\def\newaliasedtheorem#1[#2]#3{
  \newaliascnt{#1@alt}{#2}
  \newtheorem{#1}[#1@alt]{#3}
  \expandafter\newcommand\csname #1@altname\endcsname{#3}
}
\numberwithin{equation}{section}
\newtheoremstyle{slanted}{\topsep}{\topsep}{\slshape}{}{\bfseries}{.}{.5em}{}
\theoremstyle{plain}
\newtheorem{theorem}{Theorem}[section]
\theoremstyle{definition}
\theoremstyle{remark}
\newcommand{\setQ}{\mathbb{Q}}
\newcommand{\setR}{\mathbb{R}}
\let\altphi\phi
\let\phi\varphi
\let\varphi\altphi
\let\altphi\undefined
\let\div\undefined
\DeclareMathOperator{\div}{div}
\newcommand{\di}{\mathop{}\!\mathrm{d}}
\newcommand{\bs}{{\rm bs}}
\newcommand{\loc}{{\rm loc}}
\newcommand{\res}{\mathop{\hbox{\vrule height 7pt width .5pt depth 0pt
\vrule height .5pt width 6pt depth 0pt}}\nolimits}
\newcommand{\restr}{\raisebox{-.1618ex}{$\bigr\rvert$}}
\DeclareMathOperator{\supp}{supp}
\newcommand{\rmD}{{\rm D}}
\newcommand{\Ch}{{\sf Ch}}
\DeclareMathOperator{\Lip}{Lip}
\DeclareMathOperator{\Lipb}{Lip_b}
\DeclareMathOperator{\Cb}{C_b}
\DeclareMathOperator{\Cp}{C_p}
\DeclareMathOperator{\Cbs}{C_\bs}
\newcommand{\haus}{\mathscr{H}}
\newcommand{\leb}{\mathscr{L}}
\newcommand{\Meas}{\mathscr{M}}
\newcommand{\Prob}{\mathscr{P}}
\newcommand{\Borel}{\mathscr{B}}
\newcommand{\bb}{{\boldsymbol{b}}}
\newcommand{\dist}{\mathsf{d}}
\newcommand{\meas}{\mathfrak{m}}
\newcommand{\Algebra}{{\mathscr A}}
\DeclareMathOperator{\RCD}{RCD}
\newfont{\tmpf}{cmsy10 scaled 2500}
\newcommand\perone{\mathop{\mbox{\tmpf $\Pi$}}}
\begin{document}
\title{New stability results for sequences of metric measure spaces\\ with uniform Ricci bounds from below}
\author{Luigi Ambrosio
\thanks{Scuola Normale Superiore, \url{luigi.ambrosio@sns.it}} \and
Shouhei Honda
\thanks{Tohoku University, \url{shonda@m.tohoku.ac.jp}}} 
\maketitle

\begin{abstract}
The aim of this paper is to provide new stability results for sequences of metric measure spaces
$(X_i,\dist_i,\meas_i)$ convergent in the measured Gromov-Hausdorff sense. By adopting the so-called
extrinsic approach of embedding all metric spaces into a common one $(X,\dist)$, 
we extend the results of \cite{GigliMondinoSavare13} by providing Mosco convergence of Cheeger's energies
and compactness theorems in the whole range of Sobolev spaces $H^{1,p}$, including the space $BV$, and even
with a variable exponent $p_i\in [1,\infty]$. In addition,
building on \cite{AmbrosioStraTrevisan}, we provide local convergence results for gradient derivations. We use these tools to 
improve the spectral stability results, previously known for $p>1$ and for Ricci limit spaces, getting continuity of Cheeger's constant.
In the dimensional case $N<\infty$, we improve some rigidity and almost rigidity
results in \cite{Ketterer15a,Ketterer15b,CavallettiMondino15a,CavallettiMondino15b}.  On the basis of the second-order calculus
in \cite{Gigli}, in the class of $RCD(K,\infty)$ spaces we provide stability results for Hessians and $W^{2,2}$ functions and
we treat the stability of the Bakry-\'Emery condition $BE(K,N)$ and of ${\bf Ric}\geq KI$, with $K$ and $N$ not necessarily
constant.

\end{abstract}

\tableofcontents

\section{Introduction}

In this paper we establish new stability properties for sequences of metric measure spaces $(X,\dist_i,\meas_i)$
convergent in the measured Gromov-Hausdorff sense (mGH for short). Even though some
results are valid under weaker assumptions, to give a unified treatment of the several topics treated in this paper
we confine our discussion to sequences of $RCD(K,\infty)$ metric measure spaces, with $K\in\setR$ independent of $i$.
A pointed mGH limit of a sequence of Riemannian manifolds with a uniform lower Ricci curvature bound, called 
Ricci limit space, gives a typical example of $RCD(K,\infty)$ metric measure space, and 
this paper provides new results even for such sequences and for the corresponding Ricci limit spaces.  
Our stability results, relative to spectral properties and Hessians, extend the ones in \cite{Honda1}, \cite{Honda3} 
for compact Ricci limit spaces.  

The stability of the curvature-dimension conditions has been treated in the seminal papers \cite{LottVillani}, \cite{Sturm06},
while stability of the ``Riemannian'' condition (i.e. the quadratic character of Cheeger's energy) has been estabilished
in \cite{AmbrosioGigliSavare14}. It is by now quite clear that the treatment of stability of more complex objects derived from the metric measure
structure, like derivations, Lagrangian flows associated to derivations,
heat flows, Hessians, etc. is possible (even though we do not exclude other possibilities) by adopting the so-called
extrinsic approach, i.e. assuming that $(X_i,\dist_i)=(X,\dist)$ are independent of $i$, and that $\meas_i$ weakly converge to $\meas$
in duality with $\Cbs(X)$, the space of continuous functions with bounded support. 
We follow this approach, also because this paper builds upon the recent papers \cite{GigliMondinoSavare13}
(for stability of heat flows and Mosco convergence of Cheeger's energies) and \cite{AmbrosioStraTrevisan} 
(for strong convergence of derivations) which use the same one. See also \cite[Theorem~3.15]{GigliMondinoSavare13}
for a detailed comparison between the extrinsic approach and other intrinsic ones, with or without doubling assumptions.
In a broader context, see also the recent monograph \cite{Shioya} for the detailed analysis of convergence and concentration for metric
measure structures.

Before passing to a more precise technical description of the content of the paper, we discuss the main applications:

\smallskip\noindent
{\bf Spectral gap.} We discuss the joint continuity with respect to
$\bigl(p,(X,\dist,\meas)\bigr)$ of the $p$-spectral gaps
\begin{equation}\label{eq;p-spec gap}
\bigl(\lambda_{1,p}(X,\dist,\meas)\bigr)^{1/p}
\end{equation}
w.r.t. the mGH convergence. Here, for $p\in [1,\infty)$, $\lambda_{1, p}$ is the first positive eigenvalue of the $p$-Laplacian when $p>1$, and
Cheeger's constant when $p=1$, see \eqref{eq:def p-lap} for the precise definition in our setting. 
This extends the analysis of \cite{GigliMondinoSavare13} from $p=2$
to general $p$ and even to the case when $p$ is variable, see Theorem~\ref{thm:p-spec} and also Theorem~\ref{thm:gros}, dealing with the
case $p_i\to\infty$, with
\begin{equation}\label{eq;p-spec gap1}
\bigl(\lambda_{1, \infty}(X,\dist,\meas)\bigr)^{1/\infty}:=\frac{2}{\mathrm{diam}\,(\mathrm{supp}\,\meas)}.
\end{equation}
These general continuity properties were conjectured in \cite{Honda1} in the Ricci limit setting, and so we provide
an affirmative answer to the conjecture in the more general setting of $RCD(K,\infty)$ spaces. In particular,
Theorem~\ref{thm:p-spec} yields that Cheeger's  constants are continuous w.r.t. the mGH convergence.

The class $RCD^*(K,N)$ of metric measure spaces has been proposed in \cite{Gigli1} and 
deeply investigated in \cite{AmbrosioGigliSavare15}, \cite{ErbarKuwadaSturm} and \cite{AmbrosioMondinoSavare} in the nonsmooth
setting. Recall that in the class of smooth weighted $n$-dimensional Riemannian manifolds 
$(M^n,\dist,e^{-V}{\rm vol}_{M^n})$ the $RCD^*(K,N)$ condition, $n\leq N$, 
is equivalent to
$$
{\rm Ric}+\mathrm{Hess}(V)-\frac{\nabla V\otimes\nabla V}{N-n}\geq K I.
$$
Analogously, it is well-known that the condition $RCD(K,\infty)$ for
$(M^n,\dist,e^{-V}{\rm vol}_{M^n})$ 
is equivalent to ${\rm Ric}+\mathrm{Hess}(V)\geq KI$.

By combining the continuity of \eqref{eq;p-spec gap} with the compactness property of the class of $RCD^*(K, N)$-spaces 
w.r.t. the mGH convergence, we also establish a uniform bound
\begin{equation}\label{eq;estimate}
C_1 \le \bigl(\lambda_{1, p}(X,\dist,\meas)\bigr)^{1/p}\le C_2,
\end{equation}
where $C_i$ are positive constants depending only on $K$, $N<\infty$, and two-sided bounds of the diameter, i.e. 
$C_i$ do \textit{not} depend on $p$ (Proposition~\ref{cor:unif est p-lap}).

\smallskip\noindent
{\bf Suspension theorems.} The second application is related to almost spherical suspension theorems of positive Ricci curvature.
For simplicity we discuss here only the case when $N\geq 2$ is an integer, but our results (as those in \cite{Sturm06}, \cite{Ketterer15a},
\cite{Ketterer15b}, \cite{CavallettiMondino15b}) cover also the case $N\in (1,\infty)$.
In \cite{CavallettiMondino15b} Cavalletti-Mondino proved that for any $RCD^*(N-1, N)$-space, the quantity \eqref{eq;p-spec gap} is greater than or 
equal than $\bigl(\lambda_{1,p}(\mathbf{S}^N,\dist,\meas_N)\bigr)^{1/p}$ for any $p \in [1, \infty)$, 
where $\mathbf{S}^N$ is the unit sphere in $\setR^{N+1}$, 
$\dist$ is the standard metric of the sectional curvature $1$, and $\meas_N$ is the $N$-dimensional Hausdorff measure.
Moreover, equality implies that the metric measure space is isomorphic to a spherical suspension.
Under our notation \eqref{eq;p-spec gap1} as above, this observation is also true 
when $p=\infty$, which corresponds to the Bonnet-Myers theorem in our setting (see \cite{Sturm06} by Sturm).
Note that \cite{CavallettiMondino15b} also provides rigidity results as the following one: for a fixed $p \in [1, \infty]$, 
if $(\lambda_{1, p})^{1/p}$ is close to  
$\bigl(\lambda_{1,p}(\mathbf{S}^N,\dist,\meas_N)\bigr)^{1/p}$, then the space is Gromov-Hausdorff close to 
the spherical suspension of a compact metric space, a so-called \textit{almost} spherical suspension theorem.
The converse is known for $p \in \{2, \infty \}$ in \cite{Ketterer15a, Ketterer15b} by Ketterer and we extend the result to general $p$; in addition,
combining this with the joint spectral continuity result we can remove the
$p$-dependence in the almost spherical suspension theorem, i.e. if $(\lambda_{1, p})^{1/p}$ is close to  
$\bigl(\lambda_{1,p}(\mathbf{S}^N,\dist,\meas_N)\bigr)^{1/p}$ for \textit{some} $p\in [1,\infty]$, then this happens for any other $q\in [1,\infty]$, 
see Corollary~\ref{cor;shperical suspension}. 
This seems to be new even for compact $n$-dimensional Riemannian manifolds endowed with the $n$-dimensional Hausdorff measure.
In particular by using Petrunin's compatibility result \cite{Petrunin} between Alexandrov spaces and curvature-dimension conditions, 
this also holds for all finite-dimensional Alexandrov spaces with curvature bounded below by $1$, which is also new.

\smallskip\noindent
{\bf Stability of Hessians and of Gigli's measure-valued Ricci tensor.} The final application deals with
stability of Hessians and Ricci tensor with respect to mGH-convergence.
These notions come from the second order differential calculus on $RCD(K,\infty)$ spaces fully
developed by Gigli in \cite{Gigli}, starting from ideas from $\Gamma$-calculus. 
For Ricci limit spaces, analogous stability results were estabilished in \cite{Honda3}.
In this respect, the main novelty of this paper is the treatment of $RCD(K, \infty)$ spaces,
dropping also the dimensionality assumption. The main results are the stability of Hessians, see Corollary~\ref{thm;compactness lap} and
Corollary~\ref{thm;stability hessian}, and a kind of localized stability of the measure-valued Ricci tensor. In connection
with the latter, specifically, we prove in Theorem~\ref{thm; upp ricci} that local
lower bounds of the form
$$
\textbf{Ric}(\nabla f,\nabla f)\geq \zeta|\nabla f|^2\meas,
$$
with $\zeta\in C(X)$ bounded from below, are stable under mGH-convergence. This way, also nonconstant bounds 
from below on the Ricci tensor can be proved to be stable (see also \cite{Ketterer15c} for stability results in the same spirit,
obtained from a localization of the Lagrangian definition of curvature/dimension bounds). 
On the other hand, since our approach is extrinsic, this result becomes
of interest from the intrinsic point of view only when $\zeta$'s depending on the metric structure, as $\phi\circ\dist$,
are considered. See also Remark~\ref{rem:beKNstable} for an analogous stability property of the $BE(K,N)$
condition with $K$ and $N$ dependent on $x$.

We believe that these stability results and the tools developed in this paper 
could be the basis for the analysis of the stability of the other calculus tools and concepts
developed in \cite{Gigli}, as exterior and covariant derivatives, Hodge laplacian, etc. However, we will not pursue this point
of view in this paper.

\smallskip\noindent
{\bf Organization of the paper.} In Section~\ref{sec:2} we introduce the main measure-theoretic preliminaries.
In Section~\ref{sec:3} we discuss convergence of functions $f_i$ in different measure spaces relative to $\meas_i$; here the main
new ingredient is a notion of $L^{p_i}$ convergence which allows us also to cover the case when the exponents
$p_i$ converge to $p\in [1,\infty)$. We discuss the case of strong convergence, and of weak convergence when $p>1$. 
Section~\ref{sec:4} recalls the main terminology and the main known facts about $RCD(K,\infty)$ spaces and the
regularizing properties of the heat flow $h_t$. Less standard facts proved in this section are: the formula provided 
in Proposition~\ref{prop:newrepGamma} for $u\mapsto\int_X|\nabla u|\di\meas$ (somehow reminiscent of the duality 
tangent/cotangent bundle at the basis of \cite{Gigli}), of particular interest for
the proof of lower semicontinuity properties, and the weak isoperimetric property of Proposition~\ref{prop:quasiiso}.

In Section~\ref{sec:5} we enter the core of the paper, somehow ``localizing'' the Mosco convergence result of Cheeger's
energies of \cite{GigliMondinoSavare13}. The main result is Theorem~\ref{thm:cont_reco} where we prove, among other things, that
the measures $|\nabla f_i|_i^2\meas_i$ weakly converge to $|\nabla f|^2\meas$ whenever $f_i$ strongly converge to
$f$ in $H^{1,2}$ (i.e., $f_i$ $L^2$-strongly converge to $f$ and the Cheeger energies of $f_i$ converge to the Cheeger energy
of $f$). To prove this, the main difficulty is the localization of the $\liminf$ inequality of \cite{GigliMondinoSavare13}; we obtain
it using the recent results in \cite{AmbrosioStraTrevisan}, for families of derivations with convergent $L^2$ norms (in this
case, gradient derivations, see Theorem~\ref{thm:strong-convergence-gradients} in this paper). Section~\ref{sec:6}
covers the stability properties of $BV$ functions, the main result is that $f\in BV(X,\dist,\meas_i)$ whenever
$f_i\in BV(X,\dist,\meas_i)$ $L^1$-strongly converge to $f$, with $L=\liminf_i|\rmD f_i|(X)<\infty$. In addition,
$|\rmD f|(X)\leq L$. The proof of this stability properties strongly relies on the results of Section~\ref{sec:5} and,
nowithstanding the well-estabilished Eulerian-Lagrangian duality for Sobolev and $BV$ spaces (see \cite{AmbrosioDiMarino}
for the latter spaces) it seems harder to get from the Lagrangian point of view.

Section~\ref{sec:7} covers compactness results
for $BV$ and $H^{1,p}$, also in the case when $p$ depends on $i$. In the proof of these facts we use the (local)
strong $L^2$ compactness properties for sequences bounded $H^{1,2}$ proved in \cite{GigliMondinoSavare13}; passing from 
the exponent 2 to higher exponents is quite simple, while the treatment of smaller powers and the passage from $L^p_\loc$
to $L^p$ convergence (essential for our results in Section~\ref{sec:9}) requires the existence of uniform isoperimetric
profiles. We review the state of the art on this topic in Theorem~\ref{thm:isopro}. 
In Section~\ref{sec:8} we prove $\Gamma$-convergence of the $p_i$-Cheeger energies 
$\Ch^i_{p_i}$ relative to $(X,\dist,\meas_i)$ (set equal to the total variation functional
$f\mapsto |\rmD f|(X)$ in $BV$ when $p=1$), namely 
$$
\liminf_{i\to\infty}\Ch^i_{p_i}(f_i)\geq\Ch_p(f)
$$
whenever $f_i$ $L^{p_i}$-strongly converge to $f$, and the existence of a sequence $f_i$ with this property
satisfying $\limsup_i\Ch^i_{p_i}(f_i)\leq \Ch_p(f)$.
The only difference with the case $p=2$ considered in \cite{GigliMondinoSavare13} is that, in general, we are not able to achieve
the $\liminf$ inequality  with $L^{p_i}$-weakly convergent sequences, unless a uniform isoperimetric assumption on the spaces
grants relative compactness w.r.t. strong $L^{p_i}$ convergence. Under this assumption, Mosco and $\Gamma$-convergence coincide.

Finally, Section~\ref{sec:9}, Section~\ref{sec:10} and Section~\ref{sec:11} cover the above mentioned stability results for
$p$-eigenvalues and eigenfunctions (using Section~\ref{sec:7} and Section~\ref{sec:8}) , for Hessians and Ricci tensors
(using Section~\ref{sec:5}), and the dimensional results relative to the suspension theorems (using Section~\ref{sec:9}).

\smallskip
{\bf Acknowledgement.} The first author acknowledges helpful conversations on the subject of this paper with Fabio Cavalletti,
Andrea Mondino and Giuseppe Savar\'e. The second author acknowledges the support of the JSPS Program for Advancing 
Strategic International Networks to Accelerate the Circulation of Talented Researchers, the
Grant-in-Aid for Young Scientists (B) 16K17585 and the warm hospitality of SNS.\\
The authors warmly thank the referee for the detailed reading of the paper and for the constructive comments.

\section{Notation and basic setting}\label{sec:2}

\noindent{\bf Metric concepts.} In a metric space $(X,\dist)$, we denote by $B_r(x)$ and $\overline{B}_r(x)$ the open and closed
balls respectively, by $\Cbs(X)$ the space of bounded 
continuous functions with bounded support, by $\Lip_\bs(X)\subset \Cbs(X)$ the subspace of Lipschitz 
functions. We use the notation $\Cb(X)$ and $\Lipb(X)$ for bounded continuous and bounded Lipschitz functions respectively.

For $f:X\to\setR$ we denote by $\Lip(f)\in [0,\infty]$ the Lipschitz constant and by ${\rm lip}(f)$ the slope, namely
\begin{equation}\label{eq:localsl}
{\rm lip}(f)(x):=\limsup_{y\to x}\frac{|f(y)-f(x)|}{\dist(y,x)}.
\end{equation}
We also define the asymptotic Lipschitz constant by
\begin{equation}\label{eq:deflipa}
{\rm Lip}_a f(x) = \inf_{r>0} \Lip \bigl( f\restr_{B_r(x)} \bigr) =
\lim_{r\to0^+} \Lip \bigl( f\restr_{B_r(x)} \bigr),
\end{equation}
which is upper semicontinuous.

\smallskip
\noindent{\bf The metric algebra $\Algebra_\bs$.} We associate to any separable metric space $(X,\dist)$ the smallest 
$\Algebra\subset\Lipb(X)$ containing
\begin{equation}\label{eq:setD}
\min\{\dist(\cdot,x),k\}\quad\text{with $k\in\setQ\cap [0,\infty]$, $x\in D$ and $D\subset X$ countable and dense}
\end{equation}
which is a vector space over $\setQ$ and is stable under products and lattice operations.
It is a countable set and it depends only on the choice of the set $D$ (but this dependence will not be emphasized 
in our notation, since the metric space will mostly be fixed). We shall work with the subalgebra $\Algebra_\bs$ of functions with bounded support.

\smallskip
\noindent{\bf Measure-theoretic notation.} The Borel $\sigma$-algebra of a metric space $(X,\dist)$ is denoted $\Borel(X)$.
The Borel signed measures with finite total variation are denoted by $\Meas(X)$, while we use the notation $\Meas^+(X)$, $\Meas^+_\loc(X)$, $\Prob(X)$ for 
nonnegative finite Borel measures, Borel measures which are finite on bounded sets and Borel probability measures. 

We use the standard notation $L^p(X,\meas)$, $L^p_\loc(X,\meas)$ for the $L^p$ spaces
when $\meas$ is nonnegative ($p=0$ is included  and denotes the class of $\meas$-measurable functions). 
Notice that, in this context where no local compactness assumption is made, $L^p_\loc$ means
$p$-integrability on bounded subsets.

Given metric spaces $(X,\dist_X)$ and $(Y,\dist_Y)$ and a Borel map $f:X\to Y$, we denote by $f_\#$ the induced push-forward
operator, mapping $\Prob(X)$ to $\Prob(Y)$, $\Meas^+(X)$ to $\Meas^+(Y)$ and, if the preimage of bounded sets is bounded, 
$\Meas^+_\loc(X)$ to $\Meas^+_\loc(Y)$. Notice that, for all $\mu\in\Meas^+(X)$, $f_\#\mu$ is well defined also if $f$ is $\mu$-measurable.

\smallskip
\noindent{\bf Convergence of measures.} 
We say that $\meas_n\in\Meas_\loc(X)$ weakly converge to $\meas\in\Meas_\loc(X)$ if
$\int_X v\di\meas_n\to\int_X v\di\meas$ as $n\to\infty$ for all $v\in \Cbs(X)$. When all the measures $\meas_n$ as well as $\meas$
are probability measures, this is equivalent to requiring that $\int_X v\di\meas_n\to\int_X v\di\meas$ as $n\to\infty$ for all $v\in\Cb(X)$. 
We shall also use the following well-known proposition.

\begin{proposition}\label{prop:passa_al_limite} If $\meas_n$ weakly converge to $\meas$ in $\Meas^+_\loc(X)$, and if
$\limsup_{i\to\infty}\int_X\Theta\di\meas_i<\infty$ for some Borel $\Theta:X\to (0,\infty]$, then 
\begin{equation}\label{eq:passa}
\lim_{i\to\infty}\int_X v\di\meas_i=\int_X v\di\meas
\end{equation} 
for all $v:X\to\setR$ continuous with $\lim_{\dist(x,\bar x)\to\infty}|v|(x)/\Theta(x)=0$
for some (and thus all) $\bar x\in X$. If $\Theta:X\to [0,\infty)$ is continuous and
$$\limsup_{n\to\infty}\int_X\Theta\di\meas_n\leq\int_X\Theta\di\meas<\infty,$$ 
then \eqref{eq:passa} holds for all $v:X\to\setR$ continuous with
$|v|\leq C\Theta$ for some constant $C$.
\end{proposition}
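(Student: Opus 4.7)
The strategy is the standard cutoff method: pick a Lipschitz function $\chi_R\colon X\to[0,1]$ equal to $1$ on $B_R(\bar x)$ and to $0$ outside $B_{R+1}(\bar x)$, split $v=\chi_R v+(1-\chi_R)v$, and handle the two pieces separately. Since $\chi_R v\in\Cbs(X)$, the very definition of weak convergence in $\Meas^+_\loc(X)$ gives
$$\int_X \chi_R v\,\di\meas_n\longrightarrow\int_X\chi_R v\,\di\meas\qquad\text{as }n\to\infty.$$
The problem thus reduces to showing that the tails $\int(1-\chi_R)|v|\,\di\meas_n$ and $\int(1-\chi_R)|v|\,\di\meas$ can be made arbitrarily small, uniformly in $n$ eventually, by choosing $R$ large.

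For the first statement, given $\eps>0$ the decay hypothesis $|v|/\Theta\to 0$ at infinity provides $R$ with $|v|\le\eps\Theta$ outside $B_R(\bar x)$. Since $(1-\chi_R)$ is supported there,
$$\int(1-\chi_R)|v|\,\di\meas_n\le \eps\int\Theta\,\di\meas_n\le\eps(M+1)$$
for $n$ large, where $M:=\limsup_n\int\Theta\,\di\meas_n<\infty$. The delicate point is the analogous bound on the limit measure, since $\Theta$ is only Borel and cannot be tested directly against $\meas_n$. I would circumvent this by applying the lower semicontinuous version of portmanteau \emph{not to $\Theta$ but to} $(1-\chi_R)|v|$ itself, which is continuous and nonnegative, hence l.s.c.; this yields
$$\int(1-\chi_R)|v|\,\di\meas\le\liminf_n\int(1-\chi_R)|v|\,\di\meas_n\le\eps M.$$
Combining everything, $\limsup_n\bigl|\int v\,\di\meas_n-\int v\,\di\meas\bigr|\le 2\eps M$, and letting $\eps\to 0$ concludes.

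For the second statement the decay $|v|/\Theta\to 0$ is replaced by the global bound $|v|\le C\Theta$, so the smallness of the tails must now be extracted from the extra hypothesis $\limsup_n\int\Theta\,\di\meas_n\le\int\Theta\,\di\meas<\infty$. Here $\Theta$ itself is continuous (hence l.s.c.), so portmanteau combined with this inequality upgrades to $\int\Theta\,\di\meas_n\to\int\Theta\,\di\meas$; together with $\int\chi_R\Theta\,\di\meas_n\to\int\chi_R\Theta\,\di\meas$ (using $\chi_R\Theta\in\Cbs(X)$), this gives
$$\limsup_n\int(1-\chi_R)\Theta\,\di\meas_n\le\int(1-\chi_R)\Theta\,\di\meas,$$
whose right-hand side tends to $0$ as $R\to\infty$ by dominated convergence in $L^1(\meas)$. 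The tail bounds $|\int(1-\chi_R)v\,\di\meas_n|\le C\int(1-\chi_R)\Theta\,\di\meas_n$ and $|\int(1-\chi_R)v\,\di\meas|\le C\int(1-\chi_R)\Theta\,\di\meas$ are then both arbitrarily small for $R$ large, and the same decomposition finishes the proof. The main conceptual subtlety in the whole argument lies in the first part: one should not try to push the Borel function $\Theta$ through portmanteau but rather use the continuity of $|v|$ itself in order to transfer the tail estimate from $\meas_n$ to $\meas$.
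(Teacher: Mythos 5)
The paper never actually proves this proposition: it is stated as ``well-known'' right after the definition of weak convergence and then used as a black box, so there is no proof of record to compare yours against; what follows is an assessment of your argument on its own terms. The skeleton is the right one, and the move you single out as the main subtlety is indeed correct: to transfer the tail estimate to the limit measure one applies the $\liminf$--portmanteau inequality to the continuous nonnegative function $(1-\chi_R)|v|$, not to the Borel function $\Theta$. One small point: the portmanteau inequality $\int f\di\meas\le\liminf_n\int f\di\meas_n$ for $f\ge 0$ lower semicontinuous is usually stated for narrow convergence of probability measures, whereas here the convergence is only in duality with $\Cbs(X)$; it does hold in this setting, but it deserves a line of justification (write $f$ as an increasing supremum of functions in $\Cbs(X)$, truncating both in the range and in the support, and use monotone convergence).

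The genuine gap is the repeated claim that $\chi_R v\in\Cbs(X)$ (and, in the second part, that $\chi_R\Theta\in\Cbs(X)$). The paper defines $\Cbs(X)$ as the \emph{bounded} continuous functions with bounded support, and $X$ is only complete and separable --- not locally compact, nor are the $RCD(K,\infty)$ spaces of this paper --- so a continuous function may well be unbounded on a bounded set; continuity of $v$ therefore gives no control on $\sup_{B_{R+1}(\bar x)}|v|$, and the step $\int\chi_R v\di\meas_n\to\int\chi_R v\di\meas$ is unjustified. For the first statement this defect is, in fairness, inherited from the statement itself, which is false as literally written: on $X=\ell^2$ with orthonormal basis $(e_n)$, let $\meas_n:=\frac 1n\delta_{e_n}$ (these converge weakly to $\meas=0$, since every $w\in\Cbs(X)$ is bounded), let $\Theta\equiv 1$, and let $v(x):=\sum_n n^2\max\{0,1-2\dist(x,e_n)\}$, which is continuous with bounded support because the balls $B_{1/2}(e_n)$ are uniformly separated; all hypotheses hold, yet $\int v\di\meas_n=n\to\infty\neq 0$. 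So the first part is only true --- and your proof of it only complete --- under the tacit assumption that $v$ is bounded on bounded sets, which is harmless in the paper's applications where $v,\Theta$ are functions of $\dist(\cdot,\bar x)$. The second statement, however, is true in full generality (with $\Theta$ possibly unbounded on bounded sets), and there your cutoff argument genuinely fails to cover it. The classical fix (cf. \cite[Lemma~5.1.7]{AmbrosioGigliSavare08}) needs no cutoff at all: apply the portmanteau inequality to the nonnegative continuous functions $C\Theta\pm v$, and subtract the convergence $\int\Theta\di\meas_n\to\int\Theta\di\meas$, which you correctly derived from portmanteau applied to $\Theta$ together with the hypothesis; this gives $\limsup_n\int v\di\meas_n\le\int v\di\meas$ and $\liminf_n\int v\di\meas_n\ge\int v\di\meas$ at one stroke, with no boundedness on bounded sets required.
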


\smallskip
\noindent{\bf Metric measure space.} Throughout this paper, a \emph{metric measure space} is a triple $(X,\dist,\meas)$, where $(X,\dist)$
is a complete and separable metric space and $\meas\in\Meas_\loc^+(X)$. 

As explained in the introduction, in this paper we always consider metric measure spaces according to the
previous definition. When a sequence convergent in the measured-Gromov Hausdorff sense is considered,
we shall always assume (up to an isometric embedding in a common space) that the sequence has the structure 
$(X,\dist,\meas_i)$ with $\meas_i\in \Meas_\loc^+(X)$ weakly convergent to $\meas\in\Meas_\loc^+(X)$. In particular,
this convention forces us to drop the condition $\supp\meas =X$, used in many papers where individual spaces
are considered.

\section{Convergence of functions}\label{sec:3}

In our setting, we are dealing with a sequence $(\meas_i)\subset\Meas_\loc^+(X)$ weakly convergent to $\meas\in\Meas^+_\loc(X)$.
Assuming that $f_i$ in suitable Lebesgue spaces relative to $\meas_i$ are given, we discuss in this section suitable notions
of weak and strong convergence for $f_i$. 
Motivated by the convergence results of Section~\ref{sec:8} and Section~\ref{sec:9}, we extend the analysis of 
\cite{GigliMondinoSavare13} and \cite{AmbrosioStraTrevisan} to the case when also the exponents $p_i\in [1,\infty)$
are allowed to vary, with $p_i\to p\in [1,\infty)$. For weak convergence we only consider the case $p>1$ (we don't
need $L^1$-weak convergence), while for strong convergence, in connection with the results of Section~\ref{sec:6},
we also consider the case $p=1$.

\smallskip\noindent
{\bf Weak convergence.} Assume that $p_i\in [1,\infty)$ converge to $p\in (1,\infty)$. We say that $f_i\in L^{p_i}(X,\meas_i)$ 
$L^{p_i}$-weakly converge to $f \in L^p(X,\meas)$ if $f_i\meas_i$ weakly converge to $f \meas$ in $\Meas_\loc(X)$, with
\begin{equation}\label{eq:bound-lp-norms} 
\limsup_{i\to \infty} \left\| f_i \right\|_{L^{p_i}(X,\meas_i)} < \infty.
\end{equation}
For $\setR^k$-valued maps we understand the convergence componentwise.

It is obvious that $L^{p_i}$-weak convergence is stable under finite sums.
The proof of the following result is very similar to the proof in the case when $p$ and $\meas$ are fixed, and it omitted.

\begin{proposition}\label{prop:weakcon}
If $f_i\in L^{p_i}(X,\meas_i;\setR^k)$  $L^{p_i}$-weakly converge to $f \in L^p(X,\meas;\setR^k)$, then 
$$\| f\|_{L^p(X,\meas;\setR^k)} \le  \liminf_{i\to\infty}\| f_i\|_{L^{p_i}(X,\meas_i;\setR^k)}.$$ 
Moreover, any sequence $f_i\in L^{p_i}(X,\meas_i;\setR^k)$ such that \eqref{eq:bound-lp-norms} holds
admits a $L^{p_i}$-weakly convergent subsequence.
\end{proposition}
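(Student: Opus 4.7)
The plan is to imitate the classical duality proofs with the added bookkeeping for variable exponents. Since $p>1$ we may assume $p_i>1$ for all $i$ and put $p_i':=p_i/(p_i-1)\to p'$; the key recurring observation is that for any $g\in\Cbs(X)$ the functions $|g|^{p_i'}$ are continuous, share a common bounded support and converge uniformly to $|g|^{p'}$, so Proposition~\ref{prop:passa_al_limite} yields $\|g\|_{L^{p_i'}(\meas_i)}\to\|g\|_{L^{p'}(\meas)}$, and similarly $\|f_i\|_{L^{p_i}(\meas_i)}^{p_i}\to L^p$ along any subsequence realizing a finite $L:=\lim_i\|f_i\|_{L^{p_i}(\meas_i)}$.

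For the lower semicontinuity inequality, assume $L<\infty$, pass to such a realizing subsequence, and apply Young's inequality to $f_i$ and an arbitrary $g\in\Cbs(X;\setR^k)$:
$$\int\langle f_i,g\rangle\,\di\meas_i\le\frac{1}{p_i}\|f_i\|_{L^{p_i}(\meas_i)}^{p_i}+\frac{1}{p_i'}\int|g|^{p_i'}\,\di\meas_i.$$
The left-hand side converges to $\int\langle f,g\rangle\,\di\meas$ by definition of $L^{p_i}$-weak convergence, and the right-hand side to $L^p/p+(1/p')\int|g|^{p'}\,\di\meas$ by the preceding observation. Taking the supremum over $g$ via the Legendre--Fenchel duality for the $L^p$-norm, applicable because $\Cbs(X;\setR^k)$ is dense in $L^{p'}(X,\meas;\setR^k)$ ($\meas$ being Radon on the Polish space $X$), yields $\|f\|_{L^p(\meas)}\le L$.

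For the compactness statement, decomposing each component of $f_i$ into its positive and negative parts reduces matters to nonnegative sequences. For any bounded open $B\subset X$, Hölder gives $\int_B f_i\,\di\meas_i\le\|f_i\|_{L^{p_i}(\meas_i)}\meas_i(B)^{1/p_i'}$, and any $\Cbs$ bump dominating $\mathbf 1_B$ bounds $\meas_i(B)$ uniformly via weak convergence; hence the positive measures $f_i\meas_i$ are uniformly bounded on bounded sets. A diagonal extraction along an exhaustion of $X$ by bounded open sets then yields a subsequence $f_i\meas_i\weakto\mu\in\Meas_\loc^+(X)$, and applying Hölder again with an arbitrary $g\in\Cbs(X)$ and passing to the limit gives
$$\left|\int g\,\di\mu\right|\le\Bigl(\limsup_i\|f_i\|_{L^{p_i}(\meas_i)}\Bigr)\|g\|_{L^{p'}(\meas)},$$
so by density $g\mapsto\int g\,\di\mu$ extends to a bounded linear functional on $L^{p'}(X,\meas)$ and is represented by some $f\in L^p(X,\meas)$, i.e.\ $\mu=f\meas$. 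The only nontrivial point is the bookkeeping with variable exponents, uniformly handled by Proposition~\ref{prop:passa_al_limite}; everything else is essentially identical to the fixed-exponent case, which is why the authors omit the proof.
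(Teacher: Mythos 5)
Your first half (the $\liminf$ inequality) is correct and complete: Young's inequality against an arbitrary $g\in\Cbs(X;\setR^k)$ plus Fenchel duality, with the variable exponents handled by the uniform convergence of $|g|^{p_i'}$ to $|g|^{p'}$ on a common bounded support, is exactly the right bookkeeping. (The appeal to Proposition~\ref{prop:passa_al_limite} is not literally what is needed, since the integrand there does not depend on $i$; but the splitting you indicate --- uniform convergence plus the uniform bound on $\meas_i(\supp g)$ coming from weak convergence of $\meas_i$ --- proves the claim $\|g\|_{L^{p_i'}(X,\meas_i)}\to\|g\|_{L^{p'}(X,\meas)}$ directly, so this is only a citation issue.)

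The compactness half, however, has a genuine gap at the extraction step. You argue that the measures $f_i\meas_i$ are uniformly bounded on bounded sets and conclude that ``a diagonal extraction along an exhaustion of $X$ by bounded open sets yields a subsequence $f_i\meas_i\weakto\mu$''. That principle is false in the setting of this paper: $(X,\dist)$ is only complete and separable, not locally compact (the paper stresses that no local compactness is assumed), $\Cbs(X)$ is in general not separable, and local uniform boundedness alone does not give sequential weak compactness in duality with $\Cbs(X)$. Concretely, let $X$ be the closed unit ball of $\ell^2$ (so $\Cbs(X)=\Cb(X)$) and $\mu_i=\delta_{e_i}$ with $(e_i)$ orthonormal: these are uniformly bounded, yet for any subsequence $(i_j)$ the function $g=\sum_{j\ \mathrm{even}}g_{i_j}$, where $g_n(x)=(1-2|x-e_n|)^+$ are bumps with pairwise disjoint supports, belongs to $\Cb(X)$ while $g(e_{i_j})$ oscillates between $0$ and $1$; so no subsequence converges.

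What is missing --- and what the hypotheses do provide --- is uniform tightness on bounded sets. For a cutoff $\phi\in\Cbs(X)$ one has $\phi\meas_i\to\phi\meas$ narrowly (test against $g\phi$, $g\in\Cb(X)$), so by the converse of Prokhorov's theorem on Polish spaces the family $\{\phi\meas_i\}_i$ is uniformly tight; your own H\"older estimate then transfers this tightness to $f_i\meas_i$, since $\int_{B\setminus K}f_i\di\meas_i\le\|f_i\|_{L^{p_i}(X,\meas_i)}\bigl(\meas_i(B\setminus K)\bigr)^{1/p_i'}$ and $1/p_i'$ stays bounded away from $0$. With uniform boundedness \emph{and} tightness, Prokhorov gives the weakly convergent subsequence on each bounded piece, and the rest of your argument (identification of the limit as $f\meas$ with $f\in L^p(X,\meas)$ via H\"older, density of $\Cbs(X)$ in $L^{p'}(X,\meas)$, and Riesz representation) goes through as written. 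So the proof is fixable, but as stated the key compactness step rests on a principle that fails precisely in the non-locally-compact spaces this proposition is meant to cover.
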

 
\smallskip\noindent
{\bf Strong convergence.} We discuss the simpler case $p_i=p$ first. If $p>1$ we say that $f_i\in L^p(X,\meas_i;\setR^k)$  
$L^p$-strongly converge to $f\in L^p(X,\meas;\setR^k)$ if, in addition to weak $L^p$-convergence, one has 
$\limsup_i\|f_i\|_{L^p(X,\meas_i;\setR^k)}\le\|f\|_{L^p(X,\meas;\setR^k)}$. If $k=p=1$, we say that $f_i\in L^1(X,\meas_i)$  
$L^1$-strongly converge to $f\in L^1(X,\meas)$ if $\sigma\circ f_i$ $L^2$-strongly converges to $\sigma\circ f$,
where $\sigma(z)={\rm sign}(z)\sqrt{|z|}$ is the signed square root.

In the following remark we see that strong convergence can be written in terms of convergence of the probability
measures naturally associated to the graphs of $f_i$; this holds also for vector valued maps 
and we will use this fact in the proof of Proposition~\ref{prop:strocon}.

\begin{remark}[Convergence of graphs versus $L^p$-strong convergence]\label{rem:sono_simili}
{\rm If $p>1$ one can use the strict convexity of the map $z\in\setR^k\mapsto |z|^p$ to prove that 
$F_i:X\to\setR^k$ $L^p$-strongly converge to $F$ if and only if   
$\mu_i = (Id\times F_i)_\#\meas_i$ weakly converge to $\mu = (Id\times F)_\#\meas$
in duality with 
\begin{equation}\label{eq:pspace}
\Cp(X\times\setR^k):=\left\{\psi\in C(X\times\setR^k):\ |\psi(x,z)|\leq C|z|^p\,\,\text{for some $C\geq 0$}\right\}
\end{equation}
(see for instance \cite[Section~5.4]{AmbrosioGigliSavare08}, \cite{GigliMondinoSavare13}). \\
If $p=k=1$, we can use the fact that the signed square root is an homeomorphism of $\setR$ and the equivalence
estabilished in the quadratic case to get the same result. 
}\end{remark}

We recall in the following proposition a few well-known properties of $L^p$-strong convergence, see also
\cite{Honda2}, \cite{GigliMondinoSavare13} for a more detailed treatment of this topic.

\begin{proposition}\label{prop:strocon}
For all $p\in [1,\infty)$ the following properties hold:
\begin{itemize}
\item[(a)]  If $f_i$ $L^p$-strongly converge to $f$ the functions $\phi\circ f_i$ $L^p$-strongly converge to $\phi\circ f$
for all $\phi\in\Lip(\setR)$ with $\phi(0)=0$. 
\item[(b)] If $f_i,\,g_i$ $L^p$-strongly converge to $f,\,g$ respectively, then $f_i+g_i$ $L^p$-strongly converge to $f+g$. 
\item[(c)] If $f_i$ $L^p$-strongly (resp. $L^p$-weakly) converge to $f$, then $\varphi f$ $L^p$-strongly (resp. $L^p$-weakly)
converge to $\varphi f$ for all $\varphi\in\Cb(X)$ (resp. $\varphi\in\Cbs(X)$).
\item[(d)] If $f_i$ $L^2$-strongly converge to $f$ and $g_i$ $L^2$-weakly converge to $g$, then 
$$
\lim_{i\to\infty}\int_X f_ig_i\di\meas_i=\int_X fg\di\meas.
$$
If $g_i$ are also $L^2$-strongly convergent, then $f_ig_i$ are $L^1$-strongly convergent.
\item[(e)] If $(g_i)$ is uniformly bounded in $L^\infty$ and $L^1$-strongly convergent to $g$, then
$$
\lim_{i\to\infty}\|g_i\|_{L^{p_i}(X,\meas_i)}=\|g\|_{L^p(X,\meas)}
$$
whenever $p_i\in [1,\infty)$ converge to $p\in [1,\infty)$.
\end{itemize}
\end{proposition}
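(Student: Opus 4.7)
The unifying device is the graph characterization of Remark~\ref{rem:sono_simili}: $f_i\to f$ $L^p$-strongly is equivalent to weak convergence of $\mu_i:=(\Id,f_i)_\#\meas_i$ to $\mu:=(\Id,f)_\#\meas$ in duality with $\Cp(X\times\setR^k)$ (valid also in the case $p=k=1$ through the signed-square-root reduction). Every statement in the proposition will be obtained by carrying out a continuous operation at the level of the graph, checking that $p$-growth is preserved, and concluding with Proposition~\ref{prop:passa_al_limite}.

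Parts (a) and (c) are immediate. For (a), push $\mu_i$ by $(x,z)\mapsto(x,\phi(z))$; the assumption $\phi(0)=0$ gives $|\phi(z)|\leq\Lip(\phi)|z|$, so test functions in $\Cp(X\times\setR)$ are transported to test functions in $\Cp(X\times\setR)$. For the strong case of (c), use the push-forward $(x,z)\mapsto(x,\varphi(x)z)$ together with $|\varphi(x)z|^p\leq\|\varphi\|_\infty^p|z|^p$; for the weak case, $v\varphi\in\Cbs(X)$ whenever $v\in\Cbs(X)$ and $\varphi\in\Cbs(X)$, while $\|\varphi f_i\|_{p_i}\leq\|\varphi\|_\infty\|f_i\|_{p_i}$ supplies the uniform $L^{p_i}$-bound required in \eqref{eq:bound-lp-norms}.

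The key technical step, on which both (b) and the second assertion of (d) rest, is the promotion of marginal graph convergence to convergence of the joint graph $\nu_i:=(\Id,f_i,g_i)_\#\meas_i$. Uniform local boundedness of $\nu_i$ follows from $\meas_i\weakto\meas$, so weak cluster points exist; any such cluster point has $(X,y)$- and $(X,z)$-marginals equal to $(\Id,f)_\#\meas$ and $(\Id,g)_\#\meas$, is therefore concentrated on the joint graph, and (with $X$-marginal $\meas$) must coincide with $\nu:=(\Id,f,g)_\#\meas$. Combined with $\int(|y|^p+|z|^p)\di\nu_i\to\int(|y|^p+|z|^p)\di\nu$, inherited from the marginals, Proposition~\ref{prop:passa_al_limite} lets me pass to the limit in $\int\psi\di\nu_i$ for every continuous $\psi$ with $|\psi|\leq C(|y|^p+|z|^p)$. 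The choice $\psi=|y+z|^p$ yields (b); the choice $\psi(x,y,z)=yz$, dominated by $y^2+z^2$, together with $L^2$-strong joint graph convergence, yields the second part of (d). The first part of (d) then follows by $L^2$-density of $\Cbs(X)$: for $\varphi\in\Cbs(X)$ close to $f$, $\int\varphi g_i\di\meas_i\to\int\varphi g\di\meas$ by weak convergence against $\Cbs$, while the remainder $|\int(f_i-\varphi)g_i\di\meas_i|\leq\|f_i-\varphi\|_{L^2(\meas_i)}\|g_i\|_{L^2(\meas_i)}$ has arbitrarily small $\limsup$ by (b) applied to $f_i-\varphi$ and the uniform $L^2$-bound on $g_i$.

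Finally, for (e), (a) applied to the Lipschitz truncation $\phi_M(z):=(|z|\wedge M)^p$ (which vanishes at $0$) combined with $|g_i|\leq M$ gives $L^1$-strong convergence of $|g_i|^p$ to $|g|^p$, settling the case $p_i\equiv p$. To accommodate the varying exponent, for fixed $\eps\in(0,1)$ I split: on $\{|g_i|\geq\eps\}$, whose $\meas_i$-measure is at most $\|g_i\|_1/\eps$ by Chebyshev, the pointwise bound $||g_i|^{p_i}-|g_i|^p|\leq C(M,p,\eps)|p_i-p|$ gives a vanishing contribution; on $\{|g_i|<\eps\}$, both $|g_i|^{p_i}$ and $|g_i|^p$ are dominated by $\min(|g_i|,\eps)$, whose $\meas_i$-integral converges via (a) to $\int\min(|g|,\eps)\di\meas$, itself $o(1)$ as $\eps\to 0$ by dominated convergence. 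Taking $1/p_i$-th powers and using $1/p_i\to 1/p$ closes the argument. I expect this truncation-and-split bookkeeping in (e), rather than any of the graph-level manipulations, to be the main obstacle.
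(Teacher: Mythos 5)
Your proof is correct, but its engine differs from the paper's at the two points where real work is needed, so it is worth recording the comparison. For (a) and (c) you and the paper do exactly the same thing (push the graph measures forward and check that test-function growth is preserved). For (b), however, the paper does not argue via joint graphs at all: for $p>1$ it simply cites \cite{Honda2} (Corollary~3.26 and Proposition~3.31 there), and for $p=1$ it reduces to nonnegative summands $u_i,v_i$ and uses the identity $\sqrt{u_i+v_i}=\sqrt{\sqrt{u_i}^2+\sqrt{v_i}^2}$ together with the quadratic case; your cluster-point identification of $(\Id,f_i,g_i)_\#\meas_i$ gives a self-contained argument valid for every $p\in[1,\infty)$ at once, which is a genuine gain in uniformity. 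For the first part of (d), the paper expands $\liminf_i\|f_i+tg_i\|_{L^2(X,\meas_i)}\geq\|f+tg\|_{L^2(X,\meas)}$ in $t$ (a Mosco-type argument), whereas you approximate $f$ by $\Cbs(X)$ functions and use Cauchy--Schwarz; both work, the paper's is shorter, yours needs in addition the (standard) density of $\Cbs(X)$ in $L^2(X,\meas)$. The second part of (d) and part (e) are in substance the paper's proofs: the paper uses Remark~\ref{rem:sono_simili} with $k=p=2$, $F_i=(f_i,g_i)$ and $\psi(z)=\sqrt{|z_1||z_2|}$, respectively statement (a) with $\phi(z)=|z|^p\wedge N^p$ plus the assertion that $p_i\to p$ forces $\int_X\phi(g_i)\di\meas_i-\int_X|g_i|^{p_i}\di\meas_i\to 0$; your $\eps$-split on $\{|g_i|\geq\eps\}$ and $\{|g_i|<\eps\}$ supplies the quantitative justification of that last assertion, which the paper leaves implicit. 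Two steps you gloss over should be made explicit: (i) in the joint-graph argument, identifying the marginals of a weak cluster point is not automatic, because a cylindrical test function $v(x,y)$ with $v\in\Cbs(X\times\setR)$ does \emph{not} lie in $\Cbs(X\times\setR^2)$ (its support is unbounded in $z$); one must cut off in $z$ and control the error by Chebyshev, using the uniform $L^p$ bounds, exactly as in Proposition~\ref{prop:passa_al_limite}; (ii) in (e) you use $\phi_M(g)=|g|^p$, i.e. $|g|\leq M$ $\meas$-a.e., which should first be deduced from the uniform $L^\infty$ bound, e.g. by applying (a) to $z\mapsto(|z|-M)^+$. Both are routine repairs, not gaps in the strategy.
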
  
\begin{proof} (a) In the case $p>1$ this is a simple consequence of Remark~\ref{rem:sono_simili}, since
$\mu_i = (Id\times f_i)_\#\meas_i$ weakly converge to $\mu = (Id\times f)_\#\meas$ in duality with
the space in duality with the space $\Cp(X\times\setR)$ in \eqref{eq:pspace}. Since 
$\tilde\psi(x,z)=\psi(x,\phi(z))$ belongs to $\Cp(X\times\setR)$ for all $\psi\in \Cp(X\times\setR)$ it follows that
$(Id\times \phi\circ f_i)_\#\meas_i$ weakly converge to $\mu = (Id\times \phi\circ f)_\#\meas$ in duality with
$\Cp(X\times\setR)$, and then Remark~\ref{rem:sono_simili} applies again to provide the $L^p$-strong
convergence of $\phi\circ f_i$ to $\phi\circ f$.

In the case $p=1$,
since $\sigma(\phi(f_i))={\rm sign}(\phi\circ f_i)\sqrt{|\phi|\circ f_i}$, from the strong $L^2$-convergence of $\sqrt{\phi^\pm\circ f_i}$
to $\sqrt{\phi^\pm\circ f}$ and the additivity of $L^2$-strong convergence (proved in (b)) we get the result.

(b) The case $p>1$ is dealt with, for instance, in \cite{Honda2}, see Corollary~3.26 and Proposition~3.31 therein. 
In order to prove additivity for $p=1$ we can reduce ourselves, thanks to the stability under left composition proved
in (a), to the sum of nonnegative functions $u_i,\,v_i$. Since $\sqrt{u_i}$ and $\sqrt{v_i}$ are $L^2$-strongly
convergent, using the identity $\sqrt{u_i+v_i}=\sqrt{\sqrt{u_i}^2+\sqrt{v_i}^2}$ we obtain that also $\sqrt{u_i+v_i}$
is strongly $L^2$-convergent.

The proof of (c) is a simple consequence of the definitions of $L^p$-strong convergence, splitting $\phi$ and $f_i$ in positive
and negative parts to deal also with the case $p=1$.

The proof of the first part of statement (d) is a simple consequence of 
$$\liminf_i\|f_i+tg_i\|_{L^2(X,\meas_i)}\geq\|f+tg\|_{L^2(X,\meas)}\qquad \forall t\in\setR,$$ 
see also Section~\ref{sec:8} where a similar argument is used in connection with Mosco convergence. In order to
prove $L^1$-strong convergence when also $g_i$ are $L^2$-strongly convergent, we can reduce ourselves to the case when $f_i$ and
$g_i$ are nonnegative. Then, convergence of the $L^2$ norms of $\sqrt{f_ig_i}$ follows by the first part of the statement; weak convergence
of $\sqrt{f_ig_i}\meas_i$ to $\sqrt{fg}\meas$ follows by Remark~\ref{rem:sono_simili}, with $k=p=2$, $F_i=(f_i,g_i)$ and 
$\psi(z)=\sqrt{|z_1||z_2|}$.

For the proof of (e), let $N=\sup_i\|g_i\|_{L^\infty(X,\meas_i)}$ and notice first that $(g_i)$ is uniformly bounded in $L^{p_i}$. Hence,
the $\liminf$ inequality follows by the $L^{p_i}$-weak convergence of $g_i$ to $g$. The proof of the 
$\limsup$ inequality follows by statement (a) with $\phi(z)=|z|^p\land N^p$, which ensures that
$\int_X\phi(g_i)\di\meas_i\to\int_X\phi(g)\di\meas=\|g\|^p_{L^p(X,\meas)}$,
noticing that $p_i\to p$ implies $\int_X\phi(g_i)\di\meas_i-\int_X |g_i|^{p_i}\di\meas_i\to 0$.
\end{proof}

Now we turn to the general case $p_i\to p\in [1,\infty)$. We say that $L^{p_i}$-strongly converge to $f$ 
if $f_i\in L^{p_i}(X,\meas_i)$, $L^{p_i}$-weakly convergent to $f\in L^p(X,\meas)$ and if 
for any $\epsilon>0$ we can find an additive decomposition $f_i=g_i+h_i$ with
\begin{itemize}
\item[(i)] $(g_i)$ uniformly bounded in $L^\infty$, and strongly $L^1$-convergent;
\item[(ii)] $\sup_i\|h_i\|_{L^{p_i}(X,\meas_i)}<\epsilon$.
\end{itemize}

It is obvious from the definition that also $L^{p_i}$-strong convergence is stable under finite sums. In the following
proposition we show that stability under composition with Lipschitz maps $\phi$ holds and that $L^{p_i}$ convergence implies
convergence of the $L^{p_i}$ norms.

\begin{proposition} [Properties of $L^{p_i}$-strong convergence] The following properties hold:\break
\begin{itemize}
\item[(a)] If $f_i$ $L^{p_i}$-strongly converge to $f$, the functions $\phi\circ f_i$ $L^{p_i}$-strongly converge to $\phi\circ f$
for all $\phi\in\Lip(\setR)$ with $\phi(0)=0$. 
\item[(b)] If $(f_i)$ is $L^{p_i}$-strongly convergent to
$f\in L^p(X,\meas)$, then
$$ \lim_{i\to\infty}\| f_i\|_{L^{p_i}(X,\meas_i)}=\| f\|_{L^p(X,\meas)}.$$
\end{itemize}
\end{proposition}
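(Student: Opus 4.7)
The backbone of both parts is the given decomposition $f_i = g_i + h_i$, with $g_i$ uniformly bounded in $L^\infty$ and $L^1$-strongly convergent to some $g = g^{(\epsilon)}$, and $\|h_i\|_{L^{p_i}(X,\meas_i)} < \epsilon$. A preliminary observation I would record at the outset: such $g_i$ is in fact $L^{p_i}$-weakly convergent to $g$, because the norm bound $\|g_i\|_{L^{p_i}} \le \|f_i\|_{L^{p_i}} + \epsilon$ is uniform in $i$, while $g_i\meas_i \weakto g\meas$ in $\Meas_\loc$ is precisely what $L^1$-strong convergence provides via Remark~\ref{rem:sono_simili} with $p=1$. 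Consequently $h_i = f_i - g_i$ is $L^{p_i}$-weakly convergent to $f - g^{(\epsilon)}$, and Proposition~\ref{prop:weakcon} yields $\|f - g^{(\epsilon)}\|_{L^p} \le \epsilon$; in particular $g^{(\epsilon)} \to f$ in $L^p$ as $\epsilon \to 0$.

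For (a) I would first write
\[
\phi \circ f_i \;=\; \phi \circ g_i \;+\; \bigl(\phi \circ f_i - \phi \circ g_i\bigr).
\]
The first term is uniformly bounded by $\Lip(\phi)\sup_i\|g_i\|_\infty$ and, by Proposition~\ref{prop:strocon}(a) applied with $p=1$, is $L^1$-strongly convergent to $\phi \circ g^{(\epsilon)}$; the second term satisfies $|\phi \circ f_i - \phi \circ g_i|\le \Lip(\phi)|h_i|$ pointwise, hence has $L^{p_i}$-norm at most $\Lip(\phi)\epsilon$. This supplies the decomposition condition for arbitrary $\epsilon>0$. It remains to verify that $\phi \circ f_i$ is $L^{p_i}$-weakly convergent to $\phi \circ f$. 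The uniform $L^{p_i}$-bound is immediate from $|\phi(z)|\le \Lip(\phi)|z|$. For the convergence of measures tested against $v \in \Cbs(X)$, I split as above: the bounded piece yields $\int v\,\phi\circ g_i \di\meas_i \to \int v\,\phi\circ g^{(\epsilon)} \di\meas$ via Proposition~\ref{prop:strocon}(a) and (c) (which make $v\cdot\phi\circ g_i$ $L^1$-strongly convergent on the bounded support of $v$), while the residual piece is bounded by H\"older as $\|v\|_\infty \Lip(\phi)\|h_i\|_{L^{p_i}}\meas_i(\supp v)^{1/q_i} = O(\epsilon)$ uniformly in $i$, since $\sup_i \meas_i(\supp v)<\infty$ (weak convergence of $\meas_i$ tested against a Lipschitz cutoff function around $\supp v$). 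Taking $\limsup_i$ and then letting $\epsilon \to 0$, using $\phi \circ g^{(\epsilon)} \to \phi \circ f$ in $L^p$, identifies the weak limit as $\phi \circ f$.

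For (b), the decomposition immediately gives $\bigl|\|f_i\|_{L^{p_i}}-\|g_i\|_{L^{p_i}}\bigr|\le \epsilon$; Proposition~\ref{prop:strocon}(e) supplies $\|g_i\|_{L^{p_i}}\to \|g^{(\epsilon)}\|_{L^p}$, and the preliminary estimate gives $\bigl|\|g^{(\epsilon)}\|_{L^p}-\|f\|_{L^p}\bigr|\le \epsilon$. Hence $\limsup_i \bigl|\|f_i\|_{L^{p_i}}-\|f\|_{L^p}\bigr|\le 2\epsilon$, and sending $\epsilon \to 0$ concludes.

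\textbf{Main obstacle.} The non-linearity of $\phi$ prevents one from reading off the $L^{p_i}$-weak limit of $\phi\circ f_i$ directly from that of $f_i$; Proposition~\ref{prop:strocon}(a) is only stated with a fixed exponent. The resolution is the two-scale scheme above: apply Lipschitz composition only to the uniformly bounded $L^1$-strongly convergent pieces $g_i$ (where the $p=1$ case of Proposition~\ref{prop:strocon}(a) is available), and absorb the remainder through a H\"older estimate whose smallness is uniform in $i$ thanks to the local finiteness of the weakly convergent measures $\meas_i$.
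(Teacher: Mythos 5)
Your proof is correct and takes essentially the same route as the paper's: the same induced decomposition $\phi\circ f_i=\phi\circ g_i+(\phi\circ f_i-\phi\circ g_i)$ with Proposition~\ref{prop:strocon}(a) at exponent $1$ handling the uniformly bounded piece and the Lipschitz bound absorbing the remainder, the key estimate $\|f-g^{(\epsilon)}\|_{L^p(X,\meas)}\leq\epsilon$ via weak lower semicontinuity, and Proposition~\ref{prop:strocon}(e) for part (b). The only (harmless) variation is bookkeeping: you identify the weak limit of $\phi\circ f_i$ by testing directly against $\Cbs(X)$ with a H\"older estimate, whereas the paper extracts an $L^{p_i}$-weak limit point via Proposition~\ref{prop:weakcon} and pins it down by norm lower semicontinuity; both arguments rest on the same ingredients.
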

\begin{proof} (a) Possibly splitting $\phi$ in positive and negative part we can assume $\phi\geq 0$. Since $\phi$ is a contraction,
taking also Proposition~\ref{prop:strocon}(a) into account, it is immediate to check that decompositions $f_i=g_i+h_i$ induce
decompositions $\phi\circ g_i+(\phi\circ f_i-\phi\circ g_i)$ of $\phi\circ f_i$; in addition, if $\psi$ is any $L^{p_i}$-weak limit
point of $(\phi\circ f_i)$, from the lower semicontinuity of $L^{p_i}$ convergence we get 
\begin{eqnarray*}
&&\|\psi-\phi\circ g\|_{L^p(X,\meas)}\leq\liminf\limits_{i\to\infty}\|\phi\circ h_i\|_{L^{p_i}(X,\meas_i)}\leq \Lip(\phi)\epsilon\\
&&\|\phi\circ f-\phi\circ g\|_{L^p(X,\meas)}\leq\Lip(\phi)\|f-g\|_{L^p(X,\meas)}\leq
\Lip(\phi)\liminf\limits_{i\to\infty}\|h_i\|_{L^{p_i}(X,\meas_i)} \leq\Lip(\phi)\epsilon,
\end{eqnarray*}
where $g$ denotes the $L^{p_i}$-strong limit of $g_i$.  
Since $\epsilon$ is arbitrary, we obtain that $\psi=\phi\circ f$, and this proves the $L^{p_i}$-strong convergence of 
$f_i$ to $f$.

(b) The $\liminf$ inequality follows by weak convergence. If $f_i=g_i+h_i$ is a decomposition as in (i), (ii), and if $g$ is the $L^{p_i}$-strong limit of 
$g_i$, the $\limsup$ inequality is a direct consequence of the inequality $\|f-g\|_{L^p(X,\meas)}<\epsilon$ and of
$$
\lim_{i\to\infty}\|g_i\|_{L^{p_i}(X,\meas_i)}=\|g\|_{L^p(X,\meas)},
$$
ensured by Proposition~\ref{prop:strocon}(e).
\end{proof}

\section{Minimal relaxed slopes, Cheeger energy and $RCD(K,\infty)$ spaces}\label{sec:4}

In this section we recall basic facts about minimal relaxed slopes, Sobolev spaces and heat flow in metric measure spaces
$(X,\dist,\meas)$, see \cite{AmbrosioGigliSavare13} and \cite{Gigli1} for a more systematic treatment of this topic. For
$p\in (1,\infty)$ the $p$-th Cheeger energy
$\Ch_p:L^p(X,\meas)\to [0,\infty]$ is the convex and $L^p(X,\meas)$-lower semicontinuous functional defined as follows:
\begin{equation}\label{eq:defchp}
\Ch_p(f):=\inf\left\{\liminf_{n\to\infty}\frac 1p\int_X{\rm Lip}_a^p(f_n)\di\meas:\ \text{$f_n\in\Lipb(X)\cap L^p(X,\meas)$, $\|f_n-f\|_p\to 0$}\right\}. 
\end{equation}
The original definition in \cite{Cheeger} involves generalized upper gradients of $f_n$ in place of their asymptotic Lipschitz constant, but many
other pseudo gradients (upper gradients, or the slope ${\rm lip}(f)\leq{\rm Lip}_a(f)$, which is a particular upper gradient) can be used and all of them lead to the same definition. Indeed, all these pseudo gradients
produce functionals intermediate between the functional in \eqref{eq:defchp} and the functional based on the minimal $p$-weak upper gradient
of \cite{Shanmugalingam}, which are shown to be coincident in \cite{AmbrosioColomboDiMarino}
(see also the discussion in \cite[Remark~5.12]{AmbrosioGigliSavare13}).

The Sobolev spaces $H^{1,p}(X,\dist,\meas)$ are simply defined as the finiteness domains of $\Ch_p$. When endowed with the norm
$$
\|f\|_{H^{1,p}}:=\left(\|f\|_{L^p(X,\meas)}^p+p\Ch_p(f)\right)^{1/p}
$$
these spaces are Banach, and reflexive if $(X,\dist)$ is doubling (see \cite{AmbrosioColomboDiMarino}).  

The case $p=2$ plays an important role in the construction of the differentiable structure, following \cite{Gigli}.
For this reason we use the disinguished notation $\Ch=\Ch_2$ and it can be proved that $H^{1,2}(X,\dist,\meas)$ is Hilbert if $\Ch$ is quadratic.
  
In connection with the definition of $\Ch$, for all $f\in H^{1,2}(X,\dist,\meas)$ one can consider the collection $RS(f)$ all functions in $L^2(X,\meas)$
larger than a weak $L^2(X,\meas)$ limit of ${\rm Lip}_a(f_n)$, with $f_n\in\Lipb(X)$ and $f_n\to f$ in $L^2(X,\meas)$. 
This collection describes a convex, closed and nonempty set, 
whose element with smallest $L^2(X,\meas)$ norm is called minimal relaxed slope and denoted by $|\nabla f|$. We use the
not completely appropriate nabla notation, instead of the notation $|\rmD f|$ of \cite{Gigli}, since we will be dealing only with quadratic $\Ch$. 
Notice also that a similar construction can be applied to $\Ch_p$, and provides a minimal $p$-relaxed gradient that can indeed depend on
$p$ (see \cite{DmSp}). However, either under the doubling\&Poincar\'e assumptions \cite{Cheeger}, or under curvature assumptions
\cite{GigliHan} this dependence disappears and in any case we will only be dealing with the $2$-minimal relaxed slope in this paper.

When $\Ch$ is quadratic we denote by $\langle\nabla f,\nabla g\rangle$ the canonical symmetric bilinear form from 
$[H^{1,2}(X,\dist,\meas)]^2$ to $L^1(X,\meas)$ defined by
\begin{equation}\label{eq:defccarre}
\langle\nabla f,\nabla g\rangle:=\lim_{\epsilon\to 0}\frac{|\nabla (f+\epsilon g)|^2-|\nabla f|^2}{2\epsilon}
\end{equation}
(where the limit is understood in the $L^1(X,\meas)$ sense). Notice also that the
expression $\langle\nabla f,\nabla g\rangle$ still makes sense $\meas$-a.e. for any $f,\,g\in\Lipb(X)$ 
(not necessarily in the $H^{1,2}$ space, when $\meas(X)=\infty$), 
since $f,\,g$ coincide on bounded sets with functions in the Sobolev class, and gradients
satisfy the locality property on open and even on Borel sets.

Because of the minimality
property, $|\nabla f|$ provides integral representation to $\Ch$, so that
$$
\int_X\langle\nabla f,\nabla g\rangle\di\meas=\lim_{\epsilon\to 0}\frac{\Ch (f+\epsilon g)-\Ch(f)}{\epsilon}
$$
and it is not hard to improve weak to strong convergence.  

\begin{theorem}\label{tapprox}
For all $f\in D(\Ch)$ one has
$$
\Ch(f)=\frac 12\int_X|\nabla f|^2\di\meas
$$
and there exist $f_n\in\Lipb(X)\cap L^2(X,\meas)$ with $f_n\to f$ in $L^2(X,\meas)$ and 
${\rm Lip}_a(f_n)\to |\nabla f|$ in $L^2(X,\meas)$.  In particular, if $H^{1,2}(X,\dist,\meas)$ is reflexive,
there exist $f_n\in\Lipb(X)\cap L^2(X,\meas)$ satisfying $f_n\to f$ in $L^2(X,\meas)$ and 
$|\nabla (f_n-f)|\to 0$ in $L^2(X,\meas)$. 
\end{theorem}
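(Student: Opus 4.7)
The plan is to split the argument into three steps: the integral representation $\Ch(f)=\tfrac12\int|\nabla f|^2\di\meas$, the construction of a recovery sequence with strongly $L^2$-convergent asymptotic Lipschitz constants, and the promotion to strong $H^{1,2}$-convergence in the reflexive case. Throughout I would exploit the characterisation of $|\nabla f|$ as the element of smallest $L^2$-norm in the convex closed set $RS(f)$, together with Mazur's lemma.

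The easy inequality $\tfrac12\int|\nabla f|^2\di\meas\leq\Ch(f)$ follows from weak $L^2$-lower semicontinuity: any admissible sequence $f_n\in\Lipb(X)\cap L^2(X,\meas)$ with $f_n\to f$ in $L^2$ and $\liminf_n\int{\rm Lip}_a^2(f_n)\di\meas<\infty$ admits, up to a subsequence, a weak $L^2$-limit $G$ of ${\rm Lip}_a(f_n)$; since $G\in RS(f)$, the norm-minimality of $|\nabla f|$ yields $\int|\nabla f|^2\di\meas\leq\int G^2\di\meas\leq\liminf_n\int{\rm Lip}_a^2(f_n)\di\meas$. For the opposite inequality I first note that $|\nabla f|$ is itself attained as a weak $L^2$-limit of ${\rm Lip}_a(f_n)$ for some admissible sequence: by definition of $RS(f)$ there is such a weak limit $H$ with $H\leq|\nabla f|$ a.e., and the $L^2$-minimality forces $H=|\nabla f|$ a.e. Applying Mazur's lemma to this weakly convergent sequence produces convex combinations $\sum_k\lambda_{n,k}{\rm Lip}_a(f_k)\to|\nabla f|$ strongly in $L^2$; the subadditivity of the asymptotic Lipschitz constant under convex combinations then gives the pointwise bound ${\rm Lip}_a(\tilde f_n)\leq\sum_k\lambda_{n,k}{\rm Lip}_a(f_k)$ for $\tilde f_n:=\sum_k\lambda_{n,k}f_k\in\Lipb(X)\cap L^2(X,\meas)$. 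Since $\tilde f_n\to f$ in $L^2$, this yields $\Ch(f)\leq\tfrac12\int|\nabla f|^2\di\meas$, which combined with the lower bound identifies the two quantities and forces $\int{\rm Lip}_a^2(\tilde f_n)\di\meas\to\int|\nabla f|^2\di\meas$; any weak $L^2$-limit of ${\rm Lip}_a(\tilde f_n)$ then coincides with $|\nabla f|$ by the same minimality argument, and the Radon-Riesz property of $L^2$ upgrades this to strong $L^2$-convergence.

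For the reflexive case the recovery sequence $\tilde f_n$ constructed above is bounded in $H^{1,2}(X,\dist,\meas)$, since $|\nabla\tilde f_n|\leq{\rm Lip}_a(\tilde f_n)$ a.e.\ and both $\|\tilde f_n\|_{L^2(X,\meas)}$ and $\|{\rm Lip}_a(\tilde f_n)\|_{L^2(X,\meas)}$ stay bounded; reflexivity together with the $L^2$-strong convergence $\tilde f_n\to f$ identifies the weak $H^{1,2}$-limit as $f$, hence $\tilde f_n\weakto f$ weakly in $H^{1,2}$. A second application of Mazur's lemma, this time in $H^{1,2}$, produces convex combinations $h_n\in\Lipb(X)\cap L^2(X,\meas)$ of $(\tilde f_k)_{k\geq n}$ converging to $f$ strongly in $H^{1,2}$, hence with $|\nabla(h_n-f)|\to 0$ in $L^2$.

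The main obstacle is the recovery-sequence construction in the second step: turning weak $L^2$-convergence of ${\rm Lip}_a(f_n)$ to $|\nabla f|$ into a sequence whose asymptotic Lipschitz constants have $L^2$-norm matching $\||\nabla f|\|_{L^2(X,\meas)}$. The Mazur/convex-combinations trick, combined with the subadditivity of ${\rm Lip}_a$ coming from the definition \eqref{eq:deflipa}, is exactly what closes this gap; without it, one obtains only a weak bound that falls short of the Cheeger energy.
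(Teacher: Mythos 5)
Your proof is correct, and it is essentially the argument this theorem rests on: the paper states it without proof (``it is not hard to improve weak to strong convergence''), recalling it from \cite{AmbrosioGigliSavare13} and \cite{AmbrosioColomboDiMarino}, where precisely your ingredients appear — weak $L^2$-compactness plus norm-minimality of $|\nabla f|$ in $RS(f)$ for the lower bound, Mazur's lemma applied to a sequence with ${\rm Lip}_a(f_n)\weakto|\nabla f|$ combined with the convexity/subadditivity of ${\rm Lip}_a$ for the recovery sequence, Radon--Riesz to upgrade to strong $L^2$-convergence, and a second application of Mazur's lemma in the Banach space $H^{1,2}(X,\dist,\meas)$ for the reflexive case. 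No gaps: in particular your identification of the weak limit $H$ with $|\nabla f|$ (via $0\le H\le|\nabla f|$ and norm equality) and of all weak limit points of ${\rm Lip}_a(\tilde f_n)$ with $|\nabla f|$ (via uniqueness of the norm minimizer on the closed convex set $RS(f)$) are both sound.
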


Most standard calculus rules can be proved, when dealing with minimal relaxed slopes. For the purposes of this paper the most relevant ones
are:\smallskip

\noindent{\bf Locality on Borel sets.} $|\nabla f|=|\nabla g|$ $\meas$-a.e. on $\{f=g\}$ for all $f,\,g\in H^{1,2}(X,\dist,\meas)$;

\smallskip
\noindent{\bf Pointwise minimality.} $|\nabla f|\leq g$ $\meas$-a.e. for all $g\in RS(f)$;

\smallskip
\noindent{\bf Degeneracy.} $|\nabla f|=0$ $\meas$-a.e. on $f^{-1}(N)$ for all $f\in H^{1,2}(X,\dist,\meas)$ and all $\leb^1$-negligible $N\in\Borel(\setR)$;

\smallskip
\noindent{\bf Chain rule.} $|\nabla (\varphi\circ f)|=|\varphi'(f)||\nabla f|$ for all $f\in H^{1,2}(X,\dist,\meas)$ and all
$\varphi:\setR\to\setR$ Lipschitz with $\varphi(0)=0$.

\smallskip
\noindent{\bf Leibniz rule.} If $f,\,g\in H^{1,2}(X,\dist,\meas)$ and $h\in\Lipb(X)$, then
$$
\langle\nabla f,\nabla (gh)\rangle=h\langle\nabla f,\nabla g\rangle+g\langle\nabla f,\nabla h\rangle
\qquad\text{$\meas$-a.e. in $X$.}
$$
\smallskip

Another object canonically associated to $\Ch$ and then to the metric measure structure is the heat flow $h_t$, defined as the
$L^2(X,\meas)$ gradient flow of $\Ch$, according to the Brezis-Komura theory of gradient flows of lower semicontinuous
functionals in Hilbert spaces, see for instance \cite{Brezis}. This theory provides a continuous contraction
semigroup $h_t$ in $L^2(X,\meas)$ 
which, under the growth condition
\begin{equation}\label{eq:Grygorian}
\meas\bigl(B_r(\bar x)\bigr)\leq c_1 e^{c_2r^2}\qquad\forall r>0,
\end{equation}
extends to a continuous and mass preserving semigroup (still denoted $h_t$) in all
$L^p(X,\meas)$ spaces, $1\leq p<\infty$. In addition, $h_t$ preserves upper and lower
bounds with constants, namely $f\leq C$ $\meas$-a.e. (resp. $f\geq C$ $\meas$-a.e.)
implies $h_t f\leq C$ $\meas$-a.e.  (resp. $h_tf\geq C$ $\meas$-a.e.) for all $t\geq 0$.

We shall use $h_t$ only in the case when $\Ch$ is quadratic, as a regularizing operator. In the sequel 
we adopt the notation
\begin{equation}\label{eq:defDelta}
D(\Delta):=\left\{f\in H^{1,2}(X,\dist,\meas):\ \Delta f\in L^2(X,\meas)\right\}
\end{equation}
namely $D(\Delta)$ is the class of functions $f\in H^{1,2}(X,\dist,\meas)$ satisfying $-\int_X v g\di\meas=\int_X\langle\nabla f,\nabla v\rangle\di\meas$
for all $v\in H^{1,2}(X,\dist,\meas)$, for some $g\in L^2(X,\meas)$ (and then, since $g$ is uniquely determined, $\Delta f:=g$).
When $\Ch$ is quadratis the semigroup $h_t$ is also linear (and this property is equivalent to $\Ch$ being quadratic) and it is easily
seen that
$$
\lim_{t\downarrow 0} h_tf=f\qquad\text{strongly in $H^{1,2}$ for all $f\in H^{1,2}(X,\dist,\meas)$.}
$$
We shall also extensively use the typical regularizing properties (independent of curvature assumptions)
\begin{equation}\label{eq:Brezis1}
\text{$h_tf\in W^{1,2}(X,\dist,\meas)$ for all $f\in L^2(X,\meas)$, $t>0$ and $\Ch(h_t f)\leq\frac{\|f\|_{L^2(X,\meas)}^2}{2t}$,}
\end{equation}
\begin{equation}\label{eq:Brezis2}
\text{$h_tf\in D(\Delta)$ for all $f\in L^2(X,\meas)$, $t>0$ and $\|\Delta h_t f\|_{L^2(X,\meas)}^2\leq\frac{\|f\|_{L^2(X,\meas)}^2}{t^2}$,}
\end{equation}
as well as the commutation rule $h_t\circ\Delta=\Delta\circ h_t$, $t>0$.

Finally, we describe the class of $RCD(K,\infty)$ metric measure spaces of \cite{AmbrosioGigliSavare14}, where thanks to the lower
bounds on Ricci curvature even stronger properties of $h_t$ can be proved.

\begin{definition}[$CD(K,\infty)$ and $RCD(K,\infty)$ spaces] We say that a metric measure space $(X,\dist,\meas)$
satisfying the growth bound \eqref{eq:Grygorian} (for some constants $c_1,\,c_2$ and some $\bar x\in X$)
is a $RCD(K,\infty)$ metric measure space, with $K\in\setR$, if:
\begin{itemize}
\item[(a)] setting
$$
\Prob_2(X):=\left\{\mu\in\Prob(X):\ \int_X\dist^2(\bar x,x)\di\meas(x)<\infty\right\},
$$
the Relative Entropy Functional ${\rm Ent}(\mu):\Prob_2(X)\to\setR\cup\{\infty\}$ given by
\begin{equation}\label{eq:defentropy}
{\rm Ent}(\mu):=
\begin{cases}
\int_X\rho\log\rho\di\meas&\text{if $\mu=\rho\meas\ll\meas$;}
\\ 
\infty &\text{otherwise}
\end{cases}
\end{equation}
is $K$-convex along Wasserstein geodesics in $\Prob_2(X)$, namely
$$
{\rm Ent}(\mu_t)\leq (1-t){\rm Ent}(\mu_0)+t{\rm Ent}(\mu_1)-\frac{K}{2}t(1-t)W_2^2(\mu_0,\mu_1)
$$
for all $\mu_0,\,\mu_1\in D({\rm Ent}):=\{\mu:\ {\rm Ent}(\mu)<\infty\}$, 
for some constant speed geodesic $\mu_t$ from $\mu_0$ to $\mu_1$ (so, this condition forces
$D({\rm Ent}),W_2)$ to be geodesic). This condition corresponds to the $CD(K,\infty)$ condition
of \cite{LottVillani}, \cite{Sturm06}.
\item[(b)] $\Ch$ is quadratic. This is the axiom added to the Lott-Sturm-Villani theory in \cite{AmbrosioGigliSavare14}.
\end{itemize}
\end{definition}

\begin{remark} [On the growth condition \eqref{eq:Grygorian}]\label{rem:Grygorian}
Notice that \eqref{eq:Grygorian} is needed to give a meaning to the integral in \eqref{eq:defentropy}, as it ensures the 
integrability of the negative part of $\rho\log\rho$. On the other hand, adopting a suitable
convention on the meaning to be given to ${\rm Ent}$ in these cases of indeterminacy (so that
the $CD(K,\infty)$ condition makes anyhow sense), it has been proved in \cite{Sturm06} that
\eqref{eq:Grygorian} can be deduced from the $CD(K,\infty)$ condition, and that the constants $c_i$
can be estimated in terms of $K$ and of the measure of two concentric balls centered at
$\bar x\in\supp\meas$.
\end{remark}

It is not hard to prove that the support of any $\RCD(K,\infty)$ (or even $CD(K,\infty)$ space)
is length, namely the infimum of the length of the absolutely continuous curves connecting any two points $x,\,y\in\supp\meas$
is $\dist(x,y)$.
See \cite{AmbrosioGigliSavare14} (dealing with finite reference measures),
\cite{AmbrosioGigliMondinoRajala} (for the $\sigma$-finite case) and \cite{AmbrosioGigliSavare15} for various characterizations
of the class of $RCD(K,\infty)$ spaces. We quote here a few results, which essentially derive from the identification of $h_t$
as the gradient flow of ${\rm Ent}$ w.r.t. the Wasserstein distance and the contractivity properties with respect to that distance.

It is proved in \cite{AmbrosioGigliSavare14} that the formula
$$
h_t g(x):=\int_X g(y) d\tilde h_t\delta_x(y)\qquad x\in X,\, t\geq 0
$$
where $\tilde h_t$ is the dual $K$-contractive semigroup acting on $\Prob_2(X)$,
provides a pointwise version of the semigroup on $L^2\cap L^\infty(X,\meas)$
with better continuity properties, recalled among other things in the next proposition.
Notice also that the formula
$$
\tilde h_t\mu:=\int \tilde h_t\delta_x\di\mu(x)
$$
provides a canonical extension of $\tilde h_t$ to the whole of $\Prob(X)$, used in Proposition~\ref{prop:char_bv}.

\begin{proposition} [Regularizing properties of $h_t$] \label{prop:reguht} Let $(X,\dist,\meas)$ be a $RCD(K,\infty)$ metric measure space. Then,
any $f\in H^{1,2}(X,\dist,\meas)$ with $|\nabla f|\in L^\infty(X,\meas)$ has a Lipschitz representative 
$\tilde f$, with $\Lip(\tilde f)=\||\nabla f|\|_{L^\infty(X,\meas)}$ and 
the following properties hold for all $t>0$:
\begin{itemize}
\item[(a)] if $f\in L^2\cap L^\infty(X,\meas)$ one has $h_t f\in\Lipb(X)\cap H^{1,2}(X,\dist,\meas)$ with 
\begin{equation}\label{eq:regu1}
|\nabla h_t f|={\rm lip}(h_t f)\quad\text{$\meas$-a.e.},\qquad
\Lip(h_t f)\leq \frac{1}{\sqrt{2{\sf I}_{2K}(t)}}\|f\|_{L^\infty(X,\meas)};
\end{equation}
\item[(b)] for all $f\in H^{1,2}(X,\dist,\meas)$ with $|\nabla f|\in L^\infty(X,\meas)$ the Bakry-\'Emery condition holds in the form
\begin{equation}\label{eq:BE}
{\rm Lip}_a(h_t f,x)\leq e^{-Kt}h_t|\nabla f|(x)\qquad\forall x\in X;
\end{equation}
\item[(c)] if $\mu\in\Prob_2(X)$, then $\tilde{h}_t\mu=f_t\meas$, with
$$
\int_X f_t\log f_t\di\meas\leq \frac{1}{2{\sf I}_{2K}(t)}\biggl(r^2+\int_X\dist^2(x,\bar x)\di\mu(x)\biggr)
-\log\bigl(\meas (B_r(\bar x))\bigr)
$$
for all $\bar x\in X$ and $r>0$.
\end{itemize}
\end{proposition}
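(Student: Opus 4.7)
The plan is to derive all three statements from the established $RCD(K,\infty)$ theory of \cite{AmbrosioGigliSavare14, AmbrosioGigliSavare15}, building on the identification of $h_t$ with the $W_2$-gradient flow of $\mathrm{Ent}$ and the associated $K$-contractivity of $\tilde h_t$. The existence of the Lipschitz representative $\tilde f$ with $\Lip(\tilde f)=\||\nabla f|\|_{L^\infty(X,\meas)}$ is the Sobolev-to-Lipschitz property of $RCD$ spaces, which I would invoke as a preliminary step: it holds because $\supp\meas$ is a length space and the dual semigroup satisfies a sharp $W_\infty$-contraction bound, so that one can run the Kantorovich--Rubinstein duality argument of \cite{AmbrosioGigliSavare14}.

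For (a), I would first apply the standard Hopf--Lax / hypercontractivity estimate available in $CD(K,\infty)$ spaces (whose metric version comes from Bakry's $\Gamma_2$-type computation along $h_t$): for $f\in L^2\cap L^\infty(X,\meas)$ one has $|\nabla h_tf|^2\leq (2{\sf I}_{2K}(t))^{-1}\|f\|_{L^\infty}^2$ $\meas$-a.e., where ${\sf I}_{2K}(t)=\int_0^t e^{2Ks}\di s$. Combined with the Sobolev-to-Lipschitz property this gives the quantitative Lipschitz bound. The identity $|\nabla h_t f|=\mathrm{lip}(h_t f)$ $\meas$-a.e. then follows from Theorem~\ref{tapprox} applied to the Lipschitz representative, since for a bounded Lipschitz function the local slope itself is a relaxed slope and hence pointwise dominates $|\nabla h_tf|$, while the reverse inequality is a consequence of the $\meas$-a.e. minimality property.

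For (b), the pointwise Bakry--\'Emery estimate $\mathrm{Lip}_a(h_tf,x)\leq e^{-Kt} h_t|\nabla f|(x)$ is precisely the strong form of the $BE(K,\infty)$ condition established in \cite{AmbrosioGigliSavare15} to be equivalent to $RCD(K,\infty)$. The route is first to establish the $\meas$-a.e. inequality $|\nabla h_tf|\leq e^{-Kt} h_t|\nabla f|$ by differentiating $s\mapsto h_s(|\nabla h_{t-s}f|^2)$ and invoking the $\Gamma_2\geq K\Gamma$ inequality, and then to upgrade it to a pointwise statement by passing to the good continuous representatives: the left-hand side is upper semicontinuous by \eqref{eq:deflipa}, while $h_t|\nabla f|$ is (locally) Lipschitz by part (a) applied to the bounded function $|\nabla f|$, so the $\meas$-a.e. inequality propagates to $\supp\meas$, hence to all of $X$ after adjusting on the $\meas$-negligible complement.

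For (c), I would use the gradient-flow characterization of $\tilde h_t$ on $\Prob_2(X)$ and the EVI formulation of \cite{AmbrosioGigliSavare14}. Starting from the test measure $\nu_r:=\meas\measrestr B_r(\bar x)/\meas(B_r(\bar x))$, whose entropy is $-\log\meas(B_r(\bar x))$, I would integrate the evolution variational inequality between $0$ and $t$, estimate $W_2^2(\mu,\nu_r)\leq 2(r^2+\int\dist^2(x,\bar x)\di\mu(x))$ by a direct coupling computation, and exploit the dissipation identity for $\mathrm{Ent}$ along $h_t$ to convert the resulting bound into the claimed logarithmic-integral estimate; in particular absolute continuity $\tilde h_t\mu\ll\meas$ is forced by finiteness of the right-hand side. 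The main obstacle among the three parts is the pointwise upgrade in (b), where one must carefully combine the $L^2$-theoretic gradient estimate with the topological regularity of both sides on the support of $\meas$.
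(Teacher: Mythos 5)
Your reconstruction attempts real proofs where the paper simply cites the literature ((a) is quoted from \cite{AmbrosioGigliSavare14,AmbrosioGigliSavare15}, (b) from \cite{Savare}, and (c) from Wang's log-Harnack inequality, \cite[Theorem~4.8]{AmbrosioGigliSavare15}); that is a reasonable ambition, but two of your three arguments have genuine gaps. In (a), your proof of $|\nabla h_tf|={\rm lip}(h_tf)$ $\meas$-a.e.\ is circular: pointwise minimality gives exactly one inequality, namely $|\nabla h_tf|\leq{\rm lip}(h_tf)$ $\meas$-a.e.\ (the slope of a bounded Lipschitz function is a relaxed slope), and appealing to minimality again for ``the reverse inequality'' reproduces that same inequality rather than its converse. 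The nontrivial half, ${\rm lip}(h_tf)\leq|\nabla h_tf|$ $\meas$-a.e., requires the Bakry--\'Emery machinery itself: e.g.\ write $h_tf=h_{t-s}(h_sf)$, use \eqref{eq:BE} to get ${\rm lip}(h_tf)\leq e^{-K(t-s)}h_{t-s}|\nabla h_sf|$ pointwise, and let $s\uparrow t$. In (b) the pointwise upgrade fails twice. First, the $\Gamma_2$-argument controls $|\nabla h_tf|$, which agrees $\meas$-a.e.\ with ${\rm lip}(h_tf)$ but \emph{not} with ${\rm Lip}_a(h_tf,\cdot)$: even in an $RCD$ space the asymptotic Lipschitz constant can strictly exceed the slope on a set of positive measure (take $X=[0,1]$ with Lebesgue measure and $u=\dist(\cdot,C)$ with $C$ a fat Cantor set: ${\rm lip}(u)=0$ $\leb^1$-a.e.\ on $C$, while ${\rm Lip}_a(u,\cdot)\equiv1$). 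Second, upper semicontinuity propagates a.e.\ inequalities in the wrong direction: a u.s.c.\ function may jump \emph{up} on a $\meas$-negligible set, so ``u.s.c.\ $\leq$ continuous $\meas$-a.e.'' does not imply the same inequality everywhere on $\supp\meas$. What actually makes the upgrade work is the continuity of the majorant $h_t|\nabla f|$ combined with the length structure of $\supp\meas$: one integrates the a.e.\ gradient bound along suitable curves or test plans, as in \cite{Savare}, to bound $\Lip\bigl(h_tf\restr_{B_r(x)}\bigr)$ by the supremum of the majorant on a slightly larger ball, and then lets $r\downarrow0$.

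For (c), the EVI route you propose cannot reach the stated constant. Multiplying the ${\rm EVI}_K$ inequality by $e^{Ks}$, integrating on $(0,t)$ and using that ${\rm Ent}$ is nonincreasing along its gradient flow gives
\[
{\rm Ent}(\tilde h_t\mu)\;\leq\;{\rm Ent}(\nu_r)+\frac{W_2^2(\mu,\nu_r)}{2{\sf I}_K(t)},
\qquad {\sf I}_K(t):=\int_0^te^{Ks}\di s,
\]
and your coupling bound $W_2^2(\mu,\nu_r)\leq 2\bigl(r^2+\int_X\dist^2(x,\bar x)\di\mu\bigr)$ then yields the prefactor $1/{\sf I}_K(t)$ in front of $r^2+\int_X\dist^2(x,\bar x)\di\mu$, while the statement requires $1/(2{\sf I}_{2K}(t))$. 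Since $2{\sf I}_{2K}(t)-{\sf I}_K(t)=e^{Kt}(e^{Kt}-1)/K>0$ for all $K\neq0$ and $t>0$ (and equals $t$ when $K=0$), your bound is strictly weaker than the one claimed, and no refinement of the coupling removes this: the discrepancy between ${\sf I}_K$ and ${\sf I}_{2K}$ is intrinsic to the EVI route. The sharp constant is exactly what the log-Harnack inequality delivers, which is why the paper invokes \cite[Theorem~4.8]{AmbrosioGigliSavare15}. (For the only application of (c) in this paper, Proposition~\ref{prop:quasiiso}, your weaker bound would in fact suffice, but it does not prove the proposition as stated.)
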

\begin{proof} (a) is proved in \cite{AmbrosioGigliSavare14,AmbrosioGigliSavare15}, (b) in \cite{Savare}. The inequality (c) follows by
Wang's log-Harnack inequality, see \cite[Theorem~4.8]{AmbrosioGigliSavare15} for a proof in the $RCD(K,\infty)$ context.
\end{proof}

In $RCD(K,\infty)$ spaces we have also a useful formula to represent the functional $\int_X|\nabla f|\di\meas$.

\begin{proposition} \label{prop:newrepGamma} For all $f\in H^{1,2}(X,\dist,\meas)$ one has that $|\nabla f|$ is the essential supremum of the family
$\langle\nabla f,\nabla v\rangle$ as $v$ runs in the family of $1$-Lipschitz functions in $H^{1,2}(X,\dist,\meas)$. Moreover, for all
$g:X\to [0,\infty)$ lower semicontinuous, one has
\begin{equation}\label{eq:supformula}
\int_X|\nabla f|g\di\meas=\sup\sum_k\int_X\langle \nabla f,\nabla v_k\rangle w_k\di\meas
\end{equation}
where the supremum runs among all finite collections of $1$-Lipschitz functions $v_k\in H^{1,2}(X,\dist,\meas)$ and 
all $w_k\in\Cbs(X)$ with $\sum_k|w_k|\leq g$.
\end{proposition}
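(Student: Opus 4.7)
The inequality $\langle\nabla f,\nabla v\rangle \le |\nabla f|$ holds $\meas$-a.e.\ for every $1$-Lipschitz $v\in H^{1,2}(X,\dist,\meas)$: since the constant function $1$ is a relaxed slope of any $1$-Lipschitz function, pointwise minimality gives $|\nabla v|\le 1$ $\meas$-a.e., and Cauchy--Schwarz for the bilinear form \eqref{eq:defccarre} then yields $\langle\nabla f,\nabla v\rangle \le |\nabla f|\,|\nabla v|\le|\nabla f|$. Hence the essential supremum is at most $|\nabla f|$.

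The reverse inequality is the technical core. I would combine Theorem~\ref{tapprox} with a level-set localization. Pick $f_n\in\Lipb(X)\cap L^2(X,\meas)$ with $f_n\to f$ in $L^2(X,\meas)$ and ${\rm Lip}_a(f_n)\to|\nabla f|$ in $L^2(X,\meas)$. Fix $\eta>0$ and the geometric grid $c_k=(1+\eta)^k$, and on each level strip $E_k=\{c_k\le|\nabla f|<c_{k+1}\}$ consider the truncated rescaling $v_{n,k}=\bigl((f_n-a_{n,k})/c_{k+1}\bigr)\wedge M\vee(-M)$ for a large constant $M$ and a suitable recentering $a_{n,k}$. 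For $n$ large, the region where ${\rm Lip}_a(f_n)\le c_{k+1}$ exhausts $E_k$ up to small measure, so the chain and Leibniz rules give $\langle\nabla f,\nabla v_{n,k}\rangle = \langle\nabla f,\nabla f_n\rangle/c_{k+1}$ on the truncation-inactive part of $E_k$. Passing to the limit $n\to\infty$ and using the strong $L^2$ convergence of ${\rm Lip}_a(f_n)$ to $|\nabla f|$, one obtains $\langle\nabla f,\nabla v_{n,k}\rangle\ge|\nabla f|^2/c_{k+1}\ge|\nabla f|/(1+\eta)$ on $E_k$ in the essential-supremum sense; letting $\eta\to 0$ and aggregating over $k$ recovers $|\nabla f|$. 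The main obstacle is turning the locally rescaled $v_{n,k}$ into genuinely $1$-Lipschitz $H^{1,2}$ test functions: this is handled via lattice operations (which preserve both $1$-Lipschitz continuity and $H^{1,2}$ membership) and locality of gradients on Borel sets, together with McShane-type extensions so that each piece contributes to the essential supremum on its own strip without inflating the Lipschitz constant globally.

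For the integral identity \eqref{eq:supformula}, the $\le$ direction is immediate from the first claim:
\[
\sum_k\int_X\langle\nabla f,\nabla v_k\rangle w_k\,\di\meas\le\int_X|\nabla f|\sum_k|w_k|\,\di\meas\le\int_X|\nabla f|\,g\,\di\meas.
\]
For the $\ge$ direction, I first reduce to $g\in\Cbs(X)$: a lower semicontinuous $g\ge 0$ is an increasing pointwise limit of bounded, compactly supported continuous functions (e.g.\ by inf-convolution followed by cutoff), and monotone convergence handles both sides. Given $g\in\Cbs(X)$ and $\eps>0$, the first part yields finitely many $1$-Lipschitz $v_1,\dots,v_N\in H^{1,2}(X,\dist,\meas)$ and open sets $U_1,\dots,U_N$ covering $\supp g$ up to a set of small measure, on which $\langle\nabla f,\nabla v_k\rangle\ge|\nabla f|-\eps$ outside a further small-measure set. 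Subordinate to this cover pick a continuous partition of unity $\phi_k\in\Cbs(X)$ with $\phi_k\ge 0$, $\sum_k\phi_k\le 1$ and $\supp\phi_k\subset U_k$; setting $w_k=g\phi_k\in\Cbs(X)$ gives $\sum_k|w_k|\le g$ and
\[
\sum_k\int_X\langle\nabla f,\nabla v_k\rangle w_k\,\di\meas\ge\int_X(|\nabla f|-\eps)\,g\,\di\meas-o(1),
\]
so letting $\eps\to 0$ yields the matching lower bound.
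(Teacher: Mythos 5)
Your first inequality ($\langle\nabla f,\nabla v\rangle\le|\nabla f|$ $\meas$-a.e.\ for $1$-Lipschitz $v$) is correct, and your derivation of \eqref{eq:supformula} \emph{given} the essential-supremum representation is also essentially sound: the reduction to $g\in\Cbs(X)$ by monotone approximation, and the partition of unity subordinate to open neighbourhoods (outer regularity) of the Borel sets where $\langle\nabla f,\nabla v_k\rangle\ge|\nabla f|-\eps$, is a workable variant of what the paper does with Borel partitions, inner regularity by compact sets, and approximation by $\Cbs(X)$ functions. The problem is your proof of the reverse inequality in the essential-supremum claim, which is the real content of the proposition.

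The functions $v_{n,k}=\bigl((f_n-a_{n,k})/c_{k+1}\bigr)\wedge M\vee(-M)$ are not $1$-Lipschitz: their Lipschitz constant is $\Lip(f_n)/c_{k+1}$, over which you have no control, so they are not admissible competitors, and the patch you sketch cannot close this hole. A McShane extension with constant $c_{k+1}$ of $f_n$ restricted to $A_{n,k}:=\{{\rm Lip}_a(f_n)\le c_{k+1}\}$ (or to $A_{n,k}\cap E_k$) exists only if the restriction $f_n\restr_{A_{n,k}}$ is itself $c_{k+1}$-Lipschitz for the \emph{ambient} distance, and a pointwise bound on ${\rm Lip}_a(f_n)$ on a set does not imply this: already on $X=\setR$, a function that is constant on $[0,1]\cup[2,3]$ and climbs by $100$ on $(1,2)$ has ${\rm lip}(f)=0$ on that union, yet its restriction to the union is not Lipschitz with small constant, so no $1$-Lipschitz extension of the rescaled restriction exists. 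The obstruction is global (across the gaps of the set), so neither locality on Borel sets nor lattice operations can remove it. This is precisely why the paper (quoting \cite[Lemma~9.2]{AmbrosioTrevisan14}) never constructs near-optimal $1$-Lipschitz competitors: writing $M$ for the essential supremum, one first notes $|\langle\nabla f,\nabla v\rangle|\le M\Lip(v)$ for all bounded Lipschitz $v\in H^{1,2}(X,\dist,\meas)$ (by scaling), then upgrades this to $|\langle\nabla f,\nabla v\rangle|\le M\,{\rm Lip}_a(v)$ $\meas$-a.e.\ by localization --- there the McShane extension is applied to $v\restr_{B_r(x)}$, whose Lipschitz constant is exactly the quantity converging to ${\rm Lip}_a(v)(x)$ as $r\downarrow 0$, so the constant \emph{is} controlled --- and finally a density argument (Theorem~\ref{tapprox}) yields $|\langle\nabla f,\nabla v\rangle|\le M|\nabla v|$, whence $|\nabla f|\le M$ by taking $v=f$. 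Your plan hinges on manufacturing near-optimal $1$-Lipschitz test functions by rescaling $f_n$ on level strips, and that is exactly the step that fails; replacing it by the indirect duality argument above is not a cosmetic change but a different proof.
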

\begin{proof} The proof of the representation of $|\nabla f|$ as essential supremum has been achieved
in \cite[Lemma~9.2]{AmbrosioTrevisan14}. We sketch the argument: denoting
by $M$ the essential supremum in the statement, one has obviously the inequalities $M\leq |\nabla f|$ $\meas$-a.e. and
$|\langle\nabla f,\nabla v\rangle|\leq M\Lip(v)$ $\meas$-a.e. for all $v\in H^{1,2}(X,\dist,\meas)$ Lipschitz and bounded. 
By localization, this last inequality is improved to $|\langle \nabla f,\nabla v\rangle|\leq M{\rm Lip}_a(v)$ $\meas$-a.e. for all
$v\in H^{1,2}(X,\dist,\meas)$ Lipschitz and bounded and then a density argument provides the inequality $|\langle\nabla f,\nabla v\rangle|\leq M|\nabla v|$
for all $v\in H^{1,2}(X,\dist,\meas)$ Lipschitz and bounded, which leads to $|\nabla f|\leq M$ choosing $v=f$.

In order to prove \eqref{eq:supformula} we remark that the representation of $|\nabla f|$ as essential supremum yields
$$
\int_Xg|\nabla f|\di\meas=\sup\sum_kc_k\int_{B_k}\langle \nabla f,\nabla v_k\rangle \di\meas
$$
where the supremum runs among all finite Borel partitions $B_k$ of $X$, constants $c_k\leq\inf_{B_k} g$ and all choices of bounded $1$-Lipschitz functions 
$v_k\in H^{1,2}(X,\dist,\meas)$. By inner regularity, the supremum is unchanged if we replace the Borel partitions by finite families of pairwise
disjoint compact sets $K_k$. In turn, these families can be approximated by functions $w_k\in\Cbs(X)$ with $\sum_k|w_k|\leq g$.
\end{proof}

Now we recall three useful functional inequalities available in $RCD(K,\infty)$ spaces.

\begin{proposition} \label{prop:Ledoux} If $(X,\dist,\meas)$ is a $RCD(K,\infty)$ metric measure space, for all $f\in\Lip_\bs(X)$ one has
\begin{equation}\label{eq:Ledoux}
\int_X |h_t f-f|\di\meas\leq c(t,K)\int_X|\nabla f|\di\meas
\end{equation}
with $c(t,K)\sim \sqrt{t}$ as $t\downarrow 0$.
\end{proposition}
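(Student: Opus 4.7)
The plan is to use the standard Ledoux-type argument: duality against bounded test functions, self-adjointness of $h_t$, differentiation in time, and the Lipschitz-regularizing estimate for $h_s$ from Proposition~\ref{prop:reguht}(a). Since $f\in\Lip_\bs(X)$ is bounded with bounded support we have $f\in L^1\cap L^2\cap L^\infty(X,\meas)$, and under the growth condition \eqref{eq:Grygorian} the semigroup $h_t$ is a contraction on $L^1(X,\meas)$, so $h_tf-f\in L^1(X,\meas)$ and the left hand side of \eqref{eq:Ledoux} is finite. By duality it therefore suffices to prove a uniform bound
$$
\Bigl|\int_X (h_tf-f)\,g\di\meas\Bigr|\leq c(t,K)\,\|g\|_{L^\infty(X,\meas)}\int_X|\nabla f|\di\meas
$$
for $g$ in a dense subclass of the unit ball of $L^\infty$, say $g\in L^2\cap L^\infty(X,\meas)$.

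For such $g$, I would invoke the self-adjointness of $h_t$ to rewrite the left hand side as $\int_X f(h_tg-g)\di\meas$. Writing $h_tg-g=\int_0^t\Delta h_sg\,ds$ as a Bochner integral in $L^2(X,\meas)$ and integrating by parts (legitimate because $h_sg\in H^{1,2}\cap\Lipb(X)$ by Proposition~\ref{prop:reguht}(a), and $f\in H^{1,2}$), I get
$$
\int_X f(h_tg-g)\di\meas=-\int_0^t\int_X\langle\nabla f,\nabla h_sg\rangle\di\meas\,ds.
$$
Estimating pointwise $|\langle\nabla f,\nabla h_sg\rangle|\leq|\nabla f|\cdot\||\nabla h_sg|\|_{L^\infty(X,\meas)}$ and using \eqref{eq:regu1}, which gives $\||\nabla h_sg|\|_{L^\infty(X,\meas)}\leq\Lip(h_sg)\leq\|g\|_{L^\infty}/\sqrt{2{\sf I}_{2K}(s)}$, yields
$$
\Bigl|\int_X (h_tf-f)\,g\di\meas\Bigr|\leq\|g\|_{L^\infty(X,\meas)}\Bigl(\int_0^t\frac{ds}{\sqrt{2{\sf I}_{2K}(s)}}\Bigr)\int_X|\nabla f|\di\meas.
$$
Taking the supremum over $g$ gives \eqref{eq:Ledoux} with $c(t,K):=\int_0^t(2{\sf I}_{2K}(s))^{-1/2}\,ds$. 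Since ${\sf I}_{2K}(s)\sim s$ as $s\downarrow 0$, one has $c(t,K)\sim\sqrt{2t}$ as $t\downarrow 0$, which is the desired asymptotics.

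There is no real obstacle: the only point requiring care is that the integrand $1/\sqrt{{\sf I}_{2K}(s)}$ is singular at $s=0$, but precisely of order $s^{-1/2}$, which is integrable and produces the announced $\sqrt{t}$-behaviour. One also has to justify the integration by parts and the Bochner representation of $h_tg-g$; both follow from the standard gradient-flow calculus for $\Ch$ in Hilbert spaces together with the commutation $h_s\circ\Delta=\Delta\circ h_s$ and the regularizing estimates \eqref{eq:Brezis1}--\eqref{eq:Brezis2}. If desired, one may first prove the estimate for $g\in\Lipbs(X)$, where all integrations by parts are unambiguous, and then extend by density to $g\in L^2\cap L^\infty$.
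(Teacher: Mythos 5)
Your proof is correct, and it shares the paper's Ledoux-type skeleton: duality with $g\in L^\infty$, time differentiation of the semigroup pairing, integration by parts, and the regularization estimate \eqref{eq:regu1}. The implementation, however, differs in one genuine respect. The paper differentiates $t\mapsto\int_X g\,h_t f\di\meas$ and splits the semigroup as $\Delta h_t f=h_{t/2}\Delta h_{t/2}f$, so that self-adjointness moves only half of the smoothing onto $g$; the resulting term $\int_X\langle\nabla h_{t/2}g,\nabla h_{t/2}f\rangle\di\meas$ is then estimated using \emph{both} parts of Proposition~\ref{prop:reguht}: part (a) for $\Lip(h_{t/2}g)$, and the Bakry--\'Emery bound \eqref{eq:BE} of part (b) combined with mass preservation of $h_{t/2}$, to get $\int_X|\nabla h_{t/2}f|\di\meas\le e^{-Kt/2}\int_X|\nabla f|\di\meas$. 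You instead transfer \emph{all} the smoothing onto the test function via $\int_X(h_tf-f)g\di\meas=\int_X f(h_tg-g)\di\meas$ and bound $|\langle\nabla f,\nabla h_s g\rangle|$ directly; this needs only Proposition~\ref{prop:reguht}(a), no Bakry--\'Emery estimate and no mass-preservation step, so it is slightly more economical, and it yields a constant $c(t,K)=\int_0^t(2{\sf I}_{2K}(s))^{-1/2}\di s$ of the same order $\sqrt t$. The one point to phrase carefully is the claim that $h_tg-g=\int_0^t\Delta h_s g\di s$ holds as a Bochner integral in $L^2$: for general $g\in L^2\cap L^\infty$ one only has $\|\Delta h_sg\|_{L^2}\le\|g\|_{L^2}/s$, so the integral need not converge absolutely. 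This is harmless, since all you actually use is the scalar identity $\int_X f(h_tg-g)\di\meas=-\int_0^t\int_X\langle\nabla f,\nabla h_sg\rangle\di\meas\di s$, which follows from the fundamental theorem of calculus applied to $s\mapsto\int_X f\,h_sg\di\meas$ (whose derivative is integrable precisely because of your $O(s^{-1/2})$ bound), or from your fallback of first taking $g\in\Lip_\bs(X)$, for which the Bochner integral does converge absolutely, and then extending by density.
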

\begin{proof} Fix $g\in L^\infty(X,\meas)$ with $\|g\|_{L^\infty(X,\meas)}\leq 1$ and let us estimate the derivative
of $t\mapsto\int_X gh_t f\di\meas$:
\begin{eqnarray*}
\biggl|\int_X g\Delta h_t f\di\meas\biggr|&=&
\biggl|\int_X gh_{t/2}\Delta h_{t/2} f\di\meas\biggr|=
\biggl|\int_X h_{t/2} g\Delta h_{t/2} f\di\meas\biggr|\\
&=&\biggl|\int_X\langle\nabla h_{t/2} g,\nabla h_{t/2} f\rangle\di\meas\biggr|\leq
  \frac{1}{\sqrt{2{\sf I}_{2K}(t/2)}}\int_X|\nabla h_{t/2}f|\di\meas\\&\leq&
  \frac{e^{-Kt/2}}{\sqrt{2{\sf I}_{2K}(t/2)}}\int_X|\nabla f|\di\meas.
\end{eqnarray*}
By integration, and then taking the supremum w.r.t. $g$, we get \eqref{eq:Ledoux}.
\end{proof}

%

When the space has finite diameter and $K\leq 0$ 
we will also use, as a replacement of the isoperimetric inequality (presently known in the $RCD(K,\infty)$ setting only when $K>0$), 
the following inequality, which is an easy consequence of Proposition~\ref{prop:reguht}(c).

\begin{proposition} \label{prop:quasiiso} If $(X,\dist,\meas)$ is a $RCD(K,\infty)$ metric measure space with $\meas(X)=1$, and 
if $D=\supp\meas$ is finite, for all $\epsilon>0$ we can find $M=M(\epsilon,D,K)\geq 1$ such that
$$
\int_{\{f\geq M\int_X f\di\meas\}} f\di\meas\leq \epsilon\biggl(\int_X f\di\meas+\int_X|\nabla f|\di\meas\biggr).
$$
for all $f\in\Lipb(X)$ nonnegative.
\end{proposition}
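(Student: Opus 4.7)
The plan is to normalize $\int_X f\di\meas = 1$ (the case $\int_X f\di\meas = 0$ forces $f = 0$ $\meas$-a.e.\ on $\supp\meas$ and is trivial), apply Proposition~\ref{prop:reguht}(c) to the probability measure $\mu := f\meas$, and relate the resulting bound on $h_t f$ back to $f$ via Proposition~\ref{prop:Ledoux}. Since $\Ch$ is quadratic the semigroup $h_t$ is self-adjoint on $L^2(X,\meas)$, so the dual semigroup acts by $\tilde h_t(f\meas) = (h_tf)\meas$. Taking any $\bar x \in \supp\meas$ and $r = D$ (so that $\supp\meas \subset B_r(\bar x)$ and $\meas(B_r(\bar x)) = 1$), and using $\int_X\dist^2(\bar x,x)\di\mu(x) \leq D^2$, Proposition~\ref{prop:reguht}(c) gives the entropy estimate
$$
\int_X h_tf \,\log h_tf \di\meas \leq \frac{D^2}{{\sf I}_{2K}(t)} =: C(t,D,K).
$$

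Next, I would convert this entropy bound into a tail bound on $h_t f$: since $u\log u \geq -e^{-1}$ for $u \geq 0$ and $\meas(X) = 1$, for $M \geq 2e$ the restriction $\log h_tf \geq \log(M/2)$ on $\{h_tf \geq M/2\}$ yields
$$
\int_{\{h_tf \geq M/2\}} h_tf\di\meas \leq \frac{C(t,D,K) + e^{-1}}{\log(M/2)}.
$$
To return to $f$ itself, I would apply Proposition~\ref{prop:Ledoux} to obtain $\int_X |h_tf - f| \di\meas \leq c(t,K)\int_X|\nabla f|\di\meas$, and decompose $\{f \geq M\}$ according to whether $h_tf \geq M/2$ or $h_tf < M/2$. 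On the first part $f \leq h_tf + |f - h_tf|$; on the second $|f - h_tf| > M/2 \geq f/2$, hence $f \leq 2|f - h_tf|$. Summing,
$$
\int_{\{f \geq M\}} f\di\meas \leq \frac{C(t,D,K) + e^{-1}}{\log(M/2)} + 3\,c(t,K)\int_X|\nabla f|\di\meas.
$$

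The final step is a two-parameter balancing: using $c(t,K) \sim \sqrt{t}$ as $t\downarrow 0$, first choose $t = t(\epsilon, K) > 0$ so small that $3c(t,K) \leq \epsilon$, and then choose $M = M(\epsilon, D, K) \geq 2e$ so large that $(C(t,D,K) + e^{-1})/\log(M/2) \leq \epsilon$; undoing the normalization gives the claim. I do not foresee a serious obstacle; the only delicate point is that $M$ must depend only on $\epsilon$, $D$, $K$ and not on the particular $f$, which is guaranteed precisely because the entropy bound from Proposition~\ref{prop:reguht}(c) is uniform over all probability densities $f$ with $\supp(f\meas) \subset \supp\meas$ of diameter $D$.
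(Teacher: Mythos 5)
Your proof is correct and follows essentially the same route as the paper: normalize $\int_X f\di\meas=1$, apply Proposition~\ref{prop:reguht}(c) to $\mu=f\meas$ with $r=D$ to get the entropy bound $\int_X h_tf\log h_tf\di\meas\leq D^2/{\sf I}_{2K}(t)$, compare $f$ with $h_tf$ in $L^1$ via Proposition~\ref{prop:Ledoux}, and then choose $t$ small first and $M$ large afterwards. The only (immaterial) difference is how the entropy bound is converted into a tail estimate: the paper bounds $\int_{\{f\geq M\}}h_tf\di\meas$ by a uniform-integrability modulus $\omega_{E_t}$ evaluated at $\meas(\{f\geq M\})\leq 1/M$, whereas you split according to whether $h_tf\geq M/2$ and use the explicit $\log(M/2)$ decay together with the bound $f\leq 2|f-h_tf|$ on the complementary piece; both devices are standard and yield the same conclusion.
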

\begin{proof} The standard entropy inequality 
$$
\int_A g\di\meas\log\biggl(\frac{1}{\meas (A)}\int_Ag\di\meas\biggr)\leq
\int_A g\log g\di\meas\leq \int_Xg\log g\di\meas+\frac 1e \meas(X\setminus A)
$$
provides a modulus of continuity $\omega_E$, depending only on $E\geq 0$, such that $g$ nonnegative and 
$\int_X g\log g\di\meas\leq E$ imply $\int_A g\di\meas\leq\omega_E(\meas(A))$.

Assume first $\int_Xf\di \meas=1$ and let $M> 0$. For all $t>0$
we apply Proposition~\ref{prop:Ledoux} and Proposition~\ref{prop:reguht}(c) with $r=D$ to get
\begin{eqnarray}\label{eq:sufficient}
\int_{\{f\geq M\}} f\di\meas&\leq&\int_{\{f\geq M\}}h_tf\di\meas+\int_X|h_tf-f|\di\meas\\&\leq&
\omega_{E_t}(\frac{1}{M})+c(K,t)\int_X|\nabla f|\di\meas\nonumber
\end{eqnarray}
with 
$$
E_t=\frac{D^2}{{\sf I}_{2K}(t)}\geq\int_X h_tf\log h_t f\di\meas.
$$
By a scaling argument, the inequality \eqref{eq:sufficient} implies
$$
\int_{\{f\geq M\int_X f\di\meas\}} f\di\meas\leq\omega_{E_t}(\frac 1M)\int_X f\di\meas
+c(K,t)\int_X|\nabla f|\di\meas\qquad\forall t,\,M>0.
$$
Then, given $\epsilon>0$ we choose first $t>0$ sufficiently small such that $c(t,K)<\epsilon$ and then
$M$ sufficiently large to conclude.
\end{proof}

Finally, we close this section by reminding higher order properties, strongly inspired by Bakry's calculus, which played a fundamental
role in the recent developments of the theory. 

\begin{proposition} \label{prop:Bakry} Let $(X,\dist,\meas)$ be a $RCD(K,\infty)$ space. Then
\begin{equation}\label{eq:bound_on_Gamma1}
\||\nabla f|\|_{L^4(X,\meas)}\leq c\|f\|_\infty\|(\Delta-K^-I)f\|_{L^2(X,\meas)}
\end{equation}
for all $f\in L^\infty(X,\meas)\cap D(\Delta)$, and
\begin{equation}\label{eq:bound_on_Gamma2}
\|\nabla |\nabla g|^2\|^2_{L^2(X,\meas)}\leq -\int_X\bigl(2K|\nabla g|^4+2|\nabla g|^2\langle \nabla g,\nabla \Delta g\rangle)\di\meas
\end{equation}
for all $g\in H^{1,2}(X,\dist,\meas)\cap\Lipb(X)\cap D(\Delta)$ with $\Delta g\in H^{1,2}(X,\dist,\meas)$.
\end{proposition}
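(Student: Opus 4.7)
The plan is to exploit the Bakry-\'Emery condition $BE(K,\infty)$, available in $RCD(K,\infty)$ spaces through \eqref{eq:BE}, in its integrated Bochner form
\[
\tfrac12\int\phi\, d\boldsymbol{\Delta}|\nabla g|^2 - \int \phi \langle\nabla g,\nabla \Delta g\rangle\, d\meas \;\geq\; K\int \phi|\nabla g|^2\, d\meas,
\]
valid for nonnegative test functions $\phi$ in a suitable class whenever $g$ is smooth enough that $|\nabla g|^2\in D(\boldsymbol{\Delta})$; this is the self-improvement of the pointwise \eqref{eq:BE} known in the $RCD$ framework. Both \eqref{eq:bound_on_Gamma1} and \eqref{eq:bound_on_Gamma2} then follow by testing this Bochner inequality against a judicious $\phi$ and performing integration by parts, in the spirit of $\Gamma$-calculus.

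For \eqref{eq:bound_on_Gamma2}, under the hypotheses on $g$ the function $|\nabla g|^2$ lies in $L^\infty(X,\meas)\cap H^{1,2}(X,\dist,\meas)$ by the chain and Leibniz rules (using $|\nabla g|\in L^\infty$, which follows from $g\in\Lipb(X)$, and the Sobolev regularity propagated by $\Delta g\in H^{1,2}$). I would take $\phi=|\nabla g|^2$ as test function: the integration by parts on the left-hand side turns $\tfrac12\int\phi\, d\boldsymbol{\Delta}|\nabla g|^2$ into $-\tfrac12\int\langle\nabla|\nabla g|^2,\nabla|\nabla g|^2\rangle\, d\meas = -\tfrac12\|\nabla|\nabla g|^2\|_{L^2(X,\meas)}^2$, and rearrangement yields \eqref{eq:bound_on_Gamma2} directly.

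For \eqref{eq:bound_on_Gamma1}, I would perform two integrations by parts on the left-hand side, writing
\[
\int|\nabla f|^4\, d\meas = -\int f|\nabla f|^2 \Delta f\, d\meas - \int f\langle\nabla|\nabla f|^2,\nabla f\rangle\, d\meas
\]
(from the derivation $V=|\nabla f|^2\nabla f$), and then bound each term by Cauchy--Schwarz: the first by $\|f\|_\infty\||\nabla f|\|_{L^4}^2\|\Delta f\|_{L^2}$, the second by $\|f\|_\infty\|\nabla|\nabla f|^2\|_{L^2}\||\nabla f|\|_{L^2}$. The factor $\|\nabla|\nabla f|^2\|_{L^2}$ is controlled using \eqref{eq:bound_on_Gamma2}, which introduces the shift by $K^-$ needed to absorb the negative contribution $-2K\int|\nabla f|^4$ when $K<0$. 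A Young absorption on the $\||\nabla f|\|_{L^4}$ terms appearing on both sides of the resulting inequality, combined with the elementary bound $\|\Delta f\|_{L^2}\leq\|(\Delta-K^-I)f\|_{L^2}+K^-\|f\|_{L^2}$, then closes the estimate.

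The main obstacle, particularly for \eqref{eq:bound_on_Gamma1}, is the limited regularity of the hypotheses: $f\in L^\infty\cap D(\Delta)$ alone does not grant $|\nabla f|^2\in H^{1,2}$ or $\Delta f\in H^{1,2}$, both used implicitly in the identities above. I would circumvent this by proving the inequalities first for the mollifications $h_\tau f$, which by \eqref{eq:Brezis1} and \eqref{eq:Brezis2} satisfy $\Delta h_\tau f = h_\tau \Delta f\in H^{1,2}$ and belong to $\Lipb(X)$ by Proposition~\ref{prop:reguht}(a), so that \eqref{eq:bound_on_Gamma2} is applicable to $g=h_\tau f$. One then passes to the limit $\tau\downarrow 0$ using the strong $H^{1,2}$ convergence $h_\tau f\to f$, the $L^2$ continuity $h_\tau\Delta f\to\Delta f$, the $L^\infty$ contraction $\|h_\tau f\|_\infty\leq\|f\|_\infty$, and lower semicontinuity via Fatou's lemma for $\||\nabla h_\tau f|\|_{L^4}$ on the left-hand side.
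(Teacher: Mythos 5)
Your proposal attempts a self-contained derivation of a result that the paper itself does not prove: its ``proof'' consists of two citations, \cite[Theorem~3.1]{AmbrosioMondinoSavare16} for \eqref{eq:bound_on_Gamma1} and \cite[Section~3]{Savare} for \eqref{eq:bound_on_Gamma2}. So your argument has to stand on its own, and it has two problems, one presentational and one fatal. For \eqref{eq:bound_on_Gamma2}, the mechanism (test the measure-valued Bochner inequality with $\phi=|\nabla g|^2$ and integrate by parts) is indeed how the inequality is extracted from Savar\'e's work, but your justification that $|\nabla g|^2\in H^{1,2}(X,\dist,\meas)$ ``by the chain and Leibniz rules'' is circular: calculus rules give Sobolev regularity of compositions $\varphi\circ g$ and of products of Sobolev functions, never of the gradient modulus $|\nabla g|$ itself. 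The membership $|\nabla g|^2\in H^{1,2}$ (equivalently, the existence of $\boldsymbol{\Delta}|\nabla g|^2$ as a measure) for test functions $g$ is precisely the content of Savar\'e's self-improvement theorem --- the deep input the paper is citing --- so as written you assume the key point. Granting that theorem, your computation for \eqref{eq:bound_on_Gamma2} is correct.

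For \eqref{eq:bound_on_Gamma1} the gap is substantive: the estimate cannot be closed using the \emph{unweighted} inequality \eqref{eq:bound_on_Gamma2}. Your cross term is bounded by $\|f\|_\infty\,\|\nabla|\nabla f|^2\|_{L^2}\,\||\nabla f|\|_{L^2}$, and you then invoke \eqref{eq:bound_on_Gamma2} to control $\|\nabla|\nabla f|^2\|_{L^2}$; but the right-hand side of \eqref{eq:bound_on_Gamma2} contains $-2\int_X|\nabla f|^2\langle\nabla f,\nabla\Delta f\rangle\di\meas$, which is not expressible in the allowed quantities. Estimating it requires either $\nabla\Delta f$ in a weighted $L^2$ space, or a further integration by parts, which produces $2\int_X|\nabla f|^2(\Delta f)^2\di\meas$ (plus a term absorbable into $\|\nabla|\nabla f|^2\|_{L^2}$). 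That quantity is controlled by none of $\|f\|_\infty$, $\|\Delta f\|_{L^2}$, $\||\nabla f|\|_{L^4}$; bounding it needs $\Delta f\in L^4$ or $\Lip(f)<\infty$, and under your regularization $f\mapsto h_\tau f$ the Lipschitz constant degenerates like $\tau^{-1/2}\|f\|_\infty$, so the bound does not survive $\tau\downarrow 0$. The actual proof in \cite{AmbrosioMondinoSavare16} uses the \emph{pointwise} self-improved Bochner inequality, whose integrated form is the weighted estimate
\begin{equation*}
\int_X\big|\nabla|\nabla f|\big|^2\di\meas\;\le\;\int_X(\Delta f)^2\di\meas+K^-\int_X|\nabla f|^2\di\meas ,
\end{equation*}
and then bounds the cross term by $2\|f\|_\infty\||\nabla f|\|_{L^4}^2\|\nabla|\nabla f|\|_{L^2}$, keeping the factor $\||\nabla f|\|_{L^4}^2$ so that one can divide it out. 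This weighted inequality is strictly stronger than \eqref{eq:bound_on_Gamma2} and cannot be recovered from it; that is the missing idea. Two further slips: your bound $\|\Delta f\|_{L^2}\le\|(\Delta-K^-I)f\|_{L^2}+K^-\|f\|_{L^2}$ introduces $\|f\|_{L^2}$, which is uncontrolled when $\meas(X)=\infty$ and absent from the target inequality --- the correct observation is $\|\Delta f\|_{L^2}\le\|(\Delta-K^-I)f\|_{L^2}$, immediate from expanding the square and $-\int_X f\Delta f\di\meas=\int_X|\nabla f|^2\di\meas\ge 0$; and \eqref{eq:bound_on_Gamma1} as printed is not homogeneous under $f\mapsto\lambda f$, the inequality actually proved in \cite{AmbrosioMondinoSavare16} (and the one your scheme would yield if completed) having $\||\nabla f|\|_{L^4}^2$ on the left.
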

\begin{proof} See \cite[Theorem~3.1]{AmbrosioMondinoSavare16} for \eqref{eq:bound_on_Gamma1}, \cite[Section~3]{Savare} for \eqref{eq:bound_on_Gamma2}.
\end{proof}

\section{Local convergence of gradients under Mosco convergence}\label{sec:5}

The main goal of this section is to localize the Mosco convergence result of \cite{GigliMondinoSavare13}, proving convergence
results for $\langle\nabla u_i,\nabla v_i\rangle_i$ to $\langle\nabla u,\nabla v\rangle$ when $u_i$ are strongly convergent in $H^{1,2}$
to $u$ and $v_i$ are weakly convergent in $H^{1,2}$ to $v$. When both sequences are strongly convergent,
we obtain at least the weak convergence as measures. Besides Theorem~\ref{thm:gms13_flow} borrowed from
\cite{GigliMondinoSavare13}, the main tool is the convergence results (in the more general context of derivations)
of \cite{AmbrosioStraTrevisan}, see Theorem~\ref{thm:strong-convergence-gradients}.

\begin{definition}[Mosco convergence]\label{def:Mosco}
We say that the Cheeger energies $\Ch^i:=\Ch_{\meas_i}$ Mosco converge to $\Ch$ if both the 
following conditions hold:
\begin{itemize}
\item[(a)] (\emph{Weak-$\liminf$}). For every $f_i\in L^2(X,\meas_i)$ $L^2$-weakly converging to $f\in L^2(X,\meas)$, one has
\[ \Ch(f)\le \liminf_{i\to\infty} \Ch^i(f_i).\]
\item[(b)] (\emph{Strong-$\limsup$}). For every $f \in L^2(X,\meas)$ there exist $f_i\in L^2(X,\meas_i)$, $L^2$-strongly converging to $f$ with
\begin{equation}\label{eq:optimal_Chee} \Ch(f)=\lim_{i\to \infty} \Ch^i(f_i).\end{equation}
\end{itemize}
\end{definition}

One of the main results of \cite{GigliMondinoSavare13} is that Mosco convergence holds if $(X,\dist,\meas_i)$ are $RCD(K,\infty)$ spaces with 
\begin{equation}\label{eq:uniform_growth}
\meas_i(B_r(\bar x))\leq c_1e^{c_2r^2}\qquad\forall r>0,\,\,\forall i
\end{equation} 
for some $\bar x\in X$ and $c_1,\,c_2>0$. Notice that this result holds even in the larger class of $CD(K,\infty)$ spaces and that
the uniform growth condition \eqref{eq:uniform_growth}, that we prefer to emphasize, is actually a consequence of the local weak convergence
of $\meas_i$ to $\meas$ and of the uniform lower bound on Ricci curvature (see Remark~\ref{rem:Grygorian}).

Next, we define in a natural way, following \cite{GigliMondinoSavare13}, weak and strong convergence in the Sobolev space $H^{1,2}$,
with a variable reference measure.

\begin{definition}[Convergence in the Sobolev spaces]
We say that $f_i\in H^{1,2}(X,\dist,\meas_i)$ are weakly convergent in $H^{1,2}$ to 
$f\in H^{1,2}(X,\dist,\meas)$ if $f_i$ are $L^2$-weakly convergent to $f$ 
and $\sup_i\Ch^i(f_i)$ is finite. Strong convergence
in $H^{1,2}$ is defined by requiring $L^2$-strong convergence of the functions, and that $\Ch(f)=\lim_i\Ch^i(f_i)$. 
\end{definition}

Notice that the sequence $f_i=h$, with $h\in\Lip_\bs(X)$ fixed, need not be strongly convergent in $H^{1,2}$, as the
following simple example taken from \cite{AmbrosioStraTrevisan} shows. The reason is that this sequence should
not be considered as a constant one, since the supports of $\meas_i$ can well be pairwise disjoint.

\begin{example}
Take $X=\setR^2$ endowed with the Euclidean distance, $f(x_1,x_2)=x_2$ and let 
$$
\meas_i=i\leb^2\res \bigl([0,1]\times [0,\tfrac 1i]\bigr),\qquad
\meas=\haus^1\res [0,1]\times\{0\}.
$$
Then, it is easily seen that $|\nabla f|_i=1$, while $|\nabla f|=0$.
\end{example}

It is immediate to check that weak convergence in $H^{1,2}$ is stable under finite sums; it follows from \eqref{eq:continuitymean} below that the
same holds for strong convergence in $H^{1,2}$. Also, Theorem~\ref{thm:gms13} below (borrowed from 
\cite{GigliMondinoSavare13}) yields that weakly convergent sequences are also
$L^2_\loc$-strongly convergent, and provides conditions under which this can be improved to $L^2$-strong convergence.

\begin{theorem}[Mosco convergence under uniform Ricci bounds]\label{thm:gms13_flow}
If $(X,\dist,\meas_i)$ are $RCD(K,\infty)$ spaces satisfying \eqref{eq:uniform_growth}, then $\Ch^i$ Mosco converge to $\Ch$. In addition
\begin{equation}\label{eq:continuitymean}
\lim_{i\to\infty}\int_X\langle\nabla v_i,\nabla w_i\rangle_i\di\meas_i=\int_X\langle\nabla v,\nabla w\rangle\di\meas,
\end{equation}
whenever $(v_i)$ strongly converge in $H^{1,2}$ to $v$ and $(u_i)$ weakly converge in $H^{1,2}$ to $u$ and
the heat flows $h^i$ relative to $(X,\dist,\meas_i)$ converge to the heat flow $h$ relative to $(X,\dist,\meas)$ in the following sense: 
\begin{equation}\label{thm:gms13_flow2}
\text{$\forall t\geq 0$, $h^i_tf_i$ $L^2$-strongly converge to $h_t f$ whenever $f_i$ $L^2$-strongly converge to $f$.}
\end{equation}
\end{theorem}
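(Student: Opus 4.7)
The Mosco convergence of $\Ch^i$ to $\Ch$, under the uniform growth bound \eqref{eq:uniform_growth} and the common $RCD(K,\infty)$ hypothesis, is the main convergence theorem of \cite{GigliMondinoSavare13}; the argument identifies $\sqrt{2\Ch^i}$ with the descending slope of the relative entropy ${\rm Ent}_{\meas_i}$ along the Wasserstein distance on $\Prob_2(X)$ and exploits the uniform $K$-convexity coming from the common $CD(K,\infty)$ condition together with the $\Cbs$-duality convergence $\meas_i\rightharpoonup\meas$. The heat flow convergence \eqref{thm:gms13_flow2} is then the standard consequence of Mosco convergence of convex lower semicontinuous functionals on Hilbert spaces, adapted to the variable-measure $L^2$-setting exactly as done in \cite{GigliMondinoSavare13}. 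So the only substantive new assertion to verify is the bilinear continuity \eqref{eq:continuitymean}.

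The plan for \eqref{eq:continuitymean} is a polarization argument exploiting the quadratic character of $\Ch^i$. Writing $Q_i(f,g):=\int_X\langle\nabla f,\nabla g\rangle_i\di\meas_i$ and $Q(f,g):=\int_X\langle\nabla f,\nabla g\rangle\di\meas$, one has, for every $t\in\setR$,
\[
2\Ch^i(v_i+tw_i)=2\Ch^i(v_i)+2tQ_i(v_i,w_i)+2t^2\Ch^i(w_i),
\]
and analogously for the limit. Since $v_i$ is $L^2$-strongly and $w_i$ is $L^2$-weakly convergent, $v_i+tw_i$ is $L^2$-weakly convergent to $v+tw$ for every $t$; the weak-$\liminf$ half of Mosco convergence therefore gives
\[
\liminf_{i\to\infty}2\Ch^i(v_i+tw_i)\ge 2\Ch(v+tw)=2\Ch(v)+2tQ(v,w)+2t^2\Ch(w).
\]
Subtracting the strong $H^{1,2}$-hypothesis $\Ch^i(v_i)\to\Ch(v)$ leaves, for every $t\in\setR$,
\[
\liminf_{i\to\infty}\bigl[tQ_i(v_i,w_i)+t^2\Ch^i(w_i)\bigr]\ge tQ(v,w)+t^2\Ch(w).
\]

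The main obstacle is that $\Ch^i(w_i)$ is only assumed bounded, not convergent, so one cannot simply divide by $t$ and let $t\to 0$. This is handled by a subsequence-and-squeeze argument: Cauchy--Schwarz gives $|Q_i(v_i,w_i)|\le\sqrt{2\Ch^i(v_i)}\sqrt{2\Ch^i(w_i)}$, which is bounded, so from an arbitrary subsequence one extracts a further subsequence along which $Q_i(v_i,w_i)\to a$ and $\Ch^i(w_i)\to b$; the previous inequality then reduces to $ta+t^2b\ge tQ(v,w)+t^2\Ch(w)$ for all $t\in\setR$, and letting $t\to 0^+$ and $t\to 0^-$ forces $a=Q(v,w)$ irrespective of the value of $b$. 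Since the subsequential limit $a$ is independent of the extraction, the full sequence $Q_i(v_i,w_i)$ converges to $Q(v,w)$, which is \eqref{eq:continuitymean}. Conceptually, the two-sided evaluation in $t$ is precisely the device that removes the need for any strong $H^{1,2}$-control on the weakly convergent factor $w_i$.
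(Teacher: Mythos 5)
Your proposal is correct and follows essentially the same route as the paper: both delegate the Mosco convergence and the heat-flow statement \eqref{thm:gms13_flow2} to \cite{GigliMondinoSavare13}, and both prove \eqref{eq:continuitymean} by expanding $\Ch^i(v_i+tw_i)$, applying the weak-$\liminf$ half of Mosco convergence, using $\Ch^i(v_i)\to\Ch(v)$ and the boundedness of $\Ch^i(w_i)$. Your subsequence-and-squeeze step with two-sided $t\to 0^{\pm}$ is only a cosmetic variant of the paper's device of dividing by $t>0$, letting $t\downarrow 0$, and then replacing $w$ by $-w$.
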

\begin{proof} See \cite[Theorem~6.8]{GigliMondinoSavare13} for the Mosco convergence and \cite[Theorem~6.11]{GigliMondinoSavare13}
for the $L^2$-strong convergence of $h^i_t f_i$ to $h_t f$. The proof of \eqref{eq:continuitymean} is elementary:
since $v_i + t w_i$ weakly converge in $H^{1,2}$ to $v+t w$ for all $t>0$, by Mosco convergence we have
\begin{equation*}
\begin{split}
\Ch(v) + 2t \int_X \langle\nabla v,\nabla w\rangle \di \meas + t^2 \Ch(w) & = \Ch(v+ t w) \le \liminf_{i \to \infty}  \Ch^i(v_i+ t w_i) \\
& = \liminf_{i \to \infty}  \Ch^i(v_i) + 2t \int_X \langle\nabla v_i,\nabla w_i\rangle_i \di \meas_i + t^2 \Ch^i(g_i)\\
& \le  \Ch(v) +  2 t \liminf_{i\to\infty}\int_X \langle\nabla v_i,\nabla w_i\rangle_i \di \meas + t^2 \limsup_{i\to\infty} \Ch^i(w_i).
\end{split}
\end{equation*}
Since $\sup_i \Ch^i(w_i)$ is finite, we may let $t \downarrow 0$ to deduce the $\liminf$ inequality;
replacing $w$ by $-w$ gives \eqref{eq:continuitymean}.
 \end{proof}

In the following corollary we prove standard consequences of the Mosco convergence of Theorem~\ref{thm:gms13_flow}, which
refine \eqref{thm:gms13_flow2}
 (see also \cite[Corollary~6.10]{GigliMondinoSavare13} for a discrete counterpart of this result, involving the resolvents).

\begin{corollary} \label{cor:strocoh} Under the same assumptions of Theorem~\ref{thm:gms13_flow}, one has
\begin{itemize}
\item[(a)] if $f_i\in H^{1,2}(X,\dist,\meas_i)$, $f_i\in D(\Delta_i)$ $L^2$-strongly converge to $f$ 
and $\Delta_i f_i$ is uniformly bounded in $L^2$, then $f\in D(\Delta)$, $\Delta_i f_i$ $L^2$ weakly converge to
$\Delta f$ and $f_i$ strongly converge in $H^{1,2}$ to $f$;
\item[(b)] for all $t>0$, $h^i_tf_i$ strongly converge in $H^{1,2}$ to $h_t f$
whenever $f_i$ $L^2$-strongly converge to $f$.
\end{itemize}
\end{corollary}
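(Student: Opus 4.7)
For part (a), I would start from the integration-by-parts identity at level $i$,
$$2\Ch^i(f_i) = -\int_X f_i\,\Delta_i f_i\di\meas_i,$$
combined with Cauchy-Schwarz and the uniform $L^2$-bounds on $f_i$ (coming from the $L^2$-strong convergence via Proposition~\ref{prop:strocon}(e)) and on $\Delta_if_i$, to get $\sup_i\Ch^i(f_i)<\infty$. Thus $(f_i)$ weakly converges in $H^{1,2}$ to $f$ and the Mosco weak-$\liminf$ inequality ensures $f\in H^{1,2}(X,\dist,\meas)$. Passing to a (not relabeled) subsequence, assume also $\Delta_if_i \weakto g$ in $L^2$.

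To identify $g$ with $\Delta f$, fix an arbitrary test function $v\in H^{1,2}(X,\dist,\meas)$ and use the strong-$\limsup$ part of Mosco convergence to produce $v_i\in H^{1,2}(X,\dist,\meas_i)$ strongly $H^{1,2}$-convergent to $v$. Then \eqref{eq:continuitymean}, applied with $v_i$ in the strong role and $f_i$ in the weak role, gives
$$\int_X\langle\nabla v_i,\nabla f_i\rangle_i\di\meas_i\longrightarrow\int_X\langle\nabla v,\nabla f\rangle\di\meas.$$
On the other hand, integration by parts at level $i$ together with Proposition~\ref{prop:strocon}(d) (applied to the $L^2$-strongly convergent $v_i$ and the $L^2$-weakly convergent $\Delta_if_i$) gives
$$\int_X\langle\nabla v_i,\nabla f_i\rangle_i\di\meas_i = -\int_X v_i\,\Delta_if_i\di\meas_i\longrightarrow -\int_X vg\di\meas.$$
Comparing the two limits shows $f\in D(\Delta)$ with $\Delta f=g$. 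By uniqueness of the limit, the whole sequence $\Delta_if_i$ converges weakly in $L^2$ to $\Delta f$. Finally, specializing the integration-by-parts identity to $v_i=f_i$ and applying Proposition~\ref{prop:strocon}(d) with $f_i$ strong and $\Delta_if_i$ weak,
$$2\Ch^i(f_i) = -\int_X f_i\,\Delta_if_i\di\meas_i\longrightarrow -\int_X f\,\Delta f\di\meas = 2\Ch(f),$$
which, together with the $L^2$-strong convergence of $f_i$, is precisely strong $H^{1,2}$-convergence.

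For part (b), the regularization estimate \eqref{eq:Brezis2} yields $\|\Delta_ih^i_tf_i\|_{L^2(X,\meas_i)}\leq t^{-1}\|f_i\|_{L^2(X,\meas_i)}$, so by the convergence of $L^2$-norms of $(f_i)$ this bound is uniform in $i$. Since \eqref{thm:gms13_flow2} already provides the $L^2$-strong convergence $h^i_tf_i\to h_tf$, part (a) applied to the sequence $(h^i_tf_i)$ immediately gives the strong $H^{1,2}$-convergence to $h_tf$.

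The main subtlety is the asymmetric weak/strong hypothesis in \eqref{eq:continuitymean}: one cannot test \eqref{eq:continuitymean} directly with two copies of $f_i$ (which is only weakly convergent in $H^{1,2}$), and one must instead introduce recovery sequences $v_i$ built by the strong-$\limsup$ property to probe the limit and identify $\Delta f$; once this is in place, the duality computation and the passage from weak to strong $H^{1,2}$-convergence are routine.
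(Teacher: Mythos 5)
Your proof is correct and follows essentially the same route as the paper's: integration by parts gives the uniform Cheeger bound and hence weak $H^{1,2}$ convergence, recovery sequences from the strong-$\limsup$ part of Mosco convergence combined with \eqref{eq:continuitymean} and the duality of Proposition~\ref{prop:strocon}(d) identify every $L^2$-weak limit point of $\Delta_i f_i$ as $\Delta f$, and passing to the limit in $\int_X|\nabla f_i|_i^2\di\meas_i=-\int_X f_i\Delta_i f_i\di\meas_i$ upgrades to strong $H^{1,2}$ convergence, with (b) deduced from \eqref{eq:Brezis2}, \eqref{thm:gms13_flow2} and part (a). The only cosmetic difference is your citation of Proposition~\ref{prop:strocon}(e) for the uniform $L^2$ bound on $f_i$, which in fact follows directly from the definition of $L^2$-strong convergence.
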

\begin{proof}  (a) Using the integration by parts formula we see that $f_i$ is weakly convergent in $H^{1,2}$.
Let $\chi\in H^{1,2}(X,\dist,\meas)$ and let $\chi_i\in H^{1,2}(X,\dist,\meas_i)$ be strongly convergent to $\chi$
in $H^{1,2}$. Let $g$ be a $L^2$-weak limit point of $\Delta_i f_i$ as $i\to\infty$, so that
\eqref{eq:continuitymean} gives (along a subsequence, that for simplicity we do not denote explicitly)
$$
\int_X g\chi \di\meas=\lim_{i\to\infty}\int_X \chi_i\Delta_i f_i\di\meas_i=
-\lim_{i\to\infty}\int_X\langle\nabla\chi_i,\nabla f_i\rangle_i\di\meas_i
=-\int_X\langle\nabla\chi,\nabla f\rangle\di\meas.
$$
This proves that $f\in D(\Delta)$ and $g=\Delta f$, so that compactness gives that $\Delta_i f_i$ $L^2$-weakly
converge to $\Delta f$. We can pass to the limit in the integration by parts formula
$\int_X|\nabla f_i|_i^2\di\meas_i=-\int_X f_i\Delta_i f_i\di\meas_i$
to prove the strong $H^{1,2}$ convergence of $f_i$ to $f$.

Now, we can prove (b). From \eqref{eq:Brezis2} we know that $\Delta_i h^i_t f_i$ is bounded in
$L^2$ for all $t>0$, hence (a) provides the strong convergence in $H^{1,2}$ of $h^i_t f_i$ to $h_t f$. 
\end{proof}

In order to localize the previous results (see in particular \eqref{eq:continuitymean})
we shall use the next theorem, proved in \cite[Theorem~5.3]{AmbrosioStraTrevisan}.
It shows that any sequence $(f_i)$ strongly convergent in $H^{1,2}$ to $f$  
induces gradient derivations which are strongly converging to the gradient derivation of the limit function, using as class of test functions the
family $h_{\setQ^+}\Algebra_\bs$ defined below
\begin{equation}\label{eq:def_Cinfty}
h_{\setQ_+}\Algebra_\bs:=\left\{h_sf:\ f\in\Algebra_\bs,,\,\,s\in\setQ_+\right\}\subset\Lipb(X).
\end{equation}
Notice that $h_{\setQ_+}\Algebra_\bs$  depends only on the limit metric measure structure, and it
is dense in $H^{1,2}(X,\dist,\meas)$, see \cite[Theorem~B.1]{AmbrosioStraTrevisan}.
Notice also that, since $\supp\meas$ can well be a strict subset of $X$, the $\Lipb(X)$ extension of $f\in h_{\setQ_+}\Algebra_\bs$
is not necessarily unique, and therefore $\langle\nabla v,\nabla f\rangle_i$ might depend on this extension, when $v\in H^{1,2}(X,\dist,\meas_i)$
(while $\langle\nabla v,\nabla f\rangle$ does not, for $v\in H^{1,2}(X,\dist,\meas)$). 
Nevertheless, the following convergence theorem is independent of the extension. 

\begin{theorem}[Strong convergence of gradients]\label{thm:strong-convergence-gradients}
Assume that $(X,\dist,\meas)$ is a $RCD(K,\infty)$ metric measure space, 
that $\Ch^i$ are quadratic and that Mosco converge to $\Ch$.
Let $v_i\in H^{1,2}(X,\dist,\meas_i)$ be strongly convergent in $H^{1,2}$ to $v\in H^{1,2}(X,\dist,\meas)$. \\
Then, for all $f\in h_{\setQ^+}\Algebra_\bs$, $\langle\nabla v_i,\nabla f\rangle_i$ 
$L^2$-strongly converge to $\langle\nabla v,\nabla f\rangle$. 
\end{theorem}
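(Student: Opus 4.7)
The plan is to derive this $L^2$-strong convergence from the integrated version \eqref{eq:continuitymean} in Theorem~\ref{thm:gms13_flow}, combined with the heat-flow regularity of the test function $f = h_s g$, via a bootstrap argument in the spirit of \cite[Theorem~5.3]{AmbrosioStraTrevisan}.

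The first step is to reduce to the intrinsic approximant $f_i := h^i_s g$. By Corollary~\ref{cor:strocoh}(b), $f_i$ strongly converges to $f$ in $H^{1,2}$; by Proposition~\ref{prop:reguht}(a), $|\nabla f_i|_i \le L:=(2{\sf I}_{2K}(s))^{-1/2}\|g\|_{L^\infty}$ uniformly in $i$. A direct estimate using the $L^2$-strong convergence of $f - f_i$ to $0$ and the uniform Lipschitz bound on any extension of $f$ shows that $\langle\nabla v_i,\nabla f\rangle_i - \langle\nabla v_i,\nabla f_i\rangle_i \to 0$ in $L^2(\meas_i)$. This reduces the problem to the $L^2$-strong convergence of $\langle\nabla v_i,\nabla f_i\rangle_i$ to $\langle\nabla v,\nabla f\rangle$, and makes independence of the chosen extension manifest.

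For the weak convergence of $\langle\nabla v_i,\nabla f_i\rangle_i\meas_i$ in duality with $\Cbs$, I would test against $\phi = h_t\chi$ for $\chi\in\Algebra_\bs$, introducing the smoothed sequence $\phi_i := h^i_t\chi$, which strongly converges to $\phi$ in $H^{1,2}$ with uniform Lipschitz bound $L'$. The Leibniz identity
$$
\phi_i\langle\nabla v_i,\nabla f_i\rangle_i = \langle\nabla(\phi_i v_i),\nabla f_i\rangle_i - v_i\langle\nabla\phi_i,\nabla f_i\rangle_i
$$
reduces, upon integration, the convergence of $\int\phi_i\langle\nabla v_i,\nabla f_i\rangle_i\di\meas_i$ to two pieces. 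The first is handled by \eqref{eq:continuitymean} with $f_i$ strong and $\phi_i v_i$ weakly $H^{1,2}$-convergent to $\phi v$ (uniform $L^\infty$-bounds on $\phi_i$ combined with Leibniz guarantee $\sup_i\Ch^i(\phi_i v_i) < \infty$). The second is the product of $v_i$ ($L^2$-strong) and $\langle\nabla\phi_i,\nabla f_i\rangle_i$: here both factors in the inner product are heat-smoothed with uniform Lipschitz bounds, so $\langle\nabla\phi_i,\nabla f_i\rangle_i$ is $L^\infty$-bounded, and its weak $L^2$-convergence to $\langle\nabla\phi,\nabla f\rangle$ (the \emph{base case} of the bootstrap) combined with Proposition~\ref{prop:strocon}(d) closes the argument. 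Density of $h_{\setQ_+}\Algebra_\bs$ in $\Cbs$ then extends the test class.

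The main obstacle is resolving this base case and upgrading the weak convergence of $\langle\nabla v_i,\nabla f_i\rangle_i\meas_i$ to $L^2$-strong convergence through the norm bound $\limsup_i\|\langle\nabla v_i,\nabla f_i\rangle_i\|_{L^2(\meas_i)}\le\|\langle\nabla v,\nabla f\rangle\|_{L^2(\meas)}$: Cauchy-Schwarz with $L$ is too lossy since $|\nabla v_i|_i$ has no $L^\infty$-bound. Both steps are carried out in \cite{AmbrosioStraTrevisan} via the derivation formalism, exploiting the $H^{1,2}$-regularity of $|\nabla f_i|^2$ provided by the Bakry-type inequality \eqref{eq:bound_on_Gamma2} of Proposition~\ref{prop:Bakry}; this higher-order regularity, unavailable outside the $RCD(K,\infty)$ setting, is precisely what promotes weak convergence of the gradient densities to strong convergence.
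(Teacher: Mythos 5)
The paper offers no proof of this theorem: it is imported verbatim from \cite[Theorem~5.3]{AmbrosioStraTrevisan} (``\dots we shall use the next theorem, proved in [AST16, Theorem~5.3]''). Your plan also ultimately sends the two genuinely hard steps --- the ``base case'' weak convergence and the weak-to-strong upgrade --- back to that same reference, so what your proposal adds to the paper's citation is only the reduction step replacing the extension $f$ by the intrinsic approximant $f_i:=h^i_s g$, and that is exactly the step that fails. You claim that $\langle\nabla v_i,\nabla f\rangle_i-\langle\nabla v_i,\nabla f_i\rangle_i=\langle\nabla v_i,\nabla(f-f_i)\rangle_i\to 0$ in $L^2(\meas_i)$ by ``a direct estimate'' from the $L^2$-strong convergence of $f-f_i$ to $0$ and the uniform Lipschitz bounds. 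In the varying-measure setting these two ingredients give no control whatsoever on gradients: this is precisely the point of the example in Section~\ref{sec:5} ($X=\setR^2$, $\meas_i=i\leb^2\res\bigl([0,1]\times[0,\tfrac{1}{i}]\bigr)$, $\meas=\haus^1\res[0,1]\times\{0\}$), where the fixed $1$-Lipschitz function $x_2$ is $L^2$-strongly convergent to $0$ while $|\nabla x_2|_i\equiv 1$. Concretely, if the chosen Lipschitz extension of $f=h_s g$ carries an extra ``vertical'' slope $\lambda x_2$ in that example, then $|\nabla(f-f_i)|_i$ stays of order $|\lambda|$ for every $i$, so the pointwise Cauchy--Schwarz bound $|\langle\nabla v_i,\nabla(f-f_i)\rangle_i|\le|\nabla v_i|_i\,|\nabla(f-f_i)|_i$ yields only a uniform $L^2$ bound, never smallness. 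Worse, any estimate of this form is insensitive to whether $v_i$ converges strongly or merely weakly in $H^{1,2}$, and for weakly convergent $v_i$ the claimed vanishing is simply false: in the strip example take $v_i=x_2$, which is $L^2$-strongly convergent to $0$ with $\Ch^i(v_i)=\tfrac12$ bounded (hence weakly but not strongly convergent in $H^{1,2}$), and take $\eta=\lambda x_2$ (a legitimate difference of two Lipschitz extensions of the same limit function); then $\langle\nabla v_i,\nabla \eta\rangle_i\equiv\lambda$. Hence any valid argument must use the energy convergence $\Ch^i(v_i)\to\Ch(v)$ in an essential, non-Cauchy--Schwarz way: the vanishing of this pairing encodes exactly the extension-independence assertion of the theorem (strong $H^{1,2}$ convergence kills the components of $\nabla v_i$ invisible to the limit structure), and is of the same depth as the theorem itself, not a preliminary reduction.

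The remaining architecture of your outline --- Leibniz and integration by parts against doubly heat-smoothed test functions $\phi_i=h^i_t\chi$, identification of limits via \eqref{eq:continuitymean}, Corollary~\ref{cor:strocoh} and Proposition~\ref{prop:strocon}(d), and Savar\'e's inequality \eqref{eq:bound_on_Gamma2} as the source of the $H^{1,2}$-regularity of $|\nabla f_i|_i^2$ needed to promote weak to strong convergence --- is sound in spirit and indeed close to the actual mechanism of \cite{AmbrosioStraTrevisan}. But since both the base case and the norm bound $\limsup_i\|\langle\nabla v_i,\nabla f_i\rangle_i\|_{L^2(X,\meas_i)}\le\|\langle\nabla v,\nabla f\rangle\|_{L^2(X,\meas)}$ are cited rather than proved, and the single step you argue independently is the invalid one, the proposal does not stand as a proof. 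Note also that the reduction you attempt is unnecessary if one cites [AST16, Theorem~5.3] at all, since that result already covers arbitrary Lipschitz extensions of $f$; this is why the paper can (and does) use it as a black box.
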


\begin{theorem} [Continuity of the gradient operators] \label{thm:cont_reco} 
Assume that $(X,\dist,\meas_i)$ are $RCD(K,\infty)$ metric measure spaces, 
let $v\in H^{1,2}(X,\dist,\meas)$ and let $v_i\in H^{1,2}(X,\dist,\meas_i)$
be strongly convergent in $H^{1,2}$ to $v$. Then:
\begin{itemize} 
\item[(a)] the following tightness on bounded sets holds:
\begin{equation}\label{eq:tightbounded}
\lim_{R\to\infty}\limsup_{i\to\infty}\int_{X\setminus B_R(\bar x)}|\nabla v_i|_i^2\di\meas_i=0.
\end{equation}
\item[(b)] If $w_i$ weakly converge to $w$ in $H^{1,2}$,
the measures $\langle\nabla v_i,\nabla w_i\rangle_i\meas_i$ weakly converge in duality with
$h_{\setQ_+}\Algebra_\bs$ to $\langle\nabla v,\nabla w\rangle\meas$ and, if $\langle \nabla v_i,\nabla w_i\rangle_i$ is bounded
in $L^p$ for some $p\in (1,\infty)$, also weakly in $L^p$.
\item[(c)] If $w_i$ strongly converge to $w$ in $H^{1,2}$, then $\langle \nabla v_i,\nabla w_i\rangle_i$
$L^1$-strongly converge to $\langle\nabla v,\nabla w\rangle$. 
\end{itemize} 
\end{theorem}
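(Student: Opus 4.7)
The plan is to establish (b) first via the Leibniz rule, then obtain (a) by specializing (b) to $w_i=v_i$ together with a cut-off argument, and finally reduce (c) to the strong $L^2$-convergence of the moduli $|\nabla u_i|_i$ by means of polarization.

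For (b), I would fix $f\in h_{\setQ_+}\Algebra_\bs$ and use the Leibniz rule in $(X,\dist,\meas_i)$ to write
\begin{equation*}
\int_X f\,\langle\nabla v_i,\nabla w_i\rangle_i\di\meas_i
=\int_X\langle\nabla v_i,\nabla(f\,w_i)\rangle_i\di\meas_i-\int_X w_i\,\langle\nabla v_i,\nabla f\rangle_i\di\meas_i.
\end{equation*}
The product $f\,w_i$ is uniformly bounded in $H^{1,2}(X,\dist,\meas_i)$ (by the Leibniz estimate) and $L^2$-weakly converges to $f\,w$, so \eqref{eq:continuitymean} handles the first term and converges it to $\int_X\langle\nabla v,\nabla(fw)\rangle\di\meas$. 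For the second integral, Theorem~\ref{thm:strong-convergence-gradients} yields $L^2$-strong convergence of $\langle\nabla v_i,\nabla f\rangle_i$ to $\langle\nabla v,\nabla f\rangle$, while $w_i$ is $L^2$-weakly convergent to $w$, so Proposition~\ref{prop:strocon}(d) passes the pairing to the limit. Expanding $\langle\nabla v,\nabla(fw)\rangle$ by Leibniz on the limit side cancels the second contribution and leaves $\int_X f\,\langle\nabla v,\nabla w\rangle\di\meas$, as required. The weak-$L^p$ statement under the uniform $L^p$-bound then follows by extracting an $L^p$-weakly convergent subsequence (Proposition~\ref{prop:weakcon}) and identifying the limit through the $h_{\setQ_+}\Algebra_\bs$-duality, using density of $h_{\setQ_+}\Algebra_\bs$ in $L^{p'}(X,\meas)$ for $p>1$.

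For (a), I would specialize the identity above to $w_i=v_i$: for $\chi\in h_{\setQ_+}\Algebra_\bs$,
\begin{equation*}
\int_X\chi\,|\nabla v_i|_i^2\di\meas_i=\int_X\langle\nabla v_i,\nabla(\chi v_i)\rangle_i\di\meas_i-\int_X v_i\,\langle\nabla v_i,\nabla\chi\rangle_i\di\meas_i.
\end{equation*}
Since $v_i\to v$ strongly in $L^2$, now both right-hand integrals actually converge (not merely in the weak-strong sense of (b)): the first by \eqref{eq:continuitymean} applied to the weakly convergent sequence $\chi v_i$, and the second as a strong-strong $L^2$ pairing via Proposition~\ref{prop:strocon}(d). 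This upgrades (b) at $w_i=v_i$ to $\int_X\chi\,|\nabla v_i|_i^2\di\meas_i\to\int_X\chi\,|\nabla v|^2\di\meas$ for every such $\chi$. Choosing $\chi=h_s\phi_R$ with $\phi_R\in\Algebra_\bs$ monotonically increasing to $1$ and using the mass-preserving identity $h_s 1=1$ gives $\chi\uparrow 1$ pointwise; combined with the total convergence $\int_X|\nabla v_i|_i^2\di\meas_i\to\int_X|\nabla v|^2\di\meas$ this yields $\int_X(1-\chi)|\nabla v_i|_i^2\di\meas_i\to\int_X(1-\chi)|\nabla v|^2\di\meas$, which is as small as desired for large $R$. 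Since $1-h_s\phi_R$ is bounded below on the complement of a sufficiently large ball (by the heat-kernel decay available in the $RCD(K,\infty)$ setting), the tightness claim in (a) follows.

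For (c), I would polarize $4\langle\nabla v_i,\nabla w_i\rangle_i=|\nabla(v_i+w_i)|_i^2-|\nabla(v_i-w_i)|_i^2$ and observe that strong $H^{1,2}$-convergence is preserved under linear combinations (the $\int|\nabla(v_i\pm w_i)|_i^2\di\meas_i$ converge by \eqref{eq:continuitymean} applied to the cross term). Thus it suffices to show that $u_i\to u$ strongly in $H^{1,2}$ implies $|\nabla u_i|_i^2$ is $L^1$-strongly convergent to $|\nabla u|^2$; unwrapping the definition via signed square root, this is equivalent to $|\nabla u_i|_i\to|\nabla u|$ strongly in $L^2$. The $L^2$-norm convergence is immediate from $\Ch^i(u_i)\to\Ch(u)$, so only the weak $L^2$-convergence remains. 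This is where I expect the main obstacle to lie: I would combine Proposition~\ref{prop:newrepGamma} with Theorem~\ref{thm:strong-convergence-gradients} applied to $1$-Lipschitz elements of $h_{\setQ_+}\Algebra_\bs$ (after a small rescaling, since heat-flow smoothing may inflate Lipschitz constants by a factor $e^{-Ks}$ when $K<0$) to produce, for every nonnegative $\phi\in\Cbs(X)$, the lower bound $\liminf_i\int_X\phi\,|\nabla u_i|_i\di\meas_i\ge\int_X\phi\,|\nabla u|\di\meas$. This forces any weak $L^2$-accumulation point $g$ of $|\nabla u_i|_i$ to satisfy $g\ge|\nabla u|$ $\meas$-a.e., and combined with $\|g\|_{L^2(\meas)}\le\||\nabla u|\|_{L^2(\meas)}$ we deduce $g=|\nabla u|$; hence the full sequence converges weakly and then, by norm convergence, strongly in $L^2$. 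The conclusion (c) follows by polarization and stability of $L^1$-strong convergence under linear combinations (Proposition~\ref{prop:strocon}(b)).
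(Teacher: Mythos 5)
Your treatment of the first half of (b) coincides with the paper's, but the proof has two genuine gaps, and the serious one is in (a). Your tightness argument hinges on the claim that $1-h_s\phi_R$ is bounded below outside a large ball ``by heat-kernel decay''. No such decay is available in the $RCD(K,\infty)$ setting (pointwise heat-kernel bounds require finite dimension/doubling), and the claim is in fact false in general: take $X=\setR$ with the Euclidean distance and $\meas=e^{-x^4}\di x$, an $RCD(0,\infty)$ space. The associated diffusion has drift $-4x^3$, for which $+\infty$ is an entrance boundary: started at $x$, the process lies in a fixed bounded set at time $s$ with probability close to one, uniformly in $x$, so $h_s\phi_R(x)$ does not tend to $0$ as $\dist(x,\bar x)\to\infty$; for large $R$ it approaches $1$, and $1-h_s\phi_R$ admits no positive lower bound outside any ball. (There is the additional problem that $\chi=h_s\phi_R\in h_{\setQ_+}\Algebra_\bs$ is canonically defined only on $\supp\meas$, while your lower bound is needed on $\supp\meas_i$, where $\chi$ is an arbitrary Lipschitz extension.) The paper avoids heat-flow test functions altogether: it takes $1/R$-Lipschitz cutoffs $\chi_R$ vanishing on $B_R(\bar x)$ and equal to $1$ off $B_{2R}(\bar x)$, writes $\int_X|\nabla v_i|_i^2\chi_R\di\meas_i$ by the Leibniz rule as $\int_X\langle\nabla v_i,\nabla(v_i\chi_R)\rangle_i\di\meas_i-\int_X\langle\nabla v_i,\nabla\chi_R\rangle_i v_i\di\meas_i$, handles the first term by \eqref{eq:continuitymean} (the sequence $v_i\chi_R$ being weakly convergent in $H^{1,2}$) and bounds the second by Cauchy--Schwarz by $C/R$; no decay at infinity of anything is required. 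This gap also undermines your ordering for the $L^p$ statement in (b): an $L^p$-weak limit point $\xi$ is characterized by duality with $\Cbs(X)$, whereas your first step gives convergence only in duality with $h_{\setQ_+}\Algebra_\bs$, whose elements are bounded but do not have bounded support; reconciling the two limits is exactly where the paper invokes the tightness \eqref{eq:tightbounded} (with cutoffs $\psi_R=1-\chi_R$ and Cauchy--Schwarz), which is why (a) is proved first there. As written, your identification of $\xi$ is unjustified.

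In (c), the polarization and the identification of weak limit points are the paper's, but your route to the liminf inequality differs in one step that is not closed: you apply Theorem~\ref{thm:strong-convergence-gradients} to \emph{fixed} $1$-Lipschitz elements of $h_{\setQ_+}\Algebra_\bs$, whereas the supremum in \eqref{eq:supformula} of Proposition~\ref{prop:newrepGamma} runs over \emph{all} $1$-Lipschitz $v\in H^{1,2}(X,\dist,\meas)$. To conclude you must show that (rescaled) $1$-Lipschitz elements of $h_{\setQ_+}\Algebra_\bs$ still saturate that supremum; the $H^{1,2}$-density of $h_{\setQ_+}\Algebra_\bs$ gives no control on Lipschitz constants, and without $\Lip(v)\le 1$ the key pointwise bound $\langle\nabla u_i,\nabla v\rangle_i\le|\nabla u_i|_i$ is lost. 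The paper instead keeps arbitrary $1$-Lipschitz $v_k$, chooses recovery sequences $v_{k,i}$ strongly convergent in $H^{1,2}$ with $\Lip(v_{k,i})\le 1$, and passes to the limit using part (b) (this is Lemma~\ref{lem:old_sci}, which is why the paper's (c) sits downstream of (a) and (b)). Your variant could be repaired either along those lines or by an explicit approximation of $1$-Lipschitz functions by elements of $\Algebra_\bs$ with Lipschitz constants at most $1+\eps$ before applying the heat flow, but that approximation is precisely the missing content.
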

\begin{proof} (a) In order to prove \eqref{eq:tightbounded} we choose $\chi_R:X\to [0,1]$
$1/R$-Lipschitz with $\chi_R\equiv 0$ on $B_R(\bar x)$, $\chi_R\equiv 1$ on $X\setminus B_{2R}(\bar x)$ and notice
that the Leibniz rule gives
$$
\int_X|\nabla v_i|_i^2\chi_R\di\meas_i=\int_X\langle\nabla v_i,\nabla (v_i\chi_R)\rangle_i\di\meas_i-
\int_X\langle\nabla v_i,\nabla\chi_R\rangle v_i\di\meas_i,
$$
so that we can use \eqref{eq:continuitymean} to get
$$
\limsup_{i\to\infty}\int_X|\nabla v_i|_i^2\chi_R\di\meas_i\leq
\int_X\langle\nabla v,\nabla (v\chi_R)\rangle \di\meas+
\frac {1}{R}\biggl(\int_X|\nabla v|^2\di\meas\biggr)^{1/2}\|v\|_{L^2(X,\meas)}.
$$
Using the Leibniz rule once more we get
$$
\limsup_{i\to\infty}\int_X|\nabla v_i|_i^2\chi_R\di\meas_i\leq\int_X|\nabla v|^2\chi_R\di\meas+
\frac {2}{R}\biggl(\int_X|\nabla v|^2\di\meas\biggr)^{1/2}\|v\|_{L^2(X,\meas)},
$$
which gives \eqref{eq:tightbounded}.

Let us now prove (b). Let $f\in h_{\setQ_+}\Algebra_\bs$. Using the Leibniz rule we can write
$$
\int_X  \langle\nabla v_i,\nabla w_i\rangle_i f\di\meas_i=-\int_X  \langle\nabla v_i,\nabla f\rangle_i w_i\di\meas_i+
\int_X  \langle\nabla v_i,\nabla (w_if)\rangle_i \di\meas_i
$$
and use \eqref{eq:continuitymean} together with the $L^2$-strong convergence of $\langle \nabla v_i,\nabla f\rangle_i$
to $\langle\nabla v,\nabla f\rangle$, ensured by Theorem~\ref{thm:strong-convergence-gradients}, to conclude the
weak convergence in duality with $h_{\setQ_+}\Algebra_\bs$ of $\langle\nabla v_i,\nabla w_i\rangle_i\meas_i$.
Assuming in addition that $\langle\nabla v_i,\nabla w_i\rangle_i$ satisfy a uniform $L^p$ bound for some $p>1$, 
let $\xi\in L^p(X,\meas)$ be the $L^p$-weak limit of  a subsequence (not relabelled for simplicity of notation). Then,
\eqref{eq:tightbounded} gives
$$
\limsup_{i\to\infty}\biggl|\int_X \langle\nabla v_i,\nabla w_i\rangle_i\phi\psi_R\di\meas_i-
\int_X \langle\nabla v_i,\nabla w_i\rangle_i\phi\di\meas_i\biggr|=o(R)
$$
with $\phi\in h_{\setQ_+}\Algebra_\bs$, $\psi_R=1-\chi_R\in\Lip_\bs(X)$, $\chi_R$ chosen as in the proof of (a), hence
we can pass to the limit as $i\to\infty$ to get
$$
\biggl|\int_X \xi\phi\psi_R\di\meas-
\int_X \langle\nabla v,\nabla w\rangle\phi\di\meas\biggr|=o(R).
$$
Since $h_{\setQ_+}\Algebra_\bs$ is dense in $L^q(X,\meas)$, with $q$ dual exponent of $p$, we can pass
to the limit as $R\to\infty$ and use the arbitrariness of $\phi$ to obtain that $\xi=\langle\nabla v,\nabla w\rangle$.

In order to prove (c), by polarization and the linearity of $L^1$-strong convergence it is not restrictive to assume $v_i=w_i$. 
It is then sufficient to apply \eqref{eq:lscc1} of Lemma~\ref{lem:old_sci} below (whose proof uses only (a), (b) of this proposition) 
to obtain the inequality $\liminf_i\int_A|\nabla f_i|_i\di\meas_i\geq\int_A|\nabla f|\di\meas$
on any open set $A\subset X$. Assume that $\xi\in L^2(X,\meas)$ is a $L^2$-weak limit point of $|\nabla f_i|_i$; from the
liminf inequality we get $\int_A\xi\di\meas\geq\int_A|\nabla f|\di\meas$ for any open set $A$ with $\meas(\partial A)=0$.
A standard approximation then gives $\xi\geq |\nabla f|$ $\meas$-a.e. in $X$. Since the $H^{1,2}$ strong convergence
gives
$$
\limsup_{i\to\infty}\int_X|\nabla f_i|_i^2\di\meas_i\leq\int_X|\nabla f|^2\di\meas\leq
\int_X\xi^2\di\meas,
$$
we obtain the $L^2$-strong convergence of $|\nabla f_i|_i$. Combinig the inequality above with
$\liminf_i\| |\nabla f_i|_i\|_{L^2(X,\meas_i)}\geq\|\xi\|_{L^2(X,\meas)}$ we obtain that $\xi=|\nabla f|$.
 \end{proof}

\begin{lemma} \label{lem:old_sci} If $f_i\in H^{1,2}(X,\dist,\meas_i)$ weakly converge in $H^{1,2}$ to $f$, then
\begin{equation}\label{eq:lscc1}
\liminf_{i\to\infty}\int_X g|\nabla f_i|_i\di\meas_i\geq\int_X g|\nabla f|\di\meas
\end{equation}
for any lower semicontinuous $g:X\to [0,\infty]$ and then
\begin{equation}\label{eq:lscc2}
\liminf_{i\to\infty}\int_A|\nabla f_i|_i^2\di\meas_i\geq\int_A|\nabla f|^2\di\meas
\end{equation}
for any open set $A\subset X$.
\end{lemma}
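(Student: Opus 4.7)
The strategy is to deduce \eqref{eq:lscc1} from Proposition~\ref{prop:newrepGamma} combined with Theorem~\ref{thm:cont_reco}(b), and then to derive \eqref{eq:lscc2} from \eqref{eq:lscc1} by a Legendre duality argument. The main technical point is to produce, for each $1$-Lipschitz test function $v_k$ on $(X,\dist,\meas)$, an approximating sequence on $(X,\dist,\meas_i)$ which converges strongly in $H^{1,2}$ while carrying a uniform pointwise gradient bound arbitrarily close to $1$; this is achieved via heat flow regularization together with the Bakry-\'Emery estimate.

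For \eqref{eq:lscc1}, Proposition~\ref{prop:newrepGamma} applied to $(X,\dist,\meas)$ reduces matters to showing that for any finite collection of $1$-Lipschitz $v_k\in H^{1,2}(X,\dist,\meas)$ and $w_k\in\Cbs(X)$ with $\sum_k|w_k|\leq g$, one has $\sum_k\int_X\langle\nabla f,\nabla v_k\rangle w_k\di\meas\leq\liminf_i\int_X g|\nabla f_i|_i\di\meas_i$. Using the Lipschitz representative from Proposition~\ref{prop:reguht} followed by a McShane extension, I may assume each $v_k$ is $1$-Lipschitz on $(X,\dist)$. Given $\epsilon>0$, truncating $v_k$ at a height exceeding its values on a ball around $\supp w_k$ and multiplying by a Lipschitz cutoff $\xi$ equal to $1$ on a neighborhood of $\supp w_k$ with $\supp\xi$ bounded produces a bounded Lipschitz function $\tilde v_k$ with bounded support and $\Lip(\tilde v_k)\leq 1+\epsilon$; the Leibniz rule and the vanishing of $\nabla\xi$ on $\supp w_k$ give $\int_X\langle\nabla f,\nabla\tilde v_k\rangle w_k\di\meas=\int_X\langle\nabla f,\nabla v_k\rangle w_k\di\meas$. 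Since $\tilde v_k\in\Cbs(X)$ and $\meas_i\weakto\meas$ in $\Meas_\loc(X)$, the constant sequence $\tilde v_k\in L^2(X,\meas_i)$ is $L^2$-strongly convergent to $\tilde v_k\in L^2(X,\meas)$.

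For $s>0$ set $v_{k,i}^s:=h^i_s\tilde v_k$ and $v_k^s:=h_s\tilde v_k$. Corollary~\ref{cor:strocoh}(b) gives $v_{k,i}^s\to v_k^s$ strongly in $H^{1,2}$, while the Bakry-\'Emery bound \eqref{eq:BE} applied in each $\RCD(K,\infty)$ space $(X,\dist,\meas_i)$ yields $|\nabla v_{k,i}^s|_i\leq(1+\epsilon)e^{-Ks}$ uniformly in $i$. Consequently $\langle\nabla v_{k,i}^s,\nabla f_i\rangle_i$ is uniformly bounded in $L^2$, so the $L^p$ statement in Theorem~\ref{thm:cont_reco}(b) upgrades the conclusion to $L^2$-weak convergence in the variable-measure sense, which allows the test function $w_k\in\Cbs(X)$. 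Summing over $k$ and invoking the pointwise bound $\bigl|\sum_k\langle\nabla v_{k,i}^s,\nabla f_i\rangle_i w_k\bigr|\leq(1+\epsilon)e^{-Ks}|\nabla f_i|_i g$,
$$
\sum_k\int_X\langle\nabla v_k^s,\nabla f\rangle w_k\di\meas=\lim_{i\to\infty}\sum_k\int_X\langle\nabla v_{k,i}^s,\nabla f_i\rangle_i w_k\di\meas_i\leq(1+\epsilon)e^{-Ks}\liminf_{i\to\infty}\int_X g|\nabla f_i|_i\di\meas_i.
$$
Strong $H^{1,2}$-continuity of the heat flow at $s=0$ on the limit space implies, via Cauchy-Schwarz, that $\int_X\langle\nabla v_k^s,\nabla f\rangle w_k\di\meas\to\int_X\langle\nabla\tilde v_k,\nabla f\rangle w_k\di\meas$ as $s\downarrow 0$. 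Sending $s\downarrow 0$, then $\epsilon\downarrow 0$, and finally taking the supremum over admissible $(v_k,w_k)$ in Proposition~\ref{prop:newrepGamma} yields \eqref{eq:lscc1}.

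For \eqref{eq:lscc2}, given $\varphi\in\Cbs(X)$ with $0\leq\varphi\leq\chi_A$ and $u\in\Cbs(X)$ with $u\geq 0$, the elementary bound $|\nabla f_i|_i^2\geq 2u|\nabla f_i|_i-u^2$, multiplied by $\varphi$ and integrated against $\meas_i$, gives
$$
\int_A|\nabla f_i|_i^2\di\meas_i\geq 2\int_X\varphi u|\nabla f_i|_i\di\meas_i-\int_X\varphi u^2\di\meas_i.
$$
Passing to $\liminf_i$, applying \eqref{eq:lscc1} with $g=\varphi u$ for the first term and $\meas_i\weakto\meas$ against $\varphi u^2\in\Cbs(X)$ for the second, one obtains $\liminf_i\int_A|\nabla f_i|_i^2\di\meas_i\geq 2\int_X\varphi u|\nabla f|\di\meas-\int_X\varphi u^2\di\meas$. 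Optimizing over $u$ by approximating $|\nabla f|$ in $L^2(X,\varphi\meas)$ by functions in $\Cbs(X)$, and then letting $\varphi\uparrow\chi_A$ from below (possible since $A$ is open), yields \eqref{eq:lscc2}.
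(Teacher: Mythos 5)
Your proof is correct, and it rests on the same two pillars as the paper's argument: the dual representation \eqref{eq:supformula} of Proposition~\ref{prop:newrepGamma} and the weak $L^2$ convergence of scalar products from Theorem~\ref{thm:cont_reco}(b). The implementations, however, differ in where the regularization is applied. The paper regularizes the sequence $f_i$ itself (truncation, then the heat flows $h^i_t$, undone via \eqref{eq:BE} and \eqref{eq:Ledoux}), proves \eqref{eq:lscc1} first for $g=\chi_A$ and recovers general lower semicontinuous $g$ by the layer-cake formula and Fatou; at the key step it simply \emph{fixes} recovery sequences $v_{k,i}$ that are simultaneously $1$-Lipschitz and strongly $H^{1,2}$-convergent to $v_k$, without constructing them. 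You instead leave $f_i$ and $g$ untouched and regularize the test functions: McShane extension, truncation and a flat cutoff give $\tilde v_k\in\Lip_\bs(X)$ with $\Lip(\tilde v_k)\le 1+\epsilon$ and unchanged pairing against $w_k$ (this step is really locality of minimal relaxed slopes on the set where $\tilde v_k=v_k$, which is what your Leibniz-rule remark amounts to); then the constant sequence $\tilde v_k$ is $L^2$-strongly convergent, $h^i_s\tilde v_k$ is strongly $H^{1,2}$-convergent by Corollary~\ref{cor:strocoh}(b), and \eqref{eq:BE} gives the uniform bound $|\nabla h^i_s\tilde v_k|_i\le(1+\epsilon)e^{-Ks}$, the spurious factor disappearing as $s\downarrow 0$, $\epsilon\downarrow 0$. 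This makes explicit a point the paper leaves implicit — Mosco convergence alone does not furnish $1$-Lipschitz recovery sequences — while dispensing with the truncation/heat-flow reduction of $f_i$ and with the layer-cake step. For \eqref{eq:lscc2} the routes also differ: the paper invokes the partition identity $\int_A u^2\di\meas=\sup\sum_k\meas(A_k)^{-1}\bigl(\int_{A_k}|u|\di\meas\bigr)^2$ together with superadditivity of the liminf (which tacitly requires controlling $\limsup_i\meas_i(A_k)$, e.g.\ by choosing the open sets $A_k$ with $\meas$-negligible boundary), whereas you use the Legendre duality $u^2=\sup_c\,(2cu-c^2)$ against continuous weights plus monotone approximation of $\chi_A$ — the same device the paper itself employs in \eqref{eq:duality} for the $p$-Cheeger energies — which sidesteps any boundary-measure issue. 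Both arguments are sound; yours is the more self-contained of the two.
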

\begin{proof} Since truncation preserves $L^2_\loc$-strong convergence and uniform $L^2$ bounds, 
by a truncation argument, in the proof of \eqref{eq:lscc1} we can assume with no loss of generality
that $f_i$ are uniformly bounded. Since any lower semicontinuous function is the monotone limit of a sequence of Lipschitz
functions with bounded support, we also assume $g\in\Lip_\bs(X)$. Also, taking into account the inequality 
$|\nabla h^i_t f_i|_i\leq e^{-Kt}h^i_t|\nabla f|_i$, we can estimate
\begin{eqnarray*}
\liminf_{i\to\infty}\int_X g|\nabla f_i|_i\di\meas_i&\geq&
\liminf_{i\to\infty}\int_X h_t^ig |\nabla f_i|_i\di\meas_i-
\limsup_{i\to\infty}\int_X |h_t^ig-g| |\nabla f_i|_i\di\meas_i\\
&\geq&
e^{Kt}\liminf_{i\to\infty}\int_X g |\nabla h_t^i f_i|_i\di\meas_i
-C\limsup_{i\to\infty}\|h_t^ig-g\|_{L^2(X,\meas_i)},
\end{eqnarray*}
with $C=\sup_i (2\Ch^i(f_i))^{1/2}$. Since \eqref{eq:Ledoux} gives
$$
\lim_{t\to 0}\limsup_{i\to\infty}\int_X |h^i_t g-g|^2\di\meas_i=0,
$$
this means that as soon as we have the liminf inequality for $h^i_tf_i$, $h_t f$ for all $t>0$, we have it for $f_i$, $f$.

Hence,  possibly replacing $f_i$ by $h^i_t f_i$ we see thanks to \eqref{eq:regu1} that we can assume 
with no loss of generality that $f_i$ are uniformly Lipschitz. Under this assumption, we first 
prove \eqref{eq:lscc1} in the case when $g=\chi_A$ is the characteristic function of an open set $A\subset X$, 
we fix finitely many $v_k\in H^{1,2}(X,\dist,\meas)$ with $\Lip(v_k)\leq 1$, as well as finitely many
$w_k\in\Cbs(X)$ with $\supp w_k\subset A$ and $\sum_k|w_k|\leq 1$. Let us also  fix $v_{k,i}$ strongly convergent in $H^{1,2}$ to $v_k$.  
Now, notice that
\begin{equation}\label{eq:exp1bis}
\lim_{i\to\infty}\int_X\langle\nabla f_i,\nabla v_{k,i}\rangle_i w_k\di\meas_i
=\int_X\langle\nabla f,\nabla v_k\rangle w_k \di\meas\qquad\forall k.
\end{equation}
Indeed, \eqref{eq:exp1bis} follows at once from the weak $L^2$ convergence
of $\langle\nabla f_i,\nabla v_{k,i}\rangle_i$ to $\langle\nabla f,\nabla v_k\rangle$ provided by Theorem~\ref{thm:cont_reco}(b).
Adding with respect to $k$, since $\Lip(v_{k,i})\leq 1$ and $\sum_k |w_k|\leq \chi_A$, from \eqref{eq:supformula} with
$g\equiv \chi_A$ we get \eqref{eq:lscc1}.

For general $g$ we use the formula
$$
\int g h\di\mu=\int_0^\infty\int_{\{g>t\}}h\di\mu\di t
$$
(with $\mu=\meas_i$ and $\mu=\meas$) and Fatou's lemma.

The proof of \eqref{eq:lscc2} is a direct consequence of the elementary identity 
$$
\int_A u^2\di\meas=\sup\left\{\sum_k\meas(A_k)^{-1}\biggl(\int_{A_k}|u|\di\meas\biggr)^2\right\},
$$
where the supremum runs among the finite disjoint families of open subsets $A_k$ of $A$ with $\meas(A_k)>0$, 
of \eqref{eq:lscc1} and of the superadditivity of the $\liminf$ operator.
\end{proof}

\section{$BV$ functions and their stability}\label{sec:6}

In this section we first recall basic facts about $BV$ functions in metric measure spaces. The most important result of
this section, estabilished in Theorem~\ref{thm:main}, is the extension of a well-known fact, namely the stability of $BV$ functions 
under $L^1$-strong convergence, to the case when even the family of spaces is variable.

\begin{definition}[The class $BV(X,\dist,\meas)$ and $|\rmD f|(X)$]
We say that $f\in L^1(X,\meas)$ belongs to $BV(X,\dist,\meas)$ if there exist functions $f_n\in L^1(X,\meas)\cap \Lipb(X)$ 
convergent to $f$ in $L^1(X,\meas)$ with
\begin{equation}\label{eq:liminf}
L:=\liminf_{n\to\infty}\int_X{\rm lip}(f_n)\di\meas<\infty,
\end{equation}
where ${\rm lip}(g)$ denotes the local Lipschitz constant of $g$, see \eqref{eq:localsl}.
If $f\in BV(X,\dist,\meas)$, the optimal $L$ in \eqref{eq:liminf} (i.e. the inf of $\liminf$) is called total variation of $f$ and
denoted by $|\rmD f|(X)$. By convention, we put $|\rmD f|(X)=\infty$ if $f\in L^1\setminus BV(X,\dist,\meas)$.
\end{definition}

It is immediate to check from the definition of total variation that for $\phi\circ f\in BV(X,\dist,\meas)$ for all
$f\in BV(X,\dist,\meas)$ and all $\phi:\setR\to\setR$ 1-Lipschitz with $\phi(0)=0$, with 
\begin{equation}\label{eq:monotot}
|\rmD (\phi\circ f)|(X)\leq |\rmD f|(X).
\end{equation}
In addition, the very definition of $|\rmD f|(X)$ provides the lower semicontinuity property
$$
|\rmD f|(X)\leq\liminf_{n\to\infty}|\rmD f_n|(X)
\qquad\text{whenever $f_n\to f$ in $L^1(X,\dist,\meas)$.}
$$
Still using the lower semicontinuity, arguing as in \cite{Miranda}, one can prove the coarea formula
\begin{equation}\label{eq:coarea}
|\rmD f|(X)=\int_0^\infty|\rmD \chi_{\{f>t\}}|(X)\di t
\qquad\forall f\in L^1(X,\meas),\,\,f\geq 0.
\end{equation}

In the following proposition, whose proof was suggested to the first author by S. Di Marino, we provide a useful equivalent
representation of $|\rmD f|(X)$.

\begin{proposition}\label{prop:better_var}
For all $f\in L^1(X,\meas)$ one has
$$
|\rmD f|(X)=\inf\liminf_{n\to\infty}\int_X{\rm Lip}_a(f_n)\di\meas,
$$
where the infimum runs amont all $f_n\in\Lip_\bs(X)$ 
convergent to $f$ in $L^1(X,\meas)$.
\end{proposition}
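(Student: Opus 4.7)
The plan is to prove the two inequalities separately. The easy direction, $|\rmD f|(X)\leq\inf\liminf_n\int_X\mathrm{Lip}_a(f_n)\di\meas$, is immediate: since $\meas\in\Meas^+_\loc(X)$, any element of $\Lip_\bs(X)$ is bounded and integrable, hence $\Lip_\bs(X)\subset\Lipb(X)\cap L^1(X,\meas)$. Combining this with the pointwise inequality $\mathrm{lip}(g)\leq\mathrm{Lip}_a(g)$, every sequence $(f_n)\subset\Lip_\bs(X)$ with $f_n\to f$ in $L^1$ is admissible in the defining infimum of $|\rmD f|(X)$, giving
\[
|\rmD f|(X)\leq\liminf_n\int_X\mathrm{lip}(f_n)\di\meas\leq\liminf_n\int_X\mathrm{Lip}_a(f_n)\di\meas,
\]
and passing to the infimum yields the inequality.

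For the reverse inequality, I would fix $\epsilon>0$, assume $|\rmD f|(X)<\infty$, and start from $f_n\in\Lipb(X)\cap L^1(X,\meas)$ with $f_n\to f$ in $L^1$ and $\int_X\mathrm{lip}(f_n)\di\meas\leq|\rmD f|(X)+\epsilon$. The goal is to modify $(f_n)$ in two stages so as to produce $g_n\in\Lip_\bs(X)$ with $g_n\to f$ in $L^1$ and $\limsup_n\int_X\mathrm{Lip}_a(g_n)\di\meas\leq|\rmD f|(X)+C\epsilon$, after which letting $\epsilon\downarrow 0$ closes the argument. Stage 1 is \emph{localization}: multiply $f_n$ by a cutoff $\eta_R\in\Lip_\bs(X)$ equal to $1$ on $B_R(\bar x)$, vanishing off $B_{2R}(\bar x)$, with $\Lip(\eta_R)\leq 1/R$. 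The pointwise Leibniz-type inequality
\[
\mathrm{lip}(f_n\eta_R)(x)\leq\eta_R(x)\mathrm{lip}(f_n)(x)+|f_n(x)|\mathrm{lip}(\eta_R)(x),
\]
together with the uniform $L^1$ control on $(f_n)$, bounds the truncation error by $R^{-1}\sup_n\|f_n\|_{L^1}$, so a diagonal extraction $R(n)\to\infty$ yields $\tilde f_n\in\Lip_\bs(X)$ still $L^1$-convergent to $f$ with $\limsup_n\int_X\mathrm{lip}(\tilde f_n)\di\meas\leq|\rmD f|(X)+\epsilon$.

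Stage 2 is the main obstacle: replace $\mathrm{lip}(\tilde f_n)$ by $\mathrm{Lip}_a(\tilde f_n)$ in the integral without essentially increasing its value. This is delicate because $\mathrm{Lip}_a$ (the upper semicontinuous envelope of $\mathrm{lip}$, in a suitable sense) may exceed $\mathrm{lip}$ pointwise on sets of positive measure. The idea is to further regularize each $\tilde f_n$ to kill this defect. A natural candidate is the Hopf--Lax inf-convolution
\[
\tilde f_n^s(x):=\inf_{y\in X}\Bigl[\tilde f_n(y)+\tfrac{1}{2s}\dist^2(x,y)\Bigr],\qquad s>0,
\]
which stays in $\Lip_\bs(X)$, converges uniformly to $\tilde f_n$ as $s\downarrow 0$, and, by the semi-concavity properties inherited from $\dist^2$, has $\mathrm{Lip}_a(\tilde f_n^s)=\mathrm{lip}(\tilde f_n^s)$ $\meas$-a.e. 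An alternative, well suited to a purely metric setting, is to reduce via the coarea formula \eqref{eq:coarea} to $f=\chi_E$ and approximate by truncations of $\mathrm{dist}(\cdot,E^c)$, whose slope and asymptotic Lipschitz constant agree $\meas$-a.e.\ since $\mathrm{dist}(\cdot,E^c)$ is $1$-Lipschitz. A final diagonal extraction combining the two stages produces the desired $g_n$. The main technical difficulty lies precisely in controlling the defect $\mathrm{Lip}_a-\mathrm{lip}$ in $L^1$ uniformly in $n$ along the regularization, without appealing to any smoothness structure on the underlying space.
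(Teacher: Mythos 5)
Your first inequality and your Stage 1 localization are sound, and they correspond to the reductions the paper itself makes. The genuine gap is Stage 2, which is the entire content of the proposition, and your proposal leaves it open (as you concede in your final sentence). Concretely: (i) the claim that the Hopf--Lax regularization $\tilde f_n^s$ satisfies ${\rm Lip}_a(\tilde f_n^s)={\rm lip}(\tilde f_n^s)$ $\meas$-a.e.\ rests on semiconcavity/Alexandrov-type arguments that are intrinsically Euclidean; the proposition is stated for an arbitrary complete separable metric measure space, where no such statement is available, and ``semi-concavity inherited from $\dist^2$'' has no a.e.\ consequences without further structure on $(X,\dist,\meas)$. (ii) Even granting (i), you would still need the quantitative estimate $\limsup_{s\downarrow 0}\int_X{\rm Lip}_a(\tilde f_n^s)\di\meas\leq\int_X{\rm lip}(\tilde f_n)\di\meas+\epsilon$; the Hopf--Lax semigroup carries no such $L^1$ bound on slopes in a general metric space --- its standard estimates are pointwise or uniform, not integral, and nothing prevents the regularization from increasing the mass of the slope. (iii) The coarea alternative fails for a different reason: approximating $\chi_E$ by truncations of $\dist(\cdot,E^c)$ produces asymptotic-Lipschitz integrals of the order of the Minkowski content of the chosen representative of $\partial E$, which in general strictly exceeds the perimeter $|\rmD\chi_E|(X)$; the identification of perimeter with a \emph{relaxed} Minkowski content is itself a delicate result (the paper cites \cite{AmbrosioDiMarinoGigli} for it) and does not hold set by set.

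The idea you are missing, and which is how the paper closes Stage 2, is to route the argument through the minimal relaxed slope rather than constructing a regularization on which ${\rm Lip}_a$ and ${\rm lip}$ coincide a.e. After reducing to $f\in\Lip_\bs(X)$, Theorem~\ref{tapprox} provides $f_n\in\Lipb(X)\cap L^2(X,\meas)$ with $f_n\to f$ in $L^2(X,\meas)$ and ${\rm Lip}_a(f_n)\to|\nabla f|$ \emph{strongly} in $L^2(X,\meas)$; since $f$ has bounded support, the $f_n$ can be taken with equibounded supports, so both convergences also hold in $L^1(X,\meas)$. One then concludes from the inequality $|\nabla f|\leq{\rm lip}(f)$ $\meas$-a.e., which encodes precisely the nontrivial equivalence of the relaxations built from ${\rm Lip}_a$, ${\rm lip}$ and upper gradients recalled after \eqref{eq:defchp} (due to \cite{AmbrosioColomboDiMarino}). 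In other words, the defect ${\rm Lip}_a-{\rm lip}$ is not killed by hand: it is absorbed into the already-established Sobolev-space fact that $|\nabla f|$ is simultaneously an $L^2$-strong limit of asymptotic Lipschitz constants of Lipschitz approximations and is dominated pointwise a.e.\ by the slope of $f$.
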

\begin{proof} By a diagonal argument it is sufficient, for any $f\in\Lipb(X)$ with ${\rm lip}(f)\in L^1(X,\meas)$, to 
find $f_n\in\Lip_\bs(X)$ convergent to $f$ in $L^1(X,\meas)$ with ${\rm Lip}_a(f_n)\to g$ in $L^1(X,\meas)$ and
$g\leq{\rm lip}(f)$ $\meas$-a.e. in $X$. By a further diagonal argument, it is sufficient to find $f_n$ when $f\in\Lip_\bs(X)$.
Under this assumption, we know by Theorem~\ref{tapprox} that there exist $f_n\in\Lipb(X)$ satisfying $f_n\to f$ in $L^2(X,\meas)$ with
${\rm Lip}_a(f_n)\to |\nabla f|$ in $L^2(X,\meas)$. Since $f$ has bounded support, also $f_n$ can be taken with equibounded
support, hence both convergences occur in $L^1(X,\meas)$. Since $|\nabla f|\leq {\rm lip}(f)$ $\meas$-a.e., we are done.
\end{proof}

In the following proposition we list more properties of $BV$ functions in $RCD(K,\infty)$ spaces.

\begin{proposition}\label{prop:char_bv}
Let $(X,\dist,\meas)$ be a $RCD(K,\infty)$ space. Then, the following properties hold:
\begin{itemize}
\item[(a)] if $f\in \Lipb(X)\cap L^1(X,\meas)\cap H^{1,2}(X,\dist,\meas)$ one has 
\begin{equation}\label{eq:regu2}
|\rmD f|(X)=\int_X |\nabla f|\di\meas;
\end{equation}
\item[(b)] if $f\in BV(X,\dist,\meas)$ one has
\begin{equation}\label{eq:BVcontraction}
|Dh_t f|(X)\leq e^{-Kt}|\rmD f|(X);
\end{equation} 
\item[(c)] for all $f\in BV(X,\dist,\meas)$ one has
\begin{equation}\label{eq:Ledoux1}
\int_X |P_t f-f|\di\meas\leq c(t,K)|\rmD f|(X)
\end{equation}
with $c(t,K)\sim \sqrt{t}$ as $t\downarrow 0$.
\end{itemize}
\end{proposition}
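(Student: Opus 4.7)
The plan is to prove the three items in order, bootstrapping (b) from (a) via a Bakry--\'Emery contraction applied to a smooth approximating sequence, and then (c) from (b) together with the Lipschitz case already covered by Proposition~\ref{prop:Ledoux}.

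\textbf{For (a), the $\le$ direction.} Since $f\in\Lipb\cap H^{1,2}$ has $|\nabla f|\in L^\infty$, the Bakry--\'Emery inequality \eqref{eq:BE} combined with Proposition~\ref{prop:reguht}(a) yields $\lip(h_t f) = |\nabla h_t f|\le e^{-Kt}h_t|\nabla f|$ $\meas$-a.e. Since $h_t$ is mass preserving on $L^1(X,\meas)$ (which applies to $f$, and hence to $|\nabla f|$ once we know it is in $L^1$; in the opposite case $\int|\nabla f|\di\meas=\infty$ there is nothing to prove for this direction), integrating gives $\int_X|\nabla h_tf|\di\meas\le e^{-Kt}\int_X|\nabla f|\di\meas$. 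Using a $1/R$-Lipschitz cutoff $\chi_R$ vanishing outside $B_{2R}(\bar x)$ and the Leibniz-type slope bound $\lip((h_tf)\chi_R)\le \lip(h_tf)\chi_R + |h_tf|/R$, we produce approximants $(h_tf)\chi_R\in\Lip_\bs(X)$ converging to $h_tf$ in $L^1$ as $R\to\infty$, whence $|\rmD h_tf|(X)\le \int_X|\nabla h_tf|\di\meas$. Taking $t\downarrow 0$ and using $L^1$-continuity of $h_t$ together with $L^1$-lower semicontinuity of the total variation closes this direction.

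\textbf{For (a), the $\ge$ direction.} By Proposition~\ref{prop:better_var} it suffices to show that $\int_X|\nabla f|\di\meas\le\liminf_n\int_X{\rm Lip}_a(f_n)\di\meas$ for every sequence $f_n\in\Lip_\bs(X)$ with $f_n\to f$ in $L^1$. I would invoke the $p$-independence of the minimal relaxed gradient in the $RCD(K,\infty)$ setting (via \cite{AmbrosioColomboDiMarino} and \cite{GigliHan}, already cited in Section~\ref{sec:4}): the resulting $1$-minimal relaxed slope coincides with $|\nabla f|$, and by minimality $|\nabla f|$ is dominated by any weak $L^1$ limit point of ${\rm Lip}_a(f_n)$. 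An alternative ``intrinsic'' argument goes through the duality representation of $\int_X|\nabla f|\di\meas$ provided by Proposition~\ref{prop:newrepGamma}: for any finite family of $1$-Lipschitz $v_k\in H^{1,2}$ and $w_k\in\Cbs(X)$ with $\sum_k|w_k|\le 1$ one has $\sum_k\int \langle\nabla (h_sf_n),\nabla v_k\rangle w_k\di\meas\le \int{\rm Lip}_a(h_sf_n)\di\meas\le e^{-Ks}\int{\rm Lip}_a(f_n)\di\meas$, and by Theorem~\ref{thm:strong-convergence-gradients} applied to the constant ambient measure one can pass $n\to\infty$ and then $s\downarrow 0$.

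\textbf{For (b).} Fix an approximating sequence $f_n\in\Lip_\bs(X)$ with $f_n\to f$ in $L^1$ and $\int_X{\rm Lip}_a(f_n)\di\meas\to |\rmD f|(X)$, provided by Proposition~\ref{prop:better_var}. Each $f_n$ lies in $\Lipb\cap L^1\cap L^2\cap H^{1,2}$ with $|\nabla f_n|\in L^\infty$, so Bakry--\'Emery \eqref{eq:BE} combined with mass preservation of $h_t$ gives
\[
\int_X|\nabla h_tf_n|\di\meas\le e^{-Kt}\int_X h_t{\rm Lip}_a(f_n)\di\meas = e^{-Kt}\int_X{\rm Lip}_a(f_n)\di\meas.
\]
Since $h_tf_n\in\Lipb\cap L^1\cap H^{1,2}$, part (a) converts the left-hand side into $|\rmD h_tf_n|(X)$. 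Using $L^1$-continuity of $h_t$ and lower semicontinuity of the total variation,
\[
|\rmD h_tf|(X)\le\liminf_n|\rmD h_tf_n|(X)\le e^{-Kt}|\rmD f|(X).
\]

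\textbf{For (c).} With the same sequence, Proposition~\ref{prop:Ledoux} applied to each $f_n\in\Lip_\bs(X)$ yields $\int_X|h_tf_n-f_n|\di\meas\le c(t,K)\int_X|\nabla f_n|\di\meas\le c(t,K)\int_X{\rm Lip}_a(f_n)\di\meas$. Passing to the limit via the $L^1$-continuity of $h_t$ gives the claim.

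\textbf{Main obstacle.} The delicate step is the $\ge$ direction of (a): the definition of $|\rmD f|(X)$ uses $L^1$-approximation while $|\nabla f|$ is the $L^2$-relaxation, so one really needs the $p$-independence of the minimal relaxed gradient in $RCD(K,\infty)$ spaces. Once (a) is established, (b) and (c) are routine density arguments built on Bakry--\'Emery and Proposition~\ref{prop:Ledoux}.
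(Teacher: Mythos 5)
Your treatment of (a) $\le$, (b) and (c) is correct and coincides in substance with the paper's proof: (a) $\le$ follows from \eqref{eq:BE} together with ${\rm lip}(h_tf)=|\nabla h_tf|$ and $L^1$-lower semicontinuity of the total variation as $t\downarrow 0$; (b) is obtained by running \eqref{eq:BE} through an approximating sequence and applying (a) to the regularized functions (your version, working directly with the automatically bounded approximants from Proposition~\ref{prop:better_var}, is in fact slightly more complete than the paper's one-line reduction to the bounded case plus truncation); and (c) is the limit of Proposition~\ref{prop:Ledoux} along the same sequence, exactly as in the paper.

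The trouble lies in the step you yourself single out as the crux, the inequality $\ge$ in (a). Your primary route is a genuine gap: \cite{AmbrosioColomboDiMarino} and \cite{GigliHan} identify the $p$-minimal relaxed slopes only for exponents $p\in(1,\infty)$; there is no ``$1$-minimal relaxed slope'' theory there to invoke, and the identity $|\rmD f|(X)=\int_X|\nabla f|\di\meas$ for $f\in\Lipb(X)\cap L^1\cap H^{1,2}$ is precisely the missing $p=1$ endpoint, so appealing to ``$p$-independence'' here is circular. Moreover, a sequence ${\rm Lip}_a(f_n)$ bounded in $L^1$ need not have any weakly convergent subsequence in $L^1$ (this failure of compactness is exactly why $BV$, rather than $W^{1,1}$, is the natural limit object), so the ``weak $L^1$ limit point'' you want to dominate $|\nabla f|$ with may simply not exist. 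For the same reason your closing claim that ``one really needs the $p$-independence'' is false: the paper proves (a) $\ge$ without it.

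Your alternative, duality-based argument is the one that works, and it is close in spirit to the paper's actual proof. The paper takes an arbitrary approximating sequence (made uniformly bounded by truncation), notes that $h_tf_n\weakto h_tf$ weakly in $H^{1,2}$, bounds $\int_X|\nabla h_tf_n|\di\meas\le e^{-Kt}\int_X{\rm lip}(f_n)\di\meas$ via \eqref{eq:BE}, obtains the lower-semicontinuity step from the convexity of $g\mapsto\int_X|\nabla g|\di\meas$ and Mazur's lemma, and finally lets $t\downarrow0$; you obtain the same lower-semicontinuity step from Proposition~\ref{prop:newrepGamma} instead, which works equally well. Two repairs are needed in your write-up, though. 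First, Theorem~\ref{thm:strong-convergence-gradients} cannot be quoted as stated: its test functions must belong to $h_{\setQ_+}\Algebra_\bs$, while the $v_k$ in Proposition~\ref{prop:newrepGamma} are arbitrary $1$-Lipschitz elements of $H^{1,2}$. But since the measure is fixed in this proposition you do not need that theorem at all: once $h_sf_n\to h_sf$ strongly in $H^{1,2}$ as $n\to\infty$ (Corollary~\ref{cor:strocoh}(b) with $\meas_i=\meas$, or \eqref{eq:Brezis2} plus standard Hilbert-space arguments), Cauchy--Schwarz gives $\int_X\langle\nabla h_sf_n,\nabla v_k\rangle w_k\di\meas\to\int_X\langle\nabla h_sf,\nabla v_k\rangle w_k\di\meas$. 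Second, you must truncate the $f_n$ to make them uniformly bounded (harmless by \eqref{eq:monotot} and the boundedness of $f$), since $L^1$ convergence alone does not give the $L^2$ convergence needed to start the heat-flow step.
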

\begin{proof} (a) Let $f\in\Lipb(X)\cap L^1(X,\meas)\cap H^{1,2}(X,\dist,\meas)$ and apply \eqref{eq:BE} and the inequality
${\rm lip}(g)\leq {\rm Lip}_a(g)$ to get
$$
|\rmD h_t f|(X)\leq\int_X|\nabla h_t f|\di\meas\leq e^{-Kt}\int_X|\nabla f|\di\meas.
$$
Letting $t\downarrow 0$ provides the inequality $\leq$ in (a). In order to prove the converse inequality we have to bound
from below the number $L$ in \eqref{eq:liminf}
along all sequences $(f_n)\subset\Lipb(X)$ convergent to $f$ in $L^1(X,\meas)$. It is not restrictive to assume that the
$\liminf$ is a finite limit and also, since $f$ is bounded, that $f_n$ are uniformly bounded. The finiteness of
$\int_X |\nabla f_n|\di\meas$ gives immediately $f_n\in H^{1,2}(X,\dist,\meas)$. In addition, for all $t>0$ it is easily seen that
$h_t f_n$ weakly converge to $h_t f$ in $H^{1,2}(X,\dist,\meas)$, hence the convexity of 
$$
g\mapsto \int_X|\nabla g|\di\meas\qquad g\in H^{1,2}(X,\dist,\meas)
$$
and Mazur's lemma give
$$
L\geq e^{Kt}\liminf_{n\to\infty}\int_X |\nabla (h_t f_n)|\di\meas\geq e^{Kt}\int_X|\nabla h_t f|\di\meas.
$$
We can use the lower semicontinuity of the total variation to get the inequality $\geq$ in (a).

The proof of (b) in the case of bounded functions uses \eqref{eq:BE} as in the proof of (a) and it is omitted. The general
case can be recovered by a truncation argument. 

The proof of (c) is an immediate consequence of \eqref{eq:Ledoux} and the definition of $BV$.
\end{proof}

The following theorem provides the stability of the $BV$ property under mGH-convergence. It will be generalized in
Theorem~\ref{thm:p-mosco}, but we prefer to give a direct proof in the $BV$ case, while the proof of
Theorem~\ref{thm:p-mosco} will focus more on the Sobolev case.

\begin{theorem}[Stability of the $BV$ property under mGH convergence]\label{thm:main} 
Let $(X,\dist,\meas_i)$ be $RCD(K,\infty)$ spaces satisfying \eqref{eq:uniform_growth}.
If $f_i\in BV(X,\dist,\meas_i)$ $L^1$-strongly converge to $f$ with $\sup_i|\rmD f_i|_i(X)<\infty$, then $f\in BV(X,\dist,\meas)$ and
\begin{equation}\label{eq:basic_lsc}
|\rmD f|(X)\leq\liminf_{i\to\infty}|\rmD f_i|_i(X).
\end{equation}  
\end{theorem}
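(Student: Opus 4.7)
My plan is to regularize the $f_i$ by the heat semigroups $h^i_t$, transport the stability from the Sobolev level (where the Mosco machinery applies) back to $BV$ via the identity $|\rmD g|(X)=\int_X|\nabla g|\di\meas$ on Lipschitz functions, and then let $t\downarrow0$. The key obstacle will be that $f_i$ and $f$ need not lie in $H^{1,2}$, which is exactly the reason for applying the heat flow regularization; keeping the various norm and semicontinuity estimates uniform during this process is where most of the care is required.

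First I would reduce to the case in which $f_i$ (hence $f$) is uniformly bounded. Given the truncation $\phi_M(z):=\max(-M,\min(M,z))$, the monotonicity \eqref{eq:monotot} gives $|\rmD(\phi_M\circ f_i)|_i(X)\leq|\rmD f_i|_i(X)$, and Proposition~\ref{prop:strocon}(a) preserves $L^1$-strong convergence under $\phi_M$; since the $L^1$-lower semicontinuity of $|\rmD\cdot|$ recovers the full statement by sending $M\to\infty$, it suffices to prove the theorem for uniformly bounded $f_i$. In this uniformly bounded regime, the identity $z=\phi(\sigma(z))$ with $\phi(w):=w|w|$ together with Proposition~\ref{prop:strocon}(a) upgrades the $L^1$-strong convergence to $L^2$-strong convergence, so that $f_i\in L^1\cap L^\infty\cap L^2(X,\meas_i)$ converges $L^2$-strongly to $f$.

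Next I would apply the heat semigroup. For each $t>0$, \eqref{eq:Brezis1} gives $h^i_tf_i\in H^{1,2}(X,\dist,\meas_i)$, the $L^\infty$-contraction combined with Proposition~\ref{prop:reguht}(a) shows $h^i_tf_i\in\Lipb(X)$, and the $L^1$-contraction of the extended semigroup ensures $h^i_tf_i\in L^1(X,\meas_i)$; hence Proposition~\ref{prop:char_bv}(a) yields
\[ |\rmD h^i_tf_i|_i(X)=\int_X|\nabla h^i_tf_i|_i\di\meas_i, \]
and \eqref{eq:BVcontraction} bounds the left-hand side by $e^{-Kt}|\rmD f_i|_i(X)$. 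By Corollary~\ref{cor:strocoh}(b), $h^i_tf_i$ converge strongly in $H^{1,2}$ to $h_tf$ (which analogously belongs to $\Lipb\cap L^1\cap H^{1,2}$). Applying the liminf inequality \eqref{eq:lscc1} of Lemma~\ref{lem:old_sci} with $g\equiv 1$ to this weakly $H^{1,2}$-convergent sequence and using Proposition~\ref{prop:char_bv}(a) again on $h_tf$ gives
\[ |\rmD h_tf|(X)=\int_X|\nabla h_tf|\di\meas\leq\liminf_{i\to\infty}\int_X|\nabla h^i_tf_i|_i\di\meas_i\leq e^{-Kt}\liminf_{i\to\infty}|\rmD f_i|_i(X). \]

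Finally I would let $t\downarrow0$. The $L^1$-continuity of the heat semigroup on $L^1(X,\meas)$ (which holds under the growth bound \eqref{eq:uniform_growth}, or can be seen directly from \eqref{eq:Ledoux1} applied to any BV approximant of $f$) yields $h_tf\to f$ in $L^1(X,\meas)$; the lower semicontinuity of the total variation under $L^1$-convergence then gives
\[ |\rmD f|(X)\leq\liminf_{t\downarrow0}|\rmD h_tf|(X)\leq\liminf_{i\to\infty}|\rmD f_i|_i(X), \]
concluding the proof in the bounded case. The genuine difficulty of the argument lies in localizing the Mosco convergence through Lemma~\ref{lem:old_sci}; once that piece is in place, the rest is a routine regularize-and-pass-to-the-limit procedure.
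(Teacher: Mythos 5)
Your proof is correct and follows essentially the same route as the paper's: truncation to the uniformly bounded case, heat-flow regularization combined with the contraction estimate \eqref{eq:BVcontraction} and the representation \eqref{eq:regu2} of the total variation of Lipschitz functions, the $\liminf$ inequality of Lemma~\ref{lem:old_sci} with $g\equiv 1$, and lower semicontinuity of the total variation as $t\downarrow 0$. The only cosmetic difference is that you invoke the strong $H^{1,2}$ convergence of Corollary~\ref{cor:strocoh}(b), whereas the paper gets by with the weak $H^{1,2}$ convergence that already follows from the uniform Cheeger-energy bounds on the regularized sequence.
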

\begin{proof} In the proof it is not restrictive to assume that the functions $f_i$ are uniformly bounded.
Indeed, since the truncated functions $f_i^N:=N\wedge f_i\lor -N$ $L^1$-converge to $f^N:=N\wedge f\lor -N$,
if we knew that $f_N\in BV(X,\dist,\meas)$, with 
$$
|\rmD f^N|(X)\leq\liminf_{i\to\infty}|\rmD f_i^N|_i(X),
$$
then we could apply \eqref{eq:monotot} to $f_i^N$ and use the lower semicontinuity of the total variation to obtain \eqref{eq:basic_lsc}.

After this reduction to uniformly bounded sequences, let us fix $t>0$ and 
consider the functions $h^i_t f_i$, which are uniformly bounded, uniformly Lipschitz (thanks to \eqref{eq:regu1}), in
$H^{1,2}(X,\dist,\meas_i)$ and converge to $h_t f\in H^{1,2}(X,\dist,\meas)$. If we were able to
prove 
\begin{equation}\label{eq:basiclsc1}
|\rmD h_tf|(X)\leq\liminf_{i\to\infty}|\rmD h^i_t f_i|_i(X)
\end{equation}
then we could use \eqref{eq:BVcontraction} to obtain $$|\rmD h_tf|(X)\leq e^{-Kt}\liminf_{i\to\infty}|\rmD f_i|_i(X)$$ and we could
eventually use once more the lower semicontinuity of the total variation to conclude.

Thanks to these preliminary remarks, in the proof of the proposition it is not restrictive to assume that $f_i$ are equi-bounded 
and equi-Lipschitz, with $f_i\in H^{1,2}(X,\dist,\meas_i)$, $f\in H^{1,2}(X,\dist,\meas)$. Assuming also with no loss of generality that 
the $\liminf$ in \eqref{eq:basic_lsc} is a finite limit, we have that $f_i$ are equi-bounded in $H^{1,2}$, 
so that they converge weakly to $f$ in $H^{1,2}$. Hence, thanks to the representation
\eqref{eq:regu2} of the total variation on Lipschitz functions, we need to prove that 
\begin{equation}\label{eq:basiclsc2}
\int_X|\nabla f|\di\meas\leq\liminf_{i\to\infty}\int_X|\nabla f_i|_i\di\meas_i.
\end{equation}
This is a consequence of Lemma~\ref{lem:old_sci} with $g\equiv 1$.
\end{proof}

\section{Compactness in $H^{1,p}$ and in $BV$} \label{sec:7}

In this section, building upon the basic compactness result in $H^{1,2}$ of \cite{GigliMondinoSavare13}, we provide new
compactness results. In order to state them in global form (i.e. passing from $L^p_\loc$-strong to $L^p$-strong
convergence) and in order to reach 
exponents $p$ smaller than 2, suitable uniform isoperimetric estimates along the sequence of spaces will be needed.

\begin{definition}[Isoperimetric profile] Assume $\meas(X)=1$. We say that $\omega:(0,\infty)\to (0,1/2]$ 
is an isoperimetric profile for $(X,\dist,\meas)$ if for all $\epsilon>0$ one has the implication
\begin{equation}\label{eq:weakisop}
\meas(A)\leq\omega(\epsilon)\qquad\Longrightarrow\qquad \meas(A)\leq\epsilon|\rmD \chi_A|(X)
\end{equation}
for any Borel set $A\subset X$.
\end{definition}

A stronger formulation is 
$$
\meas(A)\leq \Phi(|\rmD \chi_A|(X))\quad\text{whenever $\meas(A)\leq 1/2$}
$$
for some $\Phi: [0,\infty]\to [0,1]$ nondecreasing with $\Phi(0)=0$ and $\Phi(u)=o(u)$ as $u\downarrow 0$, but the formulation
\eqref{eq:weakisop}, which involves only the control of sets with sufficiently small measure, is more adapted to our needs.

If $(X,\dist,\meas)$ has $\omega$ as isoperimetric profile, one has the following property: 
for any $\epsilon>0$ and any $t\in\setR$ such that $\meas(\{f>t\})\leq\omega(\epsilon)$, one has
\begin{equation}\label{eq:median_decay}
\int_{\{f\geq t\}}(f-t)^p\di\meas\leq p^p\epsilon^p\int_X{\rm lip}^p(f)\di\meas.
\end{equation}
In order to prove \eqref{eq:median_decay} it is sufficent to apply \eqref{eq:coarea} to get
$$
\int_{\{g\geq 0\}}g\di\meas\leq\epsilon\int_X{\rm lip}(g)\di\meas\qquad\text{whenever $\meas(\{g>0\})\leq\omega(\epsilon)$.}
$$
Eventually, by applying this to $g=[(f-t)^+]^p$, with the H\"older inequality we conclude. By the definition of $\Ch_p$ we
also get
\begin{equation}\label{eq:median_decay_Chp}
\int_{\{f\geq t\}}(f-t)^p\di\meas\leq p^{p+1}\epsilon^p\Ch_p(f)\quad\text{$\forall f\in H^{1,p}(X,\dist,\meas)$ with
$\meas(\{f>t\})\leq\omega(\epsilon)$.}
\end{equation}
%
The following theorem provides classes of spaces for which the existence of an isoperimetric profile is
known.  Notice that $RCD(K,N)$ spaces with $K>0$ and $N<\infty$ have always finite diameter.

\begin{theorem}[Isoperimetric profiles]\label{thm:isopro}
The class of spaces $(X,\dist,\meas)$ with $\meas(X)=1$ having an isoperimetric profile includes:
\begin{itemize}
\item[(a)] $RCD(K,\infty)$ spaces with $K>0$;
\item[(b)] $RCD(K,\infty)$ spaces with finite diameter.
\end{itemize}
\end{theorem}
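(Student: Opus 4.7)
The two parts will be handled by separate mechanisms, reflecting the different nature of the two hypotheses.

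For part (a), the plan is to invoke the Gaussian-type isoperimetric inequality available in the class $RCD(K,\infty)$ with $K>0$, of Bakry--Ledoux type. In this setting one has
\[
 |\rmD\chi_A|(X)\geq \sqrt{K}\,I_G(\meas(A))\qquad\text{for all Borel }A\subset X,
\]
where $I_G(v)=\varphi\circ\Phi^{-1}(v)$ is the Gaussian isoperimetric profile ($\varphi,\Phi$ being the standard Gaussian density and cumulative distribution). Since $I_G(v)\sim v\sqrt{2\log(1/v)}$ as $v\downarrow 0$, the ratio $v/I_G(v)$ vanishes as $v\to 0^+$, and inverting this behaviour produces a function $\omega:(0,\infty)\to(0,1/2]$ satisfying the implication \eqref{eq:weakisop}. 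The only non-routine point is to justify the Bakry--Ledoux inequality in our $RCD(K,\infty)$ setting (as opposed to the smooth weighted Riemannian framework): this is by now available in the literature and can be deduced via the semigroup approach, using the regularizing properties of $h_t$ recorded in Proposition~\ref{prop:reguht} together with the Bakry--\'Emery gradient estimate \eqref{eq:BE}.

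For part (b), the plan is to bootstrap the weak isoperimetric estimate of Proposition~\ref{prop:quasiiso} from Lipschitz functions to characteristic functions of Borel sets. Fix $\epsilon\in(0,1/2)$ and let $M=M(\epsilon/2,D,K)\geq 1$ be the constant provided by Proposition~\ref{prop:quasiiso}; I will set $\omega(\epsilon)=1/(2M)$. Given $A\subset X$ Borel with $\meas(A)\leq\omega(\epsilon)$, one may assume $\chi_A\in BV(X,\dist,\meas)$ (otherwise the conclusion is trivial) and, by Proposition~\ref{prop:better_var}, choose $f_n\in\Lip_\bs(X)$ with $0\leq f_n\leq 1$, $f_n\to\chi_A$ in $L^1(X,\meas)$ and $\int_X{\rm lip}(f_n)\di\meas\to|\rmD\chi_A|(X)$. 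Applying Proposition~\ref{prop:quasiiso} with parameter $\epsilon/2$ to each $f_n$ yields
\[
 \int_{\{f_n\geq M\int_X f_n\di\meas\}}f_n\di\meas\leq \tfrac{\epsilon}{2}\Bigl(\int_X f_n\di\meas+\int_X{\rm lip}(f_n)\di\meas\Bigr).
\]

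The key step is the passage to the limit $n\to\infty$. Since $M\int_X f_n\di\meas\to M\meas(A)\leq 1/2$, for $n$ large the sublevel threshold lies below $3/4$, so the left-hand side dominates $\int_{\{f_n\geq 3/4\}}f_n\di\meas\geq\tfrac{3}{4}\meas(\{f_n\geq 3/4\})$. A direct $L^1$-convergence argument (splitting $\{f_n\geq 3/4\}$ according to its intersection with $A$ and its complement, and noting that the $L^1$ error on $A\setminus\{f_n\geq 3/4\}$ is at least $\tfrac14$ of its measure, and at least $\tfrac34$ of the measure of $\{f_n\geq 3/4\}\setminus A$) shows $\meas(\{f_n\geq 3/4\})\to\meas(A)$. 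Passing to the $\liminf$ we obtain
\[
 \tfrac{3}{4}\meas(A)\leq \tfrac{\epsilon}{2}\bigl(\meas(A)+|\rmD\chi_A|(X)\bigr),
\]
so $(\tfrac34-\tfrac{\epsilon}{2})\meas(A)\leq\tfrac{\epsilon}{2}|\rmD\chi_A|(X)$. Since $\epsilon<1/2$, the factor on the left is bounded away from $0$, giving $\meas(A)\leq C\epsilon\,|\rmD\chi_A|(X)$ for an absolute constant $C$, and a harmless rescaling of $\epsilon$ at the start of the argument delivers \eqref{eq:weakisop} exactly as stated.

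The main obstacle is precisely the approximation step in (b): Proposition~\ref{prop:quasiiso} is formulated for Lipschitz $f$, and transferring it to $\chi_A$ requires the level-set identification $\{f_n\geq 3/4\}\approx A$ modulo $\meas$-null sets in the limit, which is why the intermediate threshold $3/4$ (rather than $1$) and the smallness $M\meas(A)\leq 1/2$ are built into the choice of $\omega(\epsilon)$. Part (a) is by contrast essentially a citation, with the single caveat of checking that the $RCD(K,\infty)$ version of Bakry--Ledoux covers the non-smooth setting we work in.
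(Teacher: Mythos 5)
Your proposal follows essentially the same route as the paper: part (a) is exactly the paper's argument (Bobkov's Gaussian isoperimetric inequality $\sqrt{K}\,\mathcal{I}(\meas(A))\leq |\rmD\chi_A|(X)$, with the same caveat that the $RCD(K,\infty)$ version must be taken from the literature rather than from the Markov-triple framework), and part (b) is the paper's one-line deduction from Proposition~\ref{prop:quasiiso} with $f=\chi_A$, where your level-set argument at threshold $3/4$ correctly supplies the Lipschitz-to-$BV$ approximation step that the paper leaves implicit in the phrase ``direct consequence \dots\ and of the definition of $BV$.''
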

\begin{proof} Statement (a) follows from Bobkov's inequality that, when particularized to characteristic functions, gives
$\sqrt{K}{\cal I}(\meas(A))\leq |\rmD\chi_A|(X)$, where ${\cal I}$ is the Gaussian isoperimetric function.
The proof given in \cite[Theorem~8.5.3]{BakryGentilLedoux} can be adapted without great difficulties to the context of $RCD(K,\infty)$ metric measure spaces (notice that the setting of Markov triples of 
\cite{BakryGentilLedoux}, with a $\Gamma$-invariant algebra of functions, does not seem to apply to $RCD(K,\infty)$ spaces),
see \cite{AmbrosioMondino16} for a proof.

Statement (b) is a direct consequence of Proposition~\ref{prop:quasiiso} and of the definition of $BV$ which,
choosing $f=\chi_A$, grant the inequality
$$
\meas(A)\leq \epsilon\bigl(\meas(A)+|\rmD \chi_A|(X)\bigr)
$$
as soon as $M(\epsilon,D,K)\meas(A)\leq 1$.
\end{proof}

\begin{remark}[Sharp isoperimetric profiles]{\rm
See also \cite{CavallettiMondino15a} for comparison results and for a description of the sharp isoperimetric
profile in the case when $N<\infty$, in the much more general class of $CD(K,N)$ spaces
(assuming finiteness of the diameter when $K\leq 0$).
}
\end{remark}

The following compactness theorem is one of the main results of \cite{GigliMondinoSavare13}, see
Theorem~6.3 therein, we just adapted a bit the statement to our needs, adding also a compactness
in $L^2_\loc$ independent of the equi-tightness condition \eqref{eq:tightgms}. We say that a sequence
$(f_i)$ $L^2_\loc$-strongly converges to $f$ is $f_i\phi$ $L^2$-strongly converges to $f\phi$ for all
$\phi\in\Cbs(X)$.

\begin{theorem} \label{thm:gms13} Assume that $(X,\dist,\meas_i)$ are $RCD(K,\infty)$ spaces and
$f_i\in H^{1,2}(X,\dist,\meas_i)$ satisfy
\begin{equation}\label{eq:W12bound}
\sup_i\int_X|f_i|^2\di\meas_i+\Ch^i(f_i)<\infty 
\end{equation}
and (for some and thus all $\bar x\in X$)
\begin{equation}\label{eq:tightgms}
\lim_{R\to\infty}\limsup_{i\to\infty}\int_{X\setminus B_R(\bar x)}|f_i|^2\di\meas_i=0.
\end{equation}
Then $(f_i)$ has a $L^2$-strongly convergent subsequence to $f\in H^{1,2}(X,\dist,\meas)$.
In  general, if only \eqref{eq:W12bound} holds, $(f_i)$ has a subsequence $L^2_\loc$-strongly convergent to $f\in H^{1,2}(X,\dist,\meas)$.
\end{theorem}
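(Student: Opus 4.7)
The first assertion (under the tightness \eqref{eq:tightgms}) is essentially Theorem~6.3 of \cite{GigliMondinoSavare13}, and my plan is to invoke it as given. The underlying strategy there combines heat-flow regularization with a uniform $L^2$-contraction estimate. Concretely, after truncating $f_i$ at level $N$ by the chain rule (which preserves the $H^{1,2}$ bound), the regularity estimate \eqref{eq:regu1} shows that for fixed $t>0$ the family $(h^i_t(f_i\wedge N\vee -N))_i$ is equi-bounded and equi-Lipschitz, and the inequality $\|h^i_t g - g\|_{L^2(X,\meas_i)}^2\le 2t\,\Ch^i(g)$ controls the discrepancy with $f_i$ uniformly in $i$. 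Using the weak convergence $\meas_i\to\meas$ one extracts a subsequence along which the regularized functions converge $L^2$-strongly, and a diagonal argument in $(t,N)$ transfers this to $f_i$ itself; the tightness \eqref{eq:tightgms} guarantees that $L^2_\loc$-strong convergence upgrades to $L^2$-strong convergence, while the weak-$\liminf$ part of Mosco convergence in Theorem~\ref{thm:gms13_flow} forces the limit $f$ into $H^{1,2}(X,\dist,\meas)$.

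For the second, purely local assertion my plan is to reduce to the first via a cutoff argument. Fix $\bar x\in X$ and, for each $R>0$, a function $\chi_R\in\Lip_\bs(X)$ with $0\le\chi_R\le 1$, $\chi_R\equiv 1$ on $B_R(\bar x)$, $\supp\chi_R\subset\overline{B_{2R}(\bar x)}$ and $\Lip(\chi_R)\le 1/R$. The Leibniz rule for minimal relaxed slopes yields
$$
|\nabla(f_i\chi_R)|_i^2\le 2\chi_R^2|\nabla f_i|_i^2+2R^{-2}|f_i|^2,
$$
so $(f_i\chi_R)_i$ is bounded in $H^{1,2}(X,\dist,\meas_i)$ and supported in $\overline{B_{2R}(\bar x)}$; hence the tightness condition \eqref{eq:tightgms} is trivially fulfilled for this new sequence, and the first assertion produces a subsequence along which $f_i\chi_R$ $L^2$-strongly converges to some $g_R\in H^{1,2}(X,\dist,\meas)$. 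A diagonal extraction along a sequence $R_n\uparrow\infty$ yields a single subsequence, not relabeled, such that $f_i\chi_{R_n}\to g_{R_n}$ strongly in $L^2(X,\meas)$ for every $n$. Since $f_i\chi_{R_{n+1}}\equiv f_i\chi_{R_n}$ on $B_{R_n}(\bar x)$, the uniqueness of the $L^2$-limit gives the compatibility $g_{R_{n+1}}=g_{R_n}$ on $B_{R_n}(\bar x)\cap\supp\meas$, and one may define $f\in L^2_\loc(X,\meas)$ by glueing the $g_{R_n}$. By the definition of $L^2_\loc$-strong convergence in Section~\ref{sec:3}, using cutoffs $\varphi\in\Cbs(X)$ supported inside some $B_{R_n}(\bar x)$, it then follows that $f_i\to f$ in $L^2_\loc$.

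To conclude that $f$ lies in $H^{1,2}(X,\dist,\meas)$, I would observe that $L^2_\loc$-strong convergence together with the uniform $L^2$ bound in \eqref{eq:W12bound} yields, along a further subsequence, $L^2$-weak convergence of $f_i$ to $f$ (by Proposition~\ref{prop:weakcon} applied with $p=2$, the only possible weak limit being $f$ by compatibility with the local strong limit). The weak-$\liminf$ of Mosco convergence in Theorem~\ref{thm:gms13_flow} then gives $\Ch(f)\le\liminf_i\Ch^i(f_i)<\infty$, together with $\|f\|_{L^2(X,\meas)}\le\liminf_i\|f_i\|_{L^2(X,\meas_i)}$, so $f\in H^{1,2}(X,\dist,\meas)$.

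The main obstacle is not so much analytic as bookkeeping: verifying the compatibility of the $g_{R_n}$ across the diagonal extraction and checking that the truncation and cutoff errors do not spoil the $H^{1,2}$ control of the limit. The key point that prevents strengthening the conclusion to $L^2$-strong convergence without \eqref{eq:tightgms} is precisely that, absent global tightness, $L^2$-mass of $f_i$ may escape to infinity; the cutoff procedure captures only what lives on bounded sets, which is exactly the content of $L^2_\loc$ convergence.
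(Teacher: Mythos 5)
Your proof is correct and follows essentially the same route as the paper's: the first assertion is invoked from \cite[Theorem~6.3]{GigliMondinoSavare13}, and the local statement is reduced to it by applying it to the cutoff sequences $f_i\chi_R$ and performing a diagonal argument in $R$, which is exactly what the paper does in one line. Your filled-in details (the Leibniz-rule bound making \eqref{eq:tightgms} trivial for $f_i\chi_R$, the compatibility of the limits $g_{R_n}$ on nested balls, and the use of the weak-$\liminf$ of Mosco convergence to place $f$ in $H^{1,2}(X,\dist,\meas)$) are all sound.
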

\begin{proof} The first part, as we said, is \cite[Theorem~6.3]{GigliMondinoSavare13}. For the second part, having fixed
$\bar x\in X$, it is sufficient to apply the first part to the sequences $f_i\chi_R$, where $\chi_R\in\Lip(X,[0,1])$ with 
$\chi_R\equiv 1$ on $B_R(\bar x)$ and $\chi_R\equiv 0$ on $X\setminus B_{R+1}(\bar x)$,
and then to apply a standard diagonal argument.
\end{proof}

Under suitable finiteness assumptions, coupled with the existence of a common isoperimetric profile, we
can extend this result to $L^{p_i}$ compactness, assuming Sobolev or $BV$ bounds, as follows.
 
\begin{proposition}\label{prop:compactness} Assume that $(X,\dist,\meas_i)$, $(X,\dist,\meas)$ are $RCD(K,\infty)$ spaces
satisfying $\meas_i(X)=1$, $\meas(X)=1$ and with a common isoperimetric profile.

Assuming that $p_i>1$ converge to $p$ in $[1, \infty)$ and that $f_i\in H^{1,p_i}(X,\dist,\meas_i)$ satisfy
$$
\sup_i\int_X|f_i|^{p_i}\di\meas_i+\Ch_{p_i}^i(f_i)<\infty,
$$
the family $(f_i)$ has a $L^{p_{i(j)}}$-strongly convergent subsequence $(f_{i(j)})$. Analogously, if $p_i=1$ and 
$$
\sup_i\int_X|f_i|\di\meas_i+|\rmD f_i|_i(X)<\infty,
$$
then the family $(f_i)$ has a $L^1$-strongly convergent subsequence $(f_{i(j)})$.
\end{proposition}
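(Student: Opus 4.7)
My plan is to reduce to the $H^{1,2}$-compactness already at our disposal (Theorem~\ref{thm:gms13}) through a truncate-and-smooth construction, and then to peel off both operations using the common isoperimetric profile and the Ledoux-type inequality \eqref{eq:Ledoux1}. First I would truncate: setting $f_i^N := (-N) \vee f_i \wedge N$, the chain rule for $|\nabla \cdot|$ and the contraction property \eqref{eq:monotot} preserve the hypothesised Sobolev/$BV$-bounds, while Chebyshev's inequality gives $\meas_i(\{|f_i|>N\}) \leq CN^{-p_i}$, which drops below any prescribed $\omega(\epsilon)$ as soon as $N$ is large enough, uniformly in $i$. Applying \eqref{eq:median_decay_Chp} to $|f_i|$ in the Sobolev case, or combining \eqref{eq:weakisop} with the coarea formula \eqref{eq:coarea} in the $BV$ case, then yields
\[
\|f_i - f_i^N\|_{L^{p_i}(\meas_i)} \leq \eta(N),
\]
with $\eta(N) \to 0$ as $N\to \infty$, uniformly in $i$.

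Next I would regularize each truncate by a short-time heat flow: setting $g_i^{N,t} := h_t^i f_i^N$, Proposition~\ref{prop:reguht}(a) produces $g_i^{N,t} \in \Lipb(X) \cap H^{1,2}(X,\dist,\meas_i)$ with $\|g_i^{N,t}\|_\infty \leq N$ and $\Lip(g_i^{N,t}) \leq N/\sqrt{2\mathsf{I}_{2K}(t)}$, hence uniform $H^{1,2}$-bounds. Since the $\meas_i$ are probability measures weakly converging to $\meas$, Prokhorov tightness makes \eqref{eq:tightgms} automatic, so Theorem~\ref{thm:gms13} produces a subsequence converging $L^2$-strongly to some $g^{N,t} \in H^{1,2}(X,\dist,\meas)$; the uniform $L^\infty$-bound upgrades this to $L^{p_i}$-strong convergence through the trivial Section~\ref{sec:3} decomposition. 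To undo the regularization I would apply \eqref{eq:Ledoux1} to the $BV$ function $f_i^N$, obtaining $\|g_i^{N,t} - f_i^N\|_{L^1(\meas_i)} \leq c(t,K) |\rmD f_i^N|_i(X)$; Jensen on the probability space, combined with the $H^{1,p_i}$-bound (or the $BV$-bound when $p_i=1$), controls $|\rmD f_i^N|_i(X)$ uniformly in $i$, and interpolating against $\|g_i^{N,t}-f_i^N\|_\infty \leq 2N$ gives
\[
\|g_i^{N,t} - f_i^N\|_{L^{p_i}(\meas_i)}^{p_i} \leq (2N)^{p_i-1}\,c(t,K)\,C',
\]
which vanishes as $t \to 0$, uniformly in $i$, for $N$ fixed.

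Finally I would pick $N_k \uparrow \infty$ and then $t_k \downarrow 0$ so that the errors from both the truncation and the smoothing step are smaller than $2^{-k}$ uniformly in $i$; applying the second step for each $k$ and diagonalizing yields one subsequence along which $g_i^{N_k,t_k}$ is $L^{p_i}$-strongly convergent for every $k$. Weak compactness (Proposition~\ref{prop:weakcon}) produces a further $L^{p_i}$-weak limit $f$ of $f_i$; given $\epsilon>0$ and $k$ with $2^{-k+1}<\epsilon$, the decomposition $f_i = g_i^{N_k,t_k} + (f_i - g_i^{N_k,t_k})$ has $L^\infty$-bounded and $L^1$-strongly convergent first summand and $L^{p_i}$-remainder smaller than $\epsilon$, which is precisely the Section~\ref{sec:3} definition of $L^{p_i}$-strong convergence of $f_i$ to $f$.

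The hard part will be orchestrating the scales $N$ and $t$ so that both errors vanish uniformly in $i$ in the regime $p_i \to 1$, where the $L^\infty$--$L^1$ interpolation used for the heat flow error degenerates. What rescues the estimate is that $(2N)^{p_i-1} \to 1$ as $p_i \to 1$ and that Jensen on a probability space together with the $H^{1,p_i}$- or $BV$-bound produces a uniform control on $|\rmD f_i^N|_i(X)$; absent the common isoperimetric profile the truncation could not be made uniform in $i$, and absent the probability-measure hypothesis the Jensen bound on $|\rmD f_i^N|_i(X)$ would fail.
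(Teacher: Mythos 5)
Your proof is correct, and it relies on exactly the same three ingredients as the paper's: the uniform truncation estimate \eqref{eq:median_decay_Chp} supplied by the common isoperimetric profile, the $H^{1,2}$-compactness of Theorem~\ref{thm:gms13}, and the Ledoux-type inequality \eqref{eq:Ledoux1} to undo the heat-flow smoothing. The difference is organizational. The paper splits into three cases: $p_i=2$ is handled directly by Theorem~\ref{thm:gms13} (with tightness from the isoperimetric profile); for $p_i>2$ truncation alone suffices, because for the bounded truncates $g_i$ the H\"older/Jensen inequality $2\Ch^i_2(g_i)\leq\bigl(p_i\Ch^i_{p_i}(g_i)\bigr)^{2/p_i}$ on a probability space yields the $H^{1,2}$ bound with no smoothing at all; the heat flow and \eqref{eq:Ledoux1} are used only when $p_i<2$. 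You instead run the truncate-and-smooth argument uniformly in $p_i$, replacing the case analysis by the interpolation $\|\cdot\|^{p_i}_{L^{p_i}(X,\meas_i)}\leq\|\cdot\|_{L^\infty(X,\meas_i)}^{p_i-1}\|\cdot\|_{L^1(X,\meas_i)}$, which is legitimate for every exponent since $\sup_i p_i<\infty$, together with an explicit diagonalization over the scales $(N_k,t_k)$. Your version buys a single argument with no case distinction; the paper's buys economy when $p_i\geq 2$, where neither the heat flow nor \eqref{eq:Ledoux1} is needed. One caveat, which you share with the first line of the paper's own proof: the $L^{p_i}$-weak limit $f$ cannot be obtained from Proposition~\ref{prop:weakcon} when $p=1$, since that proposition assumes $p\in(1,\infty)$ and weak $L^1$ compactness is false in general. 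In your scheme this is easily repaired without any appeal to weak compactness: the $L^1$-strong limits $g^k$ of the pieces $g_i^{N_k,t_k}$ form a Cauchy sequence in $L^p(X,\meas)$ by lower semicontinuity of the norms, and their limit serves as $f$, the weak convergence $f_i\meas_i\weakto f\meas$ then following from the uniform bound $\sup_i\|f_i-g_i^{N_k,t_k}\|_{L^1(X,\meas_i)}\leq\sup_i\|f_i-g_i^{N_k,t_k}\|_{L^{p_i}(X,\meas_i)}<2^{-k+1}$.
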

\begin{proof} By $L^{p_i}$-weak compactness we can assume that the weak limit $f\in L^p(X,\meas)$ exists.

The case $p_i=2$ for infinitely many $i$ is already covered by Theorem~\ref{thm:gms13}, indeed
the condition \eqref{eq:tightgms} is automatically satisfied under the isoperimetric assumption, splitting
$$
\int_{X\setminus B_R(\bar x)}|f_i|^2\di\meas_i\leq
\int_{\{|f_i|\geq M\}}|f_i|^2\di\meas_i+M^2\meas_i(X\setminus B_R(\bar x))
$$
and using \eqref{eq:median_decay_Chp} with $p=2$, letting first $R\to\infty$ and then $M\uparrow\infty$.

Hence, in the sequel we need
only to consider the cases $p_i>2$ for $i$ large enough and $p_i<2$ for $i$ large enough. 
  
In the case when $p_i>2$ for $i$ large enough the proof is simpler, since for any $\delta>0$ we can write $f_i=g_i+h_i$ with
$\|h_i\|_{L^{p_i}(X,\meas_i)}<\delta$, $\|g_i\|_{L^\infty(X,\meas_i)}$ equibounded and $\sup_i\Ch^i_{p_i}(g_i)<\infty$. Since
$2\Ch_2^i(g_i)\leq \bigl( p_i\Ch_{p_i}^i(g_i)\bigr)^{2/p_i}$, it follows that $\Ch_2^i(g_i)$ is bounded as well, hence by what we already
proved in the case $p=2$ we can find a subsequence $g_{i(j)}$ $L^2$-strongly convergent and then
(since $(g_i)$ are equibounded) $L^{p_i}$-strongly convergent.  The decomposition $f_i=g_i+h_i$ can be achieved using
\eqref{eq:median_decay_Chp} with $p=p_i$, which gives 
$$
\lim_{M\to\infty}\sup_i\int_{\{|f_i|>M\}}(|f_i|-M)^{p_i}\di\meas_i=0.
$$
This is due to the fact that Markov's inequality and the uniform $L^1$ bound on $f_i$ give
$$
\lim_{M\to\infty}\sup_i\meas_i(\{|f_i|>M\})=0.
$$
Hence, we can first choose $\epsilon>0$ sufficiently small, in such a way that $\sup_i p_i^{p_i+1} \epsilon^{p_i}\Ch^i_{p_i}(f_i)<\delta$ 
and then $M$ in such a way that $\sup_i\meas_i(\{|f_i|\geq M\})\leq\omega(\epsilon)$, setting $g_i=(f_i\lor -M)\land M$. 

In the case $p_i<2$ for $i$ large enough the decomposition $f_i=g_i+h_i$ can still be achieved using \eqref{eq:median_decay_Chp}
(with $\epsilon\sup_i |\rmD f_i|(X)<\delta$ in the case $p_i=1$).
Since $p_i<2$, this time we need one more regularization step to achieve the compactness of $g_i$. More precisely, we write
$g_i=(g_i-h^i_t g_i)+h^i_tg_i$; since $h^i_t g_i$ are uniformly Lipschitz we obtain that $\sup_i\Ch_2(h^i_tg_i)$
is uniformly bounded, hence we can extract a $L^2$-strongly convergent (and also $L^{p_i}$-strongly convergent) 
subsequence. It remains to prove that
\begin{equation}\label{eq:got_stuck}
\lim_{t\downarrow 0}\limsup_{i\to\infty}\int_X|g_i-h^i_t g_i|^{p_i}\di\meas_i=0.
\end{equation}
This is an immediate consequence of \eqref{eq:Ledoux1} and the uniform boundedness of $(g_i)$.
\end{proof}

\section{Mosco convergence of $p$-Cheeger energies}\label{sec:8}

The definition of Mosco convergence can be immediately adapted to the case when the exponent $p$ is different from $2$
and even $i$-dependent. Adopting the convention $\Ch_1(f)=|\rmD f|(X)$ to include also the case $p=1$, if $p_i\in [1,\infty)$
converge to $p\in [1,\infty)$ we say that the 
$p_i$-Cheeger energies $\Ch^i_{p_i}$ relative to $(X,\dist,\meas_i)$ Mosco converge to $\Ch_p$, the
$p$-Cheeger energy relative to $(X,\dist,\meas)$, if:

\begin{itemize}
\item[(a)] (\emph{Weak-$\liminf$}). For every $f_i\in L^{p_i}(X,\meas_i)$ $L^{p_i}$-weakly converging to $f\in L^p(X,\meas)$, one has
\[ \Ch_p(f)\le \liminf_{i\to\infty} \Ch^i_{p_i}(f_i).\]
\item[(b)] (\emph{Strong-$\limsup$}). For every $f \in L^p(X,\meas)$ there exist $f_i\in L^{p_i}(X,\meas_i)$ $L^{p_i}$-strongly converging to $f$ with
\begin{equation}\label{eq:optimal_Chee_pi} \Ch_p(f)=\lim_{i\to \infty} \Ch^i_{p_i}(f_i).\end{equation}
\end{itemize}

We speak instead of $\Gamma$-convergence if the same notions of convergence occurs in (a) and (b), namely the liminf inequality
is only required along $L^{p_i}$-strongly convergent sequences. Obviously Mosco convergence implies $\Gamma$-convergence and we have
provided in Proposition~\ref{prop:compactness} a compactness result that allows to improve, under the assumptions on $(X,\dist,\meas_i)$ 
stated in the proposition, $\Gamma$ to Mosco convergence.

\begin{theorem} \label{thm:p-mosco}
Let $(X,\dist,\meas_i)$ be $RCD(K,\infty)$ spaces satisfying \eqref{eq:uniform_growth} and
let $(p_i)\subset [1,\infty)$ be convergent to $p\in [1,\infty)$. Then $\Ch^i_{p_i}$ $\Gamma$-converge to
$\Ch_p$. Under the assumption of Proposition~\ref{prop:compactness} one has Mosco convergence.
\end{theorem}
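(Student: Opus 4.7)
The theorem comprises two statements---$\Gamma$-convergence in general and Mosco convergence under the compactness assumptions of Proposition~\ref{prop:compactness}. I split the argument into (i) the construction of a recovery sequence, (ii) the $\Gamma$-$\liminf$ inequality along $L^{p_i}$-strongly convergent sequences, and (iii) the Mosco upgrade.

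\emph{(i) Recovery sequence.} Given $f\in L^p(X,\meas)$ with $\Ch_p(f)<\infty$, I first approximate $f$ in $L^p(X,\meas)$ by $g\in\Lip_\bs(X)$ with $\frac{1}{p}\int_X{\rm Lip}_a^p(g)\di\meas$ arbitrarily close to $\Ch_p(f)$; this is Proposition~\ref{prop:better_var} when $p=1$, and for $p>1$ it follows from the definition of $\Ch_p$ combined with a cutoff argument (using the uniform growth \eqref{eq:uniform_growth} to control tails). For each such $g$ I take the constant sequence $g_i:=g\in L^{p_i}(X,\meas_i)$. Proposition~\ref{prop:strocon}(e) then gives $L^{p_i}$-strong convergence of $g_i$ to $g$, while the weak convergence $\meas_i\rightharpoonup\meas$ tested against the bounded, compactly supported, upper semicontinuous function ${\rm Lip}_a^{p_i}(g)$, combined with $p_i\to p$, yields
$$
\limsup_{i\to\infty}\frac{1}{p_i}\int_X{\rm Lip}_a^{p_i}(g)\di\meas_i\le\frac{1}{p}\int_X{\rm Lip}_a^p(g)\di\meas.
$$
A diagonal extraction over the approximating $g$'s then furnishes the recovery sequence.

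\emph{(ii) $\Gamma$-$\liminf$.} Suppose $f_i$ $L^{p_i}$-strongly converges to $f$ with $L:=\liminf_i\Ch^i_{p_i}(f_i)<\infty$. The truncation $f_i^N:=(-N)\vee f_i\wedge N$ is stable under $L^{p_i}$-strong convergence (via the variable-exponent analogue of Proposition~\ref{prop:strocon}(a)) and satisfies $\Ch^i_{p_i}(f_i^N)\le\Ch^i_{p_i}(f_i)$ by the chain rule, so letting $N\to\infty$ at the end using the $L^p$-lower semicontinuity of $\Ch_p$ reduces me to the case $\|f_i\|_{L^\infty(\meas_i)}\le N$. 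Fix $t>0$ and set $\tilde f_i:=h^i_tf_i$, $\tilde f:=h_tf$: by \eqref{eq:regu1} the functions $\tilde f_i$ are equi-Lipschitz and $L^\infty$-bounded, and Corollary~\ref{cor:strocoh}(b) delivers $H^{1,2}$-strong convergence $\tilde f_i\to\tilde f$. Theorem~\ref{thm:cont_reco}(c) then provides $L^1$-strong convergence of $|\nabla\tilde f_i|_i^2$ to $|\nabla\tilde f|^2$; combined with the uniform $L^\infty$ bound, Proposition~\ref{prop:strocon}(e) yields $\Ch^i_{p_i}(\tilde f_i)\to\Ch_p(\tilde f)$. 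On the other hand, the Bakry-\'Emery bound \eqref{eq:BE}, Jensen's inequality applied to the Markov kernel $h^i_t$, and the identity $\int_X h^i_t u\di\meas_i=\int_X u\di\meas_i$ (coming from mass preservation $h^i_t1=1$ and $L^2$-self-adjointness) combine into
$$
\Ch^i_{p_i}(\tilde f_i)\le e^{-p_iKt}\,\Ch^i_{p_i}(f_i).
$$
Passing to the limit in $i$ gives $\Ch_p(h_tf)\le e^{-pKt}L$; letting $t\downarrow 0$ and invoking the $L^p$-lower semicontinuity of $\Ch_p$ together with $h_tf\to f$ in $L^p$ concludes $\Ch_p(f)\le L$.

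\emph{(iii) Mosco upgrade and main obstacle.} Under the hypotheses of Proposition~\ref{prop:compactness}, if $f_i$ $L^{p_i}$-weakly converges to $f$ with $\liminf_i\Ch^i_{p_i}(f_i)<\infty$, I pass to a subsequence realizing the $\liminf$, then to a further $L^{p_i}$-strongly convergent subsequence via that proposition, identify its strong limit with $f$ by uniqueness of weak limits, and apply (ii). The main technical hurdle is in step (ii): the machinery of Section~\ref{sec:5} is inherently $H^{1,2}$-based, so moving to general $p_i\to p$ forces a careful combination of the uniform Lipschitz bound from the heat-flow regularization \eqref{eq:regu1} with the $L^1$-strong convergence of $|\nabla\tilde f_i|_i^2$ from Theorem~\ref{thm:cont_reco}(c) in order to convert $H^{1,2}$-type gradient convergence into convergence of the $L^{p_i}$-norms of the gradients; the Bakry-\'Emery inequality then propagates this back to the unregularized sequence at the price of the exponential factor $e^{-p_i Kt}$, which vanishes as $t\to 0$.
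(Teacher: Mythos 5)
Your recovery-sequence construction (i) and the Mosco upgrade (iii) are essentially the paper's own arguments and are fine. The genuine gap is in step (ii), at the very first move: the reduction by plain truncation $f_i^N:=(-N)\vee f_i\wedge N$ does not put you in a position to invoke the $L^2$-based machinery you then rely on. The theorem assumes only \eqref{eq:uniform_growth}, so $\meas_i(X)=\infty$ is allowed; when $p_i>2$ a bounded function in $L^{p_i}(X,\meas_i)$ need not belong to $L^2(X,\meas_i)$ at all (on $(\setR,|\cdot|,\leb^1)$, which is $RCD(0,\infty)$, take $f_i=f=(1+|x|)^{-\alpha}$ with $1/p<\alpha<1/2$: then $f\in H^{1,p}\cap L^\infty$, $f\notin L^2$, and truncation changes nothing). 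Hence the assertion ``$f_i^N$ $L^2$-strongly converges to $f^N$'' --- which is exactly the hypothesis of Corollary~\ref{cor:strocoh}(b), and therefore the input needed for Theorem~\ref{thm:cont_reco}(c) --- is not even meaningful. Even in the range where the truncations do lie in $L^2$ (say $p<2$), you nowhere justify the upgrade from ``uniformly bounded and $L^{p_i}$-strongly convergent'' to ``$L^2$-strongly convergent'': weak convergence and uniform $L^2$ bounds come for free, but the required inequality $\limsup_i\|f_i^N\|_{L^2(X,\meas_i)}\le\|f^N\|_{L^2(X,\meas)}$ does not. This is precisely the point where the paper uses a different truncation: the function $\phi_N$, quadratic near the origin, chosen so that $\phi_N\circ f_i$ has the needed $L^2$ integrability with uniform bounds (the paper explicitly flags that ``the quadratic regularization near the origin is necessary in the case $p\geq 2$, to get $L^2$ integrability'').

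The second structural difference compounds the problem: the paper never needs strong $H^{1,2}$ convergence of the regularized sequence. It only shows that $h^i_tf_i$ converge \emph{weakly} in $H^{1,2}$ to $h_tf$ (for which uniform $L^2$/energy bounds plus weak convergence of $f_i\meas_i$ suffice), and then derives the one-sided $\liminf$ inequality from Lemma~\ref{lem:old_sci} combined with the duality formula \eqref{eq:duality}, which converts the lower semicontinuity of $f\mapsto\int g|\nabla f|\di\meas$ into the $p$-th power inequality \eqref{eq:basiclsc166}. Your route instead demands the two-sided convergence $\Ch^i_{p_i}(h^i_tf_i)\to\Ch_p(h_tf)$, which, besides resting on the unavailable $L^2$-strong convergence, has exponent problems in the application of Proposition~\ref{prop:strocon}(e): taking $g_i=|\nabla h^i_tf_i|_i^2$ requires exponents $p_i/2\ge 1$, which fails for $p<2$, while taking $g_i=|\nabla h^i_tf_i|_i$ requires $L^1$-strong convergence and hence $L^1$-membership of the gradients, again false in general on infinite-measure spaces. (A further small point: \eqref{eq:BE} is stated for $f\in H^{1,2}$ with $|\nabla f|\in L^\infty$, and your truncated $f_i$ satisfy neither; the paper avoids this by first replacing $f_i$ with the Lipschitz approximants from the definition of $\Ch^i_{p_i}$ and applying \eqref{eq:BE} only to heat-regularized functions with $L^2\cap L^\infty$ data.) The repair is to follow the paper: truncate with $\phi_N$, work with weak $H^{1,2}$ convergence only, and replace the norm-convergence step by Lemma~\ref{lem:old_sci} plus \eqref{eq:duality}.
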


\begin{proof} {\it $\liminf$ inequality, $p>1$.} Possibly replacing $f_i$ by their $L^{p_i}$ approximations involved in the 
definition of $\Ch_{p_i}$, we need only to prove the weaker inequality
\begin{equation}\label{eq:basiclsc16}
p\Ch_p(f)\leq\liminf_{i\to\infty}\int_X{\rm Lip}_a^{p_i}(f_i)\di\meas_i.
\end{equation}

Assume first that $f_i$ are uniformly bounded in $H^{1,2}$ and equi-Lipschitz. 
Then, Lemma~\ref{lem:old_sci} and the inequality $|\nabla f|_i\leq {\rm lip}(f)$ give
$$
\int_Xg|\nabla f|\di\meas\leq\liminf_{i\to\infty}\int_Xg|\nabla f_i|_i\di\meas_i
\leq\liminf_{i\to\infty}\int_Xg{\rm lip}(f_i)\di\meas_i\quad
$$
for any $g$ lower semicontinuous and nonnegative. This, in combination with the elementary duality identity
\begin{equation}\label{eq:duality}
\frac 1p\int_X |\nabla f|^p \di\meas=\sup\left\{\int_X g|\nabla f|\di\meas-\frac {1}{q}\int_X g^q\di\meas:\
g\in \Cbs(X),\,\,g\geq 0\right\}
\end{equation}
with $q$ dual exponent of $p$ (applied also to the spaces $(X,\dist,\meas_i)$ with $p=p_i$) provides the inequality
\begin{equation}\label{eq:basiclsc166}
\int_X|\nabla f|^p\di\meas\leq\liminf_{i\to\infty}\int_X{\rm Lip}_a^{p_i}(f_i)\di\meas_i.
\end{equation}

In order to remove the additional assumptions on $f_i$ we now consider the intermediate case when $f_i$ are uniformly
bounded in $L^\infty$ and in $L^2$. Let us fix $t>0$ and consider the functions $h^i_t f_i$, which are uniformly bounded, 
uniformly Lipschitz (thanks to \eqref{eq:regu1}), in $H^{1,2}(X,\dist,\meas_i)$ and weakly converge in $H^{1,2}$ to $h_t f\in H^{1,2}(X,\dist,\meas)$ 
by Theorem~\ref{thm:gms13_flow}. 
Then we can use \eqref{eq:regu1}, \eqref{eq:BE} and \eqref{eq:basiclsc166} with $h^i_t f_i$ to get
$$
e^{Kpt}\int_X{\rm Lip}_a^p(h_{2t} f)\di\meas\leq
\int_X|\nabla h_t f|^p\di\meas\leq e^{-Kpt}\liminf_{i\to\infty}\int_X{\rm Lip}_a^{p_i}(f_i)\di\meas_i.
$$ 
Letting $t\downarrow 0$ then provides \eqref{eq:basiclsc16}.

Eventually we consider the general case $f_i$; possibly splitting in positive and negative parts, we assume $f_i\geq 0$. 
We consider the truncation $1$-Lipschitz functions (notice that the quadratic regularization near the origin is necessary
in the case $p\geq 2$, to get $L^2$ integrability)
$$
\phi_N(t):=
\begin{cases}
\frac N 2 z^2 &\text{if $0\leq z\leq\frac 1N$};\\
-\frac 1{2N}+z&\text{if $\frac 1N\leq z\leq N$};\\
-\frac 1{2N}+N&\text{if $N\leq z$}
\end{cases}
$$
and $f_i^N:=\phi_N\circ f_i$. Since $f_i^N$ $L^{p_i}$-strongly converge to $f^N:=\phi_N\circ f$, hence
$$
\Ch_p(f^N)\leq\liminf_{i\to\infty}\Ch^i_{p_i}(f_i^N)\leq\liminf_{i\to\infty}\Ch^i_{p_i}(f_i).
$$
By letting $N\to\infty$ we conclude.

\noindent
{\it $\liminf$ inequality, $p=1$.} The proof is analogous, in the case when the $f_i$ are uniformly bounded 
it is sufficient to prove \eqref{eq:basiclsc16} for the regularized functions $h_t^i f_i$, $h_t f$, without 
using the duality formula \eqref{eq:duality}. Eventually the uniform boundedness assumption on $f_i$ can be removed as
in the case $p>1$, with the simpler truncations $\phi_N(z)=\min\{N,x\}$.

\noindent
{\it $\limsup$ inequality.} For $p>1$, let us consider $f\in H^{1,p}(X,\dist,\meas)$ 
and $f^N\in\Lip_\bs(X)$ with ${\rm Lip}_a(f^N)\to |\nabla f|$ in $L^p(X,\meas)$. For any $N$ one has, by the upper semicontinuity of the
asymptotic Lipschitz constant 
$$
\limsup_{i \to \infty}p_i\Ch^i_{p_i}(f^N)\leq\limsup_{i\to\infty}\int_X{\rm Lip}_a^{p_i}(f^N) \di\meas_i\le 
\int_X{\rm Lip}^p_a(f^N)\di\meas.
$$
Since $f^N$ $L^{p_i}$ converge to $f^N$, by a diagonal argument, we can then define $f_i=f^{N(i)}$ with $N(i)\to\infty$ as $i\to\infty$ in such a way
that $f_i$ $L^{p_i}$ converge to $f$ and
$\limsup_i\Ch_{p_i}^i(f_i)\leq\Ch_p(f)$. For $p=1$ the proof is similar and uses Proposition~\ref{prop:better_var}.
\end{proof}

\section{$p$-spectral gap}\label{sec:9}

Throughout this section we assume that $\meas(X)=1$ when a single space is considered and, when a sequence
is considered, also $\meas_i(X)=1$. For any $p \in [1, \infty)$ and any $f \in L^p(X,\meas)$ we put
\begin{equation}\label{eq:def cp}
c_p(f):= \left(\inf_{a \in\setR}\int_X|f-a|^p\di\meas\right)^{1/p}.
\end{equation}

We also recall that for any $f \in L^1(X,\meas)$ there exists a \textit{median of $f$}, i.e.
a real number $m$ such that
$$
\meas\left(\{f > m\right\})  \le\frac 12
\qquad\text{and}\qquad
\meas\left( \{f < m\}\right) \le \frac 12.
$$

In the following remark we recall a few well-known facts about the minimization problem \eqref{eq:def cp}
(see also \cite[Lemma 2.2]{WuWangZheng}, \cite{Chavel}).

\begin{remark}\label{rem:pmedians}
For $p\in (1,\infty)$, thanks to the strict convexity of $z\mapsto |z|^p$ there is a unique minimizer $a$ in \eqref{eq:def cp},
and it is characterized by
$$\int_X|f-a |^{p-2}(f-a )\di\meas=0.$$
It is also well-known that, when $p=1$, medians are minimizer in \eqref{eq:def cp}, the converse seems to be less well-known, so let us
provide a simple proof. Assume that $a$ is a minimizer and assume by contradiction that
$\meas(\{f>a\})>1/2$ (if $\meas(\{f<a\})>1/2$ the argument is similar). We can then
find $\delta>0$ such that $\meas(\{f>a+\delta\})>1/2$ and a simple computation gives
\begin{eqnarray*}
\int_X|f-(a+\delta)|\di\meas-\int_X|f-a|\di\meas&=&
\delta\bigl(\meas(\{f<a+\delta\})-\meas(\{f\geq a+\delta\})\bigr)\\&-&2\int_{\{a<f<a+\delta\}}(f-a)\di\meas
<0,
\end{eqnarray*}
contradicting the minimality of $a$.
\end{remark}

In particular, for any $p \in [1, \infty)$ there exists a minimizer of \eqref{eq:def cp}, and it will be denoted by $m_p(f)$; by
convention, it will be any median of $f$ when $p=1$. Analogously, when we say that $m_{p_i}(f_i)$ converge to $m_p(f)$
we understand this convergence in the set-theoretic sense when $p=1$ (i.e. limit points of $m_{p_i}(f_i)$ are medians).

\begin{lemma}\label{lem:conv const}
Let $p_i$ converge to $p$ in $[1, \infty)$ and let $f_i \in L^{p_i}(X,\meas_i)$ be an $L^{p_i}$-strongly 
convergent sequence to $f \in L^p(X,\meas)$. Then 
$$
\lim_{i\to\infty}m_{p_i}(f_i)=m_p(f)\qquad\text{and}\qquad\lim_{i \to \infty}c_{p_i}(f_i)=c_p(f).
$$
\end{lemma}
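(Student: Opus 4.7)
The plan is to show that every subsequential limit $a^{*}$ of $\{m_{p_i}(f_i)\}_i$ is a minimizer of the limit functional $a \mapsto \|f-a\mathbf{1}\|_{L^p(X,\meas)}$; by Remark~\ref{rem:pmedians} this forces $a^{*}=m_p(f)$ when $p>1$ and forces $a^{*}$ to be a median of $f$ when $p=1$, which in both conventions delivers convergence of $m_{p_i}(f_i)$, and hence of $c_{p_i}(f_i)=\|f_i - m_{p_i}(f_i)\mathbf{1}\|_{L^{p_i}(\meas_i)}$, to the asserted limits.

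First I would establish that $\{m_{p_i}(f_i)\}_i$ is bounded. Testing the minimization with the trial value $a=0$ gives
\begin{equation*}
\|f_i - m_{p_i}(f_i)\mathbf{1}\|_{L^{p_i}(\meas_i)}\le \|f_i\|_{L^{p_i}(\meas_i)},
\end{equation*}
and the right-hand side converges to $\|f\|_{L^p(\meas)}$ by the norm-convergence property of $L^{p_i}$-strong convergence, hence is uniformly bounded. Since $\meas_i(X)=1$, the triangle inequality
\begin{equation*}
|m_{p_i}(f_i)| = \|m_{p_i}(f_i)\mathbf{1}\|_{L^{p_i}(\meas_i)} \le \|f_i - m_{p_i}(f_i)\mathbf{1}\|_{L^{p_i}(\meas_i)} + \|f_i\|_{L^{p_i}(\meas_i)}
\end{equation*}
then bounds $|m_{p_i}(f_i)|$, and we may extract a subsequence with $m_{p_i}(f_i) \to a^{*}\in\setR$.

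Second, I would note the auxiliary fact that whenever $a_i\to a$ in $\setR$, the constants $a_i\mathbf{1}$ are $L^{p_i}$-strongly convergent to $a\mathbf{1}$: weak convergence follows from $\meas_i\weakto\meas$, while the trivial decomposition suffices for the defining condition since the sequence is uniformly bounded in $L^\infty$ and $\sigma(a_i)\mathbf{1}$ is $L^2$-strongly convergent to $\sigma(a)\mathbf{1}$ by direct inspection. Stability of $L^{p_i}$-strong convergence under finite sums then yields that $f_i - m_{p_i}(f_i)\mathbf{1}$ is $L^{p_i}$-strongly convergent to $f - a^{*}\mathbf{1}$, whence
\begin{equation*}
c_{p_i}(f_i) = \|f_i - m_{p_i}(f_i)\mathbf{1}\|_{L^{p_i}(\meas_i)} \longrightarrow \|f - a^{*}\mathbf{1}\|_{L^p(\meas)}.
\end{equation*}

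Finally, the same argument applied with the fixed constant $m_p(f)$ in place of $m_{p_i}(f_i)$ gives $\|f_i - m_p(f)\mathbf{1}\|_{L^{p_i}(\meas_i)}\to c_p(f)$; combined with the minimality inequality $c_{p_i}(f_i)\le \|f_i - m_p(f)\mathbf{1}\|_{L^{p_i}(\meas_i)}$, passage to the limit gives $\|f - a^{*}\mathbf{1}\|_{L^p(\meas)}\le c_p(f)$, so $a^{*}$ is a minimizer of the limit problem. For $p>1$, strict convexity (Remark~\ref{rem:pmedians}) forces $a^{*}=m_p(f)$; for $p=1$, $a^{*}$ must be a median of $f$. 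In either case every convergent subsequence has the prescribed limit, so the whole sequence $m_{p_i}(f_i)$ converges as claimed and $c_{p_i}(f_i)\to\|f-a^{*}\mathbf{1}\|_{L^p(\meas)}=c_p(f)$. The only (mild) technical point is the strong $L^{p_i}$-convergence of constants under combined variation of exponents and reference measures; once this is in place, the proof is a direct-methods argument driven by minimality and the norm-convergence property of $L^{p_i}$-strong convergence.
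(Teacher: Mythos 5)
Your proof is correct and follows essentially the same route as the paper's: both arguments rest on the bound $|m_{p_i}(f_i)|\le 2\|f_i\|_{L^{p_i}(X,\meas_i)}$ to extract convergent subsequences of minimizers, the convergence $\|f_i-a_i\|_{L^{p_i}(X,\meas_i)}\to\|f-a\|_{L^p(X,\meas)}$ when $a_i\to a$ (via sum-stability and norm convergence of $L^{p_i}$-strong convergence), and a comparison with a fixed competitor to conclude that every limit point of $m_{p_i}(f_i)$ is a minimizer. The only cosmetic difference is that the paper first proves $\limsup_i c_{p_i}(f_i)\le c_p(f)$ by testing against all constants $b$ and taking an infimum, whereas you test only against $m_p(f)$; your explicit verification that constants converge $L^{p_i}$-strongly is a point the paper leaves implicit.
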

\begin{proof}
Since 
$$
\limsup_{i \to \infty}c_{p_i}(f_i) \le \lim_{i \to \infty}\left(\int_X|f_i -b|^{p_i}\di\meas_i\right)^{1/p_i}
=\left(\int_X|f-b|^p\di\meas\right)^{1/p}\quad\forall b\in\setR,
$$
taking the infimum with respect to $b$ gives the upper semicontinuity of $c_{p_i}(f_i)$.

On the other hand, since it is easily seen that $|m_{p_i}(f_i)|\leq 2\|f_i\|_{L^{p_i}(X,\meas_i)}$, the family $m_{p_i}(f_i)$ has 
limit points as $i\to\infty$, and if $m_{p_{i(k)}}(f_{i(k)})\to a$ as $k\to\infty$ one has
\begin{eqnarray}\label{eq:e_partial}
\liminf_{k \to \infty}c_{p_{i(k)}}(f_{i(k)})&=&\liminf_{k \to \infty}\left(\int_X|f_i-m_{p_{i(k)}}(f_{i(k)})|^{p_i}\di\meas_i\right)^{1/p_i}\nonumber
\\&=&\left(\int_X|f-a|^p\di\meas\right)^{1/p} \ge c_p(f).
\end{eqnarray}
If we apply this to limit points of subsequences $i(k)$ on which the $\liminf_k c_{p_{i(k)}}(f_{i(k)})$ is achieved, this gives that $c_{p_i}(f_i)\to c_p(f)$.
In addition, the inequality \eqref{eq:e_partial} gives that any limit point of $m_{p_i}(f_i)$ is a minimizer.
\end{proof}

Now, for $p\in [1,\infty)$ let
\begin{equation}\label{eq:def p-lap}
\lambda_{1, p}(X,\dist,\meas):=\inf_f \frac{1}{c_p^p(f)}\int_X{\rm Lip}_a^p(f)\di\meas,
\end{equation}
where the infimum runs among all nonconstant Lipschitz functions $f$ on $X$.
By the very definition of $\Ch_p$, the infimum above does not change if we minimize $p\Ch_p(f)/c_p^p(f)$
in the class of nonconstant functions $f\in H^{1,p}(X,\dist,\meas)$. Furthermore, whenever a minimizer exists,
we may normalize it in such a way that $c_p(f)=\|f\|_{L^p(X,\meas)}=1$ (i.e. the infimum in \eqref{eq:def cp} is attained at $a=m_p(f)=0$).

For $p \in (1, \infty)$, Remark~\ref{rem:pmedians} and the definition of $\Ch_p$ gives other characterizations of $\lambda_{1, p}(X)$:
\begin{align}\label{eq:ot ch p-lap}
\lambda_{1, p}(X,\dist,\meas)
&=\inf \left\{ \int_X{\rm Lip}_a^p(f)\di\meas:\ f \in \mathrm{Lip}(X,\dist), \int_X|f|^p\di\meas=1,\,\, \int_X|f|^{p-2}f\di\meas=0\right\} \nonumber \\
&=\inf \left\{ \int_X{\rm lip}^p(f)\di\meas:\ f \in \mathrm{Lip}(X,\dist), \int_X|f|^p\di\meas=1,\,\, \int_X|f|^{p-2}f\di\meas=0\right\} \nonumber \\
&=\inf \left\{ p\Ch_p(f):\ f \in H^{1, p}(X,\dist,\meas), \int_X|f|^p\di\meas=1, \int_X|f|^{p-2}f\di\meas=0\right\}.
\end{align}

\begin{remark} If $\meas(X)=1$,
let us define \textit{the Cheeger constant $h(X,\dist,\meas)$ of $(X,\dist,\meas)$} by 
$$
h(X,\dist,\meas):=\inf_A \frac{M^-(A)}{\meas(A)},
$$
where the infimum runs among all Borel subsets $A$ of $X$ with $0<\meas(A) \le 1/2$, and $M^-(A)$ is the
lower Minkowski content of $A$, namely (here $I_r(A)$ is the open $r$-neighbourhood of $A$)
$$
M^-(A):=\liminf_{r \to 0^+}\frac{\meas\left(I_r(A)\right)-\meas(A)}{r}.
$$
Then, in \cite{AmbrosioDiMarinoGigli} it has been proved that
$$
h(X,\dist,\meas)=\inf_A \frac{|\rmD\chi_A|(X)}{\meas(A)},
$$
where as before  the infimum runs among all Borel subsets $A$ of $X$ with $0<\meas(A) \le\meas(X)/2$ (the same
result holds if we use the upper Minkowski content in the definition of $h$). On the other hand,
by applying Lemma~\ref{lem:conv const} with $\meas_i=\meas$, from Proposition~\ref{prop:better_var} we get
\begin{equation}\label{eq:altrolambda}
\lambda_{1,1}(X,\dist,\meas)=\inf\left\{ \frac{|\rmD f|(X)}{c_1(f)}:\ f \in BV(X,\dist,\meas),\,\, f \not \equiv \mathrm{constant}\right\}.
\end{equation}
Eventually, since $c_1(\chi_A)=\meas(A)$ for $\meas(A)\leq 1/2$,
the coarea formula for $BV$ maps shows that the Cheeger constant $h$ coincides also with the quantities in \eqref{eq:altrolambda}.
\end{remark}

In the following theorem we prove a generalized continuity property \eqref{equ:gen cont} of the first eigenvalue,
allowing also the exponents $p_i\to p\in [1,\infty)$ to depend on $i$. As the proof shows, this property holds even in the 
extreme case when $\mathrm{diam}\,\mathrm{supp}(\meas)=0$, with the convention 
$$(\lambda_{1, p}(X,\dist, \meas))^{1/p}:=\infty\qquad\text{if $\mathrm{diam}\,\mathrm{supp}(\meas)=0$.}$$
Note that  \eqref{equ:gen cont} in the case when $\mathrm{diam}\,\mathrm{supp}(\meas)=0$ will be used in the proof of 
Corollary~\ref{cor;shperical suspension}.

\begin{theorem}\label{thm:p-spec}
Assume that $(X,\dist,\meas_i)$, $(X,\dist,\meas)$ are $RCD(K,\infty)$ spaces
satisfying $\meas_i(X)=1$, $\meas(X)=1$ with a common isoperimetric profile (for
instance either $K>0$ or uniformly bounded diameters of $\supp\meas_i$).
If $p_i$ converge to $p$ in $[1, \infty)$, then
\begin{equation} \label{equ:gen cont}
\lim_{i \to \infty}\lambda_{1, p_i}(X,\dist,\meas_i)=\lambda_{1, p}(X,\dist,\meas).
\end{equation}
In particular the Cheeger constants are continuous with respect to the measured Gromov-Hausdorff convergence.
\end{theorem}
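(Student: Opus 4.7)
The plan is to establish the two one-sided inequalities
\[
\limsup_i\lambda_{1,p_i}(X,\dist,\meas_i)\le\lambda_{1,p}(X,\dist,\meas)\le\liminf_i\lambda_{1,p_i}(X,\dist,\meas_i)
\]
independently, each via a variational argument exploiting the variable-exponent Mosco convergence from Theorem~\ref{thm:p-mosco}. The common isoperimetric profile is precisely what upgrades the $\Gamma$-convergence there to Mosco convergence, via the $L^{p_i}$-strong compactness of Proposition~\ref{prop:compactness}. Throughout, the continuity of $p$-means and $p$-medians along $L^{p_i}$-strongly convergent sequences recorded in Lemma~\ref{lem:conv const} is what allows us to control the denominator $c_p^p$ appearing in \eqref{eq:def p-lap}.

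For the $\limsup$ inequality I would fix a nonconstant almost-minimizer $f\in H^{1,p}(X,\dist,\meas)$ (interpreted as $f\in BV(X,\dist,\meas)$ when $p=1$) of the ratio $p\Ch_p(f)/c_p^p(f)$ and produce a recovery sequence $(f_i)$ given by the strong-$\limsup$ clause of Theorem~\ref{thm:p-mosco}, so that $f_i$ $L^{p_i}$-strongly converge to $f$ and $\Ch^i_{p_i}(f_i)\to\Ch_p(f)$. Since $c_p(f)>0$, Lemma~\ref{lem:conv const} combined with $p_i\to p$ gives $c_{p_i}^{p_i}(f_i)\to c_p^p(f)$; dividing and taking the infimum over $f$ then yields the upper bound.

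For the converse, assume $L:=\liminf_i\lambda_{1,p_i}(X,\dist,\meas_i)<\infty$ (otherwise nothing is to prove), extract a subsequence realising this liminf, and pick near-minimizers $f_i$. Replacing each $f_i$ by the normalization $\widetilde f_i=(f_i-m_{p_i}(f_i))/c_{p_i}(f_i)$ ensures $m_{p_i}(\widetilde f_i)=0$, $\|\widetilde f_i\|_{L^{p_i}(X,\meas_i)}=c_{p_i}(\widetilde f_i)=1$ and $p_i\Ch^i_{p_i}(\widetilde f_i)\to L$. Proposition~\ref{prop:compactness} then provides, along a further subsequence, an $L^{p_i}$-strong limit $\widetilde f\in L^p(X,\meas)$; Lemma~\ref{lem:conv const} forces $c_p(\widetilde f)=1$, so $\widetilde f$ is nonconstant, and the weak-$\liminf$ clause of Theorem~\ref{thm:p-mosco} gives $p\Ch_p(\widetilde f)\le\liminf_i p_i\Ch^i_{p_i}(\widetilde f_i)=L$ (using $p_i\to p>0$). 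Consequently $\lambda_{1,p}(X,\dist,\meas)\le p\Ch_p(\widetilde f)/c_p^p(\widetilde f)\le L$.

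The degenerate case $\mathrm{diam}\,\mathrm{supp}\,\meas=0$ is handled by the same compactness argument: if $\liminf_i\lambda_{1,p_i}$ were finite, we would obtain $\widetilde f\in L^p(X,\meas)$ with $c_p(\widetilde f)=1$, yet $\meas=\delta_{x_0}$ forces $\widetilde f$ to be $\meas$-a.e. constant and hence $c_p(\widetilde f)=0$, a contradiction. The continuity of Cheeger's constant is then the special case $p_i\equiv p=1$, via the identification $\lambda_{1,1}=h$ recorded in the remark preceding the theorem. The main technical obstacle is the $p=1$ instance of the normalization step, where $m_{p_i}(f_i)$ is defined only up to a set of medians and the bookkeeping of $L^{p_i}$-strong convergence under additive and multiplicative renormalization is delicate; however this is exactly what Lemma~\ref{lem:conv const} (set-theoretic convergence of medians) combined with parts (a), (b) and the $L^{p_i}$-analogue of Proposition~\ref{prop:strocon} was designed to encapsulate.
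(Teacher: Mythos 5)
Your proof is correct and takes essentially the same route as the paper: the upper bound via a recovery sequence from the strong-$\limsup$ clause of Theorem~\ref{thm:p-mosco} combined with Lemma~\ref{lem:conv const}, and the lower bound via normalized near-minimizers, the compactness of Proposition~\ref{prop:compactness}, the $\liminf$ inequality, and Lemma~\ref{lem:conv const} again to certify that the limit function has $c_p(f)=1$ and is therefore admissible. The only cosmetic difference is that the paper chooses the near-minimizers already normalized so that $c_{p_i}(f_i)=\|f_i\|_{L^{p_i}(X,\meas_i)}=1$, which sidesteps the explicit median-subtraction bookkeeping you describe for $p=1$.
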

\begin{proof}
For any $f \in H^{1,p}(X,\dist,\meas)$ with $c_p(f)=\|f\|_p=1$, by Theorem~\ref{thm:p-mosco}, there exists a sequence 
$f_i \in H^{1,p_i}(X,\dist,\meas_i)$ $L^{p_i}$-strongly converging to $f$ with $\limsup_i\Ch^i_{p_i}(f_i)\leq\Ch_p(f)$.
Applying Lemma~\ref{lem:conv const} yields
$$
\limsup_{i \to \infty}\lambda_{1, p_i}(X,\dist,\meas_i) \le \limsup_{i \to \infty}\frac{p_i\Ch^i_{p_i}(f_i)}{\left(c_{p_i}(f_i)\right)^{p_i}}
\le \Ch_p(f).
$$
Taking the infimum with respect to $f$ gives the upper semicontinuity of $\lambda_{1, p_i}(X,\dist,\meas_i)$.

In order to prove the lower semicontinuity, we can assume with no loss of generality that $\lambda_{1, p_i}(X,\dist,\meas_i)$
is a bounded convergent sequence. For any $i\geq 1$ take $f_i \in H^{1,p_i}(X,\dist,\meas_i)$ with 
$$
\left| \lambda_{1, p_i}(X,\dist,\meas_i)-p_i\Ch^i_{p_i}(f_i) \right|<\frac 1 i
\quad\text{and}\quad
c_{p_i}(f_i)=\int_X|f_i|^{p_i}\di\meas_i=1.
$$
By Proposition~\ref{prop:compactness}, without loss of generality we can assume that the 
$L^{p_i}$-strong limit $f \in L^p(X,\meas)$ of $f_i$ exists. 
Thus, Theorem~\ref{thm:main} gives $\Ch_p(f)\leq\liminf_i\Ch^i_{p_i}(f_i)$.
As a consequence, since Lemma~\ref{lem:conv const} gives $c_p(f)=\|f\|_{L^p(X,\meas)}=1$, we have
$$
\liminf_{i \to \infty}\lambda_{1, p_i}(X,\dist,\meas_i)=\liminf_{i \to \infty}p_i\Ch_{p_i}^i(f_i) \ge p\Ch_p(f)\geq\lambda_{1,p}(X,\dist,\meas).
$$
\end{proof}

For $p\in (1,\infty)$ and $\Omega\subset X$ Borel, let us denote 
$$
\Lambda_p(\Omega,\dist,\meas):=\left\{f \in H^{1, p}(X,\dist,\meas):\ \int_\Omega|f|^p\di\meas=1,\,\, \text{$f=0$ $\meas$-a.e. on $X \setminus \Omega$}\right\}.
$$
Accordingly, we define $\lambda_{1, p}^D(\Omega,\dist,\meas)$ as the infimum of the $p$-energy with Dirichlet conditions
\begin{equation}\label{eq:def dir p-lap}
\lambda_{1, p}^D(\Omega,\dist,\meas):=\inf \left\{ p\Ch_p(f):\ f\in\Lambda_p(\Omega,\dist,\meas)\right\}.
\end{equation}

\begin{lemma}\label{lem:min max p-lap} Let $p\in (1,\infty)$.
\begin{enumerate}
\item[(1)] For any Borel subsets $\Omega_1, \,\Omega_2$ of $X$ with $\meas(\Omega_1 \cap \Omega_2)=0$, we have 
\begin{equation}\label{eq:minmax}
\lambda_{1, p}(X,\dist,\meas)\le \max \left\{\lambda_{1, p}^D(\Omega_1,\dist,\meas), \lambda_{1, p}^D(\Omega_2,\dist,\meas) \right\}.
\end{equation}
\item[(2)] If $p \in [2, \infty)$ and $f \in H^{1, p}(X,\dist,\meas)$ is a minimizer of the right hand side of \eqref{eq:def p-lap} with $m_p(f)=0$, then 
\begin{equation}\label{eq:p-lap eq}
\int_X\langle \nabla f, \nabla g \rangle |\nabla f|^{p-2}\di\meas =\lambda_{1, p}(X, \dist, \meas)\int_X|f|^{p-2}fg\di\meas
\end{equation}
for any $g \in H^{1, p}(X, \dist, \meas)$. In particular, choosing $g=f^\pm$ gives
\begin{equation}\label{eq:glob loc}
\lambda_{1, p}(X,\dist,\meas)=p\Ch_p(f^\pm)\left(\int_X|f^\pm|^p\di\meas\right)^{-1}.
\end{equation}
\end{enumerate}
\end{lemma}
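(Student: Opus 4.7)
The plan for part (1) is to build a competitor for $\lambda_{1,p}(X,\dist,\meas)$ by taking a scalar combination of near-optimal test functions for the two Dirichlet problems. Fix $\epsilon > 0$ and pick $f_i \in \Lambda_p(\Omega_i,\dist,\meas)$ with $p\Ch_p(f_i) \le \lambda_{1, p}^D(\Omega_i,\dist,\meas) + \epsilon$, and set $a_i := \int_X |f_i|^{p-2} f_i \, \di\meas$. I will choose scalars $\alpha_1, \alpha_2 \in \setR$, not both zero, such that
$$
|\alpha_1|^{p-2}\alpha_1\, a_1 + |\alpha_2|^{p-2}\alpha_2\, a_2 = 0,
$$
which is always possible because $z \mapsto |z|^{p-2}z$ is a bijection of $\setR$ (for $p>1$), and then rescale to obtain $|\alpha_1|^p + |\alpha_2|^p = 1$. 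Setting $f := \alpha_1 f_1 + \alpha_2 f_2$, the assumption $\meas(\Omega_1 \cap \Omega_2) = 0$ combined with $f_i \equiv 0$ off $\Omega_i$ gives, via the Borel locality and chain-rule properties of the minimal relaxed gradient recalled in Section~\ref{sec:4}, the $\meas$-a.e. identities $|f|^p = |\alpha_1|^p |f_1|^p + |\alpha_2|^p |f_2|^p$ and $|\nabla f|^p = |\alpha_1|^p |\nabla f_1|^p + |\alpha_2|^p |\nabla f_2|^p$. Hence $\|f\|_{L^p(X,\meas)} = 1$ and $\int_X |f|^{p-2} f \, \di\meas = 0$, so $f$ is admissible in \eqref{eq:ot ch p-lap}, and
$$
\lambda_{1,p}(X,\dist,\meas) \le p\Ch_p(f) = |\alpha_1|^p \cdot p\Ch_p(f_1) + |\alpha_2|^p \cdot p\Ch_p(f_2) \le \max_i \lambda_{1,p}^D(\Omega_i,\dist,\meas) + \epsilon.
$$
Letting $\epsilon \downarrow 0$ yields \eqref{eq:minmax}. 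The degenerate cases $a_1 = 0$ or $a_2 = 0$ are even easier, as one of the coefficients can simply be taken to be zero.

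For part (2), my plan is a Lagrange multiplier computation at the minimizer $f$. For $p \ge 2$ the functionals $g \mapsto p\Ch_p(g)$, $g \mapsto \int_X |g|^p\,\di\meas$ and $g \mapsto \int_X |g|^{p-2} g \,\di\meas$ admit Gâteaux derivatives at $f$ along arbitrary directions $g \in H^{1,p}(X,\dist,\meas)$, given respectively by
$$
p\int_X |\nabla f|^{p-2}\langle\nabla f, \nabla g\rangle\,\di\meas, \qquad p\int_X |f|^{p-2}fg\,\di\meas, \qquad (p-1)\int_X |f|^{p-2} g\,\di\meas.
$$
The method of Lagrange multipliers then yields $\lambda, \mu \in \setR$ such that
$$
p\int_X |\nabla f|^{p-2}\langle\nabla f, \nabla g\rangle\,\di\meas = p\lambda \int_X |f|^{p-2}fg\,\di\meas + (p-1)\mu \int_X |f|^{p-2} g\,\di\meas
$$
for all $g \in H^{1,p}(X,\dist,\meas)$. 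Testing first with $g \equiv 1 \in H^{1,p}(X,\dist,\meas)$ (so that $|\nabla g|=0$) and invoking the median constraint $\int_X |f|^{p-2}f\,\di\meas = 0$ forces $\mu\int_X |f|^{p-2}\,\di\meas = 0$; since $\|f\|_p=1$ implies $\meas(\{f\ne 0\})>0$ and since $p \ge 2$ makes $|f|^{p-2}$ pointwise nonnegative and strictly positive on $\{f \ne 0\}$, we deduce $\mu = 0$. Testing next with $g = f$ identifies $\lambda = p\Ch_p(f) = \lambda_{1,p}(X,\dist,\meas)$, proving \eqref{eq:p-lap eq}. To derive \eqref{eq:glob loc}, I would plug in $g = f^+$ (which belongs to $H^{1,p}(X,\dist,\meas)$ by the chain rule applied to $\phi(z) = z \vee 0$): by the Borel locality of $|\nabla\cdot|$ one has $|\nabla f^+| = |\nabla f|\chi_{\{f>0\}}$ and $\langle\nabla f, \nabla f^+\rangle = |\nabla f|^2 \chi_{\{f>0\}}$ $\meas$-a.e., so the left-hand side of \eqref{eq:p-lap eq} becomes $\int_{\{f>0\}} |\nabla f|^p\,\di\meas = p\Ch_p(f^+)$ while the right-hand side becomes $\lambda_{1,p}(X,\dist,\meas)\int_X |f^+|^p\,\di\meas$. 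The same computation with $g = -f^-$ yields the companion identity for $f^-$.

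The main subtle point is the careful use of Borel locality and chain rule for the minimal relaxed gradient — both in part (1), where the additive splitting of $|\nabla f|^p$ over the essentially disjoint supports of $f_1$ and $f_2$ is crucial, and in part (2), where the decomposition of $\nabla f$ relative to $\{f>0\}$ and $\{f<0\}$ underlies the step from \eqref{eq:p-lap eq} to \eqref{eq:glob loc}. These properties are part of the standard calculus reviewed in Section~\ref{sec:4}. A secondary technical point — and the reason the statement of (2) is restricted to $p \ge 2$ — is the Gâteaux differentiability of the constraint functional $g \mapsto \int_X |g|^{p-2} g\,\di\meas$, which is automatic for $p \ge 2$ (the integrand being $C^1$ in $g$ with locally bounded growth) but would need extra care for $p \in (1,2)$.
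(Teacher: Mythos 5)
Your part (1) is essentially the paper's own argument: the paper also builds a competitor $f_1+\alpha f_2$ from near-optimal Dirichlet test functions, choosing $\alpha$ by a continuity argument so that $\int_X|f_1+\alpha f_2|^{p-2}(f_1+\alpha f_2)\di\meas=0$, and then uses locality over the essentially disjoint supports to split $\|\cdot\|_{L^p}^p$ and $p\Ch_p$ additively before appealing to \eqref{eq:ot ch p-lap}. Your two-parameter normalization $|\alpha_1|^p+|\alpha_2|^p=1$ is a cosmetic variant of the paper's one-parameter version, and your explicit handling of the degenerate cases $a_i=0$ is a small point the paper leaves implicit.

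Part (2) is where you genuinely diverge. The paper never invokes an abstract Lagrange multiplier rule: it differentiates the translation- and scale-invariant quotient $s\mapsto p\Ch_p(f+sg)\,/\,\|(f+sg)-m_p(f+sg)\|^p_{L^p(X,\meas)}$ at $s=0$, using the finite-dimensional implicit function theorem applied to $F(s,t)=\int_X|f+sg-t|^{p-2}(f+sg-t)\di\meas$ to show that $s\mapsto m_p(f+sg)$ is differentiable at $s=0$; the constraint $\int_X|f|^{p-2}f\di\meas=0$ then kills the term carrying $m_p'(0)$ and \eqref{eq:p-lap eq} drops out of the quotient rule. Your route---two constraints, multipliers $\lambda,\mu$, then identification of $\mu=0$ and $\lambda=\lambda_{1,p}$ by testing with $g\equiv 1$ and $g=f$---is correct and arguably cleaner to read, but to be complete it needs two ingredients you leave implicit: (i) the constraint qualification for the infinite-dimensional multiplier rule, i.e.\ surjectivity of the derivative of $h\mapsto\bigl(\int_X|h|^p\di\meas,\int_X|h|^{p-2}h\di\meas\bigr)$ at $f$, which holds exactly because the two directions $1$ and $f$ you test with produce an invertible diagonal $2\times 2$ matrix (the relevant nondegeneracy, $0<\int_X|f|^{p-2}\di\meas<\infty$, is the same quantity the paper needs for $F_t\neq 0$ in its IFT step, and uses $\meas(X)=1$, $p\ge 2$); and (ii) the identification $p\Ch_p(h)=\int_X|\nabla h|^p\di\meas$ with $|\nabla h|$ the $2$-minimal relaxed slope, from \cite{GigliHan}, without which your formula for the derivative of $\Ch_p$ in terms of $\langle\nabla f,\nabla g\rangle$ does not even parse; the paper cites this explicitly. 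What the paper's approach buys is that it only ever uses elementary one-variable calculus plus the classical IFT in two real variables; what yours buys is that the constraints are handled abstractly, at the price of verifying the hypotheses of a Banach-space multiplier theorem. The passage from \eqref{eq:p-lap eq} to \eqref{eq:glob loc} via locality on $\{f>0\}$, $\{f<0\}$ is the same in both.
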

\begin{proof} We first prove \eqref{eq:minmax}.
Take $f_i \in H^{1, p}(X,\dist,\meas)$ with $\int_{\Omega_i}|f_i|^p\di\meas=1$ and $f_i=0$ $\meas$-a.e. on $X\setminus\Omega_i$.
Then, choosing thanks to a continuity argument $\alpha\in\setR$ such that $\int_X|f_1+\alpha f_2|^{p-2}(f_1+\alpha f_2)\di\meas=0$, we get
\begin{align}\label{eq:est}
(1+|\alpha|^p)\lambda_{1,p}(X,\dist,\meas)&=
\lambda_{1, p}(X,\dist,\meas) \biggl(\int_{\Omega_1}|f_1|^p\di\meas+\int_{\Omega_2}|\alpha f_2|^p\di\meas\biggr)\nonumber \\&= 
\lambda_{1, p}(X,\dist,\meas)\int_X|f_1+\alpha f_2|^p\di\meas \\
&\leq p\Ch_p(f_1+\alpha f_2) =p\Ch_p(f_1)+p|\alpha|^p\Ch_p(f_2).\nonumber
\end{align}
By taking the infimum w.r.t. $f_1$ and $f_2$ we obtain \eqref{eq:minmax}.

Next we prove \eqref{eq:p-lap eq}.
Let 
$$
F(s, t):=\int_X|f+sg-t|^{p-2}(f+sg-t)\di\meas.
$$
Then, it is easy to check that 
$$
F_s(s, t)=(p-1)\int_Xg|f+sg-t|^{p-2}\di\meas
$$
and that
$$
F_t(s, t)=(1-p)\int_X|f+sg-t|^{p-2}\di\meas,
$$
the implicit function theorem yields that $s\mapsto m_p(f+sg)$ is differentiable at $s=0$.

Now, recall that according to \cite{GigliHan}, we can represent $p\Ch_p(f)$ as $\int_X|\nabla f|^p\di\meas$, where
$|\nabla f|$ is the $2$-minimal relaxed slope (as always, in this paper). 
Then, the direct calculation of the left hand side of
$$
\frac{\di}{\di s}\left(\frac{p\Ch_p(f+sg)}{\|(f+sg)-m_p(f+sg)\|^p_{L^p(X,\meas)}}\right)\Biggl\vert_{s=0}=0
$$ 
with the differentiability of $m_p(f+sg)$ at $s=0$ proves (\ref{eq:p-lap eq}).
\end{proof}

In the following stability result we need the extra assumption
\begin{equation}\label{eq:extra_assumption}
\begin{gathered}
\limsup_{i\to\infty}\|f_i\|_{L^{p_i}(X,\meas_i)}\leq\|f\|_{L^\infty(X,\meas)}\\
\text{whenever $p_i\to\infty$, $\sup_i\|f_i\|_{L^{p_i}(X,\meas_i)}+\bigl(\int_X|\nabla f_i|^{p_i}\di\meas_i\bigr)^{1/p_i}<\infty$}\\
\text{and $f_i$ strongly $L^p$-converge to $f$ for some (and thus all) $p\in (1,\infty)$.}
\end{gathered}
\end{equation}
This is a kind of extension of Theorem~\ref{thm:p-spec} to the case $p=\infty$.
We believe that it should be possible to avoid this assumption,
possibly making an additional hypothesis on the decay rate of the common isoperimetric profile. Nevertheless, this assumption
is harmless for the applications of Theorem~\ref{thm:gros} below in Section~\ref{sec:11}. Indeed, in the setting of Section~\ref{sec:11},
as soon as $p_i>N$ the functions $f_i$ and $f$ are equibounded and equi-H\"older on $\supp\meas_i$, $\supp\meas$
respectively; denoting by $f_i,\,f$ suitable equibounded and equi-H\"older extensions of $f_i,\,f$ to the whole of $X$, the 
Hausdorff convergence of $\supp\meas_i$ to $\supp\meas$ and the weak convergence of $f_i\meas_i$ to $f\meas$ 
easily imply the uniform convergence of $f_i$ to $f$ on $\supp\meas$, so that
$$
\limsup_{i\to\infty}\|f_i\|_{L^{p_i}(X,\meas_i)}\leq\limsup_{i\to\infty}\|f_i\|_{L^{p_i}(X,\meas)}\leq
\limsup_{i\to\infty}\|f\|_{L^{p_i}(X,\meas)}\leq \|f\|_{L^\infty(X,\meas)}.
$$

\begin{theorem}\label{thm:gros}
Let $(X,\dist,\meas_i)$, $(X,\dist,\meas)$ be $RCD(K,\infty)$ metric measure spaces with $\meas_i(X)=1$, $\meas(X)=1$ 
and a common isoperimetric profile (e.g. either $K>0$ or equibounded diameters of $\supp\meas_i$). If $p_i\in [1,\infty)$ diverge to $\infty$ 
and \eqref{eq:extra_assumption} holds, one has 
\begin{equation}\label{eq:upp diam}
\lim_{i \to \infty}\left( \lambda_{1, p_i}(X,\dist,\meas_i) \right)^{1/p_i}=\frac{2}{\mathrm{diam}\,\supp(\meas)}.
\end{equation}
\end{theorem}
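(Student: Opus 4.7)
The plan is to prove the matching bounds
\[
\limsup_i \bigl(\lambda_{1,p_i}(X,\dist,\meas_i)\bigr)^{1/p_i} \leq \frac{2}{D}
\quad\text{and}\quad
\liminf_i \bigl(\lambda_{1,p_i}(X,\dist,\meas_i)\bigr)^{1/p_i} \geq \frac{2}{D},
\]
where $D := \mathrm{diam}\,\supp\meas$, adopting the convention $2/D = \infty$ when $D = 0$.

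\smallskip
\noindent\textbf{Upper bound.} When $D = 0$ there is nothing to prove. When $D > 0$, for each $\epsilon \in (0, D/2)$ I pick $x, y \in \supp\meas$ with $\dist(x, y) > D - \epsilon$ and test with the $1$-Lipschitz function $\phi(z) := \dist(z, x)$, which satisfies $\int_X \mathrm{Lip}_a^{p_i}(\phi)\di\meas_i \leq 1$. Since $x, y \in \supp\meas$ and $\meas_i \weakto \meas$, the lower semicontinuity $\liminf_i \meas_i(U) \geq \meas(U)$ on open sets yields, for every $r > 0$, a constant $c_r > 0$ with $\meas_i(B_r(x)) \wedge \meas_i(B_r(y)) \geq c_r$ for $i$ large. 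For any $b \in \setR$, one of the inequalities $|\phi - b| \geq (D - \epsilon - 2r)/2$ holds throughout $B_r(x)$ or $B_r(y)$, so $c_{p_i}(\phi)^{p_i} \geq \bigl((D - \epsilon - 2r)/2\bigr)^{p_i} c_r$ and
\[
\bigl(\lambda_{1, p_i}(X,\dist,\meas_i)\bigr)^{1/p_i} \leq \frac{1}{c_{p_i}(\phi)} \leq \frac{2}{(D - \epsilon - 2r)\, c_r^{1/p_i}} \longrightarrow \frac{2}{D - \epsilon - 2r}.
\]
Letting $r \downarrow 0$ then $\epsilon \downarrow 0$ gives the claim.

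\smallskip
\noindent\textbf{Lower bound.} Set $L := \liminf_i (\lambda_{1,p_i}(X,\dist,\meas_i))^{1/p_i}$; I may assume $L < \infty$ and pass to a realising subsequence. For each $i$ pick a near-minimiser $f_i \in H^{1, p_i}(X, \dist, \meas_i)$ with $m_{p_i}(f_i) = 0$, $\|f_i\|_{L^{p_i}(\meas_i)} = c_{p_i}(f_i) = 1$, and $\||\nabla f_i|_i\|_{L^{p_i}(\meas_i)} \to L$. For every fixed $q \in (1, \infty)$, H\"older's inequality and $\meas_i(X) = 1$ give uniform bounds $\|f_i\|_{L^q(\meas_i)} \leq 1$ and $\||\nabla f_i|_i\|_{L^q(\meas_i)} \leq L + o(1)$. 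Proposition~\ref{prop:compactness} together with a diagonal argument over a countable dense set of $q$'s produces a subsequence and a function $f \in \bigcap_{q < \infty} L^q(X, \meas)$ such that $f_i$ $L^q$-strongly converge to $f$ for every $q \in (1, \infty)$. The $\liminf$ part of Theorem~\ref{thm:p-mosco} applied with the constant exponent $q$ on each side gives $\int_X |\nabla f|^q \di \meas \leq L^q$; letting $q \to \infty$ (using $\meas(X) = 1$) yields $\||\nabla f|\|_{L^\infty(\meas)} \leq L$. Proposition~\ref{prop:reguht}(a) now produces an $L$-Lipschitz representative $\tilde f$ of $f$.

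\smallskip
\noindent\textbf{Conclusion via the midrange.} Since $\tilde f$ is $L$-Lipschitz and $\supp\meas$ has diameter $D$, the oscillation of $\tilde f$ on $\supp\meas$ is at most $LD$, so the midrange $m := \tfrac12\bigl(\sup_{\supp\meas}\tilde f + \inf_{\supp\meas}\tilde f\bigr)$ is finite and $\|f - m\|_{L^\infty(\meas)} \leq LD/2$. The translated sequence $(f_i - m)$ still satisfies the hypotheses of \eqref{eq:extra_assumption} (uniform $L^{p_i}$ and gradient bounds, $L^q$-strong convergence to $f - m$), whence
\[
\limsup_i \|f_i - m\|_{L^{p_i}(\meas_i)} \leq \|f - m\|_{L^\infty(\meas)} \leq \frac{LD}{2}.
\]
On the other hand, the minimality property $m_{p_i}(f_i) = 0$ forces $\|f_i - m\|_{L^{p_i}(\meas_i)} \geq c_{p_i}(f_i) = 1$, so $LD/2 \geq 1$, i.e.\ $L \geq 2/D$ when $D > 0$ and a contradiction when $D = 0$ (so $L = \infty$ there). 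The delicate point, where I expect the main obstacle to lie, is exactly this re-centring: the choice of the centring constant $m$ is dictated by the Lipschitz representative $\tilde f$ of the limit rather than by any canonical property of the approximants $f_i$, and it is this device, combined with \eqref{eq:extra_assumption}, that converts the $L^\infty$ Lipschitz control on $f$ into the required lower bound on the Rayleigh quotient.
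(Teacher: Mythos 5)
Your proof is correct, and both halves run along routes genuinely different from the paper's. For the upper bound, the paper fixes two far-apart points of $\supp\meas$, approximates them by points in the supports of $\meas_i$, and feeds two disjoint tent functions into the Dirichlet min--max inequality \eqref{eq:minmax} of Lemma~\ref{lem:min max p-lap}; you instead test \eqref{eq:def p-lap} with the single function $\dist(\cdot,x)$ and bound $c_{p_i}$ from below by the mass that $\meas_i$ retains near $x$ and $y$ (portmanteau), avoiding Lemma~\ref{lem:min max p-lap}(1) altogether. For the lower bound the divergence is more substantial: the paper takes exact minimizers (existence via Proposition~\ref{prop:compactness}), uses the Euler--Lagrange identity \eqref{eq:glob loc} of Lemma~\ref{lem:min max p-lap}(2) (stated for $p\ge 2$) to renormalize the positive and negative parts separately, shows via \eqref{eq:extra_assumption} that both limits $f^{\pm}$ have unit sup norm, and then runs an inradius estimate on the two nodal domains $\Omega^{\pm}$, invoking the length property of $\supp\meas$, to get $2/D\le\max\{1/r(\Omega^+),1/r(\Omega^-)\}\le L$. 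You bypass all of this: near-minimizers normalized by $m_{p_i}(f_i)=0$, $c_{p_i}(f_i)=1$ suffice; the same compactness and $\Gamma$-$\liminf$ steps give the $L$-Lipschitz representative of the limit $f$; and the midrange recentering $f_i-m$, combined with \eqref{eq:extra_assumption} and the trivial inequality $\|f_i-m\|_{L^{p_i}(X,\meas_i)}\ge c_{p_i}(f_i)=1$, yields $1\le\|f-m\|_{L^\infty(X,\meas)}\le LD/2$ directly (with $D=\mathrm{diam}\,\supp\meas$). Your endgame is shorter, needs neither exact minimizers nor any nodal-domain geometry, and concentrates the entire use of \eqref{eq:extra_assumption} on one recentered sequence; what the paper's route buys in exchange is the geometric picture (two nodal regions of inradius at least $1/L$) that mirrors the suspension rigidity exploited in Section~\ref{sec:11}.

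Two small points you should patch. First, when $K>0$ the supports of the $\meas_i$ may be unbounded (and $D$ may be infinite), so $\dist(\cdot,x)$ need not belong to $L^{p_i}(X,\meas_i)$; replace it by the truncation $\min\{\dist(\cdot,x),M\}$ with $M\ge\dist(x,y)$ (and, when $D=\infty$, choose $x,y$ with $\dist(x,y)$ arbitrarily large), which is bounded, $1$-Lipschitz, and leaves your two-ball estimate on $c_{p_i}$ unchanged. Second, your diagonal argument really produces $L^q$-strong convergence only along a countable family of exponents; since that is all you use (a sequence $q_k\uparrow\infty$ for the gradient bound, and one fixed $p\in(1,\infty)$ to activate \eqref{eq:extra_assumption}), state it that way rather than claiming convergence for every $q\in(1,\infty)$, or add the routine uniform-integrability argument that upgrades it.
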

\begin{proof}
Let $x_1,\,x_2\in\supp\meas$; thanks to the weak convergence of $\meas_i$ to $\meas$ we can find $x_{j,i}$ convergent to $x_j$ as $i\to\infty$,
$j=1,\,2$. Let $r=\dist(x_1,x_2)$, $r_i=\dist(x_{1,i},x_{2,i})$ and let us define nonnegative Lipschitz functions $\delta_{j, i}\in\Lip(X,\dist)$ by
$$
\delta_{j, i}(x):= \max \left\{ \frac{r_i}{2}-\dist(x_{j, i}, x), 0\right\},
$$
uniformly convergent as $i\to\infty$ to
$$
\delta_j(x):= \max \left\{ \frac r2-\dist(x_j, x), 0\right\}.
$$
Then, since $\{B_{r_i/2}(x_{j, i})\}_{j=1, 2}$ are nonempty disjoint subsets of $X$, and since $\delta_{j,i}$ are
$1$-Lipschitz, for any $p \in (1, \infty)$, \eqref{eq:minmax} and the H\"older inequality give that 
\begin{align*}
\left(\lambda_{1, p_i}(X,\dist,\meas_i)\right)^{1/p_i} &\le \max_{j=1,2}\left\{ \left(\lambda^D_{1, p_i}\left( B_{r_i/2}(x_{j, i})\right)\right)^{1/p_i} \right\} \\
&\le \max_{j=1,2} \left\{ \left(\frac{1}{\meas_i\left(B_{r_i/2}(x_{j, i})\right)}\int_{B_{r_i/2}(x_{j, i})}\delta_{j, i}^{p_i}\di\meas_i\right)^{-1/p_i} \right\}\\
&\le  \max_{j=1,2} \left\{ \left(\frac{1}{\meas_i\left(B_{r_i/2}(x_{j, i})\right)}\int_{B_{r_i/2}(x_{j, i})}\delta_{j, i}^p\di\meas_i\right)^{-1/p} \right\}
\end{align*}
for all sufficiently large $i$.
Thus by letting $i \to \infty$ we have
$$
\limsup_{i \to \infty}\left( \lambda_{1, p_i}(X,\dist,\meas_i)\right)^{1/p_i}\le \max_{j=1,2} 
\left\{ \left(\frac{1}{\meas\left(B_{r/2}(x_j)\right)}\int_{B_{r/2}(x_j)}\delta_j^p\di\meas\right)^{-1/p} \right\}.
$$
Letting $p \to \infty$ yields
$$
\limsup_{i \to \infty}\left( \lambda_{1, p_i}(X,\dist,\meas_i)\right)^{1/p_i}\le \max_{j=1,2} \{\|\delta_j\|_{L^\infty(X,\meas)}^{-1}\}=\frac{2}r=\frac{2}{\dist(x_1,x_2)}.
$$
By minimizing w.r.t. $x_1$ and $x_2$ we get the $\limsup$ inequality in \eqref{eq:upp diam}.

Next we check the $\liminf$ inequality in \eqref{eq:upp diam}. We can assume with no loss of generality that the limit 
$\lim_i\left( \lambda_{1, p_i}(X,\dist,\meas_i)\right)^{1/p_i}$ exists  and is finite.
For any $i$ such that $p_i>2$ take a minimizer $f_i \in H^{1, p_i}(X,\dist,\meas_i)$ of the right hand side of \eqref{eq:ot ch p-lap}
(whose existence is granted by Proposition~\ref{prop:compactness}). 
Set $\tilde f_i:=f_i^\pm/\|f_i^\pm\|_{L^{p_i}(X,\meas_i)}$ and $\tilde f_i:=\tilde f_i^+-\tilde f_i^-$.
Since Lemma~\ref{lem:min max p-lap} yields 
$$
\lambda_{1, p_i}(X,\dist,\meas_i)=p_i\Ch^i_{p_i}(\tilde f_i^\pm),
$$
by the compactness property provided by
 Theorem~\ref{thm:p-mosco} we can also assume that $\tilde f_i^+$ $L^p$-strongly converge for all $p>1$ to a 
nonnegative $g\in \bigcap_{p>1} H^{1,p}(X,\dist,\meas)$, that 
$\tilde f_i^-$ $L^p$-strongly converge for all $p>1$ to a nonnegative $h\in \bigcap_{p>1}H^{1,p}(X,\dist,\meas)$,
so that $\tilde f_i$ strongly $L^p$-converge  for all $p>1$ to 
$f=g-h$. For $p>1$ fixed, passing to the limit as $i\to\infty$ in the equality
$$
\|\tilde f_i^+\|^p_{L^p(X,\meas_i)}+\|\tilde f_i^+\|^p_{L^p(X,\meas_i)}=\|\tilde f_i\|^p_{L^p(X,\meas_i)}
$$
we obtain that $g=f^+$ and $h=f^-$. We now claim that both $f^+$ and $f^-$ have unit $L^\infty$ norm.
The proof of the upper bound is a simple consequence of the inequalities 
$\|\tilde f^\pm_i\|_{L^p(X,\meas_i)}\leq \|\tilde f_i^\pm\|_{L^{p_i}(X,\meas_i)}=1$ for 
$p_i\geq p$, by letting first $i\to\infty$ and then $p\to\infty$, while the proof of the lower bound
is a direct consequence of \eqref{eq:extra_assumption}.

%
Theorem~\ref{thm:p-mosco} and the inequality (actually, as we already remarked, equality holds under our curvature 
assumption, see \cite{GigliHan}) between $p$-minimal relaxed slope and $2$-minimal relaxed slope $|\nabla f|$ give 
$$
\|\nabla f^\pm\|_{L^p(X,\meas)}\leq \bigl(p\Ch_p(f^\pm)\bigr)^{1/p} \le \liminf_{i \to \infty}\bigl(p_i\Ch^i_p(f_i^\pm)\bigr)^{1/p_i}
$$
for any $p\geq 2$, thus letting $p \to \infty$ gives
$$
\||\nabla f^\pm|\|_{L^\infty(X,\meas)} \le \lim_{i \to \infty}\left( \lambda_{1, p_i}(X,\dist,\meas_i)\right)^{1/p_i}.
$$
Therefore $f^\pm$ have Lipschitz representatives, still denoted by $f^\pm$, with Lipschitz constants at most the right hand side above.
The relatively open subsets $\Omega^\pm:=\{f^\pm>0\}\cap\supp\meas$ 
of $\supp\meas$ are disjoint and nonempty. Let 
$$
r(\Omega^\pm):= \sup_{x \in \Omega^\pm}\left( \inf_{y \in \partial\Omega^\pm\cap\supp(\meas)}\dist(x, y)\right).
$$
Using the inequality $r(\Omega^+)+r(\Omega^-)\leq\mathrm{diam}(\supp\meas)$, ensured by the length property
of $(\supp\meas,\dist)$, we get
\begin{equation}\label{eq:rad1}
\frac{2}{\mathrm{diam}\,\supp(\meas)} \le \max \left\{ \frac{1}{r(\Omega^+)}, \frac{1}{r(\Omega^-)}\right\}.
\end{equation}

For $\delta\in (0,1)$, take points $x^\pm\in \Omega^\pm$ with $f^\pm(x^\pm)\geq 1-\delta$, and take points $
y^{\pm} \in \partial \Omega^{\pm}\cap\supp\meas$; since $f^{\pm}(y^{\pm})=0$, we have
$$
1-\delta\leq |f^\pm(x^\pm)-f^\pm(y^\pm)| \le {\rm Lip}(f^\pm)\dist (x^{\pm}, y^{\pm}), 
$$
so that $\|f^\pm\|_{L^\infty(X,\meas)}=1$ and the arbitrariness of $y^\pm$ give 
$$
1\leq {\rm Lip}(f^\pm) r(\Omega^\pm)\le \liminf_{i \to \infty}\left( \lambda_{1, p_i}(X,\dist,\meas_i)\right)^{1/p_i}\cdot r(\Omega^\pm).
$$
Thus 
\begin{equation}\label{eq:rad2}
\max \left\{ \frac{1}{r(\Omega^+)}, \frac{1}{r(\Omega^-)}\right\} \le  \liminf_{i \to \infty}\left( \lambda_{1, p_i}(X,\dist,\meas_i)\right)^{1/p_i}
\end{equation}
and \eqref{eq:rad1} and \eqref{eq:rad2} yield the $\liminf$ inequality in \eqref{eq:upp diam}.
\end{proof}

\section{Stability of Hessians and Ricci tensor}\label{sec:10}

Recall that derivations, according to \cite{Gigli} (the definitions being inspired by \cite{Weaver}), are linear functionals
$\bb:H^{1,2}(X,\dist,\meas)\to L^0(X,\meas)$ satisfying the quantitative locality property
$$
|\bb(u)|\leq h|\nabla u|\qquad\text{$\meas$-a.e. in $X$, for all $u\in H^{1,2}(X,\dist,\meas)$}
$$
for some $h\in L^0(X,\meas)$. The minimal $h$, up to $\meas$-negligible sets, is denoted $|\bb|$. The simplest example
of derivation is the gradient derivation $\bb_v(u):=\langle \nabla v,\nabla u\rangle$ induced by $v\in H^{1,2}(X,\dist,\meas)$, 
which satisfies $|\bb_v|=|\nabla v|$ $\meas$-a.e. in $X$. By a nice duality argument, it has also been proved in \cite[Section~2.3.1]{Gigli} that the 
$L^\infty(X,\meas)$-module generated by gradient derivations is dense in the class of $L^2$ derivations. In the language of
\cite{Gigli}, $L^2$-derivations correspond to $L^2$-sections of the tangent bundle $T(X,\dist,\meas)$, viewed as dual of the
$L^2$-sections of cotangent bundle $T^*(X,\dist,\meas)$ (the latter built starting from differentials of Sobolev functions), see
\cite[Section~2.3]{Gigli} for more details. 

Even though higher order tensors will not play a big role in this paper, except for the Hessians, let us describe the basic ingredients
of the theory developed for this purpose in \cite{Gigli}.
In a metric measure space $(X,\dist,\meas)$, for $p\in [1,\infty]$ let $L^p(T^r_s(X,\dist,\meas))$ denote the space of $L^p$-tensor fields of type $(r, s)$ on 
$(X,\dist,\meas)$, defined as in \cite{Gigli}. A tensor field of type $(r,s)$ is a $L^\infty(X,\meas)$-multilinear map
$$
T:\bigotimes_{k=1}^r T(X,\dist,\meas)\otimes\bigotimes_{k=r+1}^{r+s} T^*(X,\dist,\meas)\to L^0(X,\meas)
$$
satisfying, for some $g\in L^0(X,\meas)$ a continuity property
$$
|T(u\otimes v)|\leq g|u\otimes v|_{HS}\qquad\text{$\meas$-a.e. in $X$.}
$$
with respect to a suitable Hilbert-Schmidt norm on the tensor products. The minimal (up to $\meas$-negligible sets) $g$ is denoted $|T|$ and $L^p$
tensor fields correspond to tensor fields satisying $|T|\in L^p(X,\meas)$.

In particular derivations correspond to $(0,1)$-tensor fields. We recall the following facts and definitions:

\noindent (1) any choice of $g^0,\ldots,g^{r+s}\in  W^{1,2}(X,\dist,\meas)$ induces
a product tensor field $T$ acting as follows
$$
\langle T,\bigotimes_{k=1}^r \nabla f^k \otimes\bigotimes_{k=r+1}^{r+s}df^k \rangle=
g_0\perone_{k=1}^r\bb_{f^k}(g^k)\cdot\perone_{k=r+1}^{r+s}\bb_{g^k}(f^k)
$$
and denoted $g^0\bigotimes_{1}^r d g^k \otimes\bigotimes_{r+1}^{r+s}\nabla g^k$.
Since derivations correspond to $(0,1)$-tensor fields, we recover in particular the concept of gradient derivations.

\noindent 
(2) Denoting, as in \cite{Savare}, \cite{Gigli} (recall that $D(\Delta)$ is defined as in \eqref{eq:defDelta})
$$
\mathrm{Test}F(X,\dist,\meas):=\left\{f\in \Lipb(X)\cap D(\Delta):\ \Delta f\in H^{1,2}(X,\dist,\meas)\right\},
$$
the space of finite combinations of tensor products
\[
ST^r_s(X, \dist,\meas):=\left\{ \sum_{j=1}^Ng^{j, 0}\bigotimes_{k=1}^rd g^{j ,k} \otimes \bigotimes_{k=r+1}^{r+s}\nabla g^{j,k}:\
N\geq 1,\,\,g^{j, i} \in \mathrm{Test}F(X,\dist,\meas)\right\}
\]
is dense in $L^p(T^r_s(X,\dist,\meas))$ for $p\in [1,\infty)$. This is due to the fact that the very definition of tensor product involves a completion procedure
of the class of finite sums of elementary products. Notice also that $h_t$ maps $\Lipb(X)$ into $\mathrm{Test}F(X,\dist,\meas)$
for all $t>0$.

\noindent
(3) If $(X,\dist,\meas)$ is a $RCD(K,\infty)$ space, the space $W^{2,2}(X,\dist,\meas)$ is defined in \cite{Gigli} to be the space
of all functions $f\in H^{1,2}(X,\dist,\meas)$ such that  
\begin{eqnarray}\label{eq:def_Hessian} 
2\int_X\phi\mathrm{Hess}(f)(dg\otimes dh)&=&-\int_X
\langle\nabla f,\nabla g\rangle\div(\phi\nabla h)\di\meas
-\int_X\langle\nabla f,\nabla h\rangle\div(\phi\nabla g)\di\meas\nonumber
\\&-&\int_X\phi\bigl\langle\nabla f,\nabla\langle\nabla g,\nabla h\rangle\bigr\rangle\di\meas
\end{eqnarray}
for $\phi,\,f,\,g\in \mathrm{Test}F(X,\dist,\meas)$,
with $\mathrm{Hess}(f)$ a $(0,2)$ tensor field in $L^2$. This is a Hilbert space, when endowed with the norm
$$
\|f\|_{W^{2,2}(X,\dist,\meas)}:=\left(\|f\|^2_{H^{1,2}(X,\dist,\meas)}+\| |\mathrm{Hess}(f)|\|^2_{L^2(X,\meas)}\right)^{1/2}.
$$
It has been proved in \cite[Corollary~3.3.9]{Gigli} that $H^{1,2}(X,\dist,\meas)\cap D(\Delta)\subset W^{2,2}(X,\dist,\meas)$, with
\begin{equation}\label{eq:bound_on_Gamma3}
\int_X|\mathrm{Hess}(f)|^2\di\meas\leq\int_X (\Delta f)^2+K^-|\nabla f|^2\di\meas.
\end{equation}
Notice that \eqref{eq:def_Hessian} makes sense because of \eqref{eq:bound_on_Gamma2}; on the other hand, as soon as
$f\in W^{2,2}(X,\dist,\meas)$, by approximation the formula extends from $\phi\in \mathrm{Test}F(X,\dist,\meas)$ to
$\phi\in\Lipb(X)$. In particular, in our convergence results we shall use the choice $\phi\in h_{\setQ_+}\Algebra_\bs$,
where $h$ is the semigroup relative to the limit metric measure structure. Also, arguing as in \cite[Theorem~3.3.2(iv)]{Gigli}, we immediately obtain
that, given $f\in H^{1,2}(X,\dist,\meas)$, $f\in W^{2,2}(X,\dist,\meas)$ if and only if there is $h\in L^2(X,\meas)$ satisfying
\begin{eqnarray}\label{eq:CharaW22}
&&\biggl| \sum_k \biggl(
- \int_X\langle\nabla f,\nabla g_k\rangle\div(\phi_k\psi_k\nabla h_k)\di\meas
-\int_X\langle\nabla f,\nabla h_k\rangle\div(\phi_k\psi_k\nabla g_k)\di\meas\nonumber\\
&-&\int_X\phi_k\psi_k\bigl\langle\nabla f,\nabla\langle\nabla g_k,\nabla h_k\rangle\bigr\rangle\biggr)\biggr|
\leq\int_X h|\sum_k\phi_k\psi_k\nabla g_k\otimes\nabla h_k|\di\meas
\end{eqnarray}
for any finite collection of $\phi_k,\,\psi_k\in h_{\setQ_+}\Algebra_\bs$, $g_k,\,h_k\in\mathrm{Test}F(X,\dist,\meas)$.
In addition, the smallest $h$ up to $\meas$-negligible sets is precisely $|\mathrm{Hess}(f)|$.

In the sequel we shall also use the simplified notation $\mathrm{Hess}(f)(g,h)$. 

\begin{remark}\label{rem:HS_upper_sci}
If we have finitely many $g_k,\,h_k\in H^{1,2}(X,\dist,\meas)$ and $g^k_i,\,h^k_i\in H^{1,2}(X,\dist,\meas_i)$ 
are strongly convergent to $g_k,\,h_k$ in $H^{1,2}$ and uniformly Lipschitz, then
\begin{equation}\label{eq:HS_upper_sci}
|\sum_k \phi_k\nabla g^k_i\otimes\nabla h^k_i|_i
\quad\text{$L^2$-strongly converge to}\quad
|\sum_k \phi_k\nabla g^k\otimes\nabla h^k|
\end{equation}
for any choice of $\phi_k\in\Cb(X)$. Indeed, we can use the identity 
$$
|\sum_k \phi_k\nabla g^k_i\otimes\nabla h^k_i|_i^2=
\sum_{k,\,l}\phi_k\phi_l\langle\nabla g^k_i,\nabla g^l_i\rangle_i\langle\nabla h^k_i,\nabla h^l_i\rangle_i
$$
and Theorem~\ref{thm:cont_reco}(c) which provides the $L^1$-strong convergence of
$\langle\nabla g^k_i,\nabla g^l_i\rangle_i$ to $\langle\nabla g^k,\nabla g^l\rangle$; since these gradients
are equibounded we can use Proposition~\ref{prop:strocon}(a) to improve the convergence to $L^2$ (actually
any $L^p$, $p<\infty$) convergence, so that the products $L^1$-strongly converge. 
\end{remark}

Let us consider the regularization of $h_t$
\begin{equation}\label{eq:mollih1}
h_\rho f:=\int_0^\infty\rho(s)h_sf\di s,
\end{equation}
with $\rho\in C^\infty_c((0,\infty))$ convolution kernel and, when necessary, let us define $h^i_\rho$ in an analogous way. 
Since 
\begin{equation}\label{eq:mollih2}
\Delta h_\rho f=-\int_0^\infty \rho'(s)h_sf\di s\quad\text{if $f\in L^2(X,\meas)$},\quad
\Delta h_\rho f=\int_0^\infty \rho(s)h_s\Delta f\di s\quad\text{if $f\in D(\Delta)$},
\end{equation}
it is immediately seen that $h_\rho$ maps $L^2(X,\meas)$ into $\mathrm{Test}F(X,\dist,\meas)$ and
retains many properties of $h$, namely 
\begin{equation}\label{eq:regrho1}
\sup |h_\rho f|\leq \sup|f|, \qquad\Lip(h_\rho f)\leq e^{K^-\tau}\Lip(f),
\end{equation}
(with $\tau=\sup\supp\rho$) if $f$ is bounded and/or Lipschitz, and
\begin{equation}\label{eq:regrho2}
\int_X|\nabla h_\rho f|^2\di\meas\leq\int_X|\nabla f|^2\di\meas\qquad\text{if $f\in H^{1,2}(X,\dist,\meas)$,}
\end{equation}
\begin{equation}\label{eq:regrho3}
\int_X|\Delta h_\rho f|^2\di\meas\leq\int_X|\Delta f|^2\di\meas\qquad\text{if $f\in D(\Delta)$}.
\end{equation}
Then, we define
\begin{eqnarray*}
\mathrm{Test}_*F(X,\dist,\meas)&:=&\left\{h_\rho\bigl( L^2\cap L^\infty(X,\meas)\bigr):\ \text{$\rho\in C^\infty_c((0,\infty))$ convolution kernel}\right\}
\\&& \subset\mathrm{Test}F(X,\dist,\meas).
\end{eqnarray*}
By letting $\rho\to\delta_0$ it is immediately seen from \eqref{eq:regrho1}, \eqref{eq:regrho2}, \eqref{eq:regrho3} 
that the class $\mathrm{Test}_*F(X,\dist,\meas)$ is dense in $\mathrm{Test}F(X,\dist,\meas)$, namely for any 
$f\in \mathrm{Test}F(X,\dist,\meas)$ there exist $f_n\in \mathrm{Test}_*F(X,\dist,\meas)$
strongly convergent in $H^{1,2}$ to $f$, with $\sup|f_n|\leq\sup|f|$, $\Lip(f_n)\leq\Lip f$, and
$\Delta f_n\to\Delta f$ strongly in $H^{1,2}$.

In the next proposition we show a canonical
approximation of test functions in the class $\mathrm{Test}F(X,\dist,\meas)$ by test functions for the approximating metric measure
structures. Notice that we don't know if condition (b) can be improved, getting strong $H^{1,2}$ convergence of $|\nabla f_i|^2_i$. A

\begin{proposition}\label{prop;ex. app. seq}
Let $f \in \mathrm{Test}F(X,\dist,\meas)$.
Then there exist $f_i\in \mathrm{Test}_*F(X,\dist,\meas_i)$ with 
$\|f_i\|_{L^\infty(X,\meas_i)}\leq\|f\|_{L^\infty(X,\meas)}$ and $\sup_i\Lip(f_i)<\infty$,
such that $f_i$ and $\Delta_i f_i$ strongly converge to 
$f$ and $\Delta f$ in $H^{1, 2}$, respectively.
Moreover, these properties yield:
\begin{itemize}
\item[(a)]  $|\nabla f_i|_i^2$ $L^1$-strongly and $L^2_\loc$-strongly converge to $|\nabla f|^2$;
\item[(b)] $|\nabla f_i|_i^2$ weakly converge to $|\nabla f|^2$ in $H^{1,2}$. 
\end{itemize}
\end{proposition}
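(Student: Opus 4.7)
The plan is to produce $f_i$ by applying the heat-kernel mollifier $h^i_\rho$ in the $i$-th space to suitable approximants of $f$, and to derive (a) and (b) from the resulting strong $H^{1,2}$ convergence together with Theorem~\ref{thm:cont_reco} and the Bochner-type bound \eqref{eq:bound_on_Gamma2}.

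By the density of $\mathrm{Test}_*F(X,\dist,\meas)$ in $\mathrm{Test}F(X,\dist,\meas)$ recalled just before the statement (the canonical approximation $f^{(n)}:=h_{\rho_n}f$ with $\rho_n\to\delta_0$ gives $f^{(n)}\to f$ and $\Delta f^{(n)}=h_{\rho_n}\Delta f\to\Delta f$ strongly in $H^{1,2}$, with $\|f^{(n)}\|_\infty\leq\|f\|_\infty$ and $\Lip(f^{(n)})\leq e^{K^-\tau_n}\Lip f$) together with a diagonal argument, I would reduce to the case $f=h_\rho g$ with $\rho\in C^\infty_c((0,\infty))$, $g\in L^2\cap L^\infty(X,\meas)$ and $\|g\|_\infty\leq\|f\|_\infty$ (achieved by the choice $g=f$). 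For such $f$, I would pick $g_i\in L^2\cap L^\infty(X,\meas_i)$ strongly $L^2$-convergent to $g$ with $\|g_i\|_\infty\leq\|g\|_\infty$ (using truncation at $\pm\|g\|_\infty$ combined with a spatial cutoff to reduce to bounded support, both operations preserving $L^2$-strong convergence via Proposition~\ref{prop:strocon}), and set $f_i:=h^i_\rho g_i\in\mathrm{Test}_*F(X,\dist,\meas_i)$. The contraction properties \eqref{eq:regrho1} then immediately give $\|f_i\|_\infty\leq\|g\|_\infty\leq\|f\|_\infty$ and $\sup_i\Lip(f_i)<\infty$.

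For the strong $H^{1,2}$ convergence: for each fixed $s>0$, Theorem~\ref{thm:gms13_flow} and Corollary~\ref{cor:strocoh}(b) give $h^i_s g_i\to h_s g$ strongly in $H^{1,2}$; integrating in $s$ against the compactly supported $\rho$ (dominated convergence on $\supp\rho\Subset(0,\infty)$) yields $f_i\to f$ strongly in $H^{1,2}$. Applying the same argument with $\rho$ replaced by $-\rho'\in C^\infty_c((0,\infty))$ shows $\Delta_i f_i=h^i_{-\rho'}g_i\to h_{-\rho'}g=\Delta f$ strongly in $L^2$, and since $\Delta_i(\Delta_i f_i)=h^i_{\rho''}g_i$ is uniformly $L^2$-bounded by $\|\rho''\|_{L^1(\setR)}\sup_i\|g_i\|_{L^2(X,\meas_i)}$, Corollary~\ref{cor:strocoh}(a) upgrades this to strong $H^{1,2}$ convergence of $\Delta_i f_i$ to $\Delta f$.

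For (a), Theorem~\ref{thm:cont_reco}(c) applied with $v_i=w_i=f_i$ gives at once the $L^1$-strong convergence of $|\nabla f_i|^2_i=\langle\nabla f_i,\nabla f_i\rangle_i$ to $|\nabla f|^2$, and the uniform $L^\infty$ bound $\||\nabla f_i|_i\|_\infty\leq\sup_i\Lip(f_i)$ upgrades this to $L^2_\loc$-strong convergence via Proposition~\ref{prop:strocon}(a), (c). For (b), the Bochner-type inequality \eqref{eq:bound_on_Gamma2} applied in $(X,\dist,\meas_i)$ to $g=f_i$, namely
\[
\|\nabla|\nabla f_i|^2_i\|^2_{L^2(X,\meas_i)}\leq -\int_X\bigl(2K|\nabla f_i|^4_i+2|\nabla f_i|^2_i\langle\nabla f_i,\nabla\Delta_i f_i\rangle_i\bigr)\di\meas_i,
\]
together with the uniform Lipschitz bound on $f_i$ and the uniform $H^{1,2}$ bound on $\Delta_i f_i$ from the previous paragraph, bounds $|\nabla f_i|^2_i$ uniformly in $H^{1,2}(X,\dist,\meas_i)$; combined with the $L^2_\loc$-strong convergence from (a), this yields the required weak $H^{1,2}$ convergence. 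The main technical obstacle is the sharp preservation of the $L^\infty$ bound $\|f_i\|_\infty\leq\|f\|_\infty$ through the double mollification, forcing the choice $g=f$ in the preliminary reduction and careful truncation of the $g_i$.
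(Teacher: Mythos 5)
Your proposal is correct and follows essentially the same route as the paper's proof: reduce by density of $\mathrm{Test}_*F$ and a diagonal argument to $f=h_\rho g$, set $f_i=h^i_\rho g_i$ with $g_i$ $L^2$-strongly convergent to $g$ and with the contraction bounds \eqref{eq:regrho1}, obtain strong $H^{1,2}$ convergence of $f_i$ and $\Delta_i f_i$ from Corollary~\ref{cor:strocoh} together with \eqref{eq:mollih2}, deduce (a) from Theorem~\ref{thm:cont_reco}(c), and (b) from the a priori Bochner bound \eqref{eq:bound_on_Gamma2}. The only deviations are cosmetic (using Corollary~\ref{cor:strocoh}(a) with the bound on $\Delta_i\Delta_i f_i=h^i_{\rho''}g_i$ instead of integrating strong $H^{1,2}$ convergence directly, and using the uniform Lipschitz bound with Proposition~\ref{prop:strocon} in place of Theorem~\ref{thm:gms13} for the $L^2_\loc$ upgrade), and your explicit attention to keeping the $g_i$ uniformly Lipschitz cutoffs of $f$ is exactly what makes the Lipschitz bound survive the diagonal argument, a point the paper leaves implicit.
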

\begin{proof} Let us assume first that $f=h_\rho g$ for some $g\in L^2\cap L^\infty(X,\meas)$ and some convolution kernel $\rho$. We define
$f_i$ as $h^i_\rho g_i$, with $g_i$ $L^2$-strongly convergent to $g$, with $\|g_i\|_{L^\infty(X,\meas_i)}\leq\|g\|_{L^\infty(X,\meas)}$. 
It is clear from the construction that $\|f_i\|_{L^\infty(X,\meas_i)}\leq\|f\|_{L^\infty(X,\meas)}$ and that $\sup_i\Lip(f_i)<\infty$.
From \eqref{eq:Brezis1} and \eqref{eq:Brezis2},
together with the first formula in \eqref{eq:mollih2} (applied to $h^i_\rho$), we obtain that both $f_i$ and $\Delta_i f_i$ are bounded in $H^{1,2}$, 
and their strong convergence is a direct consequence of Corollary~\ref{cor:strocoh}(b) and of  
\eqref{eq:mollih2} again.

The weak convergence in $H^{1,2}$ 
of $|\nabla f_i|^2_i$ to $|\nabla f|^2$ follows by the apriori estimates \eqref{eq:bound_on_Gamma1} and 
\eqref{eq:bound_on_Gamma2}, that ensure the uniform bounds in $H^{1,2}$, and by Theorem~\ref{thm:cont_reco}(c)
that identifies the $L^1$-strong limit (and therefore the weak $H^{1,2}$ limit) as $|\nabla f|^2$. Theorem~\ref{thm:gms13} provides
the relative compactness in $L^2_\loc$ of $|\nabla f_i|_i^2$ and then proves $L^2_\loc$-convergence of
$|\nabla f_i|_i^2$ to $|\nabla f|^2$ as well.

When $f\in\mathrm{Test}F(X,\dist,\meas)$ we apply the previous approximation procedure to $h_\rho f$ and then we make a diagonal
argument, letting $\rho\to \delta_0$, noticing that the first identity in \eqref{eq:mollih2} grants the strong convergence in $H^{1,2}$ of 
$\Delta_i h_\rho^i f_i$ to $\Delta h_\rho f$, while the second identity in \eqref{eq:mollih2}  grants
$$
\|\Delta h_\rho f\|_{L^2(X,\meas)}\leq \|\Delta f\|_{L^2(X,\meas)},\qquad
\|\nabla \Delta h_\rho f\|_{L^2(X,\meas)}\leq \|\nabla \Delta f\|_{L^2(X,\meas)}.
$$
\end{proof}

\begin{theorem}[Stability of $W^{2,2}$ regularity and weak convergence of Hessians]\label{thm;stability hessian}
Let $f_i \in W^{2, 2}(X,\dist,\meas_i)$ with $\sup_i\|f_i\|_{W^{2, 2}(X,\dist,\meas_i)}<\infty$, and assume that $f_i$ strongly converge in $H^{1,2}$ to 
$f \in H^{1, 2}(X,\dist,\meas)$.\\
Then $f \in W^{2, 2}(X,\dist,\meas)$ and $\mathrm{Hess}_i(f_i)$ $L^2$-weakly converge to $\mathrm{Hess}(f)$
in the following sense: whenever $g_i\in H^{1,2}(X,\dist,\meas_i)$ 
are uniformly Lipschitz and strongly converge in $H^{1,2}$ to
$g\in H^{1,2}(X,\dist,\meas)$, 
$$\text{$\mathrm{Hess}_i(f_i)(g_i,g_i)$ $L^2$-weakly converge to $\mathrm{Hess}(f)(g,g)$.}$$

In addition, $|\mathrm{Hess}(f)|\leq H$ $\meas$-a.e. for any
$L^2$-weak limit point $H$ of $|\mathrm{Hess}_i(f_i)|$, and in particular
\begin{equation}\label{eq:lsc_Hessians}
\int_X|\mathrm{Hess}(f)|^2\di\meas\leq\liminf_{i\to\infty}\int_X|\mathrm{Hess}_i(f_i)|^2\di\meas_i.
\end{equation}
\end{theorem}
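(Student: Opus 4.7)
The strategy is to exploit the characterization \eqref{eq:CharaW22}, which encodes membership in $W^{2,2}$ via a scalar integral inequality involving only gradients and Laplacians of test functions. Given any finite family $\phi_k,\psi_k\in h_{\setQ_+}\Algebra_\bs$ and $g_k,h_k\in\mathrm{Test}F(X,\dist,\meas)$, I would first invoke Proposition~\ref{prop;ex. app. seq} to produce equibounded, equi-Lipschitz approximations $g_{k,i},h_{k,i}\in\mathrm{Test}_*F(X,\dist,\meas_i)$ strongly $H^{1,2}$-convergent to $g_k,h_k$ together with their Laplacians; the outer multipliers $\phi_k\psi_k$ may be kept fixed on the $i$-th spaces, since $f_i\in W^{2,2}(X,\dist,\meas_i)$ permits extension of \eqref{eq:def_Hessian} from $\mathrm{Test}F(X,\dist,\meas_i)$ to $\Lipb(X)$. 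Along a subsequence, the uniform $W^{2,2}$ bound allows the extraction of an $L^2$-weak limit point $H$ of $|\mathrm{Hess}_i(f_i)|$; the task then is to pass to the limit in both sides of \eqref{eq:CharaW22} written for $f_i$.

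Each of the three types of terms on the left-hand side of \eqref{eq:CharaW22} will be treated as a pairing of a weakly $L^2$-convergent factor against a strongly $L^2$-convergent one, after expanding $\div_i(\phi_k\psi_k\nabla h_{k,i})=\phi_k\psi_k\Delta_i h_{k,i}+\langle\nabla(\phi_k\psi_k),\nabla h_{k,i}\rangle_i$. Specifically, $\langle\nabla f_i,\nabla g_{k,i}\rangle_i$ is $L^2$-bounded and weakly $L^2$-converges to $\langle\nabla f,\nabla g_k\rangle$ by Theorem~\ref{thm:cont_reco}(b); the product $\phi_k\psi_k\Delta_i h_{k,i}$ is a bounded Lipschitz factor multiplying a strongly $L^2$-convergent sequence, hence strongly $L^2$-convergent; and $\langle\nabla(\phi_k\psi_k),\nabla h_{k,i}\rangle_i$ is strongly $L^2$-convergent by Theorem~\ref{thm:strong-convergence-gradients} (applied to each factor in the Leibniz expansion $\nabla(\phi_k\psi_k)=\phi_k\nabla\psi_k+\psi_k\nabla\phi_k$), together with uniform $L^\infty$ bounds. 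For the most delicate, third term, polarization combined with Proposition~\ref{prop;ex. app. seq}(b) gives weak $H^{1,2}$-convergence of $\langle\nabla g_{k,i},\nabla h_{k,i}\rangle_i$ to $\langle\nabla g_k,\nabla h_k\rangle$, whence Theorem~\ref{thm:cont_reco}(b), applied with $v_i=f_i$ strong and $w_i=\langle\nabla g_{k,i},\nabla h_{k,i}\rangle_i$ weak in $H^{1,2}$, yields weak $L^2$-convergence of $\langle\nabla f_i,\nabla\langle\nabla g_{k,i},\nabla h_{k,i}\rangle_i\rangle_i$, to be paired against the bounded multiplier $\phi_k\psi_k$. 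The right-hand side of \eqref{eq:CharaW22} is handled by pairing the weak $L^2$-limit $H$ against $|\sum_k\phi_k\psi_k\nabla g_{k,i}\otimes\nabla h_{k,i}|_i$, whose strong $L^2$-convergence follows from Remark~\ref{rem:HS_upper_sci} (minimally adapted so that the $\Cb(X)$-multipliers are replaced by the fixed bounded Lipschitz functions $\phi_k\psi_k$).

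Passing to the limit in \eqref{eq:CharaW22} then yields the inequality for $f$ with $h=H$, so the characterization gives $f\in W^{2,2}(X,\dist,\meas)$ with $|\mathrm{Hess}(f)|\leq H$ $\meas$-a.e.; the bound \eqref{eq:lsc_Hessians} follows from weak $L^2$ lower semicontinuity of the norm. For the weak $L^2$-convergence of $\mathrm{Hess}_i(f_i)(g_i,g_i)$ to $\mathrm{Hess}(f)(g,g)$, Cauchy-Schwarz and the equi-Lipschitz assumption give a uniform $L^2$ bound on the sequence; testing any weak $L^2$-limit point against arbitrary $\phi\in h_{\setQ_+}\Algebra_\bs$ via \eqref{eq:def_Hessian} specialized to $g=h=g_i$ (in its $W^{2,2}$-extended form, legitimate for both $f_i$ and the just-established $f\in W^{2,2}$) and repeating the convergence arguments above identifies the limit uniquely as $\mathrm{Hess}(f)(g,g)$, so the full sequence converges weakly. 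I expect the principal technical obstruction to be precisely the handling of the last term of \eqref{eq:def_Hessian}: the weak $H^{1,2}$-compactness of the inner product $\langle\nabla g_{k,i},\nabla h_{k,i}\rangle_i$ is not routine and ultimately rests on the Bakry-type a priori bounds \eqref{eq:bound_on_Gamma1}--\eqref{eq:bound_on_Gamma2} absorbed into Proposition~\ref{prop;ex. app. seq}(b).
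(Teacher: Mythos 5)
Your first half is sound and is essentially the paper's own argument, run in a cleaner logical order: you pass to the limit in \eqref{eq:CharaW22} written for $f_i$, replacing the test arguments $g_k,h_k$ by the approximations of Proposition~\ref{prop;ex. app. seq}, handling the divergence terms by Theorem~\ref{thm:strong-convergence-gradients} and the Leibniz rule, the third term by polarization plus Proposition~\ref{prop;ex. app. seq}(b) and Theorem~\ref{thm:cont_reco}(b), and the right-hand side by Remark~\ref{rem:HS_upper_sci}; this correctly yields $f\in W^{2,2}(X,\dist,\meas)$, the bound $|\mathrm{Hess}(f)|\leq H$ $\meas$-a.e., and \eqref{eq:lsc_Hessians}. (The paper proves the same facts with the same tools, merely placing this step after the convergence statement.)

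The gap is in your final step, the weak $L^2$-convergence of $\mathrm{Hess}_i(f_i)(g_i,g_i)$ for arbitrary $g_i$ that are only uniformly Lipschitz and strongly $H^{1,2}$-convergent. You propose to identify weak limit points by applying \eqref{eq:def_Hessian} ``specialized to $g=h=g_i$'', but that identity is simply not available for such $g_i$: its right-hand side contains $\div(\phi\nabla g_i)$, which requires $g_i\in D(\Delta_i)$, and contains $\nabla\langle\nabla g_i,\nabla g_i\rangle_i$, which requires $|\nabla g_i|_i^2\in H^{1,2}(X,\dist,\meas_i)$ --- a property guaranteed (via \eqref{eq:bound_on_Gamma2}) only for test functions. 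The ``$W^{2,2}$-extended form'' invoked in the paper extends \eqref{eq:def_Hessian} only in the multiplier $\phi$ (from $\mathrm{Test}F$ to $\Lipb(X)$), never in the slots $g,h$, so your parenthetical justification does not apply. The missing idea is the paper's mollification step: first record that your limit passage already gives the convergence when the arguments are the test-function approximations of Proposition~\ref{prop;ex. app. seq}; then, for general $g_i$, apply this to $h^i_\rho g_i\in\mathrm{Test}F(X,\dist,\meas_i)$ (with $\rho$ a convolution kernel supported in $(0,\infty)$) and control the replacement error by bilinearity and Cauchy--Schwarz,
\begin{equation*}
\int_X\bigl|\mathrm{Hess}_i(f_i)(g_i,g_i)-\mathrm{Hess}_i(f_i)(h^i_\rho g_i,h^i_\rho g_i)\bigr|\di\meas_i
\leq C\int_X|\mathrm{Hess}_i(f_i)|\,|\nabla g_i-\nabla h^i_\rho g_i|_i\di\meas_i,
\end{equation*}
where $C$ comes from the uniform Lipschitz bounds. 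The right-hand side tends to $0$ as $\rho\to\delta_0$, uniformly in $i$, thanks to the uniform $W^{2,2}$ bound and to the fact that strong $H^{1,2}$ convergence of both $g_i$ and $h^i_\rho g_i$ (the latter by Corollary~\ref{cor:strocoh}(b)) yields $\lim_{\rho\to\delta_0}\limsup_{i\to\infty}\int_X|\nabla g_i-\nabla h^i_\rho g_i|_i^2\di\meas_i=0$. Without this approximation-in-the-arguments step, the identification of the weak limit for general $g_i$ does not go through.
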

\begin{proof} Let $g\in \mathrm{Test}F(X,\dist,\meas)$ and let
$H$ be a $L^2$-weak limit point of $|\mathrm{Hess}_i(f_i)|$. Let $(g_i)$ 
be provided by Proposition~\ref{prop;ex. app. seq}. We will first prove convergence of the Hessians
under these stronger convergence assumption on $g_i$.

In order to identify the $L^2$-weak limit of
$\mathrm{Hess}(f_i)(g_i,g_i)$ we want to pass to the limit as $i\to\infty$ in the expression
$$
-2\int_X\langle\nabla f_i,\nabla g_i\rangle_i\div(\phi\nabla g_i)\di\meas_i-\int_X\phi\langle\nabla f_i,\nabla |\nabla g_i|_i^2\rangle_i\di\meas_i
$$
with $\phi\in h_{\setQ_+}\Algebra_\bs$.
Let us analyze the first term. Since $\div(\phi\nabla g_i)=\phi\Delta_i g_i+\langle\nabla g_i,\nabla\phi\rangle$, this term $L^2$-strongly converges
to $\div(\phi\nabla g)= \phi\Delta g+\langle\nabla g,\nabla\phi\rangle$. On the other hand, by Theorem~\ref{thm:cont_reco}(b), the term 
$\langle\nabla f_i,\nabla g_i\rangle_i$ $L^2$-weakly converges to $\langle\nabla f,\nabla g\rangle$. 
This proves the convergence of the first term.

Let us analyze the second term. Since Proposition~\ref{prop;ex. app. seq}(b) shows that
$|\nabla g_i|_i^2$ weakly converge in $H^{1,2}$ to $|\nabla g|^2$, we can apply Theorem~\ref{thm:cont_reco}(b) again
to obtain the convergence of $\int_X\phi\langle\nabla f_i, \nabla |\nabla g_i|_i^2\rangle_i\di\meas_i$ to 
$\int_X\phi\langle\nabla f,\nabla |\nabla g|^2\rangle\di\meas$.

This completes the proof under the additional assumption on $g_i$. 
In the general case it is sufficient to apply the
already proved convergence result to $h^i_\rho g_i$, with $\rho$ convolution kernel with support
in $(0,\infty)$, noticing the uniform Lipschitz bound on $g_i$
yields
\begin{eqnarray*}
\int_X|\mathrm{Hess}(f_i)(g_i,g_i)-\mathrm{Hess}(f_i)(h^i_\rho g_i,h^i_\rho g_i)|\di\meas_i&\leq&
\int_X |\mathrm{Hess}(f_i)||\nabla g_i\otimes\nabla g_i-\nabla h_\rho^i g_i\otimes\nabla h_\rho^ig_i|\di\meas_i\\
&\leq&C\int_X |\mathrm{Hess}(f_i)||\nabla g_i-\nabla h_\rho^i g_i|_i\di\meas_i
\end{eqnarray*}
and that the strong $H^{1,2}$ convergence of $h^i_\rho g_i$ to $h_\rho g$ yields
$$
\lim_{\rho\to \delta_0}\limsup_{i\to\infty} \int_X|\nabla g_i-\nabla h_\rho^i g_i|_i^2\di\meas_i=0.
$$

The inequality $|\mathrm{Hess}(f)|\leq H$ can be proved as follows. We start from the observation that, by bilinearity,
$$
-\int_X\langle\nabla f_i,\nabla g_i\rangle_i\div(\phi\psi\nabla h_i)\di\meas_i-
\int_X\langle\nabla f_i,\nabla h_i\rangle_i\div(\phi\psi\nabla g_i)\di\meas_i-
\int_X\phi\psi\bigl\langle\nabla f_i,\nabla\langle\nabla g_i,\nabla h_i\rangle_i\bigr\rangle_i\di\meas_i
$$
converges to
$$
-\int_X\langle\nabla f,\nabla g\rangle\div(\phi\psi\nabla h)\di\meas-
\int_X\langle\nabla f,\nabla h\rangle\div(\phi\psi\nabla g)\di\meas-
\int_X\phi\psi\bigl\langle\nabla f,\nabla\langle\nabla g,\nabla h\rangle\bigr\rangle\di\meas
$$
for any $\phi,\,\psi\in h_{\setQ_+}\Algebra_\bs$ whenever $g_i,\,h_i\in \mathrm{Test}F(X,\dist,\meas_i)$ are uniformly Lipschitz and
strongly converge in $H^{1,2}$ to $g,\,h\in \mathrm{Test}F(X,\dist,\meas)$ respectively.
This, taking also Remark~\ref{rem:HS_upper_sci} into account, enables to pass to the limit in 
\eqref{eq:CharaW22} written for $f_i$, to get
\begin{eqnarray*}
&&\biggl| \sum_k \biggl(
- \int_X\langle\nabla f,\nabla g_k\rangle\div(\phi_k\psi_k\nabla h_k)\di\meas
-\int_X\langle\nabla f,\nabla h_k\rangle\div(\phi_k\psi_k\nabla g_k)\di\meas\nonumber\\
&-&\int_X\phi_k\psi_k\bigl\langle\nabla f,\nabla\langle\nabla g_k,\nabla h_k\rangle\bigr\rangle\biggr)\biggr|
\leq\int_X H|\sum_k\phi_k\psi_k\nabla g_k\otimes\nabla h_k|\di\meas
\end{eqnarray*}
for any finite collection of $\phi_k,\,\psi_k\in h_{\setQ_+}\Algebra_\bs$, $g_k,\,h_k\in\mathrm{Test}F(X,\dist,\meas)$.
This proves that $|\mathrm{Hess}(f)|\leq H$ $\meas$-a.e. in $X$.
\end{proof}

In the next corollary we use the bounds on laplacians of $f_i$ to obtain at the same time strong convergence
in $H^{1,2}$ and the uniform bound in $W^{2,2}$, so that the conclusions of Theorem~\ref{thm;stability hessian} apply.

\begin{corollary}[Weak stability of Hessians under Laplacian bounds]\label{thm;compactness lap}
Let $f_i\in D(\Delta_i)$ with $$\sup_i (\|f_i\|_{L^2(X,\meas_i)}+\|\Delta_i f_i\|_{L^2(X,\meas_i)})<\infty$$
and assume that $f_i$ $L^2$-strongly converge to $f$. Then $f \in D(\Delta )$ and
\begin{itemize}
\item[(i)] $f_i$ strongly converge to $f$ in $H^{1, 2}$;
\item[(ii)]   $\Delta_i f_i$ $L^2$-weakly converge to $\Delta f$; 
\item[(iiii)] the Hessians of $f_i$ are weakly convergent to the Hessian of $f$ as in Theorem~\ref{thm;stability hessian}.
\end{itemize}
\end{corollary}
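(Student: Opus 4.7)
The statement is essentially a packaging of two previously established results, so the plan is largely a verification of hypotheses rather than a new argument.

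First I would read off parts (i) and (ii) directly from Corollary~\ref{cor:strocoh}(a): its hypotheses are exactly $f_i\in D(\Delta_i)$, $L^2$-strong convergence $f_i\to f$, and uniform $L^2$-boundedness of $\Delta_i f_i$, and its conclusions are precisely that $f\in D(\Delta)$, that $\Delta_i f_i$ $L^2$-weakly converge to $\Delta f$, and that $f_i\to f$ strongly in $H^{1,2}$. So nothing new is required for (i) and (ii).

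For (iii), the plan is to apply Theorem~\ref{thm;stability hessian}. This requires two ingredients: strong $H^{1,2}$-convergence $f_i\to f$, which we already have from (i); and a uniform bound $\sup_i\|f_i\|_{W^{2,2}(X,\dist,\meas_i)}<\infty$. To get the latter I would invoke the apriori Hessian estimate \eqref{eq:bound_on_Gamma3}, which reads
$$
\int_X|\mathrm{Hess}_i(f_i)|^2\di\meas_i\leq\int_X (\Delta_i f_i)^2+K^-|\nabla f_i|_i^2\di\meas_i.
$$
The first term on the right is uniformly bounded by assumption. For the second, I would use integration by parts,
$$
\int_X|\nabla f_i|_i^2\di\meas_i=-\int_X f_i\,\Delta_i f_i\di\meas_i\leq\|f_i\|_{L^2(X,\meas_i)}\|\Delta_i f_i\|_{L^2(X,\meas_i)},
$$
which is uniformly bounded by the standing hypothesis on $(f_i)$. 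Combined with the $L^2$-boundedness of $f_i$ itself, this yields the required uniform $W^{2,2}$ bound.

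With both ingredients in hand, Theorem~\ref{thm;stability hessian} applies and delivers (iii): namely $f\in W^{2,2}(X,\dist,\meas)$ and the weak convergence $\mathrm{Hess}_i(f_i)(g_i,g_i)\weakto\mathrm{Hess}(f)(g,g)$ in $L^2$ for any uniformly Lipschitz $g_i\in H^{1,2}(X,\dist,\meas_i)$ strongly converging in $H^{1,2}$ to $g\in H^{1,2}(X,\dist,\meas)$, together with the lower semicontinuity \eqref{eq:lsc_Hessians} of the Hessian $L^2$-norms. There is no genuine obstacle here: the only non-formal step is the integration-by-parts bound on $\Ch^i(f_i)$, and even that is immediate once one notices that (ii) already identifies the $L^2$-weak limit of $\Delta_i f_i$, so the $W^{2,2}$ bound follows without any extraction of subsequences.
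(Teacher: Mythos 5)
Your proposal is correct and follows exactly the paper's own route: (i) and (ii) are read off from Corollary~\ref{cor:strocoh}(a), and (iii) follows by combining Theorem~\ref{thm;stability hessian} with the a priori Hessian bound \eqref{eq:bound_on_Gamma3}, the uniform $W^{2,2}$ bound being supplied by the integration-by-parts estimate on the Cheeger energies just as you wrote (the paper leaves this last verification implicit).
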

\begin{proof}
Statements (i) and (ii) follows by By Corollary~\ref{cor:strocoh}(a), while statement (iii) is a consequence of
Theorem~\ref{thm;stability hessian} and of \eqref{eq:bound_on_Gamma3}.
\end{proof}

In the final part of his work \cite{Gigli}, motivated also by the measure-valued $\Gamma_2$ operator
introduced in \cite{Savare}, Gigli introduced a weak Ricci tensor $\mathbf{Ric}$. It is a sort of measure-valued $(0,2)$-tensor, 
whose action on gradients of functions $f\in \mathrm{Test}F(X,\dist,\meas)$ is given by
\begin{equation}\label{eq:defRic}
\mathbf{Ric}(\nabla f,\nabla f):={\bf\Delta}\frac 12|\nabla f|^2-|\mathrm{Hess}(f)|^2\meas-\langle\nabla f,\nabla\Delta f\rangle\meas,
\end{equation}
where the potentially singular part w.r.t. $\meas$ comes from the distributional laplacian ${\bf\Delta}$. The measure
defined in \eqref{eq:defRic} is bounded from below by $K|\nabla f|^2\meas$ and it is a capacitary measure, namely
it vanishes on sets with null capacity (with respect to the Dirichlet form associated to $\Ch$), hence its duality
with functions in $H^{1,2}(X,\dist,\meas)$ is well-defined.

Actually, $\mathbf{Ric}$ can be defined as a bilinear form on a larger class  
$H^{1,2}_H(T(X,\dist,\meas))$ of vector fields, weakly differentiable in a suitable sense, 
which includes gradient vector fields of functions in $\mathrm{Test}F(X,\dist,\meas)$; on the other hand, using 
the linearity property of Proposition~3.6.9 in \cite{Gigli}, as well as the 
continuity property (3.6.13) of Theorem~3.6.7, one can prove that \eqref{eq:Ricci2} holds iff
$\mathbf{Ric}(v,v)\geq \zeta |v|^2$ for all $v\in H^{1,2}_H(T(X,\dist,\meas))$. For this reason
we confine ourselves to the smaller class of vector fields.
 
Using the tools developed so far we are able to prove a kind of upper semicontinuity, in the measure-valued
sense, for $\mathbf{Ric}$ under measured Gromov-Hausdorff convergence. 

\begin{theorem}[Upper semicontinuity of Ricci curvature]\label{thm; upp ricci}
Assume that $(X,\dist,\meas_i)$ are $RCD(K_i,\infty)$ spaces satisfying
\begin{equation}\label{eq:Ricci1}
\mathbf{Ric}_i(\nabla f,\nabla f)\geq \zeta|\nabla f|_i^2\qquad\forall f\in \mathrm{Test}F(X,\dist,\meas_i)
\end{equation}
for some $\zeta\in C(X)$ with $\zeta^-$ bounded. Then 
\begin{equation}\label{eq:Ricci2}
\mathbf{Ric}(\nabla f,\nabla f)\geq \zeta|\nabla f|^2\qquad\forall f\in \mathrm{Test}F(X,\dist,\meas).
\end{equation}
\end{theorem}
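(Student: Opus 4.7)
The plan is to reduce the measure-valued inequality to an inequality between real numbers obtained by testing against nonnegative $\phi \in h_{\setQ_+}\Algebra_\bs$, and then to pass to the limit separately in each of the three terms appearing in the definition \eqref{eq:defRic} of $\mathbf{Ric}$. First, I fix $f \in \mathrm{Test}F(X,\dist,\meas)$ and use Proposition~\ref{prop;ex. app. seq} to obtain an approximating sequence $f_i \in \mathrm{Test}_*F(X,\dist,\meas_i)$ with uniform $L^\infty$ and Lipschitz bounds, $f_i \to f$ and $\Delta_i f_i \to \Delta f$ strongly in $H^{1,2}$, and $|\nabla f_i|_i^2 \to |\nabla f|^2$ both $L^1$-strongly and weakly in $H^{1,2}$. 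Next, given a nonnegative $\phi = h_s \phi_0 \in h_{\setQ_+}\Algebra_\bs$ with $\phi_0 \geq 0$ in $\Algebra_\bs$, I set $\phi_i := h^i_s \phi_0$, which is nonnegative by the positivity-preserving property of $h^i$, and again by Proposition~\ref{prop;ex. app. seq} we have $\phi_i \to \phi$ and $\Delta_i \phi_i \to \Delta \phi$ strongly in $H^{1,2}$ with uniform $L^\infty$ and Lipschitz bounds. Testing the hypothesis $\mathbf{Ric}_i(\nabla f_i,\nabla f_i) \geq \zeta |\nabla f_i|_i^2 \meas_i$ against $\phi_i \geq 0$ and using the distributional definition of $\mathbf{\Delta}_i$, I obtain
\begin{equation*}
-\tfrac12\!\int_X\!\langle \nabla \phi_i, \nabla |\nabla f_i|_i^2\rangle_i \di\meas_i
- \!\int_X\! \phi_i |\mathrm{Hess}_i(f_i)|^2 \di\meas_i
- \!\int_X\! \phi_i \langle \nabla f_i, \nabla \Delta_i f_i\rangle_i \di\meas_i
\geq \!\int_X\! \phi_i \zeta |\nabla f_i|_i^2 \di\meas_i.
\end{equation*}

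The heart of the proof is the passage to the limit in each term. For the first term I integrate by parts, rewriting it as $-\tfrac12 \int_X \Delta_i \phi_i \cdot |\nabla f_i|_i^2 \di\meas_i$ (legitimate since $\phi_i \in D(\Delta_i)$ and $|\nabla f_i|_i^2 \in H^{1,2}(X,\dist,\meas_i)$ by Proposition~\ref{prop:Bakry}); since $\Delta_i \phi_i \to \Delta \phi$ strongly in $L^2$ and $|\nabla f_i|_i^2$ is equibounded and $L^2$-weakly convergent to $|\nabla f|^2$, Proposition~\ref{prop:strocon}(d) produces the desired limit. For the third term, Theorem~\ref{thm:cont_reco}(c) applied to the strongly $H^{1,2}$-convergent sequences $f_i$ and $\Delta_i f_i$ yields the $L^1$-strong convergence of $\langle \nabla f_i, \nabla \Delta_i f_i\rangle_i$ to $\langle \nabla f, \nabla \Delta f\rangle$; multiplying by the uniformly bounded strongly convergent $\phi_i$ preserves the convergence of the integral. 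The right-hand side is handled similarly: $\phi_i \zeta \to \phi \zeta$ strongly in $L^2$ by the continuity and local boundedness of $\zeta$ and the convergence of $\phi_i$, while $|\nabla f_i|_i^2$ is $L^2$-weakly convergent, so Proposition~\ref{prop:strocon}(d) applies once more. The Hessian term, which enters with a minus sign and therefore demands a lower semicontinuity statement, is handled by Theorem~\ref{thm;stability hessian}: any weak $L^2$-limit point $H$ of $|\mathrm{Hess}_i(f_i)|$ satisfies $|\mathrm{Hess}(f)| \leq H$ $\meas$-a.e., and since $\sqrt{\phi_i} \to \sqrt{\phi}$ strongly in $L^2$ (by uniform boundedness and continuity of the square root) while $|\mathrm{Hess}_i(f_i)|$ is $L^2$-bounded and weakly convergent, the product $\sqrt{\phi_i}|\mathrm{Hess}_i(f_i)|$ weakly converges in $L^2$ to $\sqrt{\phi}H$, whence the weighted $L^2$-norm lower semicontinuity yields
\begin{equation*}
\int_X \phi |\mathrm{Hess}(f)|^2 \di\meas \leq \int_X \phi H^2 \di\meas \leq \liminf_{i\to\infty} \int_X \phi_i |\mathrm{Hess}_i(f_i)|^2 \di\meas_i.
\end{equation*}

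Combining these four convergences in the tested inequality yields $\mathbf{Ric}(\nabla f,\nabla f)(\phi) \geq \int_X \phi \zeta |\nabla f|^2 \di\meas$ for every nonnegative $\phi \in h_{\setQ_+}\Algebra_\bs$. To upgrade this to the measure-theoretic statement, I invoke the density of this class together with the fact that both sides define Borel measures whose negative parts are bounded—$\mathbf{Ric}(\nabla f,\nabla f) \geq K|\nabla f|^2 \meas$ by the $RCD(K,\infty)$ property and $\zeta^-$ is bounded by hypothesis—so a standard regularization in $s \downarrow 0$ followed by monotone class / inner regularity extends the inequality to all nonnegative Borel test functions. I expect the principal technical obstacle to be precisely the Hessian term: the minus sign forces us to use the limit inequality with the correct direction, which is not a direct consequence of the weak convergence $\mathrm{Hess}_i(f_i)(g_i,g_i) \weakto \mathrm{Hess}(f)(g,g)$ of Theorem~\ref{thm;stability hessian} but instead of the module-theoretic domination $|\mathrm{Hess}(f)| \leq H$ on weak limit points and the weighted lower-semicontinuity argument above, with the auxiliary (but non-trivial) verification that the square root passes to the limit strongly when one has only uniform $L^\infty$ bounds together with $L^2$-strong convergence.
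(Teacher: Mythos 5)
Your strategy coincides with the paper's own proof: approximate $f$ by Proposition~\ref{prop;ex. app. seq}, test \eqref{eq:Ricci1} against nonnegative cutoffs, pass to the limit term by term, and treat the Hessian term (the only one entering with the "wrong" sign) through the domination $|\mathrm{Hess}(f)|\leq H$ for $L^2$-weak limit points $H$ of $|\mathrm{Hess}_i(f_i)|$ from Theorem~\ref{thm;stability hessian}, followed by weighted lower semicontinuity --- this last point, which you correctly identify as the crux, is exactly how the paper concludes. However, there is a genuine gap at the very start: the hypothesis only gives $RCD(K_i,\infty)$ with $i$-dependent $K_i$, whereas every tool you invoke (Proposition~\ref{prop;ex. app. seq}, Theorem~\ref{thm:cont_reco}, Corollary~\ref{cor:strocoh}, and also the bound \eqref{eq:bound_on_Gamma3} needed to ensure that $\sup_i\||\mathrm{Hess}_i(f_i)|\|_{L^2(X,\meas_i)}<\infty$, so that weak limit points $H$ exist at all) requires a \emph{uniform} lower Ricci bound. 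The paper's first step is to observe that \eqref{eq:Ricci1}, with $\zeta^-$ bounded, combined with the Bochner-inequality characterization of $RCD$ spaces in \cite{AmbrosioGigliSavare15}, makes all the spaces $RCD(K,\infty)$ for the single constant $K=-\sup\zeta^-$; without this observation none of your citations is applicable. (Relatedly, the paper also truncates $\zeta$ to reduce to $\zeta\in\Cb(X)$, while your right-hand-side argument tacitly treats $\zeta$ as bounded.)

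A second, more technical flaw concerns your test functions: the choice $\phi_i=h^i_s\phi_0$ is not covered by Proposition~\ref{prop;ex. app. seq}, and the claimed strong $L^2$ convergence of $\Delta_i\phi_i$ to $\Delta\phi$ is unjustified --- for a \emph{fixed} time $s$, Corollary~\ref{cor:strocoh}(a) only yields weak $L^2$ convergence of $\Delta_ih^i_s\phi_0$, and this is precisely why the paper works with the time-mollified semigroups $h_\rho$ of \eqref{eq:mollih1}, for which \eqref{eq:mollih2} converts convergence of Laplacians into convergence of the semigroup itself. Your argument admits two easy repairs: either take $\phi_i=h^i_\rho g_i$ with $g_i\geq 0$ (nonnegativity is preserved and the proof of Proposition~\ref{prop;ex. app. seq} then applies verbatim), or keep your integration by parts but swap the roles in Proposition~\ref{prop:strocon}(d), pairing $\Delta_i\phi_i$ (weakly $L^2$-convergent by Corollary~\ref{cor:strocoh}(a)) against $|\nabla f_i|_i^2$, which is in fact \emph{strongly} $L^2$-convergent, by the uniform Lipschitz bound on $f_i$ together with the $L^1$-strong convergence of Proposition~\ref{prop;ex. app. seq}(a) and Proposition~\ref{prop:strocon}(a). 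The paper sidesteps the issue entirely by not integrating by parts and instead applying \eqref{eq:continuitymean} to $\phi_i$ (strong in $H^{1,2}$) paired with $|\nabla f_i|_i^2$ (weak in $H^{1,2}$). Finally, a harmless slip: integration by parts produces $+\tfrac{1}{2}\int_X\Delta_i\phi_i\,|\nabla f_i|_i^2\di\meas_i$, not $-\tfrac{1}{2}$.
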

\begin{proof} Setting $K=\sup\zeta^-$, from \eqref{eq:Ricci1} and from the characterization of $RCD(K,\infty)$ spaces based on
Bochner's inequality in \cite{AmbrosioGigliSavare15} we obtain that $(X,\dist,\meas_i)$ are $RCD(K,\infty)$ spaces.
By a truncation argument, is not restrictive to assume that $\zeta\in C_b(X)$. Assume that $f\in \mathrm{Test}F(X,\dist,\meas)$ and
let $f_i\in\mathrm{Test}F(X,\dist,\meas_i)$ be strongly convergent in $H^{1,2}$ to $f$, with 
$\sup_i(\sup_X|f_i|+\Lip(f_i))<\infty$, $\Delta_i f_i$ strongly convergent
to $\Delta f$ in $H^{1,2}$ and $|\nabla f_i|_i^2$ weakly convergent in $H^{1,2}$ to $|\nabla f|^2$, 
whose existence is granted by Proposition~\ref{prop;ex. app. seq}. 

We want to pass to the limit
as $i\to\infty$ in the integral formulation
\begin{eqnarray}\label{eq:Ricci3}
&&-\frac 12\int_X \langle\nabla\phi_i,\nabla |\nabla f_i|_i^2\rangle_i\di\meas_i-
\int_X\phi_i|\mathrm{Hess}(f_i)|_i^2\di\meas_i-
\int_X\phi_i\langle\nabla f_i,\nabla\Delta_i f_i\rangle_i\di\meas_i\nonumber\\
&\geq&\int_X\zeta\phi_i|\nabla f_i|^2_i\di\meas_i
\end{eqnarray}
of \eqref{eq:Ricci1}, with $\phi_i\in H^{1,2}(X,\dist,\meas_i)$ bounded and nonnegative, thus getting the integral formulation of \eqref{eq:Ricci2}. 
To this aim, for $\phi\in H^{1,2}(X,\dist,\meas)$, let $\phi_i$ be uniformly bounded, nonnegative and strongly convergent in $H^{1,2}$ to $\phi$.
First of all, since $|\nabla f_i|_i^2$ $L^1$-strongly converge to $|\nabla f|^2$, 
the right hand sides converge to $\int_X\zeta\phi|\nabla f|^2\di\meas$. Also the convergence
of the third term in the left hand side to $\int_X\phi\langle\nabla f,\nabla\Delta f\rangle\di\meas$
is ensured by Theorem~\ref{thm:cont_reco}(b). In order to handle the first term, we just use 
\eqref{eq:continuitymean}. Finally, in connection
with the Hessians, possibly extracting a subsequence we obtain a $L^2$-weak limit point $H$ of $|\mathrm{Hess}_i(f_i)|$, with
$H\geq |\mathrm{Hess}(f)|$ $\meas$-a.e. in $X$.

Summing up, passing to the limit as $i\to\infty$ in \eqref{eq:Ricci3} one obtains the inequality
$$
-\frac 12\int_X \langle\nabla\phi,\nabla |\nabla f|^2\rangle\di\meas- \int_X\phi H^2\di\meas-
\int_X\phi\langle\nabla f,\nabla\Delta f\rangle\di\meas
\geq\int_X\zeta\phi|\nabla f|^2\di\meas.
$$
Using the inequality $H\geq |\mathrm{Hess}(f)|$ $\meas$-a.e. in $X$ we conclude the proof.
\end{proof}

\begin{remark}
For any $r \in (0, 1)$, it is easy to construct a sequence $({\bf g}_i^r)$ of Riemannian metrics on $\mathbf{S}^2$ with 
sectional curvature bounded below by $1$ 
such that $(\mathbf{S}^2, {\bf g}_i^r) \to  [0, \pi] \times _{\sin}\mathbf{S}^1(r)$ in the Gromov-Hausdorff sense, where $\mathbf{S}^1(r):=\{x \in \mathbf{R}^2; |x|=r\}$ (the limit space is an Alexandrov space of curvature $\ge 1$).
Note that  $[0, \pi] \times _{\sin}\mathbf{S}^1(r) \to [0, \pi]$ as $r \to 0$ in the Gromov-Hausdorff sense, and that
\[\frac{\haus^2(B_s(x_0))}{\haus^2( [0, \pi] \times _{\sin}\mathbf{S}^1(r))}=\frac{\haus^2(B_s(x_{\pi}))}{\haus^2([0, \pi] \times _{\sin}\mathbf{S}^1(r))}=\frac{1}{2}\int_0^s \sin t\di t\]
for any $s \in [0, \pi]$, where $x_0 = (0, *)$ and $x_{\pi} =(\pi, *)$.
Thus, by a diagonal argument, there exist Riemannian metrics $({\bf g}_i)$ on $\mathbf{S}^2$ (in fact ${\bf g}_i:={\bf g}_i^{r_i}$ for some $r_i \to 0$) with sectional curvature bounded below by $1$ 
such that $(\mathbf{S}^2, {\bf g}_i, \haus^2/\haus^2(\mathbf{S}^2)) \to ([0, \pi],{\bf g},\upsilon )$ in the measured Gromov-Hausdorff sense, where ${\bf g}$ is the Euclidean metric and $\upsilon$ is the Borel probability measure on $[0, \pi]$  defined by
\[\upsilon ([r, s])=\frac{1}{2}\int_r^s \sin t\di t\]
for any $r, s \in [0, \pi]$ with $r \le s$.
Let us consider eigenfunctions $f_i \in C^{\infty}(\mathbf{S}^2)$ of the first positive eigenvalues of $\Delta_i$ with $\|f_i\|_{L^2(\mathbf{S}^2,\meas_i)}=1$, where $\meas_i= \haus^2/\haus^2(\mathbf{S}^2)$ with respect to ${\bf g}_i$.
Then, by \cite{CheegerColding3} we can assume with no loss of generality that $f_i$ strongly converge to $f$ in $H^{1, 2}$,
with $f$ eigenfunction of the first positive eigenvalue of $\Delta$. 
It is known that $\Delta f=2f$ and that $\lim_i\||\mathrm{Hess}_i(f_i)+f_i{\bf g}_i|\|_{L^2(X,\meas_i)}=0$. 
Moreover we can prove that $f(t)=3\cos t$.
Note that these observations correspond to the Bonnet-Mayers theorem and the rigidity on singular spaces.
See \cite{CheegerColding,CheegerColding3} for the proofs.

In particular $\lim_i\||\mathrm{Hess}_i(f_i)|\|_{L^2(\mathbf{S}^2,\meas_i)}=2\lim_i\|f_i\|_{L^2(\mathbf{S}^2,\meas_i)}=2$.
On the other hand, it was proven in \cite{Honda2} that ${\bf g}_i$ $L^2$-weakly converge to
${\bf g}$ on $[0, \pi]$. Thus $\mathrm{Hess}(f)+f {\bf g}=0$ in $L^2$. In particular $\||\mathrm{Hess}(f)|\|_{L^2([0, \pi],\upsilon)}=\|f\|_{L^2([0, \pi],\upsilon)}=1$.
Thus these facts give
\[\lim_{i \to \infty}{\bf Ric}_i(\nabla f_i, \nabla f_i)(\mathbf{S}^2, {\bf g}_i, \meas_i)<{\bf Ric}(\nabla f, \nabla f)([0, \pi], {\bf g}, \upsilon ),\]
i.e. the Ricci curvatures are strictly increasing even in the case when 
$f_i,\, |\nabla f_i|_i^2,\,\Delta_i f_i$ are uniformly bounded, and strongly converge to $f,\,|\nabla f|^2,\,\Delta f$ in $H^{1, 2}$, respectively.
In this respect, Theorem~\ref{thm; upp ricci} might be sharp.
Moreover this example also tells us that, in general, 
the condition that $\Delta_i f_i$ $L^2$-strongly converge to $\Delta f$ does not imply that 
$|\mathrm{Hess}_i(f_i)|$ $L^2$-strongly converge to $|\mathrm{Hess}(f)|$.
\end{remark}

\begin{remark}\label{rem:beKNstable}
With a very similar argument one can prove stability of the $BE(K,N)$ condition 
$$
\frac{1}{2}\Delta |\nabla f|^2\geq \langle\nabla f,\nabla\Delta f\rangle+\frac{(\Delta f)^2}{N}+K|\nabla f|^2,
$$
with $K:X\to (-\infty,+\infty]$ lower semicontinuous and bounded from below, $N:X\to (0,\infty]$ upper semicontinuous. 
Notice that the strategy of passing to an integral formulation, adopted in \cite[Theorem~5.8]{AmbrosioGigliSavare15},
seems to work only when $K$ and $N$ are constant.
\end{remark}

\section{Dimensional stability results}\label{sec:11}

In this section only we state results that depend on the assumption $N<\infty$. We recall that the definition of $RCD^*(K,N)$
space has been proposed in \cite{Gigli1} and deeply investigated and characterized in various ways in \cite{ErbarKuwadaSturm} (via the so-called
Entropy power functional, a dimensional modification of Shannon's logarithmic entropy) and in \cite{AmbrosioMondinoSavare} 
(via nonlinear diffusion semigroups induced by R\'enyi's $N$-entropy), see also \cite{AmbrosioGigliSavare15} in connection with
the stability point of view. Starting from $RCD(K,\infty)$, the conditions $RCD^*(K,N)$
amounts to the following reinforcement of Bochner's inequality
\begin{equation}\label{eq:BochnerN}
{\bf\Delta} \frac 12 |\nabla f|^2\geq \frac  1 N(\Delta f)^2\meas+\langle\nabla f,\nabla\Delta f\rangle\meas+K|\nabla f|^2\meas
\end{equation}
in the class $\mathrm{Test}F(X,\dist,\meas)$.

\begin{proposition}\label{cor:unif est p-lap} There exist positive and finite constants $C_i(\alpha,N)$, $i=1,2$, such that
for any $RCD^*(K, N)$-space $(Y,\dist,\meas)$ with $\supp\meas=Y$, $\meas(Y)=1$ and finite diameter one has
\begin{equation}\label{eq:unif bound}
0<C_1(K(\mathrm{diam}\,Y)^2, N) \le \mathrm{diam}\,Y \left(\lambda_{1, p}(Y,\dist,\meas)\right)^{1/p}\le C_2(K (\mathrm{diam}\,Y)^2, N)<\infty
\end{equation}
for any $p \in [1, \infty]$.
\end{proposition}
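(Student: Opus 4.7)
The plan is to reduce to a contradiction argument based on the compactness of $RCD^*(K,N)$ spaces under mGH-convergence, combined with the joint continuity results of Theorem~\ref{thm:p-spec} and the large-exponent limit of Theorem~\ref{thm:gros}. First I would normalize by rescaling the metric by $\mathrm{diam}(Y)^{-1}$: this converts $K$ into $K'=K(\mathrm{diam}\,Y)^2$, sends the diameter to $1$, and leaves the scale-invariant quantity $\mathrm{diam}(Y)(\lambda_{1,p}(Y,\dist,\meas))^{1/p}$ unchanged. It therefore suffices to establish constants $C_i=C_i(K',N)$ such that
\[
C_1\le(\lambda_{1,p}(Y,\dist,\meas))^{1/p}\le C_2\qquad\forall p\in[1,\infty],
\]
uniformly on the class $\mathcal{F}(K',N)$ of $RCD^*(K',N)$ spaces of diameter $1$ with $\meas(Y)=1$ and $\supp\meas=Y$. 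Note that at $p=\infty$ the conventional value is $2/\mathrm{diam}(\supp\meas)=2$, so only the range $p\in[1,\infty)$ is nontrivial.

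For the upper bound I would argue by contradiction: if it fails there exist $(Y_i,\dist_i,\meas_i)\in\mathcal{F}(K',N)$ and $p_i\in[1,\infty]$ with $(\lambda_{1,p_i})^{1/p_i}\to\infty$. By the mGH-compactness of $\mathcal{F}(K',N)$ (Bishop--Gromov in the $RCD^*(K',N)$ setting ensures that supports converge in Hausdorff distance and that the diameter $1$ is preserved in the limit), after passing to a subsequence and adopting the extrinsic approach we obtain weak convergence $\meas_i\weakto\meas$ on a common space $(X,\dist)$, with $(X,\dist,\meas)\in\mathcal{F}(K',N)$. Up to a further subsequence $p_i\to p\in[1,\infty]$. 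If $p<\infty$, Theorem~\ref{thm:isopro}(b) yields a common isoperimetric profile (diameters being uniformly bounded) and Theorem~\ref{thm:p-spec} gives
\[
(\lambda_{1,p_i}(Y_i))^{1/p_i}\longrightarrow(\lambda_{1,p}(X,\dist,\meas))^{1/p}<\infty,
\]
contradicting divergence. If $p=\infty$, for $i$ large one has $p_i>N$, and Sobolev--Morrey-type embeddings available in $RCD^*(K',N)$ with finite $N$ produce equi-H\"older extremals; as discussed in the paragraph following \eqref{eq:extra_assumption}, this verifies hypothesis \eqref{eq:extra_assumption} along the sequence, so Theorem~\ref{thm:gros} forces $(\lambda_{1,p_i})^{1/p_i}\to 2/\mathrm{diam}(\supp\meas)=2$, again a contradiction.

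The lower bound is obtained symmetrically. Assuming $(\lambda_{1,p_i})^{1/p_i}\to 0$, the same mGH-compactness extracts a limit $(X,\dist,\meas)\in\mathcal{F}(K',N)$ and a limit exponent $p$. If $p<\infty$, Theorem~\ref{thm:p-spec} forces $\lambda_{1,p}(X,\dist,\meas)=0$; but the isoperimetric profile of Theorem~\ref{thm:isopro}(b) yields a positive Cheeger constant $h>0$, and the Maz'ya-type comparison (readily deduced from \eqref{eq:median_decay_Chp} applied at $t=m_p(f)$, together with the choice of $\epsilon$ with $\omega(\epsilon)\ge 1/2$) gives $\lambda_{1,p}\ge c(h,p)>0$, a contradiction. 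If $p=\infty$, the Theorem~\ref{thm:gros} limit is $2/\mathrm{diam}(\supp\meas)=2\neq 0$, again a contradiction.

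The delicate point will be the $p_i\to\infty$ case: to apply Theorem~\ref{thm:gros}, I must verify \eqref{eq:extra_assumption} along $(Y_i)$, which reduces to showing that, once $p_i>N$, the normalized $p_i$-eigenfunctions $f_i$ admit uniform H\"older bounds on $\supp\meas_i$ (hence on $X$ via equi-H\"older extension), so that their uniform convergence yields the required comparison between $\limsup_i\|f_i\|_{L^{p_i}(X,\meas_i)}$ and $\|f\|_{L^\infty(X,\meas)}$; this uniform H\"older control is the analytic heart of the argument and relies on the dimensional character of the $RCD^*(K',N)$ condition, as sketched in the discussion immediately following \eqref{eq:extra_assumption}.
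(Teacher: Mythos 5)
Your proposal is correct and follows essentially the same route as the paper: after the identical rescaling to diameter one, the paper considers $F\left((Y,\dist,\meas),p\right):=\left(\lambda_{1,p}(Y,\dist,\meas)\right)^{1/p}$ on the sequentially compact set $\mathcal{M}(K,N)\times[1,\infty]$, invokes Theorem~\ref{thm:p-spec} and Theorem~\ref{thm:gros} for its continuity, and concludes that the maximum and minimum exist, are finite and positive --- which is exactly your contradiction argument unwound. Your explicit verification of the pointwise positivity of $\lambda_{1,p}$ on the limit space (via the isoperimetric profile) and of hypothesis \eqref{eq:extra_assumption} along sequences with $p_i\to\infty$ merely spells out details that the paper leaves implicit (the latter in the discussion preceding Theorem~\ref{thm:gros}).
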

\begin{proof}
Since the rescaled metric measure space
$$
\bigl(Y, (\mathrm{diam}\,Y)^{-1}\dist,\meas\bigr)
$$
is an $RCD^*(K(\mathrm{diam}\,Y)^2, N)$-space, and
$$
\lambda_{1, p}\bigl( Y, (\mathrm{diam}\,Y)^{-1}\dist, \meas \bigr)=\left(\mathrm{diam}\,Y\right)^p\lambda_{1, p}(Y,\dist,\meas),
$$
it suffices to check (\ref{eq:unif bound}) under $\mathrm{diam}\,Y=1$.

Let $\mathcal{M}(K,N)$ be the set of all isometry classes of $RCD^*(K, N)$ spaces $(Y,\dist,\meas)$ 
satisfying $\supp\meas=Y$, $\mathrm{diam}\,Y=1$ and $\meas(Y)=1$. 
It is known that this set is sequentially compact with respect to the measured Gromov-Hausdorff 
convergence  by \cite{AmbrosioGigliSavare15,ErbarKuwadaSturm}.
We consider the function $F$ on $\mathcal{M}(K,N) \times [1, \infty]$ defined by
$$
F\left( (Y,\dist,\meas), p\right) :=\left(\lambda_{1, p}(Y,\dist,\meas)\right)^{1/p}.
$$
Hence, Theorem~\ref{thm:p-spec} and Theorem~\ref{thm:gros} yield that $F$ is continuous.
In particular the maximum and the minimum exist.
Moreover by the definition these depend only on the parameters $N$ and $K$.
This shows \eqref{eq:unif bound}.
\end{proof}

\begin{remark}
The finiteness of $N$ in Proposition~\ref{cor:unif est p-lap} is essential, i.e. the estimate $C_1(KR^2) \le \mathrm{diam}\,Y \left(\lambda_{1, p}(Y,\dist,\meas)\right)^{1/p}\le C_2(KR^2)$ does not hold for $RCD(K, \infty)$-spaces.
Indeed, the standard $n$-dimensional unit sphere with the standard probability measure $(\mathbf{S}^n, \dist_n, \meas_n)$ satisfies 
$$\lim_{n \to \infty}\lambda_{1, 2}(\mathbf{S}^n, \dist_n, \meas_n)=\infty.$$
\end{remark}

For any $N \in (1, \infty)$ and any $p \in [1, \infty]$ let us denote $(\lambda^N_{1, p})^{1/p}$ the infimum of $(\lambda_{1, p})^{1/p}$ in the set 
$\mathcal{M}(N)$ of all isometry classes of $RCD^*(N-1, N)$ probability spaces.
For $p=2$ the sharp Poincar\'e inequality for $CD^*(N-1, N)$-spaces given in \cite{Sturm06} by Sturm yields $(\lambda_{1, 2}^N)^{1/2}=N^{1/2}$ which coincides with $(\lambda_{1, 2}(\mathbf{S}^N, \dist, \meas_N))^{1/2}$ if $N$ is an integer.
The Bonnet-Meyers theorem for $CD^*(N-1, N)$-spaces given in \cite{Sturm06} by Sturm gives $(\lambda_{1, \infty}^N)^{1/\infty}=2/\pi$ which also coincides with  $(\lambda_{1, \infty}(\mathbf{S}^N, \dist, \meas_N))^{1/\infty}$ if $N$ is an integer.

The following rigidity theorem is proven by Ketterer in \cite{Ketterer15a, Ketterer15b}.

\begin{theorem}\label{thm: LO}
For any $p \in \{2, \infty\}$, any $N \in (1, \infty)$, and any $RCD^*(N-1, N)$-space $(Y,\dist,\meas)$ with $\supp\meas=Y$, the equality
$$
\left(\lambda_{1, p}(Y,\dist,\meas)\right)^{1/p} = \bigl( \lambda_{1, p}^N\bigr)^{1/p}
$$
holds if and only if $(Y,\dist,\meas)$ is isometric to the spherical suspension of an $RCD^*(N-2, N-1)$-space.

Furthermore for any $p \in \{2, \infty \}$, any $N \in (1, \infty)$, and any $\epsilon>0$ there exists 
$\delta:=\delta(p, N, \epsilon)>0$ such that if an $RCD^*(N-1,N)$-space $(Y,\dist,\meas)$ satisfies $\supp\meas=Y$ and 
$$
\left|\left(\lambda_{1, p}(Y,\dist,\meas)\right)^{1/p} -  \bigl( \lambda_{1, p}^N\bigr)^{1/p}\right|<\delta,
$$
then 
$$
\left|\left(\lambda_{1, q}(Y,\dist,\meas)\right)^{1/q} -  \bigl( \lambda_{1, q}^N\bigr)^{1/q}\right|<\epsilon,
$$
for any $q \in \{2, \infty\}$ and there exists an $RCD^*(N-2, N-1)$-space $(Z,\rho, \nu)$ such that
$$
d_{GH}\left((Y,\dist,\meas), ([0, \pi] \times_{\sin}^{N-1}(Z,\rho,\nu) )\right)<\epsilon.
$$
\end{theorem}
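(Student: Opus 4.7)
The plan is to split the statement into its rigidity and almost-rigidity parts. The rigidity part is structural and relies on Bochner's inequality \eqref{eq:BochnerN} together with Gigli's second-order calculus. The almost-rigidity part, by contrast, is the ideal setting for a compactness-and-contradiction argument powered by the continuity results of Section~\ref{sec:9}, namely Theorem~\ref{thm:p-spec} and Theorem~\ref{thm:gros}, combined with the mGH-compactness of the class $RCD^*(N-1,N)$ with uniformly bounded diameter (a consequence of Proposition~\ref{cor:unif est p-lap} and the general mGH-compactness of $RCD^*(K,N)$).

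For the rigidity in the case $p=2$: let $f$ be a first nontrivial eigenfunction, normalized so that $\Delta f = -N f$ and $\int f\di\meas =0$. Integrate Bochner's inequality \eqref{eq:BochnerN} against a suitable test density and use the identities $-\int f\Delta f\di\meas = N\int f^2\di\meas$ and $\int(\Delta f)^2\di\meas = N\int|\nabla f|^2\di\meas$ to see that equality must hold $\meas$-a.e. both in the trace bound $|\mathrm{Hess}(f)|^2 \geq (\Delta f)^2/N$ and in the curvature term $(N-1)|\nabla f|^2$. This forces $\mathrm{Hess}(f)=-f\,g$ in the sense of \cite{Gigli}, where $g$ denotes the tensor-metric on $(X,\dist,\meas)$. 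From this rigidity, following Ketterer's argument, the gradient flow of $f$ produces an isometric warped-product decomposition $[0,\pi]\times_{\sin}^{N-1} Z$, with $Z$ inheriting the $RCD^*(N-2,N-1)$ condition via tensorization/localization. The case $p=\infty$ amounts to Bonnet-Myers rigidity: if $\mathrm{diam}(\supp\meas)=\pi$, taking $x_0,x_1 \in \supp\meas$ with $\dist(x_0,x_1)=\pi$, sharp $CD^*(N-1,N)$ comparison forces $\cos(\dist(x_0,\cdot))$ to satisfy $\Delta h=-Nh$, reducing to the case just treated.

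For the almost rigidity, I would argue by contradiction. Suppose the statement fails for some $p\in\{2,\infty\}$ and some $\epsilon>0$: then there is a sequence $(Y_i,\dist_i,\meas_i)$ of $RCD^*(N-1,N)$ spaces with $\supp\meas_i=Y_i$ and
$$
\left|\left(\lambda_{1,p}(Y_i,\dist_i,\meas_i)\right)^{1/p}-\left(\lambda_{1,p}^N\right)^{1/p}\right|<\tfrac 1i,
$$
such that for each $i$ either the GH-distance from $Y_i$ to every spherical suspension $[0,\pi]\times_{\sin}^{N-1}(Z,\rho,\nu)$ with $(Z,\rho,\nu)\in RCD^*(N-2,N-1)$ exceeds $\epsilon$, or $|(\lambda_{1,q}(Y_i))^{1/q}-(\lambda_{1,q}^N)^{1/q}|\geq \epsilon$ for some $q\in\{2,\infty\}$. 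By Proposition~\ref{cor:unif est p-lap} the diameters of $Y_i$ are uniformly bounded (in both directions), and mGH-compactness of $RCD^*(N-1,N)$ spaces with uniformly bounded diameter yields a mGH-convergent subsequence; embedding everything extrinsically in a common $(X,\dist)$, we obtain a limit $(X,\dist,\meas)$ which is itself $RCD^*(N-1,N)$. Applying Theorem~\ref{thm:p-spec} (if $p<\infty$) or Theorem~\ref{thm:gros} (if $p=\infty$) to this subsequence gives $\left(\lambda_{1,p}(X,\dist,\meas)\right)^{1/p}=\left(\lambda_{1,p}^N\right)^{1/p}$, so by the rigidity part already established, the limit is a spherical suspension of an $RCD^*(N-2,N-1)$ space. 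This contradicts the GH-distance lower bound; and the same continuity results applied to the other exponent $q$ also yield $\left(\lambda_{1,q}(Y_i)\right)^{1/q}\to \left(\lambda_{1,q}^N\right)^{1/q}$, contradicting the $q$-closeness failure.

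The main obstacle is the rigidity step, specifically passing from the pointwise Hessian identity $\mathrm{Hess}(f)=-f\,g$ to the full warped-product structure $[0,\pi]\times_{\sin}^{N-1}Z$ with a synthetic curvature bound on the fiber $Z$: this requires careful handling of the gradient flow of $f$ and the level-set/fiber structure in the $RCD$ setting, which is Ketterer's principal technical contribution. Once this structural rigidity is granted, the almost-rigidity part is essentially automatic, combining only compactness of $RCD^*(N-1,N)$ with the continuity of $p\mapsto(\lambda_{1,p})^{1/p}$ in both exponent and space, which is exactly what Theorem~\ref{thm:p-spec} and Theorem~\ref{thm:gros} provide.
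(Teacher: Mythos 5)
First, a point of orientation: the paper does not prove Theorem~\ref{thm: LO} at all. It is stated as a known result and attributed entirely to Ketterer \cite{Ketterer15a,Ketterer15b}; there is no proof environment to compare against. So the relevant question is whether your proposal stands as an independent argument, and it does not quite: it is a proof \emph{modulo} exactly the content the paper imports by citation. Your rigidity sketch for $p=2$ gets correctly to the saturation of the dimensional Bochner inequality \eqref{eq:BochnerN} and to the identity $\mathrm{Hess}(f)=-f\,g$, and your $p=\infty$ discussion correctly identifies the maximal diameter theorem as the issue; but the passage from the pointwise Hessian identity to the isometric splitting $[0,\pi]\times_{\sin}^{N-1}(Z,\rho,\nu)$, with the fiber inheriting $RCD^*(N-2,N-1)$, is the entire substance of Ketterer's theorem, and you explicitly defer it. That step cannot be waved through: in the nonsmooth setting there is no gradient flow of $f$ with the smooth-case regularity, and Ketterer's actual route (via warped products, cones over metric measure spaces, and a careful analysis of the measure decomposition) is a substantial piece of work. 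So, as a self-contained proof, the proposal has a genuine gap at its core; as a reduction, it reproduces the paper's own stance (cite Ketterer) with extra detail.

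That said, the second half of your proposal has real content and is worth comparing with what the paper actually does. Your compactness-and-contradiction derivation of the almost-rigidity statement from the rigidity statement is correct and non-circular: Theorem~\ref{thm:p-spec}, Theorem~\ref{thm:gros} (whose extra hypothesis \eqref{eq:extra_assumption} is verified in the dimensional setting, as discussed before Theorem~\ref{thm:gros}), Proposition~\ref{cor:unif est p-lap} and the sequential mGH-compactness of $\mathcal{M}(N)$ are all established independently of Theorem~\ref{thm: LO}, and your argument correctly uses both directions of the rigidity equivalence (the ``only if'' direction to identify the limit as a suspension, the ``if'' direction at the other exponent $q$ to produce the contradiction on $\lambda_{1,q}$). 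This is precisely the strategy the paper deploys, not for Theorem~\ref{thm: LO} itself, but for Corollary~\ref{cor;shperical suspension}, where the same compactness-plus-continuity scheme (combined with Theorem~\ref{thm:almost rigidity} of Cavalletti--Mondino) upgrades the Ketterer/Cavalletti--Mondino almost-rigidity to all exponents $p\in[1,\infty]$ simultaneously with a $\delta$ independent of $p$. In other words, what your proposal buys is the observation that, granted the equality case, the quantitative statement in Theorem~\ref{thm: LO} is soft and follows from the new stability machinery of this paper; what it does not supply is the equality case itself, which is the part the paper (and you) must take from \cite{Ketterer15a,Ketterer15b}.
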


The following theorem is proven by Cavalletti-Mondino in \cite{CavallettiMondino15a, CavallettiMondino15b}.

\begin{theorem}\label{thm:almost rigidity}
We have the following.
\begin{enumerate}
\item[(i)] For any $p \in [1, \infty)$ and $N \in \mathbf{N}_{\ge 2}$, we have 
$(\lambda_{1, p}^N)^{1/p}=(\lambda_{1, p}(\mathbf{S}^N, \dist_N, \meas_N))^{1/p}$.
\item[(ii)] For any $p \in [1, \infty)$, any $N \in (1, \infty)$ and any $RCD^*(N-1, N)$-space $(Y,\dist,\meas)$ with $\supp\meas=Y$, if the equality
$$
\left(\lambda_{1, p}(Y,\dist,\meas)\right)^{1/p} = \bigl( \lambda_{1, p}^N \bigr)^{1/p}.
$$
holds, then $(Y,\dist,\meas)$ is isometric to the spherical suspension of an $RCD^*(N-2, N-1)$-space.

Furthermore for any $p \in [1, \infty)$, any $N \in (1, \infty)$, and any $\epsilon>0$ there exists 
$\delta:=\delta(p, N, \epsilon)>0$ such that if an $RCD^*(N-1,N)$-space $(Y,\dist,\meas)$ satisfies $\supp\meas=Y$ and
$$
\left|\left(\lambda_{1, p}(Y,\dist,\meas)\right)^{1/p} - \bigl( \lambda_{1, p}^N\bigr)^{1/p}\right|<\delta,
$$
then $\left| \mathrm{diam}\,(Y, \dist)-\pi\right|<\epsilon$.
\end{enumerate}
\end{theorem}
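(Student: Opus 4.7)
The plan is to treat the three assertions of Theorem~\ref{thm:almost rigidity} in sequence, combining the $L^1$-optimal transport localization of Cavalletti-Mondino with the compactness and $p$-spectral continuity results established in Section~\ref{sec:7} and Section~\ref{sec:9}. The guiding idea is that $1$-dimensional localization reduces the sharp spectral gap to a family of weighted $1$-d problems, whose model is the projection of $\mathbf{S}^N$; rigidity is then read off from the equality case, and almost rigidity from compactness.

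For part (i), having fixed a first $p$-eigenfunction $u$ on $(Y,\dist,\meas)$ with $\int_Y|u|^{p-2}u\di\meas=0$, I would apply $L^1$-optimal transport localization with cost the signed distance from $\{u=0\}$. This decomposes $\meas$ into conditional measures $\meas_\alpha$ concentrated on transport rays, each of which inherits a $1$-dimensional $CD(N-1,N)$ structure (equivalently the density along the ray is $\sin^{N-1}$-concave in the Bishop-Gromov sense) and each of which carries its own $1$-d $p$-Neumann eigenvalue problem for $u\restr_\alpha$. A Sturm-Liouville comparison on weighted intervals, together with the sharp $1$-d inequality on $\bigl([0,\pi],\sin^{N-1}(t)\di t/\omega_N\bigr)$, yields
$$
\int_\alpha {\rm lip}^p(u)\di\meas_\alpha \geq \lambda_{1,p}^N \int_\alpha |u|^p\di\meas_\alpha
$$
ray by ray; averaging gives $\lambda_{1,p}(Y,\dist,\meas)\geq\lambda_{1,p}^N$, and the round sphere realizes equality, establishing $(\lambda_{1,p}^N)^{1/p}=(\lambda_{1,p}(\mathbf{S}^N,\dist_N,\meas_N))^{1/p}$.

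For the rigidity half of (ii), equality in the chain above forces each ray to be isometric to $[0,\pi]$ with weight $\sin^{N-1}$, and $u\restr_\alpha$ to coincide with the $1$-d model eigenfunction. A standard splitting argument (as in Ketterer's work for $p\in\{2,\infty\}$) then shows $(Y,\dist,\meas)$ to be the spherical suspension of the slice supported on $\{u=0\}$, which inherits an $RCD^*(N-2,N-1)$ structure. For the almost rigidity I would argue by contradiction: suppose there exist $\epsilon_0>0$ and a sequence $(Y_i,\dist_i,\meas_i)$ of $RCD^*(N-1,N)$ probability spaces with $\supp\meas_i=Y_i$, $(\lambda_{1,p}(Y_i))^{1/p}\to (\lambda_{1,p}^N)^{1/p}$, but $|\mathrm{diam}(Y_i)-\pi|\geq\epsilon_0$. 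The Bonnet-Myers bound $\mathrm{diam}(Y_i)\leq\pi$ together with Sturm's compactness of $\mathcal{M}(N-1,N)$ in the mGH topology produces, after extrinsic embedding in a common space $(X,\dist)$, a mGH-convergent subsequence with limit $(Y_\infty,\dist_\infty,\meas_\infty)\in\mathcal{M}(N-1,N)$. By Theorem~\ref{thm:p-spec} (used with $p_i\equiv p$) one obtains $(\lambda_{1,p}(Y_\infty))^{1/p}=(\lambda_{1,p}^N)^{1/p}$, and the just-proved rigidity forces $Y_\infty$ to be a spherical suspension with $\mathrm{diam}(Y_\infty)=\pi$. Since mGH convergence of probability measures whose supports coincide with the ambient spaces is compatible with continuity of the diameter (the uniform upper bound $\pi$ makes the issue just a matter of Hausdorff convergence of supports), one gets $\mathrm{diam}(Y_i)\to\pi$, contradicting the standing assumption.

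The main obstacle is the equality/rigidity step for general $p\in[1,\infty)$: part (i) and the almost rigidity follow rather mechanically from the localization and from the tools of Section~\ref{sec:9}, but the characterization of the equality case requires a delicate analysis of the $1$-dimensional $p$-Neumann problem on a weighted interval and of the structure of the transport rays, in particular to show that the model ray is the only one attaining equality and that ray-wise rigidity glues into a global spherical suspension structure. For $p=1$ an additional subtlety is the non-uniqueness of the median $m_1(u)$: one must use the coarea formula \eqref{eq:coarea} to transfer rigidity from the superlevel sets $\{u>t\}$ (which are isoperimetric minimizers in the equality case) to $u$ itself, rather than working directly with minimizers in \eqref{eq:def cp}.
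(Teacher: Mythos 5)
The first thing to observe is that the paper does not prove Theorem~\ref{thm:almost rigidity} at all: it is stated verbatim as a result of Cavalletti--Mondino, imported from \cite{CavallettiMondino15a,CavallettiMondino15b} and used in this paper purely as external input (in Proposition~\ref{prop:char model sp} and Corollary~\ref{cor;shperical suspension}). So there is no internal proof to compare against, and your proposal has to be judged against the Cavalletti--Mondino argument and on its own soundness. Your skeleton does follow their strategy for (i) and for the rigidity half of (ii): needle decomposition into one-dimensional $CD(N-1,N)$ conditional measures, sharp comparison with the model $\bigl([0,\pi],\sin^{N-1}t\di t\bigr)$ ray by ray, and equality forcing the rays to be the full model, whence $\mathrm{diam}=\pi$ and the suspension structure via Ketterer's maximal diameter theorem \cite{Ketterer15a}. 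Your compactness proof of the almost-rigidity half (contradiction, mGH compactness of $\mathcal{M}(N)$, Theorem~\ref{thm:p-spec} with $p_i\equiv p$, rigidity, and lower semicontinuity of the diameter of supports under weak convergence) is sound and non-circular within this paper's logic, since Theorem~\ref{thm:p-spec} does not use Theorem~\ref{thm:almost rigidity}; it is, however, not Cavalletti--Mondino's original quantitative argument, but rather the same style of reasoning this paper uses to deduce Corollary~\ref{cor;shperical suspension} from the cited theorem.

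That said, two genuine gaps remain. First, the localization you set up is not the right one: you propose rays generated by ``the signed distance from $\{u=0\}$'', but the ray-wise one-dimensional spectral comparison is only applicable when the balance condition $\int_\alpha |u|^{p-2}u\di\meas_\alpha=0$ holds on $\meas$-a.e.\ needle, and this is guaranteed only if the needle decomposition is the one associated (via the $L^1$-optimal transport problem) to the zero-mean function $f=|u|^{p-2}u$ itself. Needles built from the distance to the nodal set carry no such balance condition, and without it the weighted Sturm--Liouville comparison gives no lower bound on the Rayleigh quotient of $u$ restricted to a ray. Second, the equality analysis for general $p\in[1,\infty)$ --- the one-dimensional $p$-Obata theorem on $CD(N-1,N)$ weighted intervals, its equality case, and the passage from ray-wise rigidity to the global suspension statement (including the $p=1$ case via coarea) --- is precisely the technical core of \cite{CavallettiMondino15b}. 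You flag it as ``the main obstacle'' but supply no argument, so the rigidity half of (ii) is unproven; and since your compactness proof of the almost-rigidity half invokes that rigidity at the limit space, it inherits the same gap.
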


We now give a model metric measure space whose $(\lambda_{1, p})^{1/p}$ attains $(\lambda_{1, p}^N)^{1/p}$ for general $N$.

\begin{proposition}\label{prop:char model sp}
For any $N \in (1, \infty)$, let $([0, \pi], \dist, \upsilon_N)$ with $\dist$ equal to the Euclidean distance and
$$
\upsilon_N(A):=\frac{1}{\int_0^\pi\sin^{N-1}t\di t}\int_A\sin^{N-1}t\di t.
$$ 
Then $([0, \pi], \dist, \upsilon_N)$ is an $RCD^*(N-1, N)$-space with 
$$
\bigl( \lambda_{1, p}([0, \pi], \dist, \upsilon_N)\bigr)^{1/p}=(\lambda_{1, p}^N)^{1/p}\qquad\forall p\in [1,\infty].
$$
\end{proposition}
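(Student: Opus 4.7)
The plan is two-fold: (a) verify that $([0,\pi], \dist, \upsilon_N)$ is an $RCD^*(N-1,N)$ probability space; (b) show that for every $p \in [1,\infty]$ the $p$-spectral gap of this space equals $(\lambda_{1,p}^N)^{1/p}$.

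For (a), I reduce to a direct Bakry-\'Emery computation, since this is a smooth one-dimensional weighted Riemannian manifold. With $V(t) = -\log \sin^{N-1} t = -(N-1)\log \sin t$, the $CD^*(N-1,N)$ condition in dimension $n = 1$ reads $V''(t) - (V'(t))^2/(N-1) \geq N-1$. Computing $V'(t) = -(N-1)\cot t$ and $V''(t) = (N-1)/\sin^2 t$ gives
$$V''(t) - \frac{(V'(t))^2}{N-1} = (N-1)\Bigl(\frac{1}{\sin^2 t} - \cot^2 t\Bigr) = N-1,$$
so the condition holds with equality. Since the Cheeger energy on the smooth weighted interval is the quadratic Dirichlet integral $\frac{1}{2C_N}\int_0^\pi (f')^2 \sin^{N-1}t\,\di t$ with $C_N = \int_0^\pi \sin^{N-1}t\,\di t$, the space is in particular $RCD^*(N-1,N)$.

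For (b), the inequality $\bigl(\lambda_{1,p}([0,\pi], \dist, \upsilon_N)\bigr)^{1/p} \geq (\lambda_{1,p}^N)^{1/p}$ is automatic, since by (a) the space belongs to $\mathcal{M}(N-1, N)$. The reverse inequality is the main point; it is a consequence of the sharp $p$-spectral gap estimates of Cavalletti-Mondino (Theorem~\ref{thm:almost rigidity}), whose proof via one-dimensional localization (needle decomposition) shows that the infimum defining $(\lambda_{1,p}^N)^{1/p}$ is attained precisely by the model density $\sin^{N-1}$ on $[0,\pi]$: localization reduces a general $p$-Poincar\'e inequality on $(Y,\dist,\meas) \in \mathcal M(N-1,N)$ to 1D $p$-Poincar\'e inequalities along conditional densities satisfying $CD^*(N-1, N)$ on intervals of length at most $\pi$, and the extremal such density is exactly $\sin^{N-1}t$ on $[0, \pi]$.

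Two endpoint cases admit direct verifications that confirm the formula and bypass localization: for $p=\infty$, the equality reduces to $2/\mathrm{diam}([0,\pi]) = 2/\pi$, which matches Sturm's Bonnet-Myers value $(\lambda_{1,\infty}^N)^{1/\infty}=2/\pi$ recalled in the text; for $p=2$, computing $\Delta \cos = -N\cos$ in $([0,\pi], \dist, \upsilon_N)$ together with the orthogonality $\int_0^\pi \cos t \sin^{N-1} t\,\di t = 0$ (by symmetry about $t=\pi/2$) exhibits $\cos$ as an admissible eigenfunction with Rayleigh quotient $N$, matching Sturm's sharp Poincar\'e value $\lambda_{1,2}^N = N$. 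The main obstacle is the intermediate range $p \in (1,\infty) \setminus \{2\}$ and $p=1$, where no simple closed-form minimizer is available; here one genuinely uses the Cavalletti-Mondino localization to identify the $\sin^{N-1}$-weighted interval as the universal extremum, and the $p=1$ case can alternatively be recovered from $p>1$ by sending $p\downarrow 1$ and invoking the joint continuity established in Theorem~\ref{thm:p-spec}.
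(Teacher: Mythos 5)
Your part (a) is correct, and it is actually more self-contained than the paper on this point: the paper never verifies the $RCD^*(N-1,N)$ property of $([0,\pi],\dist,\upsilon_N)$ directly, but obtains it as a byproduct of its argument (the extremal measure it constructs lies in $\mathcal{M}(N)$ by stability of the class under mGH convergence). Your endpoint checks are also fine: at $p=2$ the eigenfunction $\cos t$ together with Sturm's sharp Poincar\'e constant closes the argument, and at $p=\infty$ the statement reduces to $\mathrm{diam}=\pi$ and Bonnet--Myers. The problem is the range $p\in[1,\infty)\setminus\{2\}$, which is the actual content of the proposition.

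There your argument is: localization reduces the infimum defining $(\lambda^N_{1,p})^{1/p}$ to one-dimensional $CD^*(N-1,N)$ densities on intervals of length at most $\pi$, ``and the extremal such density is exactly $\sin^{N-1}t$ on $[0,\pi]$.'' The second half of that sentence is precisely what has to be proven, and you justify it only by appealing to ``the proof'' of the Cavalletti--Mondino theorem. As the paper uses it, \cite[Theorem 1.4]{CavallettiMondino15b} gives only the first half: $(\lambda^N_{1,p})^{1/p}$ equals the infimum of $\bigl(\lambda_{1,p}([0,\pi],\dist,\meas)\bigr)^{1/p}$ over all $\meas$ with $([0,\pi],\dist,\meas)\in\mathcal{M}(N)$; it neither asserts that this one-dimensional infimum is attained nor identifies a minimizer. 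Identifying the minimizer for every $p\in[1,\infty]$ and every real $N>1$ is exactly what is new here (for $p\in\{2,\infty\}$ it follows from Ketterer's rigidity, Theorem~\ref{thm: LO}), so at the crucial step your proof assumes the conclusion. The paper closes this gap with an argument your proposal is missing: by the sequential mGH-compactness of $\mathcal{M}(N)$ combined with the joint continuity results, Theorem~\ref{thm:p-spec} and Theorem~\ref{thm:gros}, the one-dimensional infimum is attained by some probability measure $\meas^p$ on $[0,\pi]$; then the maximal diameter theorem and the $p$-Obata rigidity of Theorem~\ref{thm:almost rigidity}(ii) force the minimizing space to be a spherical suspension, and a spherical suspension whose ambient space is an interval must be the suspension over a point, i.e. $\meas^p=\upsilon_N$. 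You must either reproduce this compactness-plus-rigidity argument or give an actual one-dimensional proof (a Bakry--Qian/Matei-type comparison for the Neumann $p$-eigenvalue with real $N$); citing the inner workings of another paper's proof for a fact not contained in its statements does not suffice. Finally, your fallback for $p=1$ (sending $p\downarrow 1$ and invoking Theorem~\ref{thm:p-spec}) cannot repair this: it presupposes the result for $p$ near $1$, and it additionally requires continuity of $p\mapsto(\lambda^N_{1,p})^{1/p}$, which itself rests on the same compactness of $\mathcal{M}(N)$ that powers the paper's argument.
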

\begin{proof}
By \cite[Theorem 1.4]{CavallettiMondino15b}, for any $p \in [1, \infty]$, $(\lambda_{1, p}^N)^{1/p}$ coincides with
the infimum in the smaller class
\begin{equation}\label{eq:1-dim}
\inf \left\{ \lambda_{1, p}([0, \pi], \dist, \meas); ([0, \pi], \dist, \meas) \in \mathcal{M}(N)\right\}.
\end{equation}
By Theorem \ref{thm:p-spec} and the sequencial compactness of $\mathcal{M}(N)$, there exists a 
Borel probability measure $\meas^p$ on $[0, \pi]$ such that $(\lambda_{1, p}([0, \pi], \dist, \meas^p))^{1/p}=(\lambda_{1, p}^N)^{1/p}$.
Then the maximal diameter theorem and $p$-Obata theorem for general $N \in (1, \infty)$ yield $\meas^p=\upsilon_N$.
This completes the proof.
\end{proof}

As a corollary of Theorem~\ref{thm:p-spec} and Theorem~\ref{thm:gros},  we have a 
generalization of Theorem~\ref{thm: LO} and Theorem~\ref{thm:almost rigidity} as follows.
It is worth pointing out that this is new even in the class of smooth metric measure spaces, and shows that 
the parameter $\delta$ in Theorem~\ref{thm:almost rigidity} can be chosen independent of $p$:

\begin{corollary}\label{cor;shperical suspension}
For any $N \in (1, \infty)$ and any $\epsilon>0$ there exists $\delta:=\delta (N, \epsilon)>0$ such that if 
an $RCD^*(N-1, N)$ space $(X,\dist,\meas)$ satisfies $\supp\meas=X$, $\meas(X)=1$ and
$$
\left| \left( \lambda_{1, p}(X,\dist,\meas) \right)^{1/p}-\bigl( \lambda_{1, p}^N\bigr)^{1/p} \right|<\delta
$$
for some $p \in [1, \infty]$, then
$$
\left| \left( \lambda_{1, q}(X,\dist,\meas) \right)^{1/q}-\bigl( \lambda_{1, q}^N\bigr)^{1/q} \right|<\epsilon
$$
for all $q \in [1, \infty]$.
\end{corollary}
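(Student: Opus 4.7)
The plan is a compactness-and-contradiction argument building on Theorem~\ref{thm:p-spec} and Theorem~\ref{thm:gros}, combined with the rigidity theorems already recalled. Suppose the statement fails: there exist $\epsilon_0>0$, spaces $(X_i,\dist_i,\meas_i)\in\mathcal{M}(N-1,N)$ with $\supp\meas_i=X_i$, and exponents $p_i,q_i\in[1,\infty]$ such that the gap between $(\lambda_{1,p_i}(X_i,\dist_i,\meas_i))^{1/p_i}$ and $(\lambda_{1,p_i}^N)^{1/p_i}$ tends to $0$ while the gap between $(\lambda_{1,q_i}(X_i,\dist_i,\meas_i))^{1/q_i}$ and $(\lambda_{1,q_i}^N)^{1/q_i}$ stays $\ge\epsilon_0$.

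By the sequential mGH-compactness of $\mathcal{M}(N-1,N)$ (used already in Proposition~\ref{cor:unif est p-lap}) and by the compactness of $[1,\infty]$, after extraction $(X_i,\dist_i,\meas_i)$ converges to a limit $(X_\infty,\dist_\infty,\meas_\infty)\in\mathcal{M}(N-1,N)$ and $p_i\to p_\infty$, $q_i\to q_\infty$. Bonnet-Myers bounds all diameters by $\pi$, so Theorem~\ref{thm:isopro}(b) furnishes a common isoperimetric profile, giving the hypotheses of Theorem~\ref{thm:p-spec}. When a limit exponent equals $\infty$, Theorem~\ref{thm:gros} applies instead, with the extra condition \eqref{eq:extra_assumption} holding automatically in the dimensional setting, as the paper remarks between Theorem~\ref{thm:gros} and its proof (for $p_i>N$ the minimizing eigenfunctions become equi-H\"older, so their $L^{p_i}$-norms converge to the $L^\infty$-norm of the limit). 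I then apply these continuity statements both to $(X_i,\dist_i,\meas_i)$ and to the model spaces $([0,\pi],\dist,\upsilon_N)\in\mathcal{M}(N-1,N)$ of Proposition~\ref{prop:char model sp}, which realize $(\lambda_{1,p}^N)^{1/p}$ for every $p$; this shows in particular that $p\mapsto(\lambda_{1,p}^N)^{1/p}$ is continuous on $[1,\infty]$. Passing to the limit in the two inequalities above yields
\[
(\lambda_{1,p_\infty}(X_\infty,\dist_\infty,\meas_\infty))^{1/p_\infty}=(\lambda_{1,p_\infty}^N)^{1/p_\infty},
\]
\[
\bigl|(\lambda_{1,q_\infty}(X_\infty,\dist_\infty,\meas_\infty))^{1/q_\infty}-(\lambda_{1,q_\infty}^N)^{1/q_\infty}\bigr|\ge\epsilon_0.
\]

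Now I invoke rigidity: Theorem~\ref{thm: LO} (if $p_\infty\in\{2,\infty\}$) or Theorem~\ref{thm:almost rigidity}(ii) (if $p_\infty\in[1,\infty)$) forces $(X_\infty,\dist_\infty,\meas_\infty)$ to be isometric to the spherical suspension of some $RCD^*(N-2,N-1)$ space. For such a suspension, a test function depending only on the $[0,\pi]$ factor gives the upper bound $(\lambda_{1,q}(X_\infty))^{1/q}\le(\lambda_{1,q}([0,\pi],\dist,\upsilon_N))^{1/q}=(\lambda_{1,q}^N)^{1/q}$ for every $q\in[1,\infty]$, while the opposite inequality holds by the very definition of $\lambda_{1,q}^N$ since $X_\infty\in\mathcal{M}(N-1,N)$. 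Thus $X_\infty$ saturates the sharp spectral gap at every exponent, in particular at $q_\infty$, contradicting the second displayed relation.

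The main technical obstacle, as usual in arguments that mix variable exponents with mGH-compactness, is the careful handling of the cases $p_\infty=\infty$ or $q_\infty=\infty$: one must verify the uniform isoperimetric profile and the hypothesis \eqref{eq:extra_assumption} so that Theorem~\ref{thm:gros} can be applied both to the converging sequence and to the model spaces realizing $\lambda_{1,\infty}^N$. A secondary point is the claim that any spherical suspension of an $RCD^*(N-2,N-1)$ space saturates $\lambda_{1,q}^N$ for every $q$; this is where the one-dimensional model $([0,\pi],\dist,\upsilon_N)$ of Proposition~\ref{prop:char model sp} enters as a source of admissible test functions. Once those verifications are in place, the contradiction scheme above produces the $p$-independent modulus $\delta=\delta(N,\epsilon)$ asserted by the corollary.
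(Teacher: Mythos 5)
Your proposal is correct and takes essentially the same route as the paper's own proof: a compactness-and-contradiction argument based on Theorem~\ref{thm:p-spec} and Theorem~\ref{thm:gros} (applied also to the model space of Proposition~\ref{prop:char model sp} to obtain continuity of $p\mapsto(\lambda_{1,p}^N)^{1/p}$), followed by the rigidity results of Theorem~\ref{thm: LO} and Theorem~\ref{thm:almost rigidity} to identify the limit space as a spherical suspension, and the lifting of one-dimensional test functions from $([0,\pi],\dist,\upsilon_N)$ to show that such a suspension saturates the sharp spectral gap for every exponent. The paper simply packages this last step as a preliminary lemma (maximal diameter implies saturation for all $p$, using the identification $|\nabla g_0|(t,z)=|\nabla g|(t)$ via the Cheeger theory), which is the same content as your final step on suspensions.
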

\begin{proof}
We first prove that if an $RCD^*(N-1, N)$-space $(Y, \dist, \meas)$ satisfies $\supp\meas=Y$ and $\mathrm{diam}\,(Y, \dist)=\pi$, then 
$(\lambda_{1, p}(Y, \dist, \meas))^{1/p}=(\lambda_{1, p}^N)^{1/p}$ for any $p \in [1, \infty)$.

By Theorem \ref{thm: LO}, there exists an $RCD^*(N-2, N-1)$-space $(Z,\rho, \nu)$ such that
$(Y,\dist,\meas)$ is isometric to $([0, \pi] \times_{\sin}^{N-1}(Z,\rho,\nu) )$ and, from now on, we
make this identification. Note that
for any $f \in L^1([0, \pi], \upsilon_N)$ the function $f_0(y):=f(t)$ for $y=(t, z)$ is in $L^1(Y, \dist, \meas)$, 
and satisfies $c_p(f_0)=c_p(f), \|f_0\|_{L^p}=\|f\|_{L^p}$ for any $f \in L^p([0, \pi], \upsilon_N)$. In addition
\begin{equation}\label{eq:coarea2}
\int_Yf_0\di\meas = \int_0^{\pi}f\di\upsilon_N.
\end{equation}
Let $g \in \Lip([0, \pi], \dist)$ with $c_p(g)=\|g\|_{L^p}=1$ (with respect to $\upsilon_N$).
Using the agreement of minimal relaxed slope with local Lipschitz constant in metric measure spaces satisfying
the doubling and $(1,p)$-Poincar\'e condition (first proved in \cite{Cheeger}, see also \cite{AmbrosioColomboDiMarino}), it is easy to check that 
$|\nabla g_0|(t, z)=|\nabla g|(t)$ for any $t \in (0, \pi)$, any $z \in Z$, applying (\ref{eq:coarea2}) for $f=|\nabla g|^p$ yields
$$
\lambda_{1, p}(Y, \dist, \meas)\le \int_Y|\nabla g_0|^p\di\meas = \int_0^{\pi}|\nabla g|^p\di\upsilon_N.
$$
Taking the infimum for $g$ with Proposition \ref{prop:char model sp} yields 
$$(\lambda_{1, p}(Y, \dist, \meas))^{1/p}=(\lambda_{1, p}([0, \pi], \dist, \upsilon_N))^{1/p}=(\lambda_{1,p}^N)^{1/p},$$ 
because $c_p(g_0)=\|g_0\|_{L^p}=1$.

We are now in a position to finish the proof of Corollary~\ref{cor;shperical suspension}.
The proof is done by contradiction via a standard compactness argument. Assume that the assertion is false.
Then there exist $\epsilon>0$, $p_i \in [1, \infty]$, $q_i \in [1, \infty]$ and $RCD^*(N-1, N)$-spaces $(X_i,\dist_i,\meas_i)$ with $\supp\meas_i=X_i$ and $\meas_i(X_i)=1$ such that 
$$
\lim_{i \to \infty}\left|\left( \lambda_{1, p_i}(X_i,\dist_i,\meas_i)\right)^{1/p_i}-\bigl(\lambda_{1, p_i}^N\bigr)^{1/p_i}\right|=0
$$ 
and 
$$
\left|\left( \lambda_{1, q_i}(X_i,\dist_i,\meas_i)\right)^{1/q_i}-\bigl(\lambda_{1, q_i}^N\bigr)^{1/q_i}\right|\geq\epsilon.
$$ 
By the sequential compactness of $\mathcal{M}(N)$, without loss of generality we can assume (after embedding isometrically $(X_i,\dist_i)$ into a common metric
space $(X,\dist)$),
that $X_i=X$, $\dist_i=\dist$ and that the measured Gromov-Hausdorff limit $(X,\dist,\meas)$ of the spaces $(X,\dist,\meas_i)$
exists, and is an $RCD^*(N-1, N)$-space.
We assume also that the limits $p,\, q \in [1, \infty]$ of $p_i,\, q_i$ exist.
Then Theorem~\ref{thm:p-spec} and Theorem~\ref{thm:gros} yield that
$$
\left(\lambda_{1, p}(X,\dist,\meas)\right)^{1/p}=\bigl(\lambda_{1, p}^N\bigr)^{1/p}
$$
and that
$$
\left(\lambda_{1, q}(X,\dist,\meas)\right)^{1/q}\neq\bigl(\lambda_{1, q}^N\bigr)^{1/q}.
$$
This contradicts Theorem~\ref{thm:almost rigidity}  with the argument above.
\end{proof}

\end{document}